\definecolor{bleu}{RGB}{27,88,145}
\definecolor{mauve}{RGB}{138,20,79}
\renewcommand{\Im}{\operatorname{Im}}
\renewcommand{\Re}{\operatorname{Re}}
\newcommand{\dist}{\operatorname{dist}}
\newcommand{\Id}{\operatorname{Id}}
\newcommand{\rk}{\operatorname{rk}}
\newcommand{\supp}{\operatorname{supp}}
\newcommand{\diag}{\operatorname{diag}}
\newcommand{\C}{\mathbb C}
\newcommand{\N}{\mathbb N}
\newcommand{\R}{\mathbb R}
\newcommand{\Hess}{\operatorname{Hess}}
\newcommand{\Cl}{\operatorname{Cl}}
\newcommand{\Ran}{\operatorname{Ran}}
\newcommand{\argmin}{\operatorname{argmin}}
\newcommand{\bsigma}{\boldsymbol{\sigma}}
\newcommand{\tr}{\operatorname{tr}}
\renewcommand{\div}{\operatorname{div}}
\renewcommand{\u}{{\bf u}}
\def\<{\langle}
\def\>{\rangle}
\def\m{\mathbf{m}}
\def\n{\mathbf{n}}
\def\s{\mathbf{s}}
\def\v{\mathbf{v}}
\def\x{\mathbf{x}}
\numberwithin{equation}{section}
\numberwithin{figure}{section}
\newtheorem{theorem}{Theorem}
\newtheorem{defin}{Definition}[section]
\newtheorem{corollary}[defin]{Corollary}
\newtheorem{lemma}[defin]{Lemma}
\newtheorem{proposition}[defin]{Proposition}
\newtheorem{remark}[defin]{Remark}
\def\aaa{{\mathcal A}}\def\bbb{{\mathcal B}}\def\ccc{{\mathcal C}}\def\ddd{{\mathcal D}}
\def\eee{{\mathcal E}}\def\fff{{\mathcal F}} 
 \def\jjj{{\mathcal J}}\def\lll{{\mathcal L}}
\def\mmm{{\mathcal M}} \def\ooo{{\mathcal O}}\def\ppp{{\mathcal P}}
\def\rrr{{\mathcal R}}\def\sss{{\mathcal S}}
\def\uuu{{\mathcal U}}\def\vvv{{\mathcal V}}
\def\Cr{{\mathscr C}}
\def\Er{{\mathscr E}}
\def\Mr{{\mathscr M}}
\def\Sr{{\mathscr S}}
\begin{document}
\title{Eyring--Kramers law for  Fokker--Planck type differential operators }


\author[J.-F. Bony]{Jean-Fran\c{c}ois Bony}
\address{J.-F. Bony, Institut Math\'ematique de Bordeaux, Universit\'e de Bordeaux}
\email{jean-francois.bony@math.u-bordeaux.fr}

\author[D. Le Peutrec]{Dorian Le Peutrec}
\address{D. Le Peutrec, Institut Denis Poisson, Universit\'e d'Orl\'eans}
\email{dorian.le-peutrec@univ-orleans.fr}

\author[L. Michel]{Laurent Michel}
\address{L. Michel, Institut Math\'ematique de Bordeaux, Universit\'e de Bordeaux}
\email{laurent.michel@math.u-bordeaux.fr}

\begin{abstract}
We consider Fokker--Planck type differential operators associated with general Langevin processes admitting a Gibbs stationary distribution. Under assumptions insuring suitable resolvent estimates, we prove Eyring--Kramers formulas for the bottom of the spectrum of these operators in the low temperature regime. Our approach is based on the construction of sharp Gaussian quasimodes which avoids supersymmetry or PT-symmetry assumptions.
\end{abstract}

\maketitle

\setcounter{tocdepth}{1}
\tableofcontents

\section{Introduction} \label{b29}

\subsection{Motivations}

Let $P$ be the real semiclassical second order differential operator
\begin{equation} \label{g22}
P = - h \div\circ A \circ h \nabla + \frac{1}{2} \big( b \cdot h \nabla + h \div \circ b \big) + c ,
\end{equation}
where the matrix field $A$, the vector field $b$ and the function $c$ depend smoothly on $x \in \R^{d}$, and where $h > 0$ is a small parameter. We assume that the matrix field $A$ is pointwise symmetric and positive semidefinite and that the function $c$ is nonnegative. In stochastic analysis, such operators arise naturally as the generators of time homogeneous Langevin processes
\begin{equation} \label{g50}
d X_{t} = \xi ( X_{t} ) + \sqrt{2 h} \sigma ( X_{t} ) \, d B_{t} ,
\end{equation}
where $( B_{t} )$ denotes the Brownian motion on $\R^{d}$, the vector field $\xi$ is the drift coefficient, the matrix field $\sigma$ is the diffusion coefficient and the parameter $h$ is proportional to the temperature of the system. Given any test function $\varphi$, the expectation  $u ( t , x ) = {\mathbb E} ( \varphi ( X_{t} ) \vert \ X_{0} = x )$ is solution of the Fokker--Planck equation
\begin{equation} \label{g23}
\left\{ \begin{aligned}
&\partial_{t} u + \lll u = 0 ,  \\
&u_{\vert t = 0} = \varphi ,
\end{aligned} 
\right.
\end{equation}
where
\begin{equation*} \label{defL}
\lll = - h \sum_{i , j=1}^{d} a_{i , j} \partial_{x_{i}} \partial_{x_{j}} - \sum_{k=1}^{d} \xi_{k} \partial_{x_{k}} , 
\end{equation*}
with $A = ( a_{i , j} ) = \sigma \sigma^{t}$. 
Up to the multiplicative factor $h$, this operator has the form \eqref{g22} for some suitable $b$ and $c$. Denoting $\lll^{\dagger}$ the formal $L^{2} ( d x )$ adjoint of $\lll$, \eqref{g23} is equivalent to say that the probability density $\varrho(t,\cdot)$ of the process $(X_{t})$ is solution of the adjoint equation
\begin{equation*}
\partial_{t} \varrho = \lll^{\dagger} \varrho .
\end{equation*}
Among many examples of such operators, let us mention two cases of particular interest.

Taking $\xi = - \nabla f$ for some smooth function $f$ on $\R^{d}$ and $\sigma = \Id_{\R^{d}}$, the generator $\lll$ of the overdamped Langevin process 
\begin{equation} \label{g24}
d X_{t} = - \nabla f ( X_{t} ) + \sqrt{2 h} \, d B_{t} ,
\end{equation}
writes
\begin{equation} \label{g25}
\lll = \lll_{K S} = - h \Delta + \nabla f \cdot \nabla ,
\end{equation}
which is sometimes called the Kramers--Smoluchowski operator.
Depending on the field of research, this operator is also known as the weighted Laplacian or Bakry--\'Emery Laplacian
and is unitarily equivalent to the Witten Laplacian.

Another famous example is given by the case where $\sigma : \R^{2 d} \rightarrow \R^{2 d}$ is the projection onto the subspace $0 \oplus \R^{d}$, $\sigma ( x , v ) = ( 0 , v )$ and $\xi : \R^{2 d} \rightarrow \R^{2 d}$, defined by $\xi ( x , v ) = ( v , - \nabla_{x} V - v )$, is related to the energy function
$f( x , v )=\frac 12 \vert v \vert^2 + V(x)$ depending on
a smooth potential $V$ on $\R^{d}$.
The associated Langevin equation writes 
\begin{equation}
\label{eq.lang}
\left\{ \begin{aligned}
d x_{t} &= v_{t} d t ,   \\
d v_{t} &= - \nabla_{x} V ( x_{t} ) d t - v_{t} d t + \sqrt{2 h} \, d B_{t} ,
\end{aligned} \right.
\end{equation}
where $( B_{t} )$ is the Brownian motion on $\R^d$. The associated generator is the Kramers--Fokker--Planch
operator
\begin{equation} \label{g2}
\lll = \lll_{K F P} = -v \cdot\nabla_{x} + \nabla_{x} V \cdot \nabla_{v} + v \cdot \nabla_{v} - h \Delta_{v} ,
\end{equation}
where $\Delta_{v}$ is the Laplace operator in the $v$ variable only.

The study of the operators $\lll_{K S}$ and $\lll_{K F P}$ has been the subject of many works in the last decades. It is particularly motivated by its applications to computational physics. The above processes are indeed ergodic with respect to their Gibbs measure and can thus be used to sample this distribution. We refer to \cite{LeRoSt10_01} for details on these topics.

From a theoretical point of view, the study of the qualitative  properties (well-posedness, asymptotic behavior) as well as of the quantitative properties (precise spectral asymptotics) of 
 the Fokker-Planck equation \eqref{g23} has recently  known some major progresses on the impulse of microlocal techniques. 
 When the matrix field $A$ is positive definite, the operator $P$ is elliptic and standard tools apply to prove general properties on the operator $P$ 
 (maximal accretivity, compactness of the resolvent). When $A$ is not invertible, the operator $P$ is not elliptic anymore (it is sometimes called a degenerate diffusion) and major progresses were recently made using hypoelliptic methods in the spirit of H\"ormander. For the Kramers--Fokker--Planck operator $\lll_{KFP}$, 
 exponential convergence to equilibrium was proved in \cite{Ta99} and explicit rate of decay in the non-semiclassical setting was given in \cite{HeNi04}.
More generally, hypocoercive methods  developed for various kinetic models provide now robust tools to prove
return to equilibrium and
 spectral gap estimates  (see \cite{Vi09} for an overview).

In the semiclassical setting $h\to0$, computing sharp spectral asymptotics for the low spectrum of~$P$ is a classical problem having a long history.
In the elliptic  self-adjoint case, i.e. when~$A$ is uniformly positive definite and  $b=0$, the low-lying eigenvalues of $P$ are localized near the absolute minimum value of the zeroth order part of $P$, that is the minimum value of the function $c$. Moreover, the harmonic and WKB approximations of $P$ near the absolute minima of $c$  yield spectral expansions in powers of $h$
of the  low-lying eigenvalues 
of $P$ (see \cite[Chapters~3~and~4]{DiSj99_01} 
for a detailed study in the case of Schr\"odinger operators). 

However, in certain situations,
these expansions are identical and, thus, do not permit to discriminate between these low-lying eigenvalues. This is for instance the case for Witten Laplacians associated with a confining Morse function $f$ (in this
case, the corresponding function $c$ also depends on $h$), for which we know from the early works of Witten \cite{Wi82}
and Helffer--Sj\"ostrand \cite{HeSj85_01} that
 $P$ admits exponentially small eigenvalues (that is of order $\mathcal O(e^{- C/h})$ for some $C>0$,
 and hence indistinguishable on the basis of their expansions in powers of $h$),
in one-to-one correspondence with the minima of $f$, and that the rest of its spectrum is above $\varepsilon_{*} h$
for some $\varepsilon_{*}>0$. 

Up to a unitary conjugation, the Witten Laplacian is nothing else but the Kramers--Smoluchowski operator \eqref{g25} and
its small eigenvalues govern the time of return to equilibrium for the
overdamped Langevin process~\eqref{g24}. 
The computation of these eigenvalues is a historical problem which at least goes back to Kramers \cite{Kr40}.
In the early '00s, sharp asymptotics of these small eigenvalues were obtained in \cite{BoGaKl05_01} and 
\cite{HeKlNi04_01}. Known as  Eyring--Kramers formulas, such spectral asymptotics were also obtained recently in \cite{BoRe16_01,LePMi20} in elliptic non-self-adjoint settings, associated with non-reversible overdamped Langevin processes generalizing \eqref{g24}. 
Concerning  the transition times of these processes,
 Eyring--Kramers formulas have been respectively established in both reversible and non-reversible settings in \cite{BoEcGaKl04} and \cite{LaMaSe19,LeSe20} under similar assumptions. We also refer to \cite{Be13} for a nice introduction to these topics.

In the non-elliptic case, major progresses in the analysis of the operator $P$ were made by H\'erau, Hitrik and Sj\"ostrand in a series of works. In  
\cite{HeHiSj08-2}, the authors proved resolvent estimates and established harmonic approximation of the spectrum under dynamical assumptions
on the symbol of the operator $P$.
In  \cite{HeHiSj11_01}, they applied these results to operators satisfying additional symmetries (supersymmetry and PT-symmetry). 
Under these assumptions, the operator $P$ admits a natural Gibbs stationary distribution $e^{-f/h}$ and the authors   proved 
spectral Eyring--Kramers formulas in this setting, where the 
small eigenvalues govern the time of return to equilibrium for the
Langevin process~\eqref{eq.lang}. 

Though it is satisfied by many interesting examples (as Kramers--Fokker--Planck operators), the 
assumptions of supersymmetry and PT-symmetry
 do not look necessary to prove sharp spectral asymptotics, as shown by the paper \cite{LePMi20}.
The aim of the present  paper is to prove spectral Eyring--Kramers formulas for general operators $P$ satisfying the assumptions of 
\cite{HeHiSj08-2} and admitting an
 explicit Gibbs stationary distribution, but without the additional symmetry assumptions of  \cite{HeHiSj11_01}.
 
\subsection{Framework and results} \label{s1}

Let $P = P ( x , h \partial_{x} , h )$ be a semiclassical second order differential operator on $\R^d$, $d \geq 1$, with smooth real coefficients,
\begin{equation} \label{a3}
P = - \sum_{i , j = 1}^{d} h \partial_{x_{i}} \circ a_{i , j} ( x , h ) \circ h \partial_{x_{j}} +\frac{1}{2} \sum_{j = 1}^{d} \big( b_{j} ( x , h ) \circ h \partial_{x_{j}} + h \partial_{x_{j}} \circ b_{j} ( x , h ) \big) + c ( x , h ) ,
\end{equation}
where the real functions $a_{i , j}$, $b_{j}$, and $c$ depend smoothly on $x\in\R^d $, $a_{i , j} = a_{j , i}$ for all $i , j=1,\dots,d$, and where $h \in ] 0 , 1 ]$ denotes the semiclassical parameter. We suppose that the coefficients of $P$ satisfy the following growth condition at infinity
\begin{equation}\label{a7}
\begin{aligned}
&\forall \vert \alpha \vert \geq 0 , \qquad &&\partial_{x}^{\alpha} a_{i , j} ( x , h ) = \ooo ( 1 ) ,  \\
&\forall \vert \alpha \vert \geq 1 ,  &&\partial_{x}^{\alpha} b_{j} ( x , h ) = \ooo ( 1 ) ,  \\
&\forall \vert \alpha \vert \geq 2 ,  &&\partial_{x}^{\alpha} c ( x , h ) = \ooo ( 1 ) ,
\end{aligned}
\end{equation}
uniformly with respect to $h$. We assume moreover that the above coefficients admit classical expansions: $a_{i , j} ( x , h ) \sim \sum_{k\in\N} h^{k} a_{i , j}^{k} ( x )$, $b_{j} ( x , h ) \sim \sum_{k\in\N} h^{k} b_{j}^{k} ( x )$ and $c ( x , h ) \sim \sum_{k\in\N} h^{k} c^{k} ( x )$ in the sense
\begin{equation}
\partial_{x}^{\alpha} \Big( e ( x , h ) - \sum_{0 \leq k \leq K} e^{k} ( x ) h^{k} \Big) = \ooo ( h^{K + 1} )
\end{equation}
for all $\alpha\in\N^{d}$, $K \in\N$ and $e \in \{ a_{i , j} , b_{j} , c \}$. This yields classical expansions for the matrix
field $A ( x , h ) = ( a_{i , j} ( x , h ) )_{i , j} \sim \sum_{k} h^{k} A^{k} ( x )$ and the vector field $b ( x , h ) \sim\sum_{k} h^{k} b^{k} ( x )$. Considering symbols which have a classical expansion allows  to deal with, e.g., Witten Laplacians and Kramers--Fokker--Planck operators, which naturally have subprincipal terms. Eventually, we also assume a partial positivity of the symbols of the operator $P$: for all $x \in \R^{d}$ and $h \in ] 0 , 1 ]$,
\begin{equation} \label{a8}
c^{0} ( x ) \geq 0 \qquad \text{and} \qquad A ( x,h ) = ( a_{i , j} ( x,h ) )_{i , j} \text{ is positive semidefinite.}
\end{equation}
Such operators were studied in details in \cite{HeHiSj08-2}, where the authors establish resolvent estimates together with spectral asymptotics. In particular, the graph closure of the operator $P$ initially defined on the Schwartz space $\sss ( \R^{d} )$,
still denoted by $P$, is maximal accretive and has domain
\begin{equation*}
\ddd ( P ) = \{ u \in L^{2} ( \R^{d} ) ; \ P u \in L^{2} ( \R^{d} ) \} ,
\end{equation*}
from \cite[Proposition~3.1 and Corollary~3.2]{HeHiSj08-2}. The same properties also hold for $P^{*}$ mutatis mutandis. Throughout the paper, we assume \eqref{a3}--\eqref{a8} without reminder.

Let us introduce the symbol $p ( x , \xi , h )$ of $P$ in the semiclassical Weyl quantization. It satisfies
\begin{equation*} \label{a4}
p ( x , \xi , h ) = \xi \cdot A ( x , h ) \xi + i b ( x , h ) \cdot\xi + c ( x , h ) +\frac{h^{2}}{4} \sum_{i , j = 1}^{d} \partial_{x_{i}} \partial_{x_{j}} a_{i , j} ( x , h ) ,
\end{equation*}
 where, throughout the paper, $x \cdot y$ denotes the usual scalar product of $x$ and $y$ in $\R^{d}$ (in order to facilitate the reading, we will also sometimes use the notation $\< x , y \> = x \cdot y$). It admits a classical expansion $p ( x , \xi , h ) \sim \sum_{k} h^{k} p^{k} ( x , \xi )$ and the principal symbol $p^{0}$ is given by $p^{0} = p^{0}_{2} + i p^{0}_{1} + p^{0}_{0}$ with $p^{0}_{2} ( x , \xi ) = \xi \cdot A^{0} ( x ) \xi$, $p^{0}_{1} ( x , \xi ) = b^{0} ( x ) \cdot \xi$ and $p^{0}_{0} ( x ) = c^{0} ( x )$. In order to lighten the notation, we will drop the superscript $0$ when it is unambiguous. Consider the symbol
\begin{equation} \label{a9}
\widetilde{p} ( x , \xi ) = p^{0}_{0} ( x ) +\frac{p^{0}_{2} ( x , \xi )}{\< \xi \>^{2}}.
\end{equation}
Thanks to \eqref{a8}, one has $p^{0}_{0} , p^{0}_{2} \geq 0$ and hence $\widetilde{p} \geq 0$. Given $T > 0$, let us define the symbol $\< \widetilde{p} \>_{T}$ by 
\begin{equation} \label{a10}
\< \widetilde{p} \>_{T} = \frac{1}{2T} \int_{- T}^{T} \widetilde{p} \circ e^{t H_{p_{1}^{0}}} d t ,
\end{equation}
where $H_{p_{1}^{0}} = \partial_{\xi} p^{0}_{1} \cdot \partial_{x} - \partial_{x} p^{0}_{1} \cdot \partial_{\xi}$ denotes the Hamilton vector field associated with the symbol~$p^{0}_{1}$. The critical set associated with $p$ is defined by 
\begin{equation} \label{a11}
\ccc = \big\{ ( x , 0 ) \in T^{*} \R^{d} ; \ b^{0} ( x ) = 0 \text{ and } c^{0} ( x ) = 0 \big\} .
\end{equation}
As in \cite{HeHiSj08-2}, we suppose that the set $\ccc$ is finite, $\ccc = \{ \rho_{1} , \ldots , \rho_{N} \}$, and that for some fixed $T > 0$ (see (4.21) and (4.23) in \cite{HeHiSj08-2}): there exists some constant $C > 0$ such that 
\begin{equation} \label{a12} \tag{Harmo}
\text{for all } \rho \text{ near any } \rho_{j} , \text{ we have } \< \widetilde{p} \>_{T} \geq \frac{1}{C} \vert \rho - \rho_{j} \vert^{2} ,
\end{equation}
and, for any neighborhood $U$ of $\pi_{x} \ccc$, one has
\begin{equation} \label{a14} \tag{Hypo}
\exists C > 0 , \ \forall x \in \R^{d} \setminus U , \qquad \operatorname{meas} \Big\{ t \in [- T , T ] ; \ c^{0} \big( \exp ( t b^{0} \cdot \nabla ) ( x ) \big) \geq \frac{1}{C} \Big\} \geq \frac{1}{C} .
\end{equation}
Assumption \eqref{a12} may look difficult to check in the applications, but Corollary \ref{g10} and Remark \ref{g11} provide a concrete criterion to verify it. Observe also that \eqref{a14} holds true for instance when $c^{0}$ is uniformly positive outside each neighborhood of $\pi_{x} \ccc$. Note that it is not necessary to assume (4.22) of \cite{HeHiSj08-2} here since this is a consequence of \eqref{a12} and \eqref{a14}, as explained in \cite[page 223]{HeHiSj08-2}. 

Under these assumptions, H\'erau, Hitrik and Sj\"ostrand obtained spectral informations that we sum up below. For this purpose, we introduce the fundamental matrix $F_{p^{0}}$ of the symbol~$p^{0}$ at a critical point $\rho \in \ccc$ (see \eqref{a11}) as the linearization of the Hamilton field $H_{p^{0}}$ at $\rho$. Its eigenvalues are of the form $\pm \lambda_{\rho , \ell}$, $1 \leq \ell \leq d$, with $\Im \lambda_{\rho , \ell} > 0$. We use the notation 
\begin{equation*}
\widetilde{\tr} ( p , \rho ) = - i \sum_{\ell=1}^{d} \lambda_{\rho , \ell} + 2 c^{1} ( \pi_{x} \rho ) .
\end{equation*}
Combining Proposition 7.2, Theorem 8.3 and Theorem 8.4 of \cite{HeHiSj08-2}, we get

\begin{theorem}[H\'erau--Hitrik--Sj\"ostrand]\sl \label{a15}
Assume that \eqref{a12} and \eqref{a14} hold true. For any $B>0$, there exists $C > 0$ such that
for $h$ small enough,
 the operator $P$ has no spectrum in
\begin{equation*}
\{ z \in \C ; \ \Re z < B h \text{ and } \vert \Im z \vert > C h \} .
\end{equation*}
Moreover, for any $ B >0$, there exists $\alpha>0$ such that, for $h$ small enough, the spectrum of $P$ in $D ( 0 , B h )$ is discrete and made of  
eigenvalues (with multiplicity) of the form
\begin{equation*}
\mu_{\rho , k} ( h ) = h ( \mu^{0}_{\rho , k}  + \ooo ( h^{\alpha} ) ) ,
\end{equation*}
where the $(\mu^{0}_{\rho , k})_{\rho\in\mathcal C, k\in\N}$
are all 
the possible numbers of the form
\begin{equation*}
\mu^{0}_{\rho , k} = \frac{1}{i} \sum_{\ell = 1}^{d} \nu_{\rho , k , \ell} \lambda_{\rho , \ell} + \frac{1}{2} \widetilde{\tr} ( p , \rho ) \quad \text{with} \quad \nu_{\rho , k , \ell} \in \N .
\end{equation*}
Finally, for every $B > 0$, there exists $C > 0$ such that 
\begin{equation} \label{a18}
\Vert ( P - z )^{- 1} \Vert \leq \frac{C}{h} 
\end{equation}
for $h$ small enough and $z \in \C$ such that $\Re z < B h$ and $\dist ( z , \sigma ( P ) ) \geq h / B$.
\end{theorem}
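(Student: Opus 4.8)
Since the statement is quoted verbatim from \cite{HeHiSj08-2} (Proposition~7.2, Theorem~8.3 and Theorem~8.4), the honest "proof" is to cite those results; what follows is the strategy behind them, in the order I would carry it out. The plan has three stages. \emph{First}, I would establish a global microlocal resolvent estimate. Exploiting the partial positivity \eqref{a8}, the flow-averaging condition \eqref{a12} near $\ccc$, and the hypoellipticity condition \eqref{a14} on the complement of $\pi_{x}\ccc$, one builds an $h$-dependent multiplier — an escape function for the Hamilton flow of $p^{0}_{1}$, regularized at the scale dictated by the diffusion in the range of $A$ — which yields $\Vert u\Vert\leq Ch^{-1}\Vert(P-z)u\Vert$ for $u\in\ddd(P)$ and $z$ with $\Re z<Bh$, $\dist(z,\sigma(P))\geq h/B$, the coercivity degenerating only in balls of radius $\ooo(\sqrt h)$ around the $\rho_{j}$. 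This already gives \eqref{a18} and confines the part of $\sigma(P)$ with $\Re z<Bh$ to an $\ooo(h)$-neighborhood of the real axis.

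\emph{Second}, I would carry out the harmonic (quadratic) approximation near each $\rho_{j}\in\ccc$. Rescaling $x=\pi_{x}\rho_{j}+\sqrt h\,y$, the dominant operator is the quadratic differential operator $Q_{\rho_{j}}$ whose Weyl symbol is the Hessian quadratic form of $p^{0}$ at $\rho_{j}$ plus the constant $c^{1}(\pi_{x}\rho_{j})$. By Sjöstrand's analysis of (possibly non-elliptic, non-self-adjoint) quadratic operators, $Q_{\rho_{j}}$ has discrete spectrum consisting precisely of the numbers $\frac1i\sum_{\ell}\nu_{\rho_{j},k,\ell}\lambda_{\rho_{j},\ell}+\frac12\widetilde{\tr}(p,\rho_{j})$ with finite-dimensional generalized eigenspaces, where the $\pm\lambda_{\rho_{j},\ell}$ are the eigenvalues of the fundamental matrix $F_{p^{0}}$ at $\rho_{j}$, and the shift $\frac12\widetilde{\tr}(p,\rho_{j})$ absorbs both the Weyl-quantization correction $\frac1i\sum_{\ell}\lambda_{\rho_{j},\ell}$ and the subprincipal term $2c^{1}(\pi_{x}\rho_{j})$.

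\emph{Third}, I would glue. Using a phase-space partition of unity subordinate to the cover by small neighborhoods of the $\rho_{j}$ together with the complement of $\ccc$, and the Step-1 estimate on that complement, one shows that $\sigma(P)\cap D(0,Bh)$ coincides, up to errors $\ooo(h^{1+\alpha})$, with the union of the spectra of the rescaled models $hQ_{\rho_{j}}$; a Grushin/Schur-complement reduction then promotes this to the stated asymptotics $\mu_{\rho,k}(h)=h(\mu^{0}_{\rho,k}+\ooo(h^{\alpha}))$ with the correct multiplicities and yields the spectral gap $\sigma(P)\cap\{\Re z<Bh,\ \vert\Im z\vert>Ch\}=\emptyset$. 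The main obstacle is Step~1: in the hypoelliptic regime where $P$ itself is not elliptic, constructing a single multiplier that simultaneously turns the quadratic lower bound \eqref{a12} for $\<\widetilde p\>_{T}$ near $\ccc$ and the positivity \eqref{a14} of $c^{0}$ along a fixed fraction of the drift flow away from $\ccc$ into the desired global coercivity — this is the technical heart of \cite{HeHiSj08-2}.
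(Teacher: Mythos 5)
Your proposal is correct and matches the paper exactly: the paper does not prove Theorem~\ref{a15} but, just as you say, cites Proposition~7.2, Theorem~8.3 and Theorem~8.4 of \cite{HeHiSj08-2} and states the combined result; your three-step sketch of the underlying strategy in \cite{HeHiSj08-2} (global hypoelliptic resolvent estimate via an escape-function multiplier, $\sqrt{h}$-scale harmonic approximation with Sj\"ostrand's theory of non-self-adjoint quadratic operators, Grushin gluing) is an accurate summary. The one small point the paper adds that you omit is that \cite{HeHiSj08-2} assumes $h$-independent coefficients $a_{i,j},b_{j},c$, so a direct but routine adaptation to the present setting of classical $h$-expansions is needed.
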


In addition, they showed that the remainder terms $\ooo ( h^{\alpha} )$   have a classical expansion in fractional powers of $h$. It is assumed in \cite{HeHiSj08-2} that the coefficients $a_{i,j} , b_{j} , c$ of the operator $P$ (see \eqref{a3}) do not depend on $h$, but a direct adaption to our setting gives Theorem \ref{a15}. It turns out that in many situations, some leading coefficients $\mu_{\rho , k}^{0}$ vanish and one aims at having a more accurate description of the spectrum. This is the case for instance for Witten Laplacians or Kramers--Fokker--Planck operators, which both admit an invariant distribution.
In the present paper, we consider the situation where the operator $P$ satisfies the assumptions of Theorem~\ref{a15} and there exists a smooth function $f : \R^{d} \rightarrow \R$ such that
\begin{equation} \label{h1} \tag{Confin}
e^{- f / h} \in L^{2} ( \R^{d} ) , \qquad \lim_{\vert x \vert \to + \infty} f ( x ) = + \infty ,
\end{equation}
with
\begin{equation} \label{a17} \tag{Gibbs}
 P ( e^{- f / h} ) = 0 \qquad \text{and} \qquad P^{\dagger} ( e^{- f / h} ) = 0 ,
\end{equation}
where $P^{\dagger}$ denotes the formal adjoint of $P$. In particular, $e^{- f / h} \in \ddd ( P ) \cap \ddd ( P^{*} )$. Moreover, we will assume that 
\begin{equation}\label{a24} \tag{Morse}
f \text{ is a Morse function with a finite number of critical points.}
\end{equation}
In the sequel, we denote by $\uuu$ the set of critical points of the Morse function $f$ and by $\uuu^{( j )}$ the set of its critical points of index $j = 0 , \ldots , d$ (that is the set of its critical points $\u$ such that $\Hess f(\u)$ has
signature $(d-j,j)$). 
Moreover, we denote by $n_{0} := \sharp \uuu^{( 0 )}$ the number of local minima of $f$,
by $H ( x ) := \Hess f ( x )$ the Hessian matrix of $f$ at $x \in \R^{d}$,
and we call saddle points the elements of $\uuu^{( 1 )}$.

For $j\in\{0,1,2\}$, let us denote by $P_{j}$ the $j$th order part of the operator $P=P_{2} + P_{1} + P_{0}$
with 
$P_{2} = - h \div \circ A \circ h \nabla$ formally self-adjoint,
$P_{1} = \frac{1}{2} (  b \cdot h \nabla + h \div \circ b )$ formally anti-adjoint,
and $P_{0} = c $ formally self-adjoint.  It then follows from \eqref{a17} that $P_{1} ( e^{- f / h} ) = 0$ and $( P_{2} + P_{0} ) ( e^{- f / h} ) = 0$. Using the classical expansions of the coefficients and looking at the terms of order $0$ in the expansion, we obtain  the following eikonal equations: for all $x \in \R^{d}$,
\begin{equation} \label{a19}
\< A^{0} ( x ) \nabla f ( x ) , \nabla f ( x ) \>_{\R^{d}} = c^{0} ( x )
\end{equation}
and
\begin{equation} \label{a20}
\< b^{0} ( x ) , \nabla f ( x ) \>_{\R^{d} } = 0 .
\end{equation}
 The first consequence of these identities is the following lemma whose proof is postponed to the next section.

\begin{lemma}\sl \label{a21}
If \eqref{a17} and \eqref{a24} hold true, then $\uuu \times \{ 0 \} \subset \ccc$. If in addition \eqref{a12}  is satisfied, then $\ccc = \uuu \times \{ 0 \}$.
\end{lemma}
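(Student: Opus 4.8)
The plan is to exploit the two eikonal equations \eqref{a19} and \eqref{a20} together with the nonnegativity of $c^0$ and $A^0$. For the first inclusion, suppose $\u \in \uuu$, so $\nabla f(\u) = 0$. Plugging this into \eqref{a19} immediately gives $c^0(\u) = \< A^0(\u)\cdot 0, 0\> = 0$, and plugging into \eqref{a20} gives $\< b^0(\u), 0 \> = 0$, which is automatic and hence gives no information. To get $b^0(\u) = 0$ I would argue differently: differentiate \eqref{a20} at the point $\u$. Since $\nabla f(\u) = 0$, the derivative of the left-hand side is $\< b^0(\u), \Hess f(\u) \cdot w \>$ for any direction $w$ (the term involving the derivative of $b^0$ is killed by $\nabla f(\u)=0$), so $\< b^0(\u), H(\u) w \> = 0$ for all $w$. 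Because \eqref{a24} makes $f$ Morse, $H(\u) = \Hess f(\u)$ is invertible, so its range is all of $\R^d$, forcing $b^0(\u) = 0$. Hence $(\u, 0) \in \ccc$ by the definition \eqref{a11}, which proves $\uuu \times \{0\} \subset \ccc$.

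For the reverse inclusion under the additional hypothesis \eqref{a12}, I would take $\rho_j = (x_j, 0) \in \ccc$, so $b^0(x_j) = 0$ and $c^0(x_j) = 0$, and show $x_j \in \uuu$, i.e. $\nabla f(x_j) = 0$. The idea is that \eqref{a19} at $x_j$ gives $\< A^0(x_j) \nabla f(x_j), \nabla f(x_j)\> = c^0(x_j) = 0$; since $A^0(x_j)$ is positive semidefinite this only yields $\nabla f(x_j) \in \Ker A^0(x_j)$, not $\nabla f(x_j) = 0$, so a bit more is needed. Here is where \eqref{a12} enters: the quadratic lower bound $\< \widetilde p \>_T \geq \frac1C |\rho - \rho_j|^2$ near $\rho_j$, unwound through the definitions \eqref{a9} and \eqref{a10} of $\widetilde p$ and its average, controls how fast $p^0_0 = c^0$ and $p^0_2$ grow away from $\rho_j$, forcing $c^0$ to have a nondegenerate-type behavior near $x_j$. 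Concretely, I expect that the quadratic nondegeneracy of $\< \widetilde p \>_T$ at $\rho_j$ propagates to give that $x_j$ is an isolated point where $c^0$ vanishes with $c^0$ controlled below by $\dist(x, \pi_x \ccc)^2$ on a neighborhood (compare \eqref{a14}), and then combined with eikonal equation \eqref{a19} written as $c^0 = \< A^0 \nabla f, \nabla f\>$ — together with the growth bounds \eqref{a7} on $A^0$ — this would force $\nabla f(x_j) = 0$, since otherwise $c^0$ would vanish to higher order than $|x - x_j|^2$ along the flow directions where $A^0 \nabla f$ degenerates. An alternative, cleaner route: use the first part of the lemma to note $\uuu \times \{0\} \subset \ccc = \{\rho_1, \dots, \rho_N\}$, so $\uuu$ is finite, and then argue that $\pi_x \ccc \setminus \uuu$ cannot be nonempty because at such a point $x_j$ one would have $c^0(x_j) = 0$ but (using $\nabla f(x_j) \neq 0$ and continuity) $c^0$ bounded below by a positive constant on a deleted neighborhood — contradicting that \eqref{a12}'s quadratic lower bound, being an equality-type constraint at $\rho_j$, is compatible with $c^0$ vanishing to exactly order two there, which the eikonal equation then cannot sustain unless $\nabla f(x_j) = 0$.

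The main obstacle is the reverse inclusion, specifically extracting from \eqref{a12} enough rigidity to conclude $\nabla f(x_j) = 0$ rather than merely $\nabla f(x_j) \in \Ker A^0(x_j)$. The subtlety is that $A^0$ may be genuinely degenerate (the operator is allowed to be hypoelliptic, not elliptic, as for $\lll_{KFP}$), so the eikonal equation \eqref{a19} alone is far from enough; one must genuinely use that \eqref{a12} is a statement about the $H_{p^0_1}$-\emph{averaged} symbol, so that the degeneracy direction $\Ker A^0(x_j)$ gets "seen" after transport along the Hamiltonian flow of $p^0_1 = b^0 \cdot \xi$, and the combination of this transport with \eqref{a20} pins down $\nabla f$. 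I would handle this by Taylor-expanding both eikonal equations at $x_j$ to second order, feeding in $b^0(x_j) = c^0(x_j) = 0$, and matching against the quadratic form $\< \widetilde p \>_T$ computed at $\rho_j$ via its linearization (the fundamental matrix $F_{p^0}$), checking that a nonzero $\nabla f(x_j)$ would make the averaged Hessian degenerate and contradict \eqref{a12}.
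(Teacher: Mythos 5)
Your first inclusion $\uuu\times\{0\}\subset\ccc$ is correct and essentially identical to the paper's argument (Lemma~\ref{g5}~$i)$): from $\nabla f(\u)=0$ and \eqref{a19} get $c^0(\u)=0$, then differentiate \eqref{a20} and use invertibility of $\Hess f(\u)$ to get $b^0(\u)=0$.

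The reverse inclusion is where the proposal has a genuine gap. You correctly extract $A^0(\u)\eta=0$ with $\eta:=\nabla f(\u)$ from \eqref{a19} and the semidefiniteness of $A^0$, and you correctly identify that \eqref{a12}, being about the $H_{p_1^0}$-averaged symbol, is what must pin down $\eta=0$. But the concrete mechanism is not given. The missing chain has two links. First, Taylor-expanding \eqref{a20} to \emph{first} order at $\u$, using $b^0(\u)=0$, yields the precise additional piece of information $B^t\eta=0$ (you gesture at Taylor-expanding both eikonal equations but never state this conclusion). Second, one must extract from \eqref{a12} a usable algebraic criterion: evaluating $\<\widetilde p\>_T$ at $(\u,\xi)$ for small $\xi$, the flow $e^{tH_{p_1^0}}$ acts, to leading order, by $\xi\mapsto e^{-tB^t}\xi$, so \eqref{a12} forces the Gramian $\int_{-T}^T e^{-tB}A^0e^{-tB^t}\,dt$ to be positive definite. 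Since $B^t\eta=0$ gives $e^{-tB^t}\eta=\eta$ and $A^0\eta=0$, $\eta$ lies in the kernel of that Gramian, so $\eta=0$. Nowhere in your proposal is this (or any equivalent statement, e.g. the Kalman-type criterion $\bigcap_n\ker(A^0(B^t)^n)=\{0\}$) actually established; without it, the reverse inclusion does not close.

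Two smaller issues. Your ``alternative, cleaner route'' contains an actual error: $c^0$ is continuous and $c^0(x_j)=0$, so it cannot be ``bounded below by a positive constant on a deleted neighborhood'' of $x_j$; that contradiction never occurs. And the invocation of the full fundamental matrix $F_{p^0}$ is misdirected here: the averaging in \eqref{a10} is along $H_{p_1^0}$ alone, and what matters near $(\u,0)$ is the linearization of that flow, which is governed by $B^t$, not by $F_{p^0}$. (The matrix $F_{p^0}$ is the right object elsewhere in the paper, e.g.\ for the harmonic approximation in Theorem~\ref{a15}, but not for this lemma.)
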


Using this lemma, we obtain our first localization result on the spectrum of $P$. Its proof will also be  given in the next section.

\begin{proposition}\sl \label{a22}
Assume the hypotheses of Theorem \ref{a15}, \eqref{a17} and \eqref{a24}. There exist $h_{0} , \varepsilon_{*} > 0$ such that, for every $h \in ] 0 , h_{0} ]$, $P$ has exactly $n_{0}$ eigenvalues in $\{  z \in \C ; \ \Re z < \varepsilon_{*} h \}$. Moreover, these eigenvalues are of order $\ooo ( h^{1 + \alpha} )$, where $\alpha > 0$ is  given by Theorem \ref{a15}.
\end{proposition}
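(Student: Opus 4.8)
The plan is to read off Proposition~\ref{a22} from Theorem~\ref{a15}, the only work being the computation of the leading coefficients $\mu^0_{\rho,k}$ at the critical points. By Lemma~\ref{a21} we have $\ccc=\uuu\times\{0\}$, so for any fixed $B>0$ and $h$ small the spectrum of $P$ in $D(0,Bh)$ is, with multiplicity, the set of numbers $h\big(\mu^0_{\u,k}+\ooo(h^\alpha)\big)$ with $\u\in\uuu$, $k\in\N$, where $\mu^0_{\u,k}=\frac1i\sum_{\ell=1}^d\nu_{\u,k,\ell}\,\lambda_{(\u,0),\ell}+\frac12\widetilde{\tr}(p,(\u,0))$. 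I would then establish, as a lemma, that: (i) if $\u$ is a local minimum of $f$ then $\widetilde{\tr}(p,(\u,0))=0$, hence $\mu^0_{\u,0}=0$, and this value is attained at $\u$ only by the index $\nu=0$; and (ii) there is $c_0>0$ such that $\Re\mu^0_{\u,k}\ge c_0$ whenever $k\ge1$, or $\u\notin\uuu^{(0)}$. The proposition then follows by a routine argument exploiting the accretivity of $P$: one has $\sigma(P)\subset\{\Re z\ge0\}$ and, by Theorem~\ref{a15} with $B=1$, a constant $C>0$ with no spectrum in $\{\Re z<h,\ \lvert\Im z\rvert>Ch\}$, so $\sigma(P)\cap\{\Re z<\varepsilon_* h\}\subset D(0,B'h)$ for $\varepsilon_*\le1$ and $B':=1+\sqrt{1+C^2}$; applying Theorem~\ref{a15} with this $B'$ and fixing $0<\varepsilon_*<c_0/2$, for $h$ small the eigenvalues in $D(0,B'h)$ with $\mu^0_{\u,k}\neq0$ have real part $\ge\tfrac{c_0}{2}h>\varepsilon_* h$ and thus lie outside $\{\Re z<\varepsilon_* h\}$, whereas those with $\mu^0_{\u,k}=0$ — exactly one per local minimum by (i) — number $n_0$ and are of size $h\,\ooo(h^\alpha)=\ooo(h^{1+\alpha})$.

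To prove the lemma, fix $\u\in\uuu$ and write $H=\Hess f(\u)$, $A=A^0(\u)\ge0$ and $M=\big(\partial_{x_j}b^0_i(\u)\big)_{1\le i,j\le d}$. Differentiating the eikonal relations \eqref{a19} and \eqref{a20} twice at $\u$, where $\nabla f$ and $b^0$ vanish, gives $\Hess c^0(\u)=2HAH$ and $HM+M^tH=0$; since $H$ is invertible the latter makes $M$ similar to $-M^t$, so $\tr M=\div b^0(\u)=0$, and comparing the $h^1$-terms in the identity $(P_2+P_0)(e^{-f/h})=0$ coming from \eqref{a17} yields $c^1(\u)=-\tr(AH)$. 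Feeding the corresponding quadratic model of $p^0$ at $(\u,0)$ into the definition of $F_{p^0}$ and conjugating by $\bigl(\begin{smallmatrix}\Id&0\\-iH&\Id\end{smallmatrix}\bigr)$ — which is licit precisely because $HM+M^tH=0$ — shows that $F_{p^0}$ at $(\u,0)$ is similar to $\bigl(\begin{smallmatrix}iN&2A\\0&-iN^t\end{smallmatrix}\bigr)$, where $N:=M-2AH$. Its eigenvalues are therefore $\pm i\beta_1,\dots,\pm i\beta_d$ with $\beta_1,\dots,\beta_d$ the eigenvalues of $N$; by \eqref{a12} none of the $\beta_\ell$ is purely imaginary, $\lambda_{(\u,0),\ell}$ is the one of $\pm i\beta_\ell$ with positive imaginary part, and $\gamma_\ell:=\tfrac1i\lambda_{(\u,0),\ell}\in\{\beta_\ell,-\beta_\ell\}$ satisfies $\Re\gamma_\ell=\lvert\Re\beta_\ell\rvert>0$. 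Using $\tr N=-2\tr(AH)$ and $c^1(\u)=-\tr(AH)$, a short computation gives
\[
\mu^0_{\u,0}=\sum_{\ell\,:\,\Re\beta_\ell>0}\beta_\ell\qquad\text{and}\qquad\mu^0_{\u,k}=\mu^0_{\u,0}+\sum_{\ell=1}^d\nu_{\u,k,\ell}\,\gamma_\ell .
\]
Hence, if $N$ has no eigenvalue with positive real part, then $\mu^0_{\u,0}=0$ (attained only at $\nu=0$) and $\Re\mu^0_{\u,k}\ge\min_\ell\lvert\Re\beta_\ell\rvert$ for $k\ge1$; while if $N$ has at least one such eigenvalue, then $\Re\mu^0_{\u,k}\ge\Re\mu^0_{\u,0}=\sum_{\Re\beta_\ell>0}\Re\beta_\ell>0$ for all $k$. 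Everything thus reduces to counting the eigenvalues of $N$ with positive real part.

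The claim, which I expect to be the main obstacle, is that this count equals the index $j$ of $\u$, i.e. the number of negative eigenvalues of $H$. I would prove it by continuity. Conjugating $N$ by $\lvert H\rvert^{1/2}$ turns it into $\widetilde M-2\widetilde AJ_0$, where $\widetilde A:=\lvert H\rvert^{1/2}A\lvert H\rvert^{1/2}\ge0$, $J_0:=\lvert H\rvert^{-1/2}H\lvert H\rvert^{-1/2}$ is a symmetric involution with exactly $j$ eigenvalues equal to $-1$, and $J_0\widetilde M+\widetilde M^tJ_0=0$. Along the path $s\mapsto\widetilde M-2(\widetilde A+s\Id)J_0$, $s\ge0$, no eigenvalue is purely imaginary: for $s>0$, if $(\widetilde M-2(\widetilde A+s\Id)J_0)v=i\tau v$ with $\tau\in\R$ and $v\neq0$, then pairing with $J_0\bar v$ and taking real parts forces $\langle(J_0\widetilde AJ_0+s\Id)v,\bar v\rangle=0$, impossible since $J_0\widetilde AJ_0+s\Id$ is positive definite; for $s=0$ this is exactly \eqref{a12} (no real eigenvalue of $F_{p^0}$ at $(\u,0)$). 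Consequently the number of eigenvalues with positive real part stays constant on $[0,+\infty)$, and letting $s\to+\infty$ it equals the number of positive eigenvalues of $-2J_0$, namely $j$. Combined with the previous paragraph this gives (i) (when $j=0$) and (ii) (the cases $k\ge1$ and $j\ge1$), taking $c_0$ to be the minimum, over the finitely many $\u$, of $\min_\ell\lvert\Re\beta_\ell\rvert$ and $\sum_{\Re\beta_\ell>0}\Re\beta_\ell$.
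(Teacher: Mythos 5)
Your proof is correct and, at its core, follows the same route as the paper's: use the eikonal identities at a critical point to obtain $\Hess c^0(\u)=2HA^0H$ and the antisymmetry $HB+B^tH=0$ (the paper's Lemma~\ref{a40}), reduce $\sigma(F_{p^0})$ to $\{\pm i\beta\}$ with $\beta$ ranging over the spectrum of a $d\times d$ matrix, count the eigenvalues of that matrix with a prescribed sign of real part by a continuity argument that forbids crossings of the imaginary axis, and then read off $\widetilde{\tr}(p,\rho)$ and feed it into Theorem~\ref{a15}. Your matrix $N=B-2A^0H$ is similar to $-\Lambda$ via $H$, where $\Lambda=2HA^0+B^t$ is the one used in the paper's Lemma~\ref{a29}, and your explicit block-triangularizing conjugation of $F_{p^0}$ repackages the paper's invariant-subspace decomposition $L_\pm=\{(X,\pm iHX)\}$; your deformation $N_s=\widetilde M-2(\widetilde A+s\Id)J_0$ on $[0,\infty)$ plays exactly the role of the paper's convex path $\Lambda_r=r\Lambda+(1-r)H$ in Lemma~\ref{a53}. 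The only substantive divergence is at the delicate endpoint ($s=0$ for you, $r=1$ for the paper): you invoke the assertion, stated just before Theorem~\ref{a15} and imported from \cite{HeHiSj08-2}, that $F_{p^0}$ has no real eigenvalue, whereas the paper re-derives this from \eqref{a12} through its Lemma~\ref{g4} (the Kalman-type positivity of $\int_{-T}^{T}e^{-tB}A^0e^{-tB^t}\,dt$ and the resulting $\ker A^0\cap\ker(B^t-z)=\{0\}$). That shortcut is legitimate under the stated hypotheses, but it bypasses precisely the content that Lemma~\ref{g4} supplies.
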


The aim of this paper is to give sharp asymptotics on these $n_{0}$ small eigenvalues of $P$. For this purpose, we recall the general labeling of minima introduced in \cite{HeKlNi04_01} and generalized in~\cite{HeHiSj11_01}. The presentation below 
originates from \cite{Mi19} and \cite{LePMi20}, where extra material can be found. The main ingredient is the notion of separating saddle point which is defined as follows. Note that, for a saddle point $\s \in \uuu^{( 1 )}$ of $f$ and $r > 0$ small enough, the set 
\begin{equation*}
\{ x \in D ( \s , r ) ; \ f ( x ) < f ( \s ) \} 
\end{equation*}
has exactly two connected components $C_{j} ( \s , r )$, $j = 1 , 2$.
Observe that this set is empty when $\s \in \uuu^{( 0 )}$ and  connected when $\s\in  \R^{d}\setminus(\uuu^{( 0 )}\cup\uuu^{( 1 )}$). The following definition comes from \cite[Definition 4.1]{HeHiSj11_01}.

\begin{defin}\sl \label{a23}
We say that $\s \in  \uuu^{( 1 )}$ is a separating saddle point of $f$  if, for every $r>0$ small enough, $C_{1} ( \s , r )$ and $C_{2} ( \s , r )$ are contained in two different connected components of $\{ x \in \R^{d} ; \ f ( x ) < f ( \s ) \}$. We will denote by $\vvv^{( 1)}$ the set made of these points.

We say that $\sigma \in \R$ is a separating saddle value of $f$ if it has the form $\sigma = f ( \s )$ for some~$\s \in \vvv^{( 1 )}$.

We say that $E \subset \R^{d}$ is a critical component of $f$ if there exists $\sigma \in  f ( \vvv^{( 1 )} )$ such that $E$ is a connected component of $\{ f < \sigma \}$ and $\partial E \cap \vvv^{( 1 )} \neq \emptyset$.
\end{defin}

Let us now describe the labeling procedure of \cite{HeHiSj11_01}. 
Assume  that $f(x)\to+\infty$ when $|x|\to+\infty$ and that $f$
satisfies \eqref{a24}. The set $f ( \vvv^{( 1 )} )$ is then finite.
We denote by $\sigma_{2} > \sigma_{3} > \cdots > \sigma_{N}$ its elements  and, for convenience, we also introduce a fictive infinite saddle value $\sigma_{1} = + \infty$. Starting from $\sigma_{1}$, we will recursively associate to each $\sigma_{i}$ a finite family of local minima $( \m_{i , j} )_{j}$ and a finite family of critical components $( E_{i , j} )_{j}$:

\begin{itemize}
\item[$\star$] Let $X_{\sigma_{1}} = \{ x \in \R^{d} ; \ f ( x ) < \sigma_{1} = + \infty \} = \R^{d}$. We let $\m_{1 , 1}$ be any global minimum of $f$ (not necessarily unique) and $E_{1 , 1} = \R^{d}$. In the following, we will write  $\underline{\m} = \m_{1 , 1}$.

\item[$\star$] Next, we consider $X_{\sigma_{2}} = \{ x \in \R^{d} ; \ f ( x ) < \sigma_{2} \}$. This is the union of its finitely many connected components. Exactly one of these components contains $\m_{1 , 1}$ and the other components are denoted by $E_{2 , 1} , \ldots , E_{2 , N_{2}}$. They are all critical and, in each component $E_{2 , j}$, we pick up a point $\m_{2 , j}$ which is a global minimum of $f_{\vert E_{2 , j}}$.

\item[$\star$] Suppose now that the families $( \m_{k , j} )_{j}$ and $( E_{k , j} )_{j}$ have been constructed until rank $k = i - 1$. The set $X_{\sigma_{i}} = \{ x \in \R^{d} ; \ f ( x ) < \sigma_{i} \}$ has again finitely many connected components and we label $E_{i , j}$, $j = 1 , \ldots , N_{i}$, those of these components that do not contain any $\m_{k , \ell}$ with $k < i$.
They are all critical and, 
in each $E_{i , j}$, we pick up a point $\m_{i , j}$ which is a global minimum of $f_{\vert E_{i , j}}$.
\end{itemize}
At the end of this procedure, all the minima have been labeled.

We now recall some constructions of \cite{Mi19} and \cite{LePMi20} that will be useful in the sequel. 
Throughout, we denote by $\s_{1}$ a fictive saddle point such that $f ( \s_{1} ) = \sigma_{1} = + \infty$  and, for any set $A$, ${\mathcal P} ( A )$ denotes the power set of $A$. From the above labelling, we define   two mappings
\begin{equation*}
E : {\mathcal U}^{( 0 )} \to {\mathcal P} ( \R^{d} ) \qquad \text{and} \qquad {\bf j} : {\mathcal U}^{( 0 )} \to {\mathcal P} ( \vvv^{( 1 )} \cup \{ \s_{1} \} ) 
\end{equation*}
as follows: for every $i \in \{ 1 , \dots , N \}$ and $j \in \{ 1 , \dots , N_{i} \}$,
\begin{equation} \label{a25}
E ( \m_{i , j} ) : = E_{i , j} ,
\end{equation}
and
\begin{equation} \label{a26}
{\bf j} ( \underline{\m} ) : = \{ \s_{1} \} \qquad \text{and} \qquad {\bf j} ( \m_{i , j} ) : = \partial E_{i , j} \cap \vvv^{( 1 )} \text{ for } i \geq 2 .
\end{equation}
In particular, we have $E ( \underline{\m} ) = \R^{d}$ and,
 for all
$ i, j \in \{ 1 , \dots , N \} $,  one has $\emptyset \neq {\bf j} ( \m_{i , j} ) \subset
\{ f = \sigma_{i} \} $.
We then define the mappings
\begin{equation*}
\bsigma : \uuu^{( 0 )} \rightarrow f ( \vvv^{( 1 )} ) \cup \{ \sigma_{1} \} \qquad \text{and} \qquad S : \uuu^{( 0 )} \rightarrow ] 0 , + \infty ] ,
\end{equation*}
by
\begin{equation} \label{a27}
\forall \m \in \uuu^{( 0 )} , \qquad \bsigma ( \m ) : = f ( {\bf j} ( \m ) ) \qquad \text{and} \qquad S ( \m ) : = \bsigma ( \m ) - f ( \m ) ,
\end{equation}
where, with a slight abuse of notation, we have identified the set $f ( {\bf j} ( \m ) )$ with its unique element. Note that $S ( \m ) = + \infty$ if and only if $\m = \underline{\m}$.

We are now in position to introduce our last assumption. In addition to \eqref{a17}, \eqref{h1}, and \eqref{a24}, we assume the following
\begin{equation} \label{a28} \tag{Gener}
\begin{aligned}
&\star\text{ for any } \m \in \uuu^{( 0 )} , \ \m \text{ is the unique global minimum of } f_{\vert E ( \m )} ,  \\
&\star\text{ for all } \m \neq \m^{\prime} \text{ in } \uuu^{( 0 )} , \ {\bf j} ( \m ) \cap {\bf j} ( \m^{\prime} ) = \emptyset .
\end{aligned}
\end{equation}
In particular, \eqref{a28} implies that $f$ uniquely attains its global minimum at $\underline \m \in \uuu^{( 0 )}$. This assumption is a generalization of  \cite[Assumption 5.1]{HeHiSj11_01} which was already used in \cite{LePMi20}. In Section \ref{b28}, we discuss how to remove this hypothesis and deal with  the general case, in the spirit of \cite{Mi19}.

In order to state our main result, we need the following lemma which is proved in Section~\ref{b35}. Throughout the paper, we denote $\C_{\pm} = \{ z \in \C ; \ \pm \Re z > 0 \}$ and $M^{t}$ the transpose of any matrix~$M$.

\begin{lemma}\sl \label{a29}
Assume \eqref{a12}, \eqref{a17}, and \eqref{a24}, and let $k\in\{0,\ldots,d\}$. Let $\u \in \uuu^{( k )}$ be a critical point
of $f$ of index $k$. Denote $B ( \u) = d b^{0} ( \u )$ and recall that $H ( \u )$ is the Hessian matrix of $f$ at $\u$. Then,

$i)$ the matrix $\Lambda ( \u ) := 2 H ( \u ) A^{0} ( \u ) + B^{t} ( \u )$ admits exactly $k$ eigenvalues in $\C_{-}$ and $d-k$ eigenvalues in $\C_+$,

$ii)$ if $k=1$, its unique  eigenvalue $\mu(\u)$ in $\C_{-}$  is real (and thus $\mu ( \u ) < 0)$.
\end{lemma}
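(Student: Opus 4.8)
The key point is to understand the eigenvalues of $\Lambda(\u) = 2H(\u)A^0(\u) + B^t(\u)$ in terms of the fundamental matrix $F_{p^0}$ at the critical point $\rho = (\u,0) \in \ccc$, whose eigenvalues are $\pm\lambda_{\rho,\ell}$ with $\Im\lambda_{\rho,\ell} > 0$. The plan is first to compute $F_{p^0}$ explicitly at $\rho$. Writing $p^0 = \xi\cdot A^0(x)\xi + ib^0(x)\cdot\xi + c^0(x)$ and linearizing $H_{p^0}$ at $(\u,0)$, one uses the eikonal relations \eqref{a19}--\eqref{a20}: differentiating \eqref{a19} at the critical point $\u$ of $f$ (where $\nabla f(\u) = 0$) kills all terms except those where both derivatives hit $\nabla f$, giving $\nabla^2 c^0(\u) = 2H(\u)A^0(\u)H(\u)$, and similarly differentiating \eqref{a20} gives $B^t(\u)\nabla f + \dots$, so that $db^0(\u)^t H(\u)$ is antisymmetric, i.e. $H(\u)B(\u)$ is antisymmetric, equivalently $(H A^0 \text{ part is symmetrized correctly})$. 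A direct computation of the $2d\times 2d$ matrix $F_{p^0}$ in the block form $\begin{pmatrix} * & * \\ * & * \end{pmatrix}$ then shows that $F_{p^0}$ is conjugate (via $\mathrm{diag}(I, H(\u))$ or a similar intertwiner built from $H(\u)$, which is invertible since $f$ is Morse) to a block matrix whose relevant invariant is $\Lambda(\u)$. The cleanest route: show that the nonzero part of the spectrum of $F_{p^0}^2$ equals the spectrum of $\Lambda(\u)\Lambda(\u)^t$-type object, or more simply that $\{\lambda_{\rho,\ell}^2\}$ are the eigenvalues of a $d\times d$ matrix similar to $\Lambda(\u)$ up to the obvious relation. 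In fact the expected statement (this is classical, cf. the computations in \cite{HeHiSj08-2} and the overdamped case in \cite{LePMi20}) is that the eigenvalues of $\Lambda(\u)$ are exactly the $2\lambda_{\rho,\ell}(-i)$... — more precisely one identifies $\Lambda(\u)$, up to similarity, with a matrix whose eigenvalues $\mu$ satisfy: $\mu \in \C_-$ iff the corresponding $\lambda_{\rho,\ell}$ contributes a "stable" direction, and the count of such directions is the Morse index $k$.

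For part $(i)$, I would argue as follows. Since $A^0(\u)$ is positive semidefinite and $H(\u)$ is symmetric invertible with exactly $k$ negative eigenvalues, one wants to count the eigenvalues of $\Lambda(\u) = 2H(\u)A^0(\u) + B^t(\u)$ in each open half-plane. The antisymmetry of $H(\u)B(\u)$ (from the eikonal equation \eqref{a20}) means $B^t(\u) H(\u) = -H(\u)B(\u)$, so $\Lambda(\u)$ is conjugate by $|H(\u)|^{1/2}$ (taking care of signs via the signature decomposition $H(\u) = H(\u)^{1/2}_+ \oplus (-|H(\u)|^{1/2}_-)$) to a matrix of the form $2 S + J$ where $S$ is symmetric and $J$ is... this is where the signature of $H$ enters. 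The robust tool here is a homotopy/perturbation argument: deform $A^0(\u)$ to a positive definite matrix (possible while keeping the relevant nondegeneracy, which follows from \eqref{a12} — Lemma \ref{a21} type considerations ensure $\Lambda(\u)$ stays invertible along the deformation because no eigenvalue can cross the imaginary axis, as a purely imaginary eigenvalue would contradict the Harmonic approximation hypothesis at $\rho$), reducing to the case $A^0(\u) > 0$; then conjugating by $(A^0)^{-1/2}$ reduces to counting eigenvalues of $2 (A^0)^{1/2} H (A^0)^{1/2} + \widetilde{B}$ with $\widetilde B$ antisymmetric with respect to the relevant inner product, and for the symmetric-plus-antisymmetric matrix $M + N$ with $M$ symmetric having signature $(d-k,k)$ and $N$ antisymmetric, a classical lemma (used in \cite{BoRe16_01, LePMi20}) gives that $M+N$ has exactly $k$ eigenvalues in $\C_-$ and $d-k$ in $\C_+$ — this follows because $\langle (M+N)x, x\rangle = \langle Mx,x\rangle$, so any eigenvector with eigenvalue in the closed left/right half plane pins down the sign of a quadratic form, and an index-counting argument (e.g. via the Lyapunov equation $M+N$ restricted to generalized eigenspaces, or the fact that the number of stable eigenvalues of such a matrix equals the negative index of $M$) concludes.

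For part $(ii)$, when $k=1$: $H(\u)$ has exactly one negative eigenvalue. We know from $(i)$ that $\Lambda(\u)$ has exactly one eigenvalue $\mu(\u)$ in $\C_-$. We must show it is real. Complex eigenvalues of a real matrix come in conjugate pairs, so if $\mu(\u)$ were non-real, $\overline{\mu(\u)}$ would also be an eigenvalue in $\C_-$ (same real part sign), contradicting that there is exactly one. Hence $\mu(\u) \in \R$, and since $\mu(\u)\in\C_-$ means $\Re\mu(\u)<0$ and $\mu(\u)$ real gives $\mu(\u)<0$. This part is essentially immediate once $(i)$ is established — the real coefficients of $b^0$, $A^0$, $f$ are what matter.

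\medskip

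\textbf{Main obstacle.} The hard part is $(i)$: the clean bookkeeping relating the Morse index $k$ of $H(\u)$ to the count of eigenvalues of the non-normal, non-self-adjoint matrix $\Lambda(\u) = 2H(\u)A^0(\u) + B^t(\u)$ in the two half-planes, particularly when $A^0(\u)$ is only semidefinite (the degenerate/hypoelliptic case). The eikonal identities \eqref{a19}--\eqble{a20} and the Harmonic hypothesis \eqref{a12} (which prevents eigenvalues on the imaginary axis and is what makes the deformation argument rigorous) are the essential inputs; assembling them into a continuity/index argument — or alternatively a direct Lyapunov-function argument — is the crux. Everything else (computing $F_{p^0}$, the conjugations, part $(ii)$) is routine linear algebra.
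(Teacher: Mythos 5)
Your overall strategy — a continuous deformation that moves the eigenvalue count from $H$ to $\Lambda$ while forbidding crossings of the imaginary axis, together with the complex-conjugation observation for part $ii)$ — is exactly the strategy of the paper's proof, and part $ii)$ is handled identically. However, the two key ingredients you propose for part $i)$ are wrong as stated.

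First, the ``classical lemma'' that a real symmetric $M$ of signature $(d-k,k)$ plus a real antisymmetric $N$ always has $k$ eigenvalues in $\C_{-}$ and $d-k$ in $\C_{+}$ is false. With $M = \diag(1,-1)$ and $N = \left(\begin{smallmatrix}0 & 2 \\ -2 & 0\end{smallmatrix}\right)$, the matrix $M+N$ has eigenvalues $\pm i\sqrt{3}$, both on the imaginary axis. The observation $\Re z\,\|v\|^2 = \langle Mv, v\rangle$ gives only a one-sided bound; equality requires an extra hypothesis that forbids imaginary eigenvalues, which is precisely the difficulty being addressed.

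Second, conjugation by $(A^0)^{1/2}$ does not yield a symmetric-plus-antisymmetric form. With $S := (A^0)^{1/2}$ and $A^0 > 0$, one gets $S\Lambda S^{-1} = 2SHS + SB^tS^{-1}$, and antisymmetry of $SB^tS^{-1}$ would require $A^0B^t + BA^0 = 0$. The eikonal relation \eqref{a20} gives antisymmetry of $B^t H$ (Lemma \ref{a40}), not of $A^0 B^t$, and these are not the same identity.

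The paper's Lemma \ref{a53} avoids both issues by not conjugating at all. For $\Lambda_r := H\big(r(2A^0 + J) + (1-r)\Id\big)$ with $J := H^{-1}B^t$ antisymmetric, one tests an eigenvector $v$ satisfying $\Lambda_r v = zv$, $\Re z = 0$, against $H^{-1}v$: the quantity $\Re\langle\Lambda_r v, H^{-1}v\rangle$ equals $\Re z\,\langle v, H^{-1}v\rangle = 0$, while expanding it using the antisymmetry of $J$ gives $2r\langle A^0 v, v\rangle + (1-r)\|v\|^2$. For $r < 1$ this forces $v = 0$ directly, with no additional hypothesis along the path. Only at $r = 1$ is more needed: then $A^0 v = 0$, so $B^t v = zv$ and $v \in \ker A^0 \cap \ker(B^t - z)$, which is $\{0\}$ precisely by Lemma \ref{g4}, hence by \eqref{a12}. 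You correctly sensed that \eqref{a12} must enter, but invoking it to control the deformed matrices is both unnecessary — the $(1-r)\|v\|^2$ term already does the job — and unjustified, since \eqref{a12} concerns the actual symbol and says nothing about a perturbed $A^0$. Once Lemma \ref{a53} is in place, the continuity argument runs exactly as you describe: $\Lambda_0 = H$ has $k$ eigenvalues in $\C_{-}$, no crossing of the imaginary axis occurs, so $\Lambda_1 = \Lambda$ has $k$ as well.
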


We are now in position to state our main result.

\begin{theorem}\sl \label{a66}
Suppose that the assumptions of Theorem \ref{a15} are satisfied. Assume also that \eqref{a17}, \eqref{h1}, \eqref{a24} and \eqref{a28} hold true. Let $\varepsilon_{*}$ be given by Proposition \ref{a22}. There exists $h_{0} > 0$ such that, for all $h \in ] 0 , h_{0} ]$, one has, counting the eigenvalues with multiplicity,
\begin{equation*}
\sigma ( P ) \cap \{ \Re z < \varepsilon_{*} h \} = \{ \lambda ( \m , h ) ; \ \m \in \uuu^{( 0 )} \} ,
\end{equation*}
where $\lambda ( \underline{\m} , h ) = 0$ and, for all $\m \neq \underline{\m}$, $\lambda ( \m , h )$ satisfies the following Eyring--Kramers type formula
\begin{equation} \label{e12}
\lambda ( \m , h ) =  z ( \m ) h e^{- 2 S ( \m ) / h} a ( h ) ,
\end{equation}
where $a ( h ) \in \C$ admits a classical expansion $a ( h ) \sim 1 + \sum_{j \geq 1} a_{j} h^{j}$
with every $a_{j}$ real.
 Here, $S : \uuu^{( 0 )} \rightarrow ] 0 , + \infty ]$ is defined in \eqref{a27} and, for every $\m \in \uuu^{( 0 )} \setminus \{ \underline{\m} \}$,
\begin{equation} \label{e13}
z ( \m ) = \frac{( \det \Hess f ( \m ) )^{\frac{1}{2}}}{2 \pi} \Big( \sum_{\s \in {\bf j} ( \m )} \frac{\vert \mu ( \s ) \vert}{\vert \det \Hess f ( \s ) \vert^{\frac{1}{2}}} \Big) ,
\end{equation}
where ${\bf j} : \mathcal U^{( 0 )} \to \mathcal P ( \vvv^{( 1 )} \cup \{ \s_{1} \} )$ is defined in \eqref{a26} and $\mu ( \s )$ is given by Lemma \ref{a29}.
\end{theorem}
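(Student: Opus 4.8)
The strategy is the standard quasimode/Witten-complex approach to Eyring--Kramers asymptotics, but carried out without any supersymmetry or PT-symmetry, relying instead on the resolvent bound \eqref{a18} from Theorem \ref{a15} and on sharp Gaussian quasimodes. By Proposition \ref{a22}, $P$ has exactly $n_{0}$ eigenvalues in $\{\Re z<\varepsilon_{*}h\}$, all of size $\ooo(h^{1+\alpha})$; since $e^{-f/h}\in\Ker P\cap\Ker P^{*}$, one of them is identically $0$ and corresponds to $\underline{\m}$. The remaining $n_{0}-1$ eigenvalues are the object of the theorem. First I would construct, for each non-global minimum $\m\in\uuu^{(0)}\setminus\{\underline{\m}\}$, an explicit approximate eigenfunction $\varphi_{\m}$ built from the equilibrium density: take $\varphi_{\m}(x)=\chi_{\m}(x)e^{-f(x)/h}$ where $\chi_{\m}$ is (a regularization of) the indicator of the critical component $E(\m)$, plus a small correction near the saddle points $\s\in{\bf j}(\m)$ where the truncation is made, designed so that $P\varphi_{\m}$ is concentrated near those saddles. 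Orthonormalizing the $\varphi_{\m}$ against $e^{-f/h}$ and against each other (legitimate thanks to \eqref{a28}, which separates the wells and their boundary saddles) gives a family spanning an approximate invariant subspace.

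Next I would compute the matrix of $P$ in this basis. The key quantities are the pairings $\langle P\varphi_{\m},\psi_{\m'}\rangle$ where $\psi_{\m'}$ are suitable dual quasimodes (one may use $\psi_{\m'}$ built analogously from $P^{*}$, or a left quasimode against $e^{+f/h}$-type weights tailored to the non-self-adjoint setting). Because $P(e^{-f/h})=0$, the leading contribution to $P\varphi_{\m}$ comes entirely from the cutoff: $P\varphi_{\m}= [P,\chi_{\m}]e^{-f/h}$, which is a sum over $\s\in{\bf j}(\m)$ of terms localized in $D(\s,r)$. A Laplace-type (saddle-point) evaluation of the resulting integrals near each $\s$ is where the geometry enters: one linearizes, using the eikonal equations \eqref{a19}--\eqref{a20} and the structure of $\Lambda(\s)=2H(\s)A^{0}(\s)+B^{t}(\s)$ from Lemma \ref{a29}. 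The real negative eigenvalue $\mu(\s)$ (Lemma \ref{a29} $ii)$) governs the escape rate through $\s$, producing the factor $|\mu(\s)|/|\det\Hess f(\s)|^{1/2}$, while the Gaussian mass of $e^{-f/h}$ near $\m$ contributes $(\det\Hess f(\m))^{1/2}$ and the prefactor $e^{-2S(\m)/h}$ comes from $2(f(\s)-f(\m))/h$ in the exponent (the factor $2$ reflecting that $\varphi_{\m}\sim e^{-f/h}$ rather than $e^{-f/2h}$). Summing over $\s\in{\bf j}(\m)$ yields $z(\m)$ as in \eqref{e13}.

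Then I would invoke the resolvent estimate: by \eqref{a18}, if $z$ is at distance $\geq h/B$ from $\sigma(P)$ and $\Re z<Bh$ then $\|(P-z)^{-1}\|\leq C/h$, so the spectral projector $\Pi$ onto $\sigma(P)\cap\{\Re z<\varepsilon_{*}h\}$ can be written as a contour integral and compared with the approximate projector onto $\mathrm{span}(e^{-f/h},\varphi_{\m})$; the quasimode errors being $\ooo(h^{\infty})$ (or $\ooo(e^{-c/h})\cdot$smaller) relative to the relevant scale $he^{-2S(\m)/h}$, a standard perturbation/comparison argument shows the true eigenvalues equal the eigenvalues of the computed finite matrix up to a multiplicative $1+\ooo(h)$. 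The near-triangular structure of that matrix in the ordering of decreasing saddle values $\sigma_{i}$ — a consequence of the labeling and of \eqref{a28} — lets one read off the eigenvalues one by one as the diagonal entries $z(\m)he^{-2S(\m)/h}(1+\ooo(h))$, with the off-diagonal entries exponentially smaller. Upgrading $\ooo(h)$ to a full classical expansion $a(h)\sim 1+\sum a_{j}h^{j}$ with real $a_{j}$ requires pushing the quasimode construction to all orders in $h$ (using the classical expansions of $a_{i,j},b_{j},c$) and tracking reality: the reality of the $a_{j}$ should follow because the exact $0$ eigenvalue pins down the expansion and the remaining data entering the subprincipal corrections are real.

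\textbf{Main obstacle.} The hard part is the non-self-adjoint, non-supersymmetric saddle-point analysis: without PT- or supersymmetry one cannot diagonalize or factor $P$ near a saddle, so one must control the full Gaussian quasimode and its image directly, proving that $\Lambda(\s)$ has a \emph{real} escape eigenvalue (Lemma \ref{a29}) and that the oscillatory/Gaussian integral defining $\langle P\varphi_{\m},\psi_{\m'}\rangle$ localizes correctly and yields exactly $|\mu(\s)|$ rather than some complex quantity. Establishing that the error terms are genuinely negligible compared to $he^{-2S(\m)/h}$ — uniformly as several wells interact through shared saddle values, which is why \eqref{a28} is imposed — and that the comparison of spectral subspaces is quantitatively valid, is the technical heart of the argument; this is precisely the point where \cite{HeHiSj11_01} used the extra symmetry and where the present construction of sharp Gaussian quasimodes must do the work instead.
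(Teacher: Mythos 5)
Your plan is essentially the paper's approach: sharp Gaussian quasimodes built near the saddles (the function $\ell_{\s}$ solving an eikonal and transport hierarchy), Laplace evaluation of the interaction coefficients with the factor $|\mu(\s)|/|\det\Hess f(\s)|^{1/2}$ supplied by Lemma~\ref{a29} and Lemma~\ref{a42}, projection onto $\Ran\Pi_h$ via the resolvent bound \eqref{a18}, and extraction of the eigenvalues from the resulting small matrix. Two places where you depart from what the paper actually does deserve a flag. First, you propose pairing against dual quasimodes built for $P^{*}$. The paper avoids constructing such a second family: it uses only the $\varphi_{\m}$ and compensates for non-self-adjointness by the asymmetric estimates $\|P\varphi_{\m}\|=\ooo(h^{\infty}\sqrt{\widetilde\lambda_{\m}})$ and $\|P^{*}\varphi_{\m}\|=\ooo(\sqrt{h\,\widetilde\lambda_{\m}})$ of Proposition~\ref{a67}, which together with \eqref{a18} give the projection estimate \eqref{a5}. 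Your dual route could in principle work (since $P^{*}$ also annihilates $e^{-f/h}$), but would force a separate eikonal/transport analysis for $P^{*}$, and you should check whether the analogue of Lemma~\ref{a29} has the right sign structure for $P^{*}$; the paper's way is more economical. Second, the reduced matrix is not near-triangular in the saddle-value ordering. Under \eqref{a28} the interaction matrix $\langle P\varphi_{j},\varphi_{k}\rangle$ is in fact \emph{exactly} diagonal (\eqref{g35}, thanks to disjoint supports / constant overlaps), and after projecting to $\Ran\Pi_h$ and Gram--Schmidt orthonormalizing one gets diagonal entries $\widetilde\lambda_{j}$ and off-diagonals $\ooo(h^{\infty}\sqrt{\widetilde\lambda_{j}\widetilde\lambda_{k}})$. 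Extracting the eigenvalues rigorously then needs the graded almost-symmetric / Schur complement machinery (Theorem~\ref{b10}), because different wells can share the same $S(\m)$, so entries of the same exponential scale sit in a genuine block that must be diagonalized; this is the paper's substitute for the triangularity you invoked and is also where the multiplicative errors stay $\ooo(h^{\infty})$ and the classical expansion of $a(h)$ with real coefficients comes out.
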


Let us make some comments on the above result.
 In \cite{HeHiSj11_01}, the authors studied the case where the operator satisfies 
 some supersymmetry property.
 More precisely, they assumed the existence of a smooth matrix-valued function $G ( x )$ such that $P = \Delta_{f , G}$, where 
\begin{equation} \label{a16}
 \Delta_{f , G} = d_{f}^{*} \circ G ( x ) \circ d_{f} ,
\end{equation}
and $d_{f}$ denotes the semiclassical Hodge derivative twisted ``\`a la Witten'': $d_{f} = e^{- f / h} \circ h d \circ e^{f / h}$ for some smooth function $f$.
  Under suitable assumptions on $f$ and $G$, $\Delta_{f , G}$ satisfies the general hypotheses of Theorem \ref{a15}. Moreover, one has obviously $\Delta_{f , G} ( e^{- f / h} ) = \Delta_{f , G}^{\dagger} ( e^{- f / h} ) = 0$. Assuming additionally that $f$ is a Morse function, the authors proved that the smallest eigenvalues of $ \Delta_{f , G}$ are exponentially small with respect to $h$, and established  Eyring--Kramers type formulas under suitable topological assumptions (see \cite[Theorem 5.10, Proposition 6.7, and Formula (6.71)]{HeHiSj11_01}). 
In their paper, the supersymmetry assumption is fundamental since, combined with the PT-symmetry property, it permits to follow the strategy used by Helffer, Klein and Nier \cite{HeKlNi04_01} in the supersymmetric framework of Witten Laplacians. More recently, the two last authors \cite{LePMi20} studied the case of non-reversible diffusions 
\begin{equation} \label{g27}
P = \Delta_{f} + b \cdot d_{f},
\end{equation}
where $\Delta_f=-h^2\Delta +|\nabla f|^2-h\Delta f$ denotes the Witten Laplacian and 
$b$ is a vector field verifying $\div b=0$ and $b\cdot\nabla f=0$. In this setting, which is not supersymmetric in general, the
authors
built accurate quasimodes and used them to prove Eyring--Kramers asymptotics.

The interest of the approach developed in the present paper is to deal simultaneously with all these models without 
assuming
additional symmetry. In particular, Theorem~\ref{a66} applies to both \eqref{a16} and \eqref{g27}. Moreover, we give in Appendix \ref{s4} examples of operators $P$ satisfying our assumption but which do not enjoy a nice supersymmetric structure \eqref{a16}. Compared to the results of \cite{HeHiSj11_01}, our approach has also the advantage to give formulas which are completely explicit in terms of the coefficients of the operator and of the function $f$. Moreover, compared to the results of \cite{LePMi20}, we would like to emphasize that we obtain a full asymptotic expansion of the prefactor $z(\m)a(h)$. The proof relies on the resolvent estimates of \cite{HeHiSj08-2} and on the construction of accurate quasimodes near the saddle points of $f$. These constructions, inspired by \cite{BoEcGaKl04,DiLe,LePMi20}, are the main novelty of this paper.
To be more precise, we generalize the use of Gaussian cut-off functions
introduced in
\cite{LePMi20} by using geometric constructions,
which lead to complete asymptotic expansions.

Theorem \ref{a66} permits to give the long time behavior of the solutions of the evolution equation associated with $P$,
\begin{equation} \label{g32}
\left\{ \begin{aligned}
& h \partial_{t} u + P u = 0 , \\
&u_{\vert t = 0} = u_{0} .
\end{aligned} \right.
\end{equation}
Since the operator $P$ is maximal accretive, this Cauchy problem has,
for every $u_{0} \in L^{2} ( \R^{d} )$, a unique solution
in $ C^{0} ( [ 0 , + \infty [ ; L^{2} ( \R^{d} ) )\cap C^{1} ( ] 0 , + \infty [ ; L^{2} ( \R^{d} ) )$, denoted by $u ( t ) = e^{- t P / h} u_{0}$. Modulo the conjugation by $e^{- f / h}$, \eqref{g32} is the Fokker--Planck equation \eqref{g23} in the case of the general Langevin process \eqref{g50}. 

We first state the spectral expansion of the propagator.

\begin{corollary}\sl \label{g30}
In the setting of Theorem \ref{a66}, there exist $C , \varepsilon > 0$ such that, for all $u_{0} \in L^{2} ( \R^{d} )$ and $h$ small enough, there exists $(u_{\m , n})_{\m,n} \subset \C$ such that the solution $u ( t )$ of \eqref{g32} satisfies
\begin{equation} \label{g31}
\forall t \geq 0 , \qquad \Big\Vert u ( t ) - \sum_{\m \in \uuu^{( 0 )}} \sum_{0 \leq n \leq n_{0} - 1} u_{\m , n} t^{n} e^{- \lambda ( \m , h ) t / h} \Big\Vert \leq C e^{- \varepsilon t} \Vert u_{0} \Vert .
\end{equation}
Moreover, for all $N \in \N$, there exists $C_{N} > 0$ such that, for all $u_{0} \in L^{2} ( \R^{d} )$ and $h$ small enough, the solution $u ( t )$ of \eqref{g32} satisfies
\begin{equation} \label{g49}
\forall t \geq 0 , \qquad \bigg\Vert u ( t ) - \frac{\< e^{- f / h} , u_{0} \>}{\Vert e^{- f / h} \Vert^{2}} e^{- f / h} \bigg\Vert \leq C_{N} e^{- t \min\limits_{\m \neq \underline{\m}} \Re(\lambda ( \m , h )) ( 1 - h^{N} ) / h} \Vert u_{0} \Vert .
\end{equation}
\end{corollary}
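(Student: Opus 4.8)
The plan is to deduce both estimates from Theorem \ref{a66} together with the resolvent bound \eqref{a18} of Theorem \ref{a15}, via a contour integral representation of the propagator. First I would fix $B > \varepsilon_{*}$ and note that, by Theorem \ref{a15} and Proposition \ref{a22}, for $h$ small enough the spectrum of $P$ in $\{\Re z < Bh\}$ consists exactly of the $n_{0}$ eigenvalues $\lambda(\m,h)$, $\m \in \uuu^{(0)}$, all contained in $D(0, Ch^{1+\alpha})$, and that outside an $h/B$-neighborhood of $\sigma(P)$ one has $\|(P-z)^{-1}\| \leq C/h$. Choose a contour $\Gamma = \Gamma_{1} \cup \Gamma_{2}$ where $\Gamma_{2} = \{\Re z = \varepsilon h\}$ for a suitable small fixed $\varepsilon > 0$ (so that $\Re z = \varepsilon h$ separates the small eigenvalues from the rest of the spectrum, using the spectral gap $\varepsilon_{*} h$ of Proposition \ref{a22} and the exponential smallness of the $\lambda(\m,h)$), and $\Gamma_{1}$ closes it off in the region $\Re z \geq \varepsilon h$ where, by \eqref{a18}, the resolvent is $\ooo(1/h)$ and one has $\Re z \geq \varepsilon h$ so that $|e^{-tz/h}| \leq e^{-\varepsilon t}$ on that part. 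The spectral projector associated with the small eigenvalues is $\Pi = \frac{1}{2i\pi} \oint_{\gamma} (z - P)^{-1}\, dz$ for a small circle $\gamma$ around $0$, and $e^{-tP/h}\Pi$ is a finite-rank operator acting on a space of dimension $n_{0}$; on its range, $P$ restricts to a matrix whose eigenvalues are the $\lambda(\m,h)$, so $e^{-tP/h}\Pi u_{0} = \sum_{\m} \sum_{0 \leq n \leq n_{0}-1} u_{\m,n}(u_{0})\, t^{n} e^{-\lambda(\m,h)t/h}$ for suitable coefficients (the powers of $t$ accounting for possible nontrivial Jordan blocks, bounded in size by $n_{0}$). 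Writing $e^{-tP/h} = e^{-tP/h}\Pi + \frac{1}{2i\pi}\int_{\Gamma}e^{-tz/h}(z-P)^{-1}\,dz$ and estimating the remaining integral with $|e^{-tz/h}| \leq e^{-\varepsilon t}$ and $\|(z-P)^{-1}\| \leq C/h$ along $\Gamma$ — using the decay of $\|(z-P)^{-1}\|$ for $\Re z$ large (which follows from accretivity of $P$, giving $\|(z-P)^{-1}\| \leq 1/\Re z$ there) to make the contour integral absolutely convergent — yields the bound $Ce^{-\varepsilon t}\|u_{0}\|$ of \eqref{g31}. Here one absorbs the factor $1/h$ into the constant since $h$ ranges over a fixed interval $]0,h_{0}]$.

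For \eqref{g49}, the point is to separate the kernel eigenvalue $\lambda(\underline\m,h)=0$ from the nonzero small eigenvalues. By \eqref{a17} and \eqref{h1}, $e^{-f/h} \in \Ker P$, and since $P^{\dagger}(e^{-f/h}) = 0$ as well, the rank-one spectral projector onto $\Ker P$ is $\Pi_{0} u = \frac{\langle e^{-f/h}, u\rangle}{\|e^{-f/h}\|^{2}} e^{-f/h}$ (one should check $\Ker P$ is one-dimensional and that there is no Jordan block at $0$, which follows from Theorem \ref{a66} since $\lambda(\underline\m,h)=0$ appears with multiplicity one among the $n_{0}$ listed eigenvalues). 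Then $e^{-tP/h}\Pi_{0} = \Pi_{0}$ for all $t$, and $e^{-tP/h}(\Pi - \Pi_{0})u_{0}$ is supported on the span of the generalized eigenspaces for the $\lambda(\m,h)$ with $\m \neq \underline\m$, on which $\|e^{-tP/h}(\Pi-\Pi_{0})\| \leq C_{N}' t^{n_{0}-1} e^{-t\min_{\m\neq\underline\m}\Re(\lambda(\m,h))/h}$; bounding $t^{n_{0}-1}$ against an extra exponential factor $e^{t\min_{\m\neq\underline\m}\Re(\lambda(\m,h))h^{N}/h}$ (legitimate since $\min_{\m\neq\underline\m}\Re\lambda(\m,h) > 0$ is of order $he^{-2S/h} \gg h^{1+N}\cdot$const, so $t^{n_{0}-1} \lesssim_{N} e^{t h^{N}\min\Re\lambda/h}$ uniformly in $t\geq 0$ and small $h$) produces the stated rate $e^{-t\min_{\m\neq\underline\m}\Re(\lambda(\m,h))(1-h^{N})/h}$. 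The contribution of the high part $(\Id - \Pi)u_{0}$ is controlled as before by $Ce^{-\varepsilon t}\|u_{0}\|$, which is dominated by the exponentially-in-$1/h$-slowly-decaying term on the right-hand side of \eqref{g49} (since $\varepsilon$ is fixed while $\min_{\m\neq\underline\m}\Re\lambda(\m,h)/h \to 0$), and thus gets absorbed into $C_{N}$.

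The main obstacle I anticipate is the careful treatment of the finite-dimensional part: one must control the size and number of Jordan blocks of $P$ restricted to the range of $\Pi$ (equivalently, bound the polynomial-in-$t$ growth of $e^{-tP/h}\Pi$ by $t^{n_{0}-1}$ with a constant uniform in $h$), and ensure that $0$ is a semisimple eigenvalue so that $\Pi_{0}$ is genuinely a spectral projector with $e^{-tP/h}\Pi_{0} = \Pi_{0}$. The uniformity in $h$ of the constants multiplying $t^{n}$ requires knowing that $\|\Pi\|$ and $\|(P|_{\Ran\Pi})\|$ are bounded uniformly, which comes from \eqref{a18} applied on the fixed circle $\gamma$ of radius $\varepsilon_{*}h/2$ together with $\|P\Pi\| = \ooo(h^{1+\alpha})$ from Proposition \ref{a22}; this is routine but must be done. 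Everything else is a standard application of the Dunford functional calculus once the resolvent bounds of Theorem \ref{a15} and the spectral description of Theorem \ref{a66} are in hand.
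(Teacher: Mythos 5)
Your overall strategy -- decomposing $e^{-tP/h}$ into $e^{-tP/h}\Pi_h + e^{-tP/h}(1-\Pi_h)$, treating the finite-rank piece as a matrix exponential to obtain the sum $\sum u_{\m,n}t^n e^{-\lambda(\m,h)t/h}$, and then estimating the remainder -- matches the paper. But there are two genuine gaps.

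First, your Dunford contour integral for the remainder $e^{-tP/h}(1-\Pi_h)u_0$ does not converge. On a vertical line $\{\Re z = \varepsilon h\}$ the factor $|e^{-tz/h}|$ is constant and the resolvent bound \eqref{a18} is uniformly $\ooo(1/h)$ with no decay as $|\Im z|\to\infty$. Your appeal to accretivity -- ``$\|(z-P)^{-1}\| \leq 1/\Re z$ for $\Re z$ large'' -- is wrong: maximal accretivity of $P$ gives $\|(P-z)^{-1}\|\leq 1/|\Re z|$ only for $\Re z < 0$, while your contour $\Gamma_1$ is supposed to close in $\{\Re z \geq \varepsilon h\}$, which is precisely where the rest of $\sigma(P)$ sits and where accretivity gives nothing. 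A maximal accretive operator is not in general sectorial, so the analytic-semigroup contour representation simply does not apply. The paper instead applies a quantitative Gearhardt--Pr\"uss inequality (\cite[Theorem~1.4]{HeSj21_01}, see also \cite[Proposition~2.1]{HeSj10_01}) to the restriction $Q = P|_{\Im(1-\Pi_h)}$: contractivity of $e^{-tQ/h}$ together with the resolvent bound $\|(Q-z)^{-1}\|\leq Ch^{-1}$ on the half-plane $\{\Re z < 2\varepsilon_* h/3\}$ (extended from the contour by the maximum principle, \eqref{g38}) yields directly $\|e^{-tQ/h}\|\leq Ce^{-t\varepsilon_*/2}$, which is \eqref{g43}. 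This is the key technical input missing from your argument.

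Second, your proof of \eqref{g49} fails quantitatively. You propose to absorb the polynomial factor via $t^{n_0-1}\leq C_N e^{th^N\mu/h}$ where $\mu := \min_{\m\neq\underline\m}\Re\lambda(\m,h)$, ``uniformly in $t\geq 0$ and small $h$''. This is false: the minimal such $C_N$ is $\sup_{t\geq 0}t^{n_0-1}e^{-th^{N-1}\mu}\sim (h^{1-N}/\mu)^{n_0-1}$, which blows up as $h\to 0$ because $\mu$ is exponentially small (of order $he^{-2S/h}$ by Theorem \ref{a66}). Moreover, the bound $\|e^{-tP/h}(\Pi_h-\Pi_0)\|\leq C_N' t^{n_0-1}e^{-t\mu/h}$ with $h$-uniform $C_N'$ is itself not justified: for a non-normal matrix, such a constant is governed by the Jordan/condition-number structure at the scale of the eigenvalues, which is exponentially small here and is not controlled by $\|\Pi_h\|\lesssim 1$ and $\|P\Pi_h\|=\ooo(h^{1+\alpha})$ as you suggest. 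The paper resolves both issues via Theorem~\ref{b10}: the graded structure of the interaction matrix gives the $h$-uniform resolvent estimate \eqref{b13}, $\|(\mmm_h-z)^{-1}\|\leq C\dist(z,\sigma(\mmm_h))^{-1}$, and with this in hand the sharp exponential rate in \eqref{g49} follows from a finite-dimensional (and hence absolutely convergent) contour integral, without any polynomial-in-$t$ loss.
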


The double sum appearing in \eqref{g31} is nothing but $e^{- t P / h}\Pi_{h}$, where $\Pi_{h}$ denotes the spectral projector of $P$ associated with its $n_{0}$ exponentially small eigenvalues. In particular, when the $\lambda ( \m , h )$ are pairwise distinct, \eqref{g31} writes
\begin{equation}
u ( t ) = \sum_{\m \in \uuu^{( 0 )}} \Pi_{\m} ( u_{0} ) e^{- \lambda ( \m , h ) t / h} + \ooo ( e^{- \varepsilon t} ) \Vert u_{0} \Vert ,
\end{equation}
where $\Pi_{\m}$ is the rank-one spectral projector of $P$ associated with the eigenvalue $\lambda ( \m , h )$.

Estimate \eqref{g49} implies that $u_{\underline{\m} , 0}=\Pi_{\underline{\m}}u_{0} = \Vert e^{- f / h} \Vert^{- 2} \< e^{- f / h} , u_{0} \> e^{- f / h}$
and that $u_{\underline{\m} , n} = 0$ for every $n \geq 1$ in \eqref{g31}, where
$\Pi_{\underline{\m}}$
 is the (orthogonal) spectral projector 
 of $P$ associated with 
 the eigenvalue $0$ with corresponding eigenspace $e^{- f / h} \C$. Equation \eqref{g49} is a return to equilibrium formula generalizing \cite[Theorem 1.11]{LePMi20}. We see that the spectral gap (that is $\min_{\m \neq \underline{\m}} \Re(\lambda ( \m , h )) $) indeed gives the rate of convergence to the equilibrium state modulo $\mathcal O(h^{\infty})$.
In addition, when there exists precisely one $\m^{\star}\in \mathcal U^{(0)}\setminus\{\m\}$ such that 
\begin{equation*}
\lambda ( \m^{\star} , h ) = \min \limits_{\m \neq \underline{\m}} \Re ( \lambda ( \m , h ) ) \big ( 1 + \mathcal O ( h^{\infty} ) \big) ,
\end{equation*}
then the eigenvalue $\lambda(\m^{\star},h)$ is simple and real and we can replace $\min \limits_{\m \neq \underline{\m}} \Re ( \lambda ( \m , h ) ) ( 1 - h^{N} )$ by $\lambda ( \m^{\star} , h )$ in \eqref{g49}.

One can also show the metastable behavior of the solutions of \eqref{g32}. More precisely,

\begin{corollary}[Metastability]\sl \label{g33}
In the setting of Theorem \ref{a66}, let $S_{1} < \cdots < S_{K} = + \infty$ denote the increasing sequence of the $S ( \m )$'s defined in \eqref{a27} and let $\Pi^{\leq}_{k}$ be the spectral projector of $P$ associated with its eigenvalues of modulus less than $e^{- 2 S_{k} / h}$. For two positive functions $g_{\pm} ( h )$ such that $g_{-} ( h ) = \ooo ( h^{\infty} )$ and $g_{+}^{-1} ( h ) = o ( \vert \ln h \vert^{- 1} )$, we define the times
\begin{equation*}
t_{0}^{+} =  g_{+} ( h ) \qquad \text{and} \qquad \forall 1 \leq k \leq K , \quad t_{k}^{\pm} = g_{\pm} ( h ) e^{2 S_{k} / h}
\end{equation*}
(in particular, $t_{K}^{-} = + \infty$). Then, for every $h$ small enough, the solution $u ( t )$ of \eqref{g32} satisfies
\begin{equation} \label{g39}
\forall t_{k - 1}^{+} \leq t \leq t_{k}^{-} , \qquad u ( t ) = \Pi^{\leq}_{k} u_{0} + \ooo ( h^{\infty}) \Vert u_{0} \Vert ,
\end{equation}
uniformly with respect to $t$, $1 \leq k \leq K$, and $u_{0} \in L^{2} ( \R^{d} )$.
\end{corollary}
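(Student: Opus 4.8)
Here is how I would attack Corollary~\ref{g33}.

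\medskip

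\emph{Strategy and reduction.} The plan is to reduce everything to a finite--dimensional matrix computation on the $n_{0}$--dimensional spectral subspace $\Ran\Pi_{h}$, and then to split that subspace according to the separating saddle values. By Corollary~\ref{g30} one has $u(t)=e^{-tP/h}\Pi_{h}u_{0}+r(t)$ with $\|r(t)\|\le Ce^{-\varepsilon t}\|u_{0}\|$ for all $t\ge0$; on every window one has $t\ge t_{k-1}^{+}\ge g_{+}(h)$, and the hypothesis $g_{+}^{-1}(h)=o(|\ln h|^{-1})$ means exactly $g_{+}(h)/|\ln h|\to+\infty$, so $e^{-\varepsilon t}=\ooo(h^{\infty})$ uniformly there and hence $r(t)=\ooo(h^{\infty})\|u_{0}\|$. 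It thus remains to analyze $e^{-tP/h}\Pi_{h}u_{0}$. Since $\sigma(P)\cap D(0,\varepsilon_{*}h)=\{\lambda(\m,h);\ \m\in\uuu^{(0)}\}$ is a finite discrete set by Proposition~\ref{a22} and Theorem~\ref{a66}, I would write $\Pi_{h}=\sum_{j=1}^{K}\Pi^{(j)}$, where $\Pi^{(j)}$ is the (Riesz) spectral projector of $P$ onto the eigenvalues $\lambda(\m,h)$ with $S(\m)=S_{j}$ (so $\Pi^{(K)}=\Pi_{\underline{\m}}$ is the kernel projector). Using $\lambda(\m,h)=z(\m)h\,e^{-2S(\m)/h}a(h)$ with $z(\m)>0$ and $a(h)\to1$ from Theorem~\ref{a66}, one checks that for $h$ small the eigenvalues with $S(\m)\ge S_{k}$ have modulus $<e^{-2S_{k}/h}$ while those with $S(\m)<S_{k}$ have modulus $>e^{-2S_{k}/h}$ (the gap being exponential), so that $\Pi^{\le}_{k}=\sum_{j\ge k}\Pi^{(j)}$ and $\Pi_{h}-\Pi^{\le}_{k}=\sum_{j<k}\Pi^{(j)}$.

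\medskip

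\emph{Input from the proof of Theorem~\ref{a66}, and slow modes.} The quasimode construction underlying Theorem~\ref{a66} (which represents $P$ on $\Ran\Pi_{h}$ in a nearly orthonormal basis of quasimodes localized near distinct wells) should provide, beyond the eigenvalues, the quantitative bounds $\|\Pi^{(j)}\|=\ooo(h^{-N_{0}})$ and $\|P\Pi^{(j)}\|=\ooo(h^{-N_{0}}e^{-2S_{j}/h})$ for some $N_{0}\in\N$; the second follows from the first since $P\Pi_{\m}=\lambda(\m,h)\Pi_{\m}$ on the relevant eigenspaces, together with the fact that $P\Pi_{h}$ is block--diagonal up to $\ooo(h^{\infty})$ with $j$th block of size $\ooo(he^{-2S_{j}/h})$. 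Granting this, fix a window $[t_{k-1}^{+},t_{k}^{-}]$. For $j\ge k$ (slow modes), $\Pi^{(j)}u_{0}\in\ddd(P)$, so Duhamel's formula and the contraction property $\|e^{-sP/h}\|\le1$ (maximal accretivity) give
\begin{equation*}
\big\|(e^{-tP/h}-\Id)\Pi^{(j)}u_{0}\big\|=\Big\|h^{-1}\!\!\int_{0}^{t}\!\!e^{-sP/h}P\Pi^{(j)}u_{0}\,ds\Big\|\le h^{-1}t\,\|P\Pi^{(j)}\|\,\|u_{0}\|,
\end{equation*}
and for $t\le t_{k}^{-}=g_{-}(h)e^{2S_{k}/h}$ with $S_{j}\ge S_{k}$ this is $\le h^{-1}g_{-}(h)\,\ooo(h^{-N_{0}})\,\|u_{0}\|=\ooo(h^{\infty})\|u_{0}\|$, since $g_{-}(h)=\ooo(h^{\infty})$. (For $j=K$ the term vanishes, $P\Pi^{(K)}=0$, which also covers $t_{K}^{-}=+\infty$.)

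\medskip

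\emph{Fast modes and conclusion.} For $j<k$, the semigroup property and $\|e^{-sP/h}\|\le1$ make $t\mapsto\|e^{-tP/h}\Pi^{(j)}\|$ non-increasing, so it suffices to bound it at $t=t_{k-1}^{+}$. On the $(\le n_{0})$--dimensional space $\Ran\Pi^{(j)}$, the operator $M:=h^{-1}P\Pi^{(j)}$ has $\min\Re\sigma(M)\ge c_{0}e^{-2S_{j}/h}$ for some uniform $c_{0}>0$ (because $\Re a(h)\to1$ and $z(\m)>0$) and $\|M\|=\ooo(\beta)$ with $\beta:=h^{-N_{0}-1}e^{-2S_{j}/h}$. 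The elementary bound $\|e^{-\tau M}\|\le C_{n_{0}}(1+\tau\|M\|)^{n_{0}-1}e^{-\tau\min\Re\sigma(M)}$ (Schur triangularization), applied with $\tau=t_{k-1}^{+}$ and $s:=t_{k-1}^{+}e^{-2S_{j}/h}=g_{+}(h)e^{2(S_{k-1}-S_{j})/h}\ge g_{+}(h)$, then gives
\begin{equation*}
\|e^{-t_{k-1}^{+}P/h}\Pi^{(j)}\|\le\|\Pi^{(j)}\|\,C_{n_{0}}\big(1+\ooo(h^{-N_{0}-1})\,s\big)^{n_{0}-1}e^{-c_{0}s}\le C\big(1+\ooo(h^{-N_{0}-1})g_{+}(h)\big)^{n_{0}-1}e^{-c_{0}g_{+}(h)}=\ooo(h^{\infty}),
\end{equation*}
using that $x\mapsto(1+\gamma x)^{n_{0}-1}e^{-c_{0}x}$ is eventually decreasing (any fixed $\gamma,c_{0}>0$), that $g_{+}(h)\to+\infty$, and that $g_{+}(h)/|\ln h|\to+\infty$ makes $e^{-c_{0}g_{+}(h)}$ dominate any power of $h^{-1}$ times any power of $g_{+}(h)$. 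Summing the slow- and fast-mode estimates over the finitely many $j$ yields, for $t_{k-1}^{+}\le t\le t_{k}^{-}$,
\begin{equation*}
e^{-tP/h}\Pi_{h}u_{0}=\sum_{j\ge k}\Pi^{(j)}u_{0}+\ooo(h^{\infty})\|u_{0}\|=\Pi^{\le}_{k}u_{0}+\ooo(h^{\infty})\|u_{0}\|,
\end{equation*}
which together with the reduction step is \eqref{g39}; all bounds are uniform in $t$ (the partial estimates being monotone in $t$), in $k$, and in $u_{0}$, and empty windows require nothing.

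\medskip

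\emph{Main obstacle.} The delicate point is the second step. The abstract resolvent bound \eqref{a18} degenerates at distance $\ll h/B$ from $\sigma(P)$ and is therefore blind both to the exponentially small eigenvalues and to the geometry of $\Ran\Pi_{h}$; one genuinely has to revisit the quasimode construction of Theorem~\ref{a66} to extract that the spectral projectors $\Pi^{(j)}$ are polynomially bounded and that $P\Pi^{(j)}$ is of the size of its eigenvalues up to polynomial losses (equivalently, that $\Ran\Pi_{h}$ is spanned by well-localized, nearly orthogonal quasimodes so that $P\Pi_{h}$ is block-diagonal up to $\ooo(h^{\infty})$). A subsidiary technical point is that the matrix-exponential estimate must be used with its explicit $\|M\|$-dependence, since the reduced blocks need not be semisimple and may carry a (mildly non-normal) nilpotent part.
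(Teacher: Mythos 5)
Your proposal is correct and reaches the right conclusion, but it takes a somewhat different technical route from the paper's own proof, and the gap you flag at the end is genuine and deserves a word. The paper works directly with Riesz projectors
$\widetilde{\Pi}_{j}=\frac{1}{2i\pi}\int_{\partial D_{j}}(z-P)^{-1}\,dz$
around disks $D_{j}$ adapted to the scales $he^{-2S_{j}/h}$, and gets everything from a single explicit resolvent bound on these contours,
$\|(P-z)^{-1}\|\le Ch^{-1}e^{2S_{j}/h}$ for $z\in\partial D_{j}$,
which is precisely the resolvent estimate \eqref{b13} established in Theorem~\ref{b10} (transported from the finite-dimensional interaction matrix $P_{|\Ran\Pi_{h}}$ to $P$ and combined with~\eqref{g38} on $\Ran(1-\Pi_{h})$). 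With this in hand, both of your cases reduce to a one-line Cauchy-integral estimate: for $t\le t_{j}^{-}$ one has $e^{-tz/h}=1+\ooo(t|z|/h)=1+\ooo(h^{\infty})$ on $\partial D_{j}$, hence $e^{-tP/h}\widetilde{\Pi}_{j}=\widetilde{\Pi}_{j}+\ooo(h^{\infty})$; for $t\ge t_{j}^{+}$ one has $e^{-t\Re z/h}=\ooo(h^{\infty})$ on $\partial D_{j}$, hence $e^{-tP/h}\widetilde{\Pi}_{j}=\ooo(h^{\infty})$. The resolvent bound also gives $\|\widetilde{\Pi}_{j}\|\le C$ uniformly, a stronger statement than the polynomial bound you postulate.

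Your route — Duhamel with the contraction property for the slow modes, and a Schur-triangularization estimate $\|e^{-\tau M}\|\le C_{n_{0}}(1+\tau\|M\|)^{n_{0}-1}e^{-\tau\min\Re\sigma(M)}$ for the fast modes — is a valid alternative, and your accounting of the various exponential scales is sound, including the check that $s\ge g_{+}(h)$ puts you in the regime where the function $s\mapsto(1+\gamma s)^{n_{0}-1}e^{-c_{0}s}$ is monotone decreasing. The one thing you use but do not establish is the pair of bounds $\|\Pi^{(j)}\|=\ooo(h^{-N_{0}})$ and $\|P\Pi^{(j)}\|=\ooo(h^{-N_{0}}e^{-2S_{j}/h})$, which you rightly single out as the main obstacle. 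These do not come ``for free'' from the quasimode orthogonality alone; they follow from the Schur-complement analysis of the interaction matrix, specifically from the resolvent estimate~\eqref{b13} of Theorem~\ref{b10} (indeed, $\|\widetilde{\Pi}_{j}\|\le C$ and $\|P\widetilde{\Pi}_{j}\|\lesssim he^{-2S_{j}/h}$ drop out of the Cauchy integral once~\eqref{b13} is known). If you supply that ingredient explicitly, your proof closes. Compared with the paper, your argument is more of a semigroup argument and requires the matrix-exponential lemma because the reduced blocks are not normal; the paper's Cauchy-integral computation sidesteps this entirely by quantifying the non-normality through the resolvent bound rather than a Schur form, at the cost of having to set up the contours $D_{j}$ so that they enclose the right cluster of eigenvalues with a comfortable margin.
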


In other words, $e^{- t P / h} \approx \Pi^{\leq}_{k}$ in the time interval $[ t_{k - 1}^{+} , t_{k}^{-} ]$, whereas transitions occur around the times $t_{k} = e^{2 S_{k} / h}$. In this result, one can take for instance $g_{-} ( h ) = e^{- \delta / h}$, with $\delta > 0$, and $g_{+} ( h ) = \vert \ln h \vert^{2}$. Note that $\Pi^{\leq}_{j} \Pi^{\leq}_{k} = \Pi^{\leq}_{\max ( j , k )}$, $\Pi^{\leq}_{1}=\Pi_{h}$ is the spectral projector of $P$ associated with its $n_{0}$ exponentially small eigenvalues, and $\Pi^{\leq}_{K}=\Pi_{\underline{\m}}$ is the orthogonal projector on $e^{- f / h} \C$. The proofs of Corollaries \ref{g30} and \ref{g33} are done at the end of Section \ref{s3}.

We conclude this introduction by applying Theorem \ref{a66} to a generalization of the Kramers--Fokker--Planck operator defined in \eqref{g2}. For two smooth functions $V ( x )$ and $W ( v )$ and a friction coefficient $\gamma > 0$, the generator associated with
the SDE
\begin{equation}
\left\{
\begin{aligned}
d x_{t} &= \partial_{v} W ( v_{t} ) d t , \\
d v_{t} &= - \partial_{x} V ( x_{t} ) d t - \gamma \partial_{v} W ( v_{t} ) d t + \sqrt{2 \gamma h} \, d B_{t} ,
\end{aligned}
\right.
\end{equation}
is given by
\begin{equation} \label{g1}
\lll = -\partial_{v} W \cdot\partial_x+\partial_x V\cdot\partial_v+ \gamma \partial_{v} W \cdot\partial_v-\gamma h\Delta_v .
\end{equation}
This is an example of Hamiltonian SDE discussed in  \cite[Section 2.2.3]{LeRoSt10_01}. 
Defining $M_{h} ( x , v ) := e^{-  ( V ( x ) + W ( v ) ) / h}$, we have
\begin{equation*}
\lll ( 1 ) = 0 \qquad \text{and} \qquad \lll^\dagger ( M_{h} ) = 0 .
\end{equation*}
Hence, $P :=  M_{h}^{- 1 / 2} h \lll^{\dagger} M_{h}^{1 / 2}$ satisfies
\begin{equation*}
P ( e^{- f / h} ) = 0 \qquad \text{and} \qquad  P^{\dagger} ( e^{- f / h} ) = 0 ,
\end{equation*}
where 
$f ( x , v ) := ( V ( x ) + W ( v ) ) / 2$. Moreover, an immediate computation shows that
\begin{equation}
P = \partial_{v} W \cdot h \partial_{x} - \partial_{x} V \cdot h \partial_{v} + \gamma \Delta_{W / 2} ,
\end{equation}
where $\Delta_{W / 2} =(-h\partial_v+ \partial_{v} W / 2)\circ (h\partial_v+  \partial_{v} W / 2 )$ is the Witten Laplacian in the variable $v$ associated with the function $W ( v ) / 2$. Thus, $P$ has the form \eqref{a3} with 
\begin{equation*}
A(x,v) =
\left( \begin{array}{cc}
0 & 0 \\
0 & \gamma\Id
\end{array} \right) , \quad
b(x,v) =
\left( \begin{array}{c}
\partial_{v} W ( v ) \\
-\partial_{x} V ( x )
\end{array} \right)
\quad \text{and} \quad
c(x,v)=\frac{\gamma}{4} \vert \nabla_{v}W \vert^{2} - h \frac{\gamma}{2}\Delta W  .
\end{equation*}
Of course, $f$ satisfies \eqref{a24} if and only if $V$ and $W$ do it. In that case, Corollary \ref{g10}, Remark \ref{g11} and
\begin{equation*}
B := db^{0} = \left( \begin{array}{cc}
0 & \Hess_{v} W \\
- \Hess_{x} V & 0
\end{array} \right) ,
\end{equation*}
show that \eqref{a12} is satisfied. Under some additional growth assumptions on $V$ and $W$ at infinity, one can verify that \eqref{h1} and \eqref{a14} hold true. At a critical point~$\u$, the matrix $\Lambda ( \u )$ of Lemma \ref{a29} is given by
\begin{equation*}
\Lambda ( \u ) = \left( \begin{array}{cc}
0 & - \Hess_{x} V \\
\Hess_{v} W & \gamma \Hess_{v} W
\end{array} \right) .
\end{equation*}
The setting of \eqref{g2} corresponds to $W ( v ) = v^{2} / 2$. In that case, the saddle points of $f$ are of the form $\s = ( \x , 0 )$, where $\x$ is a saddle point of $V$, and the unique eigenvalue with negative real part of $\Lambda ( \s )$ is
\begin{equation} \label{g3}
\mu ( \s ) = \frac{1}{2} \big( \gamma - \sqrt{\gamma^{2} - 4 \lambda_{1}} \big) ,
\end{equation}
where $\lambda_{1}$ is the unique negative eigenvalue of $\Hess_{x} V ( \x )$. This yields an explicit expression of the prefactor $z(\m)$ in \eqref{e13} and we observe that it has the same form as the one obtained in equation (6.67) of \cite{HeHiSj11_01}. Observe also that $\vert \mu ( \s ) \vert < \vert \lambda_{1} \vert$ if and only if $\gamma > 1 + \lambda_{1}$. Thus, the rate of convergence to the equilibrium  for the Langevin process \eqref{eq.lang}
(whose generator is the Kramers--Fokker--Planck operator, see \eqref{g2})
 is smaller than for the overdamped Langevin process \eqref{g24} (generated by the Witten Laplacian, see the lines above \eqref{g25}) if and only if this condition holds. Note that this discussion is more generally valid if $W$ admits a unique critical point at $v = \v$ with $\Hess_{v} W ( \v ) = \Id$. In particular, it is easy to choose $W$ such that~$P$ is not PT-symmetric, which provides a setting covered by Theorem \ref{a66} but which can not be treated using \cite{HeHiSj11_01}.
 
More sophisticated  Langevin equations have been  considered in the literature, as the generalized Langevin equation in \cite{PaStVa} (see also references therein). Our results permit to compute
in the low temperature regime
the low-lying eigenvalues  of the different generators treated in
  \cite{PaStVa}.

The rest of this paper is organized as follows.
In Section~\ref{b35}, we derive algebraic and geometric results 
from our assumptions. This leads to the proofs of  Lemmas \ref{a21} and \ref{a29}, and of 
the rough localization of the spectrum of $P$ stated in Proposition \ref{a22}.
Section~\ref{s20} is devoted to the construction of new ans\"atze
of the eigenmodes of $P$ near the saddle points of~$f$. These ans\"atze
are then used in Section~\ref{s21}  to construct global quasimodes.
Afterwards, in Section~\ref{s3}, we prove our main results, Theorem~\ref{a66} and Corollaries~\ref{g30} and~\ref{g33}.
The aim of Section~\ref{b28} is then to show that 
Theorem~\ref{a66} can be generalized to an arbitrary Morse function~$f$, that is
without assuming  \eqref{a28}. A vague statement is given in Theorem~\ref{b34},
while precise ones are given in Theorems~\ref{b23} and~\ref{b20}.
Let us also recall here that
we are not working with symmetry assumptions such as supersymmetry or PT-symmetry.
This prevents us from concluding as in \cite{HeKlNi04_01} (and in many other works later such as, e.g., in 
\cite{HeHiSj11_01,Mi19}) 
after reduction of the problem to the computation of the eigenvalues of a square matrix of size $n_{0}$.
To handle this computation, we then
use crucially the Schur complement method, as in \cite{LePMi20},
and refine  \cite[Theorem A.4]{LePMi20} in Theorem~\ref{b10}.
We believe that Theorem~\ref{b10} has his own interest and may be used in other contexts. The authors are supported by the ANR project QuAMProcs 19-CE40-0010-01.

\section{Preliminary results} \label{b35}

In this section, we prove some preparatory geometric results and use them to show the rough 
localization of the spectrum of $P$ stated in Proposition  \ref{a22}.  For $\u$ a critical point of $f$, we use the shortcuts
\begin{equation*}
A^{0} = A^{0} ( \u ) , \qquad B = d b^{0} ( \u ) \qquad \text{and} \qquad H = \Hess f ( \u ) .
\end{equation*}

\subsection{Analysis of the critical set}

The aim of this section is to prove Lemma \ref{a21} and to discuss the assumptions of Section \ref{s1}. We start  with a microlocal observation.

\begin{lemma}\sl \label{g4}
$i)$ Assume \eqref{a12} and let $( \u , 0 ) $ belong to $\ccc$. Then,
\begin{equation} \label{g51}
\text{the real symmetric matrix } \int_{- T}^{T} e^{- t B} A^{0} e^{- t B^{t}} d t \text{ is positive definite}.
\end{equation}
$ii)$ When \eqref{g51} holds true, we have the identity $\ker ( A^{0} ) \cap \ker ( B^{t} - z ) = \{ 0 \}$ for every $z \in \C$.
\end{lemma}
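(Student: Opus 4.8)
The plan for $i)$ is to compute the restriction of $\langle \widetilde p \rangle_{T}$ to the fibre over $\u$ explicitly and then read off \eqref{g51} from \eqref{a12}. Since $( \u , 0 ) \in \ccc$, one has $b^{0} ( \u ) = 0$ and $c^{0} ( \u ) = 0$ by \eqref{a11}. The Hamilton equations associated with $p_{1}^{0} ( x , \xi ) = b^{0} ( x ) \cdot \xi$ read $\dot x = b^{0} ( x )$ and $\dot \xi = - ( d b^{0} ( x ) )^{t} \xi$; since $b^{0} ( \u ) = 0$, the constant curve $x ( t ) \equiv \u$ solves the first equation, so the second one becomes linear with constant coefficients and
\[
e^{t H_{p_{1}^{0}}} ( \u , \xi ) = \big( \u , e^{- t B^{t}} \xi \big) , \qquad B = d b^{0} ( \u ) .
\]
Inserting this into \eqref{a9}--\eqref{a10} and using $c^{0} ( \u ) = 0$ gives, with $A^{0} = A^{0} ( \u )$,
\[
\langle \widetilde p \rangle_{T} ( \u , \xi ) = \frac{1}{2 T} \int_{- T}^{T} \frac{\langle e^{- t B} A^{0} e^{- t B^{t}} \xi , \xi \rangle}{1 + \vert e^{- t B^{t}} \xi \vert^{2}} \, d t .
\]
As $A^{0}$ is positive semidefinite by \eqref{a8}, each numerator is nonnegative and each denominator is $\geq 1$, so the right-hand side is bounded above by $\frac{1}{2 T} \langle M \xi , \xi \rangle$, where $M := \int_{- T}^{T} e^{- t B} A^{0} e^{- t B^{t}} \, d t$ is the symmetric matrix of \eqref{g51}. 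On the other hand, \eqref{a12} yields $\langle \widetilde p \rangle_{T} ( \u , \xi ) \geq \frac{1}{C} \vert \xi \vert^{2}$ for $\xi$ in a neighbourhood of $0$. Combining the two bounds gives $\langle M \xi , \xi \rangle \geq \frac{2 T}{C} \vert \xi \vert^{2}$ for $\xi$ near $0$, hence for all $\xi \in \R^{d}$ since $\xi \mapsto \langle M \xi , \xi \rangle$ is homogeneous of degree $2$; therefore $M$ is positive definite, which is exactly \eqref{g51}.

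For $ii)$, I would argue directly. Suppose $v \in \C^{d}$ satisfies $A^{0} v = 0$ and $B^{t} v = z v$ for some $z \in \C$. Then $e^{- t B^{t}} v = e^{- t z} v$ for every $t$, hence
\[
e^{- t B} A^{0} e^{- t B^{t}} v = e^{- t z} \, e^{- t B} A^{0} v = 0 \qquad \text{for all } t \in [ - T , T ] ,
\]
and integrating in $t$ gives $M v = 0$ with $M$ as above. Since \eqref{g51} says $M$ is positive definite, it is invertible, so $v = 0$. This proves $\ker ( A^{0} ) \cap \ker ( B^{t} - z ) = \{ 0 \}$.

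The only point that needs a little care is the flow computation in $i)$: one must notice that the base component of the Hamilton flow of $p_{1}^{0}$ issued from the fibre over $\u$ stays \emph{exactly} at $\u$ (not merely to first order), which is immediate from $b^{0} ( \u ) = 0$ and which is what makes the restriction of $\langle \widetilde p \rangle_{T}$ to that fibre computable in closed form. Everything else is elementary linear algebra.
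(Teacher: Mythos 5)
Your proof is correct and takes essentially the same approach as the paper's: both compute the Hamilton flow of $p_1^0$ restricted to the fibre over $\u$, observe that $c^0(\u)=0$ reduces $\langle \widetilde p\rangle_T(\u,\xi)$ to an integral involving $e^{-tB}A^0 e^{-tB^t}$, bound the denominator $\langle e^{-tB^t}\xi\rangle^2$ from below by $1$, and compare with \eqref{a12}; part $ii)$ then follows by plugging an eigenvector of $B^t$ in $\ker A^0$ into the matrix $M$. The only (welcome) difference is that you make explicit the point that the base component of the flow issued from the fibre over $\u$ stays exactly at $\u$, which the paper leaves implicit.
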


\begin{proof}
Since the Hamilton vector field $H_{p_{1}^{0}}$ at $( \u , \xi )$ equals $( 0 , - B^{t} \xi )$ when $\xi \in \R^{d}$, it follows that, for every $\xi \in \R^{d}$ (see \eqref{a9} and \eqref{a10}),
\begin{align}
\< \widetilde{p} \>_{T} ( \u , \xi ) = \frac{1}{2 T} \int_{- T}^{T} \widetilde{p} \big( \u , e^{- t B^{t}} \xi \big) \, d t & = \frac{1}{2 T} \int_{- T}^{T} \frac{p^{0}_{2} \big( \u , e^{- t B^{t}} \xi \big)}{\< e^{- t B^{t}} \xi \>^{2}} \, d t   \nonumber  \\
&\leq \frac{1}{2 T} \int_{- T}^{T} \big\< A^{0} e^{- t B^{t}} \xi , e^{- t B^t} \xi \big\> \, d t ,   \label{g7}
\end{align}
where we used that $( \u , 0 ) \in \ccc $ and thus $c ( \u ) = 0$ to obtain the second equality. Since the relation \eqref{a12} implies $\< \widetilde{p} \>_{T} ( \u , \xi ) \geq \frac{1}{C} \vert \xi \vert^{2}$ for some constant $C>0$ and  every $\xi$ small enough, the first part of Lemma \ref{g4} is then an immediate consequence of \eqref{g7}.

To prove the second part of Lemma \ref{g4}, let $\eta \in \C^{d}$ belong to $\ker ( A^{0} ) \cap \ker ( B^{t} - z )$ for some $z\in\C$. Then $e^{- t B^t} \eta= e^{- t z} \eta$, and thus $\eta \in \ker \big( \int_{- T}^{T} e^{- t B} A^{0} e^{- t B^{t}} \, d t \big) = \{ 0 \}$.
\end{proof}

For a not necessarily Morse function $f$ (that is without assuming \eqref{a24}), $\uuu$ (resp. $\widetilde{\uuu}$) denote the set of critical (resp. non-degenerate critical) points of $f$.

\begin{lemma}\sl \label{g5}
$i)$ If \eqref{a17} holds true, then
\begin{equation} \label{g8}
\widetilde{\mathcal U} \times \{ 0 \} \subset \ccc .
\end{equation}
$ii)$ If \eqref{a17} and \eqref{g51} hold true, then
\begin{equation} \label{g9}
\ccc \subset \uuu \times \{ 0 \} .
\end{equation}
\end{lemma}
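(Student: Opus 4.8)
The plan is to prove both inclusions by extracting information from the eikonal equations \eqref{a19} and \eqref{a20}, exactly as announced just before the statement of Lemma \ref{a21}. Recall that \eqref{a17} (together with the splitting $P=P_2+P_1+P_0$ and the classical expansions of the coefficients) yields, at order $h^0$, the two identities $\< A^{0}(x)\nabla f(x),\nabla f(x)\>=c^{0}(x)$ and $\<b^{0}(x),\nabla f(x)\>=0$ for every $x\in\R^{d}$; these will be the only consequences of \eqref{a17} that I use. The set $\ccc$ is defined in \eqref{a11} as $\ccc=\{(x,0)\,;\ b^{0}(x)=0\text{ and }c^{0}(x)=0\}$.

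For part $i)$, let $\u\in\widetilde{\uuu}$, i.e. $\nabla f(\u)=0$ and $H:=\Hess f(\u)$ is invertible. Plugging $x=\u$ into \eqref{a19} gives $c^{0}(\u)=\<A^{0}(\u)\nabla f(\u),\nabla f(\u)\>=0$ immediately. The point requiring the nondegeneracy is $b^{0}(\u)=0$: differentiating \eqref{a20} at $\u$ and using $\nabla f(\u)=0$ gives $\<b^{0}(\u),H\eta\>=0$ for every $\eta\in\R^{d}$ (the term involving $db^{0}(\u)$ is killed by $\nabla f(\u)=0$), hence $b^{0}(\u)\perp H\R^{d}=\R^{d}$ since $H$ is invertible, so $b^{0}(\u)=0$. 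Therefore $(\u,0)\in\ccc$, which is \eqref{g8}. Note that only \eqref{a17} is needed here, not \eqref{g51}.

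For part $ii)$, assume in addition \eqref{g51}, and let $(x,0)\in\ccc$, so $b^{0}(x)=0$ and $c^{0}(x)=0$; I must show $x\in\uuu$, i.e. $\nabla f(x)=0$. From \eqref{a19} at this point, $\<A^{0}(x)\nabla f(x),\nabla f(x)\>=c^{0}(x)=0$, and since $A^{0}(x)$ is positive semidefinite by \eqref{a8}, this forces $A^{0}(x)\nabla f(x)=0$, i.e. $\nabla f(x)\in\ker A^{0}(x)$. Now I want to bring in $B=db^{0}(x)$. Since $b^{0}$ vanishes at $x$ and $\<b^{0},\nabla f\>\equiv0$, differentiating \eqref{a20} once at $x$ gives $\<B\,\eta,\nabla f(x)\>+\<b^{0}(x),H(x)\eta\>=\<B\,\eta,\nabla f(x)\>=0$ for all $\eta$, i.e. $B^{t}\nabla f(x)=0$; thus $\nabla f(x)\in\ker A^{0}(x)\cap\ker B^{t}$. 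By Lemma \ref{g4}$ii)$, which applies precisely because \eqref{g51} holds (take $z=0$), this intersection is trivial, so $\nabla f(x)=0$ and $x\in\uuu$. This establishes \eqref{g9}.

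The main (and essentially only) obstacle is making sure the differentiation-of-the-eikonal-equation step is valid, namely that \eqref{a20} holds as an identity in $x$ so that it can be differentiated, and that the $\<b^{0}(x),H(x)\eta\>$ term genuinely drops because $b^{0}(x)=0$ on the critical set; once this is in place, everything reduces to linear algebra using the positive semidefiniteness of $A^{0}$ from \eqref{a8} and the triviality of $\ker A^{0}\cap\ker B^{t}$ from Lemma \ref{g4}$ii)$. Finally, Lemma \ref{a21} follows at once: under \eqref{a24} one has $\uuu=\widetilde{\uuu}$, so $i)$ gives $\uuu\times\{0\}\subset\ccc$; and if moreover \eqref{a12} holds, Lemma \ref{g4}$i)$ provides \eqref{g51}, whence $ii)$ gives the reverse inclusion $\ccc\subset\uuu\times\{0\}$ and thus equality.
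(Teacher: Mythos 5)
Your proof is correct and follows essentially the same route as the paper: obtaining $c^{0}(\u)=0$ directly from \eqref{a19}, and extracting $Hb^{0}(\u)=0$ (resp. $B^{t}\nabla f(x)=0$) by linearizing \eqref{a20} at the critical point, then invoking positive semidefiniteness of $A^{0}$ and Lemma~\ref{g4}~$ii)$ with $z=0$. The only cosmetic difference is that you differentiate the identity while the paper writes out first-order Taylor expansions; these are the same calculation.
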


\begin{proof}
Suppose that $\u \in \widetilde{\uuu}$ and assume without loss of generality that $\u = 0$. Thanks to \eqref{a19}, we have $c^{0} ( 0 ) = 0$. Moreover, one has near the origin
\begin{equation*}
\nabla f ( x ) = H x + \ooo ( x^{2} ) \qquad \text{and} \qquad b^{0} ( x ) = b^{0} ( 0 ) + \ooo ( x ) ,
\end{equation*}
which, combined with \eqref{a20}, yields
\begin{equation*}
\forall x \in \R^{d} , \qquad \< b^{0} ( 0 ) , H x \> = 0 ,
\end{equation*}
and thus $H b^{0} ( 0 ) = 0$. Since $\u=0$ is assumed to be non-degenerate, $H$ is invertible and hence $b^{0} ( 0 ) = 0$. This proves \eqref{g8}.

We now show \eqref{g9}. Let $( \u , 0 )$ belong to $\ccc$ and assume without loss of generality that  $\u = 0$. Then $b^{0} ( 0 ) = 0$ and $c^{0} ( 0 ) = 0$. Denoting $\eta : = \nabla f ( 0 )$, it follows from \eqref{a19} that $\< A^{0} \eta , \eta \> = 0$. 
Since $A^{0}$ is positive semidefinite, this implies $A^{0} \eta = 0$.  On the other hand, \eqref{a20} yields
\begin{equation*}
\forall x \in \R^{d} , \qquad 0 = \< b^{0} ( 0 ) + B x + \ooo ( x^{2} ) , \eta + \ooo(x) \> = \< B x + \ooo ( x^{2} ) , \eta + \ooo ( x ) \> ,
\end{equation*}
and hence $B^{t} \eta = 0$. It follows that $\eta \in \ker ( A^{0} ) \cap \ker ( B^{t} )$ and then $\eta = 0$ thanks to 
Lemma \ref{g4} $ii)$. This completes the proof of \eqref{g9}.
\end{proof}

\begin{proof}[Proof of Lemma \ref{a21}]
We can deduce Lemma \ref{a21} from Lemma \ref{g5}. Indeed, the sets $\widetilde{\mathcal U}$ and $\uuu$ coincide when $f$ is a Morse function. Then, Lemma \ref{g5} $i)$ provides the first statement of Lemma \ref{a21}. On the other hand, \eqref{a12} and \eqref{a17} imply \eqref{g51} from Lemma \ref{g4} $i)$ and then \eqref{g9}. Thus, Lemma \ref{g5} gives $\ccc = \uuu \times \{ 0 \}$ in that case, finishing the proof of Lemma~\ref{a21}.
\end{proof}

\begin{lemma}\sl \label{a40}
Assume \eqref{a17} and let $\u \in \uuu$ with $b ( \u ) = 0$. Then, the matrix $B^{t} H$ is antisymmetric. If moreover the critical point $\u$ is non-degenerate (i.e. $u \in \widetilde{\uuu}$), the matrix $J : =H^{- 1} B^{t}$ is also antisymmetric.
\end{lemma}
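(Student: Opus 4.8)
The plan is to differentiate the second eikonal equation \eqref{a20} at the critical point $\u$. Recall that, by \eqref{a17}, the identity $\< b^{0} ( x ) , \nabla f ( x ) \> = 0$ holds for \emph{all} $x \in \R^{d}$; set $g ( x ) := \< b^{0} ( x ) , \nabla f ( x ) \>$, so that $g \equiv 0$ and in particular $\Hess g ( \u ) = 0$.

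First I would expand $g$ near $\u$. Since $\nabla f ( \u ) = 0$ and, by the hypothesis $b ( \u ) = 0$, also $b^{0} ( \u ) = 0$, Taylor's formula gives $\nabla f ( x ) = H ( x - \u ) + \ooo ( \vert x - \u \vert^{2} )$ and $b^{0} ( x ) = B ( x - \u ) + \ooo ( \vert x - \u \vert^{2} )$. Substituting these into $g \equiv 0$ and isolating the quadratic part yields $\< B y , H y \> = 0$, i.e. $y^{t} ( B^{t} H ) y = 0$, for every $y \in \R^{d}$. A real quadratic form vanishes identically if and only if its matrix is antisymmetric, whence $B^{t} H + ( B^{t} H )^{t} = 0$; since $H = H^{t}$, this reads $B^{t} H = - H B$ and proves that $B^{t} H$ is antisymmetric. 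Equivalently, one may compute directly $\Hess g ( \u ) = B^{t} H + H B$, the term carrying the factor $b^{0} ( \u )$ dropping out, and invoke $\Hess g ( \u ) = 0$.

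For the last assertion, suppose in addition that $\u$ is non-degenerate, so $H$ is invertible. Multiplying the relation $B^{t} H = - H B$ on the left and on the right by $H^{-1}$ gives $H^{-1} B^{t} = - B H^{-1}$, and since $H^{-1}$ is symmetric we get $( H^{-1} B^{t} )^{t} = B H^{-1} = - H^{-1} B^{t}$, i.e. $J = H^{-1} B^{t}$ is antisymmetric. There is really no obstacle in this lemma: it is a short algebraic consequence of \eqref{a20}, the only mild care needed being the bookkeeping of transposes and the remark that what is actually used from the hypothesis is the vanishing of $b^{0}$ (hence of every $b^{k}$) at $\u$, which is precisely what makes the potential cubic contribution disappear and leaves the clean relation between $B$ and $H$.
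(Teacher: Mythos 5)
Your proof is correct and takes essentially the same approach as the paper: Taylor-expand the identity $\< b^{0}(x), \nabla f(x)\> = 0$ at $\u$ and observe that the vanishing of the quadratic part forces $B^{t}H$ to be antisymmetric, then transfer the antisymmetry to $J = H^{-1}B^{t}$ using the symmetry of $H^{-1}$ (the paper writes $J = H^{-1}\widetilde J H^{-1}$ with $\widetilde J = B^{t}H$, which is the same manipulation).
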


\begin{proof}
As before, we assume for simplicity that $\u = 0$. Performing a Taylor expansion of the identity $b ( x ) \cdot \nabla f ( x ) = 0$ (see \eqref{a20}), we deduce $B x \cdot H x = 0$ for every $x \in \R^{d}$. Therefore, $x \cdot B^{t} H x = 0$ for every $x \in \R^{d}$, which implies that the matrix $\widetilde{J} = B^{t} H$ is antisymmetric.

Moreover, if in addition $H$ is invertible, the matrix $J = H^{-1}B^{t} = H^{-1} \widetilde{J} H^{- 1}$ is antisymmetric since $H$ is symmetric.
\end{proof}

\begin{corollary}\sl \label{g10}
Let us assume \eqref{a17}. Then, the condition \eqref{a12} is satisfied if and only if, for every $( \u , 0 ) \in \ccc$,
\begin{equation*}
H \text{ is invertible and the symmetric matrix } \int_{- T}^{T}  e^{- t B} A^{0} e^{- t B^{t}} \, d t \text{ is positive definite.}
\end{equation*}
\end{corollary}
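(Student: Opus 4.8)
The plan is to reduce Corollary \ref{g10} to Lemma \ref{g4} by showing that, under \eqref{a17}, the microlocal condition \eqref{a12} is \emph{equivalent} to the pair of conditions ``$H$ invertible'' and ``$\int_{-T}^{T} e^{-tB} A^{0} e^{-tB^{t}}\,dt$ positive definite'' at every $(\u,0)\in\ccc$. One direction is essentially done: if \eqref{a12} holds, then Lemma \ref{g4} $i)$ already gives positive definiteness of $\int_{-T}^{T} e^{-tB} A^{0} e^{-tB^{t}}\,dt$; and by Lemma \ref{a21} (whose hypotheses \eqref{a12} and \eqref{a17} are in force) we have $\ccc = \uuu\times\{0\}$, so in particular every $\u$ with $(\u,0)\in\ccc$ is a critical point of $f$. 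The invertibility of $H=\Hess f(\u)$ at such a point needs a short argument: it should follow from the harmonic approximation, since \eqref{a12} forces $\<\widetilde p\>_{T}(\rho)\gtrsim|\rho-\rho_{j}|^{2}$, and if $H$ had a kernel one could produce a direction along which the quadratic lower bound fails. Concretely, I would Taylor-expand $\widetilde p$ at $\rho_{j}=(\u,0)$: along the $x$-directions the quadratic part of $\widetilde p$ is controlled by $H$ (via $c^{0}(x) = \<A^{0}\nabla f,\nabla f\> = \<A^{0}Hx,Hx\> + \ooo(|x|^{3})$ and the $p^{0}_{2}$ term, which is $\ooo(\xi^{2})$), so a kernel vector of $H$ lying in $\ker A^{0}$... — but here one must be a little careful, and I would instead invoke the averaging: the flow $e^{tH_{p^{0}_{1}}}$ at $(\u,\xi)$ is $(\u, e^{-tB^{t}}\xi)$ only to leading order, and the $x$-dependence enters at the next order. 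The cleanest route is to note that \eqref{a12} at $\rho_{j}$ is, after the linear change of variables diagonalizing the averaged quadratic form, exactly the statement that the Hessian of $\<\widetilde p\>_{T}$ at $\rho_{j}$ is positive definite on $T_{\rho_{j}}^{*}\R^{d}$, and then extract invertibility of $H$ from the block structure of that Hessian.

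For the converse, suppose $H$ is invertible and $\int_{-T}^{T} e^{-tB} A^{0} e^{-tB^{t}}\,dt > 0$ at every point of $\ccc$. I would first re-establish $\ccc = \uuu\times\{0\}$: Lemma \ref{g5} $ii)$ applies since \eqref{a17} and \eqref{g51} hold (the latter is precisely the assumed positive definiteness, once we know $\ccc\subset\widetilde\uuu\times\{0\}$, which itself follows from invertibility of $H$ and Lemma \ref{g5} — this is mildly circular, so I would argue directly: any $(\u,0)\in\ccc$ has $b^{0}(\u)=c^{0}(\u)=0$, and then the argument in the proof of \eqref{g9}, using Lemma \ref{g4} $ii)$ with \eqref{g51}, forces $\nabla f(\u)=0$). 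Then, at each critical point $\rho_{j}=(\u,0)$, I would compute $\<\widetilde p\>_{T}$ near $\rho_{j}$ and show the quadratic lower bound. The key computation: write $\rho = \rho_{j} + (y,\xi)$; the Hamilton flow of $p^{0}_{1} = b^{0}(x)\cdot\xi$ starting at $\rho$ moves $x$ and $\xi$, and to leading order in $(y,\xi)$ the integrand $\widetilde p\circ e^{tH_{p^{0}_{1}}}$ picks up a contribution $\<A^{0}e^{-tB^{t}}\xi, e^{-tB^{t}}\xi\>$ from the $p^{0}_{2}/\<\xi\>^{2}$ term and a contribution $\tfrac12\<H\,x(t), x(t)\>$-type term from $c^{0}$ evaluated along the drift flow $\exp(tb^{0}\cdot\nabla)$. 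Averaging in $t$, the $\xi$-part gives $\tfrac{1}{2T}\<\int_{-T}^{T}e^{-tB}A^{0}e^{-tB^{t}}dt\,\xi,\xi\>$, which is $\gtrsim|\xi|^{2}$ by hypothesis; the $y$-part, since $H$ is invertible and the drift flow is a diffeomorphism fixing $\u$, gives $\gtrsim|y|^{2}$ after averaging (using that $y\mapsto c^{0}(\exp(tb^{0}\cdot\nabla)(\u+y))$ has, for each $t$, Hessian $\,{}^{t}(De^{tb^{0}\cdot\nabla})\,(2A^{0}H^{2}\text{-type})\,(De^{tb^{0}\cdot\nabla})$ which is nonnegative, and is uniformly positive for $t=0$). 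Cross terms are handled by Cauchy--Schwarz / completing the square. This yields $\<\widetilde p\>_{T}(\rho)\geq \tfrac1C|\rho-\rho_{j}|^{2}$, i.e. \eqref{a12}.

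I expect the main obstacle to be the second (converse) direction, specifically getting the $y$-direction ($x$-variable) lower bound in $\<\widetilde p\>_{T}$ cleanly and controlling the cross terms between the $y$ and $\xi$ parts. The positive definiteness hypothesis is tailored to the $\xi$-part, but the quadratic lower bound \eqref{a12} is over the full phase space variable $\rho-\rho_{j}$, and one must verify that the averaging over $t\in[-T,T]$ genuinely produces coercivity in $y$ even though at a single time $t$ the map $y\mapsto c^{0}(\exp(tb^{0}\cdot\nabla)(\u+y))$ may be degenerate (its Hessian is $\ooo$ of a rank-deficient matrix when $A^{0}$ is not full rank). The saving grace is that at $t=0$ one gets, thanks to invertibility of $H$ and the eikonal identity $c^{0}(x)=\<A^{0}\nabla f,\nabla f\>$ combined again with $\ker A^{0}\cap\ker B^{t}=\{0\}$ applied infinitesimally, enough positivity to close the estimate by continuity in $t$ near $0$; but making ``enough positivity'' precise, and in particular handling the interaction with the $p^{0}_{2}$ term's contribution to the $x$-quadratic part, is the delicate point. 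A secondary, purely bookkeeping, obstacle is that Lemma \ref{g4} and Lemma \ref{g5} are stated with $\ccc$ on one side, so one has to be careful about the logical order in which ``$\ccc=\uuu\times\{0\}$'' and the two analytic conditions are deduced from each other; I would organize the proof so that the set identity is extracted first from the purely algebraic inputs ($b^{0}=c^{0}=0$ on $\pi_{x}\ccc$, the eikonal equations, and $\ker A^{0}\cap\ker B^{t}=\{0\}$), and only then invoke it.
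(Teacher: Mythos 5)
Your overall plan — reduce to Lemma~\ref{g4}, expand $\<\widetilde p\>_{T}$ to second order at $\rho_{j}$, and read off the quadratic form — is the paper's approach, and the forward direction is essentially correct once you replace the appeal to Lemma~\ref{a21} (which requires the Morse hypothesis \eqref{a24}, not assumed in Corollary~\ref{g10}) by a direct appeal to Lemma~\ref{g5}~$ii)$, whose hypotheses \eqref{a17} and \eqref{g51} are available here.

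The genuine gap is in the converse, precisely where you flag it. You want a bound $\gtrsim|y|^2$ from the $x$-part of the averaged form, and you propose to get it from the $t=0$ Hessian $2HA^{0}H$ plus ``continuity in $t$''. But $2HA^{0}H$ is only nonnegative (degenerate whenever $A^{0}$ is), so there is no coercivity at $t=0$ to propagate, and continuity will not create it. Likewise, you worry about cross terms between $y$ and $\xi$ and propose to absorb them by Cauchy--Schwarz; in fact there are \emph{no} cross terms, because the linearized flow $e^{tF_{p_{1}^{0}}}$ acts as $(x,\xi)\mapsto(e^{tB}x,\,e^{-tB^{t}}\xi)$, preserving the two coordinate blocks, and the second-order part of $\widetilde p$ at $\rho_{j}$ is already block-diagonal. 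What closes both points at once is the ingredient you are missing: Lemma~\ref{a40}, which gives that $B^{t}H$ is antisymmetric under \eqref{a17}, hence $HB=-B^{t}H$ and therefore $He^{tB}=e^{-tB^{t}}H$. Substituting this in the $x$-block turns the time-$t$ contribution $\<A^{0}He^{tB}x,He^{tB}x\>$ into $\<A^{0}e^{-tB^{t}}Hx,e^{-tB^{t}}Hx\>$, so after averaging the $x$-block of the Hessian of $\<\widetilde p\>_{T}$ is $H\big(\int_{-T}^{T}e^{-tB}A^{0}e^{-tB^{t}}dt\big)H$ — positive definite \emph{iff} $H$ is invertible and the integral is positive definite — and the $\xi$-block is $\int_{-T}^{T}e^{-tB}A^{0}e^{-tB^{t}}dt$ itself; see \eqref{g12}. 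With that identity in hand the equivalence in Corollary~\ref{g10} is immediate, and the continuity argument and cross-term bookkeeping you outline become unnecessary.
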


\begin{remark}\sl \label{g11}
Mimicking the Kalman criterion for controllability of finite-dimensional systems (see  \cite[Theorem 1.16]{Co07_01}), one can show that $\int_{- T}^{T} e^{- t B} A^{0} e^{- t B^{t}} \, d t$ is positive definite if and only if
\begin{equation*}
\bigplus_{n = 0}^{d - 1} \Im ( B^{n} A^{0} ) = \R^{d} \qquad \text{or} \qquad \bigcap_{n = 0}^{d - 1} \ker \big( A^{0} ( B^{t} )^{n} \big) = \{ 0 \} .
\end{equation*}
\end{remark}

\begin{proof}[Proof of Corollary~\ref{g10}]
We first prove a formula for $\< \widetilde{p} \>_{T}$ near $( \u , 0 ) \in \ccc$ with $\u \in \uuu$. Without loss of generality, we can assume that $\u = 0$. Denoting $( x , \xi )$ by $\rho$, \eqref{a9} and \eqref{a19} yield near $(0,0)$
\begin{equation*}
\begin{aligned}
\widetilde{p} ( \rho ) &= p^{0}_{2} ( x , \xi ) + c^{0} ( x ) + \ooo ( \rho^{3} )  \\
&= A^{0} \xi \cdot \xi + A^{0} H x \cdot H x + \ooo ( \rho^{3} ) .
\end{aligned}
\end{equation*}
On the other hand, since $0 = ( 0 , 0 )$ is a critical point of $H_{p_{1}^{0}}$, then $d \exp ( t H_{p_{1}^{0}} ) ( 0 ) = \exp ( t F_{p_{1}^{0}} )$, where $F_{p_{1}^{0}}$ is the linearization of $H_{p_{1}^{0}}$ at $0$. Using $p_{1}^{0} ( x , \xi ) = B x \cdot \xi + \ooo ( \rho^{3} )$, it holds $F_{p_{1}^{0}} ( x , \xi ) = ( B x , - B^{t} \xi )$. Consequently, one has
near $(0,0)$
\begin{align*}
\< \widetilde{p} \>_{T} ( \rho ) &= \frac{1}{2 T} \int_{- T}^{T} \< A^{0} H e^{t B} x , H e^{t B} x \> + \< A^{0} e^{- t B^{t}} \xi , e^{- t B^t} \xi \> + \ooo ( \rho^3 ) \, d t \\
&= \frac{1}{2 T} \int_{- T}^{T} \< A^{0} H e^{t B} x , H e^{t B} x \> + \< A^{0} e^{- t B^{t}} \xi , e^{- t B^t} \xi \>  \, d t + \ooo ( \rho^3 ) ,
\end{align*}
from \eqref{a10}. Since moreover $B^{t} H$ is antisymmetric according to Lemma \ref{a40}, we have $H B = - B^{t} H$ and hence $H e^{t B} = e^{- t B^{t}}H $. This leads to
\begin{equation} \label{g12}
\< \widetilde{p} \>_{T} ( \rho )= \frac{1}{2 T} \int_{- T}^{T} \< A^{0} e^{- t B^{t}}  H x , e^{- t B^{t}}  H  x \> + \< A^{0} e^{- t B^{t}} \xi , e^{- t B^t} \xi \> \, d t + \ooo ( \rho^3 ) .
\end{equation}

Let us now prove the corollary and consider $( \u , 0 ) \in \ccc$. If \eqref{a12} is satisfied, Lemma~\ref{g4}~$i)$ and Lemma~\ref{g5}~$ii)$ imply that $\u \in \uuu$. Then, we can apply \eqref{g12} which, combined with \eqref{a12}, shows that the matrices 
\begin{equation*}
H \big( \int_{- T}^{T} e^{- t B} A^{0} e^{- t B^{t}} \, d t \big) H \text{ and } \int_{- T}^{T} e^{- t B} A^{0} e^{- t B^{t}} \, d t \text{ are positive definite}.
\end{equation*}
This latter formula is equivalent to
\begin{equation} \label{g52}
H \text{ is invertible and the symmetric matrix } \int_{- T}^{T}  e^{- t B} A^{0} e^{- t B^{t}} \, d t \text{ is positive definite},
\end{equation}
proving the direct implication of Corollary~\ref{g10}. On the other hand, if \eqref{g52} holds true, 
then~\eqref{g51} holds true and
Lemma~\ref{g5}~$ii)$ gives $\u \in \uuu$. Then, \eqref{g12} and \eqref{g52} imply \eqref{a12}, proving the converse implication.
\end{proof}

\subsection{The spectrum of the matrix $\Lambda$}

The aim of this section is to  provide informations on  the spectrum of the matrix $\Lambda = 2 H A^{0} + B^{t}$ and to prove Lemma \ref{a29}. We start with the following result, where $J=H^{-1} B^{t}$
is antisymmetric according to Lemma~\ref{a40}.

\begin{lemma}\sl \label{a53}
Assume \eqref{a12}, \eqref{a17} and \eqref{a24}. For any $\u\in\uuu$ and  $r \in [ 0 , 1 ]$, the matrix 
\begin{equation*}
\Lambda_{r} : = H  \big( r ( 2 A^{0}  + J ) + ( 1 - r ) \Id \big) ,
\end{equation*}
has no eigenvalue on the imaginary axis $\{ \Re z = 0 \}$.
\end{lemma}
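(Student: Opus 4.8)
The plan is to argue by contradiction, exploiting the positive semidefiniteness built into $\Lambda_r$. Suppose $\Lambda_r\eta=i\omega\eta$ for some $\omega\in\R$ and $\eta\in\C^{d}\setminus\{0\}$, and write $\Lambda_r=HM_r$ with
\[
M_r=S_r+rJ,\qquad S_r:=2rA^{0}+(1-r)\Id .
\]
For $r\in[0,1]$ the matrix $S_r$ is real symmetric and positive semidefinite (a nonnegative combination of $A^{0}\geq0$ and $\Id$), while $rJ$ is real antisymmetric by Lemma~\ref{a40}. Since $H$ is invertible (by \eqref{a24}), the eigenvalue equation is equivalent to $M_r\eta=i\omega H^{-1}\eta$.

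First I would pair this identity with $\eta$ in the Hermitian inner product of $\C^{d}$. As $H^{-1}$ and $S_r$ are real symmetric, the numbers $\langle H^{-1}\eta,\eta\rangle$ and $\langle S_r\eta,\eta\rangle$ are real, while $\langle J\eta,\eta\rangle$ is purely imaginary since $J$ is real antisymmetric; comparing real parts therefore gives $\langle S_r\eta,\eta\rangle=0$. Because $S_r\geq0$ is the sum of the positive semidefinite matrices $2rA^{0}$ and $(1-r)\Id$, this forces $2r\langle A^{0}\eta,\eta\rangle=0$ and $(1-r)|\eta|^{2}=0$; hence $(1-r)\eta=0$ always, and, when $r>0$, also $A^{0}\eta=0$ (as $A^{0}\geq0$).

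If $r\in[0,1)$ we conclude $\eta=0$, a contradiction, so it only remains to treat the endpoint $r=1$. There $A^{0}\eta=0$, and since $\Lambda_1=H(2A^{0}+J)=2HA^{0}+B^{t}$, the eigenvalue equation reduces to $B^{t}\eta=i\omega\eta$; thus $\eta\in\ker(A^{0})\cap\ker(B^{t}-i\omega)$. Now $(\u,0)\in\ccc$ by Lemma~\ref{a21} (using \eqref{a17} and \eqref{a24}), so \eqref{g51} holds by Lemma~\ref{g4}~$i)$ (using \eqref{a12}), and Lemma~\ref{g4}~$ii)$ then gives $\ker(A^{0})\cap\ker(B^{t}-i\omega)=\{0\}$. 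Hence $\eta=0$, a contradiction, which completes the proof.

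I expect the only delicate point to be this endpoint $r=1$: for $r<1$ the matrix $S_r$ is actually positive definite and the Hermitian-form computation of the second paragraph already yields $\eta=0$, whereas at $r=1$ the positive definiteness degenerates to mere semidefiniteness of $2A^{0}$, so one must feed the hypothesis \eqref{a12} back in through the ``non-degeneracy'' statement of Lemma~\ref{g4}~$ii)$ (equivalently $\ker A^{0}\cap\ker(B^{t}-z)=\{0\}$ for all $z$). Everything else is elementary linear algebra on the Hermitian form defined by $\eta$.
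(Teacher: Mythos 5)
Your proof is correct and is essentially the same as the paper's: pairing $\Lambda_r v = zv$ with $H^{-1}v$ (equivalently, writing $\Lambda_r = HM_r$ and pairing $M_r\eta = i\omega H^{-1}\eta$ with $\eta$) and taking real parts to isolate the positive semidefinite part $2rA^0 + (1-r)\Id$, which handles $r<1$ directly, then invoking Lemma~\ref{g4}~$ii)$ at the endpoint $r=1$. Both arguments are identical up to notation, so there is nothing to add.
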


\begin{proof}
Suppose that there exists $v \in \C^{d}$ such that $\Lambda_{r} v = z v$ with $\Re z = 0$. Then, using $\< v , H^{- 1} v \>_{\C^{d}} \in \R$, we get
\begin{equation} \label{a54}
\Re \< \Lambda_{r} v , H^{- 1} v \>_{\C^{d}} = \Re \big( z \< v , H^{- 1} v \>_{\C^{d}} \big) = 0 .
\end{equation}
On the other hand,
\begin{align}
\Re \< \Lambda_{r} v , H^{- 1} v \>_{\C^{d}} &= 2 r \Re \< A^{0} v , v\>_{\C^{d}} + r \Re \< J v , v\>_{\C^{d}} + ( 1 - r ) \Re \< v , v \>_{\C^{d}}   \nonumber \\
&= 2 r \< A^{0} v , v\>_{\C^{d}} + ( 1 - r ) \Vert v \Vert^{2} .  \label{a55}
\end{align}
For $r \in [ 0 , 1 [$, \eqref{a54} and \eqref{a55} imply $v = 0$ and the lemma follows in that case. Assume now that $r = 1$. The relations \eqref{a54} and \eqref{a55} yield $\< A^{0} v , v\> = 0$ and then $A^{0} v = 0$ since~$A^{0}$ is positive semidefinite. Thus, the eigenvalue equation $\Lambda v = z v$ writes $B^{t} v = z v$. 
Hence, $v\in \ker ( A^{0} ) \cap \ker ( B^{t}-z )$ and thus $v=0$ according to Lemma \ref{g4}, which
completes the proof in the case $r=1$. 
\end{proof}

We now give the proof of  Lemma \ref{a29}. Let $k\in\{0,\dots,d\}$ and let $\u\in\uuu^{(k)}$. For $r = 0$, $\Lambda_{0}  = H $ has exactly $k$ eigenvalues in $\{ \Re z < 0 \}$ since $\u$ is a critical point of index $k$. Using that the eigenvalues of $\Lambda_{r} $ are continuous with respect to $r$ and cannot cross the imaginary axis from Lemma \ref{a53}, $\Lambda_{1}  = \Lambda $ has exactly $k$ eigenvalues in  $\{ \Re z < 0 \}$ and no eigenvalue on the imaginary axis. This proves the $i)$ of the lemma. Suppose now that $k=1$ and let $\mu(\u)$ denote the unique eigenvalue of $\Lambda $ in $\{ \Re z < 0 \}$.  Since $\Lambda $ is a real matrix, its set of eigenvalues is stable by complex conjugation and hence $\mu ( \u ) \in \R$. This proves the $ii)$.

\subsection{Rough localization of the spectrum}

In this section, we prove Proposition \ref{a22}. The arguments are close to those of  \cite[Section 2.2.2]{HeHiSj11_01}. Thanks to Lemma \ref{a21}, one has $\ccc = \uuu \times \{ 0 \}$ and hence it suffices to show that, for any $\rho = ( \u , 0 ) \in \uuu \times \{ 0 \}$, the numbers $\mu_{\rho , 0}^{0}$ of Theorem~\ref{a15} satisfy the following property
\begin{equation} \label{e9}
\left\{ \begin{aligned}
&\mu_{\rho , 0}^{0} = 0 \text{ for all } \u \in \uuu^{( 0 )} ,   \\
&\Re \mu_{\rho , 0}^{0} > 0 \text{ for all } \u \in \uuu^{( k )} , k \geq 1 ,
\end{aligned} \right.
\end{equation}
where we recall that $\mu_{\rho , 0}^{0} = \frac{1}{2} \widetilde{\tr} ( p , \rho )$ with $\widetilde{\tr} ( p , \rho ) = - i \sum_{\ell=1}^{d} \lambda_{\rho , \ell} +  2 c^{1} ( \u )$, and $\pm \lambda_{\rho , \ell}$, $\ell\in\{1,\dots,d\}$, denote the eigenvalues of the 
fundamental matrix $F_{p^{0}}$ of $p^{0}$ at $\rho = ( \u , 0 )$ with the convention $\Im \lambda_{\rho , \ell} > 0$ (see the paragraph above Theorem \ref{a15}). From now on, we take $\u \in \uuu^{( k )}$, $k \geq 0$, and we suppose 
again
that $\u = 0$ in order to lighten the notations. Thanks to \eqref{a19}, one has near $(0,0)$
\begin{align*}
p^{0} ( x , \xi ) &= \< A^{0} ( x ) \xi , \xi \> + i b^{0} ( x ) \cdot \xi + c^{0} ( x )   \\
&= \< A^{0} ( x ) \xi , \xi \> + i b^{0} ( x ) \cdot \xi + \< A^{0} ( x ) \nabla f ( x ) , \nabla f ( x ) \>   \\
&=  \< A^{0} \xi , \xi \> + i B x \cdot \xi + \< A^{0} H x , H x \> + \ooo ( ( x , \xi )^{3} ) .
\end{align*}
Then, 
\begin{equation} \label{g13}
F_{p^{0}} = \left( \begin{array}{cc}
i B & 2 A^{0} \\
- 2 H A^{0} H & - i B^{t}
\end{array} \right) .
\end{equation}
Using the identity $B^{t} H = - H B$ which follows from Lemma \ref{a40}, a direct computation shows that $L_{\pm} = \{ ( X , \pm i H X ) ; \ X \in \C^{d} \}$ are vector spaces stable under $F_{p^{0}}$, that $F_{p^{0}}$ restricted to $L_{\pm}$ acts like
\begin{equation*}
F_{\pm} = i ( \pm 2 A^{0} H + B ) ,
\end{equation*}
and that $\C^{2d} = L_{+} \oplus L_{-}$ since $H$ is invertible. In particular, $\sigma ( F_{p^{0}} ) = \sigma ( F_{+} ) \cup \sigma ( F_{-} )$. It follows moreover from Lemma \ref{a29} that $F_{+} = i ( 2 H A^{0} + B^{t} )^{t}$ has $k$ eigenvalues
$\lambda^{+}_{1} , \ldots , \lambda^{+}_{k}$ in $\{ \Im z < 0 \}$ and $d - k$ eigenvalues $\lambda^{+}_{k + 1} , \ldots , \lambda^{+}_{d}$ in $\{ \Im z > 0 \}$. In addition, using again $B^{t} H = - H B$, we get
\begin{equation*}
F_{-} = i ( - 2 A^{0} H + B ) = i H^{- 1} ( - 2 H A^{0} + H B H^{- 1} ) H = - H^{- 1} i ( 2 H A^{0} + B^{t} ) H = - H^{- 1} ( F_{+} )^{t} H .
\end{equation*}
Hence, $\sigma ( F_{-} ) = - \sigma ( F_{+} )$, which proves that 
\begin{equation} \label{e10}
\sigma ( F_{p^{0}} ) \cap \{ \Im z > 0 \} = \{ - \lambda^{+}_{1} , \ldots , - \lambda^{+}_{k} \} \cup \{ \lambda^{+}_{k + 1} , \ldots , \lambda^{+}_{d} \}=\{ \lambda_{\rho , 1},\ldots, \lambda_{\rho , d}\} .
\end{equation}
On the other hand, let us recall that \eqref{a17} implies
$( - h \div \circ A \circ h \nabla + c ) ( e^{- f / h} ) = 0$ (see the lines above \eqref{a19}).
Using the classical expansions of the coefficients and looking at the terms of order $1$  then shows that
\begin{equation*}
2 c^{1} ( \u ) = - 2 \sum_{j , \ell = 1}^{d} A^{0}_{j , \ell} ( \u ) \partial_{x_{j}} \partial_{x_{\ell}} f ( \u ) = - \tr ( 2 A^{0} H ) .
\end{equation*}
Thus, since $\tr ( B ) = \tr ( - H^{- 1} B^{t} H ) = - \tr ( B^{t} ) = - \tr ( B ) = 0$
thanks to $B^{t} H = - H B$,  
\begin{equation} \label{e11}
2 c^{1} ( \u ) = - \tr ( 2 A^{0} H + B ) = i \tr ( F_{+} ) .
\end{equation}
Combining \eqref{e10} and \eqref{e11} yields
\begin{equation*}
\widetilde{\tr} ( p , \rho ) = - i \sum_{\ell=1}^{d} \lambda_{\rho , \ell} 
+2 c^{1} ( \u )
=i \sum_{j = 1}^{k} \lambda^{+}_{j} - i \sum_{j = k + 1}^{d} \lambda^{+}_{j} 
+ i \sum_{j = 1}^{d} \lambda^{+}_{j} = 2 i \sum_{j = 1}^{k} \lambda^{+}_{j} .
\end{equation*}
By definition, this latter quantity vanishes when $k = 0$ and has a positive real part when $k \geq  1$. This proves \eqref{e9}.

\section{Quasimodal constructions near the saddle points}
\label{s20}

In this part, we assume the hypotheses of Theorem \ref{a15}, \eqref{a17} and \eqref{a24}.

\subsection{A first step in the construction of quasimodes}

Given $\s \in \uuu^{( 1 )}$, we look for an approximate solution to the equation $P u = 0$ in a neighborhood $V$ of $\s$ under the form
\begin{equation*}
u ( x ) = v ( x , h ) e^{- f ( x ) / h} ,
\end{equation*}
with a function $v$ of the form
\begin{equation} \label{a31}
v ( x , h ) = \int_{0}^{\ell ( x , h )} \zeta ( s / \tau ) e^{- s^{2} / 2 h} d s ,
\end{equation}
where the function $\ell ( x , h )\in C^{\infty} ( V)$ 
has a classical expansion $\ell ( x , h ) \sim \sum_{j \geq 0} h^{j} \ell_{j} ( x )$ in $C^{\infty} ( V)$
with $\ell_{0}\not\equiv 0$. Here, $\zeta$ denotes a fixed smooth even function equal to $1$ on $[ - 1 , 1 ]$ and supported in $[ - 2 , 2 ]$, and $\tau > 0$ is a small parameter which will be fixed later. The object of this section is to construct the function $\ell$.

\begin{lemma}\sl \label{a34}
One has 
\begin{equation*}
P ( v e^{- f / h} ) = ( w + r ) e^{- ( f + \frac{\ell^{2}}{2} ) / h} ,
\end{equation*}
where
\begin{equation*}
w = h \big( 2 A \nabla \ell \cdot \nabla f + ( A \nabla \ell \cdot \nabla \ell ) \ell + b \cdot \nabla \ell \big) - h^{2} \div ( A \nabla \ell ),
\end{equation*}
the function $r$ and all its derivatives are (locally) bounded, uniformly  with respect to $h$, and $\supp ( r) \subset \{ \vert \ell \vert \geq \tau \}$.
Here, we recall that all the functions above depend on $x$ and $h$. Moreover, $w$ admits a classical expansion $w \sim \sum_{j \geq 1} h^{j} w_{j}$ with 
\begin{equation*}
w_{1} = 2 A^{0} \nabla f \cdot \nabla \ell_{0} + ( A^{0} \nabla \ell_{0} \cdot \nabla \ell_{0} ) \ell_{0} + b^{0} \cdot \nabla \ell_{0} ,
\end{equation*}
and, for all $j \geq 1$,
\begin{equation*}
w_{j + 1} = 2 A^{0} \nabla f \cdot \nabla \ell_{j} + ( A^{0} \nabla \ell_{0} \cdot \nabla \ell_{0} ) \ell_{j} + 2 ( A^{0} \nabla \ell_{0} \cdot \nabla \ell_{j} ) \ell_{0} + b^{0} \cdot \nabla \ell_{j} + R_{j} ( x , \partial^{\alpha}\ell_{k} ) ,
\end{equation*}
where $R_{j}$ is a polynomial of $\partial^{\alpha}\ell_{k} $, $|\alpha|\leq 2$ and $k\in\{0,\dots,j-1\}$,
with smooth coefficients. 
\end{lemma}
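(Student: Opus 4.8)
The plan is to prove this by a direct computation: we apply the differential operator $P$ to $ve^{-f/h}$, track all terms, and then extract the classical expansion of the resulting prefactor. The key point is that $v$ has the special structure \eqref{a31}, so its derivatives are explicit. First I would compute $\partial_{x_j} v = \zeta(\ell/\tau) e^{-\ell^2/2h} \partial_{x_j}\ell$; the crucial observation is that on the set $\{|\ell| \leq \tau\}$ one has $\zeta(\ell/\tau) = 1$ so the factor $\zeta$ is locally constant there, and on $\{|\ell| \geq \tau\}$ the derivatives of $\zeta(\ell/\tau)$ produce terms supported in that region, which will be absorbed into $r$. So modulo such remainder terms, $h\partial_{x_j}(ve^{-f/h}) = e^{-(f+\ell^2/2)/h}(h\partial_{x_j}\ell - \ell\,\partial_{x_j}\ell - \partial_{x_j}f)$ up to contributions from $\zeta'$.

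Next I would substitute this into the three pieces of $P = P_2 + P_1 + P_0$ from \eqref{a3}. The zeroth-order term $c(x,h)$ contributes $c\cdot v\,e^{-f/h}$, but since $v = O(\tau) = O(1)$ and everything is multiplied by the Gaussian weight, this is of the stated form once we reorganize; more carefully, one uses the eikonal equation \eqref{a19} together with the expansion $v(x,h) = \ell(x,h) + (\text{higher order in } \ell)$ near $\s$ — actually the cleanest route is to note $v e^{-f/h} = e^{-f/h}\int_0^\ell \zeta(s/\tau)e^{-s^2/2h}ds$ and to conjugate $P$ by $e^{-f/h}$, reducing to analyzing $e^{f/h}\circ P\circ e^{-f/h}$ acting on the integral. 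Because $P(e^{-f/h}) = 0$ by \eqref{a17}, the conjugated operator $P_f := e^{f/h}\circ P\circ e^{-f/h}$ annihilates constants, so $P_f v = P_f v - v\cdot P_f(1) $ only involves terms where at least one derivative hits $v$. Carrying out the Leibniz expansion, each derivative on $v$ produces the Gaussian factor $\zeta(\ell/\tau)e^{-\ell^2/2h}$ times a derivative of $\ell$, while the remaining structure of $P_f$ supplies the coefficients $A$, $b$; collecting gives precisely $w\, e^{-(f+\ell^2/2)/h}$ plus $r\,e^{-(f+\ell^2/2)/h}$ with $r$ supported where $\zeta' \neq 0$, i.e. on $\{|\ell|\geq\tau\}$, and with $r$ and all derivatives locally bounded uniformly in $h$ since $\zeta$ is fixed and smooth and $\ell$ has bounded derivatives.

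The identification of $w$ requires a careful bookkeeping of which terms carry which power of $h$. The leading behavior: the second-order part $P_2 = -h\,\div\circ A\circ h\nabla$ produces, when one $h\nabla$ hits $v$ and the other hits either $v$ or a coefficient, terms of the form $h\,A\nabla\ell\cdot(\ldots)$; the term $h\,(A\nabla\ell\cdot\nabla\ell)\ell$ arises from both factors $h\nabla$ hitting $v$ (giving $h^2(\partial\ell)(\partial\ell)$ times $-\ell/h$ from differentiating the Gaussian $e^{-\ell^2/2h}$, hence net order $h$); the term $2hA\nabla\ell\cdot\nabla f$ arises from one $h\nabla$ on $v$ and one $h\nabla$ on $e^{-f/h}$-type structure inside $P_f$; and $-h^2\div(A\nabla\ell)$ is the genuinely second-order remainder. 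The first-order part $P_1$ contributes $h\, b\cdot\nabla\ell$. Then plugging in the classical expansions $A\sim\sum h^k A^k$, $b\sim\sum h^k b^k$, $\ell\sim\sum h^j\ell_j$ and sorting by total power of $h$ gives $w_1$ as stated (the order-$h$ coefficient, built only from order-$0$ coefficients and $\ell_0$) and the recursion for $w_{j+1}$, where the terms linear in $\ell_j$ are isolated and everything involving only $\ell_0,\ldots,\ell_{j-1}$ is lumped into the polynomial remainder $R_j(x,\partial^\alpha\ell_k)$ with $|\alpha|\leq 2$ and $k\leq j-1$.

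I expect the main obstacle to be purely organizational rather than conceptual: correctly accounting for the contributions of the Gaussian weight $e^{-\ell^2/2h}$ when differentiated (each such differentiation lowers the effective $h$-order by one, which is exactly why a superficially $h^2$ term like $h^2(A\nabla\ell\cdot\nabla\ell)$ contributes at order $h$), and simultaneously verifying that all terms coming from $\zeta'(\ell/\tau)$ — which are supported in $\{\tau\leq|\ell|\leq 2\tau\}\subset\{|\ell|\geq\tau\}$ — genuinely have locally bounded derivatives uniform in $h$. For the latter, one notes that on that support $|\ell|\geq\tau>0$, so $e^{-\ell^2/2h}$ and all its $x$-derivatives are $O(1)$ there uniformly (indeed exponentially small, but $O(1)$ suffices), and $\zeta$ being a fixed smooth function with $\ell$ smooth with uniformly bounded derivatives closes the estimate. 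Once this is set up the expansion of $w$ is a routine, if lengthy, Taylor-and-collect argument.
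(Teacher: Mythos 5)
Your proposal is correct and follows essentially the same route as the paper: both reduce to a commutator computation made possible by $P(e^{-f/h})=0$ (you phrase it as $P_f v - v P_f(1)$ with $P_f = e^{f/h}\circ P\circ e^{-f/h}$; the paper writes $P(ve^{-f/h})=[P,v](e^{-f/h})$, which is the same identity), both isolate the $\zeta'$ contributions into the remainder $r$ supported in $\{|\ell|\ge\tau\}$, and both track the extra $\ell/h$ produced when a derivative hits the Gaussian $e^{-\ell^2/2h}$ to see that the superficially $O(h^2)$ term $h^2(A\nabla\ell\cdot\nabla\ell)$ actually contributes at order $h$. Your aside suggesting one should expand $v=\ell+\cdots$ and invoke \eqref{a19} for the $c$-term is a wrong turn (the integral defining $v$ is not $\ell$ plus higher order, and $c$ cancels already in the commutator because $P_0$ is a multiplication operator), but you discard it in favour of the conjugation argument, which handles this correctly; the final bookkeeping of $h$-powers giving $w_1$ and the recursion $w_{j+1}$ with remainder $R_j$ matches the paper.
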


\begin{proof}
Throughout the proof, we write $\zeta ( \cdot )$ instead of $\zeta ( \cdot / \tau )$. Recall that $P = P_{2} + P_{1} + P_{0}$ with 
$P_{0} = c $, $P_{1} = \frac{1}{2} (  b \cdot h \nabla + h \div \circ b )$, and $P_{2} = - h \div \circ A \circ h \nabla$. One has of course 
\begin{equation} \label{a32}
[ P_{0} , v ] = 0 .
\end{equation}
Using \eqref{a31}, one gets
\begin{equation} \label{a33}
[ P_{1} , v ] = h ( b \cdot \nabla \ell ) \zeta ( \ell ) e^{- \ell^{2} / 2 h} = h ( ( b \cdot \nabla \ell ) + r_{1} ) e^{- \ell^{2} / 2 h} ,
\end{equation}
with $r_{1} = ( b \cdot \nabla \ell ) ( \zeta ( \ell ) - 1 )$. In particular, $r_{1}$ and all its derivatives are (locally) bounded, uniformly with respect to $h$, and $\supp ( r_{1} ) \subset \{ \vert \ell \vert \geq \tau \}$. On the other hand, since the matrix $A$ is symmetric, one has for any smooth function $\psi$, 
\begin{equation*}
P_{2} ( v \psi ) = - h \div ( A h \nabla ( v \psi ) ) = \psi P_{2} v + v P_{2} \psi - 2 A h \nabla v \cdot h \nabla \psi .
\end{equation*}
Using again \eqref{a31}, this yields
\begin{equation*}
[ P_{2} , v ] = e^{- \ell^{2} / 2 h} \big( - 2 h \zeta ( \ell ) A \nabla \ell \cdot h \nabla - h^{2} \div ( \zeta ( \ell ) A \nabla \ell ) + h \ell \zeta ( \ell ) A \nabla \ell \cdot \nabla \ell \big) ,
\end{equation*}
and hence
\begin{equation*}
\begin{aligned}
{}^{} [ P_{2} , v ] ( e^{- f / h} ) &= \big( h \zeta ( \ell ) ( 2  A \nabla \ell \cdot \nabla f + \ell  A \nabla \ell \cdot \nabla \ell ) - h^{2} \div ( \zeta ( \ell ) A \nabla \ell ) \big) e^{- ( f + \frac{\ell^{2}}{2} ) / h}   \\
&= \big( h ( 2 A \nabla \ell \cdot \nabla f + \ell A \nabla \ell \cdot \nabla \ell ) - h^{2} \div ( A \nabla \ell ) + r_{2} \big) e^{- ( f + \frac{\ell^{2}}{2} ) / h} ,
\end{aligned}
\end{equation*}
with $r_{2} = h ( \zeta ( \ell ) - 1 ) ( 2 A \nabla \ell \cdot \nabla f + \ell A \nabla \ell \cdot \nabla \ell ) - h^{2} \div ( ( \zeta ( \ell ) - 1 ) A \nabla \ell )$. In particular, $r_{2}$ and all its derivatives are (locally) bounded, uniformly with respect to $h$, and $\supp ( r_{2} ) \subset \{ \vert \ell \vert \geq \tau \}$. Combining this identity with \eqref{a32} and \eqref{a33},
and using the relation $P(ve^{- f / h})=[ P , v ] ( e^{- f / h} )$ implied by \eqref{a17}, we obtain the first part of the statement. Moreover, since the coefficients of $P$ and the function $\ell$ have a classical expansion, so do $w$. Plugging the expansions of $A$, $b$, $c$, and $\ell$ into the expression of $w$ and identifying the powers of $h$, we obtain  the formulas for the $w_{j}$.
\end{proof}

In order to construct accurate quasimodes, we have to find smooth functions $\ell_{j}$, $ j \geq 0$, with $\ell_{0}\not\equiv0$ 
and such that the above $w_{j + 1}$ vanish. The equation on $\ell_{0}$ is 
\begin{equation} \label{a35}
2 A^{0} \nabla f \cdot\nabla \ell_{0} + ( A^{0} \nabla \ell_{0} \cdot \nabla \ell_{0} ) \ell_{0} + b^{0} \cdot \nabla \ell_{0} = 0 ,
\end{equation}
and the equations on the $\ell_{j}$, $j \geq 1$, are
\begin{equation} \label{a36}
2 A^{0} \nabla f \cdot \nabla \ell_{j} + ( A^{0} \nabla \ell_{0} \cdot \nabla \ell_{0} ) \ell_{j} + 2 ( A^{0} \nabla \ell_{0} \cdot \nabla \ell_{j} ) \ell_{0} + b^{0} \cdot \nabla \ell_{j} = - R_{j}.
\end{equation}
By analogy with the usual WKB method, we call \eqref{a35} the eikonal equation and \eqref{a36} the transport equations.

\subsection{Resolution of  the eikonal equation \eqref{a35}}
\label{sub.eikonal}

Consider the complexified symbol
\begin{equation}
q ( x , \xi ) = - p^{0} ( x , i \xi ) = \xi \cdot A^{0} ( x ) \xi + b^{0} ( x ) \cdot \xi - c^{0} ( x ) \in \R , 
\end{equation}
and let,
 for some saddle point $\s \in \uuu^{( 1 )}$,
 $\Lambda_{+}$ (resp. $\Lambda_{-}$) be the stable outgoing (resp. incoming) manifold of the $H_{q}$ flow passing through $\rho_{\s} = ( \s , 0 )\in \ccc$. It is proved in \cite[Lemma 8.1]{HeHiSj08-2} that $\Lambda_{\pm}$ are
 Lagrangian manifolds which project nicely on the $x$-space. 
  Hence, there exist smooth functions $\phi_{\pm}$ defined in a neighborhood of $\rho_{\s}$ such that $\phi_{\pm} ( \s ) = 0$ and, near $\rho_{\s}$,
 \begin{equation*}
\Lambda_{\pm} = \{ ( x , \xi ) \in T^{*} \R^{d} ; \ \xi = \nabla \phi_{\pm} ( x ) \} .
\end{equation*}
Moreover, one has $\Lambda_{\pm} \subset \{ ( x , \xi ) \in T^{*} \R^{d} ; \ q ( x , \xi ) = q(\rho_{\s})=0 \}$ and, according to  \cite[Proposition 8.2]{HeHiSj08-2},
$\pm\Hess \phi_{\pm}(\s)>0$.
Summing up, the following properties of the functions $\phi_{\pm}$ will be used in the sequel:
\begin{equation} \label{g14}
\phi_{\pm}(\s)=0, \qquad \nabla \phi_{\pm}(\s)=0 \qquad \text{and} \qquad  \pm\Hess \phi_{\pm}(\s)>0.
\end{equation}

\begin{lemma}\sl \label{a38}
There exists a neighborhood $V$ of $\s$ and a smooth function $\ell_{\s , 0} \in C^{\infty} ( V )$ such that 
\begin{equation*}
\forall x \in V , \qquad \phi_{+} ( x ) = f ( x ) - f ( \s ) + \frac{\ell_{\s , 0}^{2} ( x )}{2}.
\end{equation*}
Moreover, we have $\nabla \ell_{\s,0}(\s)\neq 0$.
\end{lemma}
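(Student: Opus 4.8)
The plan is to exploit the two structural facts we already have about $\phi_+$ near $\s$, namely $\phi_+(\s)=0$, $\nabla\phi_+(\s)=0$ and $\Hess\phi_+(\s)>0$ from \eqref{g14}, together with the fact that $\phi_+$ lies on the zero energy surface of $q$. First I would introduce $g:=\phi_+ - (f - f(\s))$; it is smooth on a neighborhood of $\s$, vanishes at $\s$, and its gradient at $\s$ vanishes since $\nabla\phi_+(\s)=0=\nabla f(\s)$ ($\s$ being a critical point of $f$). Its Hessian at $\s$ is $\Hess\phi_+(\s) - H(\s)$; I must show this is positive semidefinite of rank exactly $d-1$, so that $g$ factors as $\ell_{\s,0}^2/2$ with $\nabla\ell_{\s,0}(\s)\neq 0$. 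The key identity is the eikonal relation $q(x,\nabla\phi_+(x))=0$, i.e. $\langle A^0\nabla\phi_+,\nabla\phi_+\rangle + b^0\cdot\nabla\phi_+ - c^0 = 0$, which when combined with the eikonal equations for $f$, \eqref{a19} and \eqref{a20}, gives a quadratic relation binding $\nabla g$ and $\nabla f$.

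The central computation I would carry out is the following. Linearizing $q(x,\nabla\phi_+(x))=0$ at $\s$ to second order (using $\nabla\phi_+(\s)=0$, $b^0(\s)=0$, $c^0(\s)=0$) one finds, writing $\Phi:=\Hess\phi_+(\s)$ and $B=db^0(\s)$, a relation of the schematic form $\langle A^0\Phi x,\Phi x\rangle + \langle B x,\Phi x\rangle - \langle A^0 Hx,Hx\rangle = 0$ for all $x$; the last term comes from differentiating $c^0 = \langle A^0\nabla f,\nabla f\rangle$ (equation \eqref{a19}) twice at $\s$. Using $B^t H = -HB$ from Lemma \ref{a40} one rewrites $\langle Bx,\Phi x\rangle$ and completes the square to obtain $\langle A^0(\Phi - H)x,(\Phi+H)x\rangle + \text{(lower order)} = 0$, which one massages into $\langle \widetilde A\,\nabla g(x),\nabla g(x)\rangle$-type control, where $\widetilde A$ is a conjugate of $A^0$ that is positive definite thanks to \eqref{g51} (Lemma \ref{g4}). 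Combined with $g(\s)=0$, $\nabla g(\s)=0$, this forces $\Hess g(\s)=\Phi - H \geq 0$. For the rank count: $\Phi + H$ has signature $(d-1,1)$ because $\Phi>0$ and $H$ has one negative eigenvalue ($\s$ is a saddle), and $\Phi - H \geq 0$; a standard inertia/continuity argument (interpolating $\Phi - rH$, $r\in[0,1]$, and noting eigenvalues cannot cross $0$ except at $r$ where $\det=0$) shows $\Phi - H$ has rank exactly $d-1$.

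Once $\Hess g(\s) = \Phi - H$ is known to be positive semidefinite of rank $d-1$, the factorization is a Morse-lemma-with-parameters argument: $g$ vanishes to second order at $\s$ with a rank-$(d-1)$ positive semidefinite Hessian, so after a smooth change of coordinates $g = \tfrac12(y_1^2 + \cdots + y_{d-1}^2)$, whence $g = \tfrac12\ell_{\s,0}^2$ with $\ell_{\s,0} = (y_1^2+\cdots+y_{d-1}^2)^{1/2}$ — but to keep $\ell_{\s,0}$ \emph{smooth} I would instead invoke the Morse–Bott / splitting lemma directly in the form: a nonnegative smooth function vanishing to order $2$ on a smooth codimension-$(d-1)$ submanifold (here the point $\s$, but the degenerate direction forces the zero set to be a curve through $\s$) with transverse Hessian of constant rank $d-1$ is a perfect square of a smooth function. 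Concretely, since $\Phi-H$ has a one-dimensional kernel, there is a smooth curve through $\s$ on which $g$ and $\nabla g$ vanish, and transversally $g$ is a nondegenerate positive quadratic form; the parametrized Morse lemma then yields a smooth $\ell_{\s,0}$ with $g = \ell_{\s,0}^2/2$, and $\nabla\ell_{\s,0}(\s)\neq 0$ precisely because the transverse Hessian is nonzero.

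The main obstacle I anticipate is the second paragraph: carefully extracting, from the single scalar identity $q(x,\nabla\phi_+(x))\equiv 0$ and the eikonal equations \eqref{a19}–\eqref{a20}, enough quadratic information to conclude $\Phi - H \geq 0$ with the sharp rank $d-1$ — the positivity is where the hypocoercivity-type assumption \eqref{g51} must enter (otherwise $A^0$ alone is only semidefinite and the square is not controlled), and getting the rank exactly right requires pairing this with the index-$1$ property of $\s$ via an inertia argument like the one in Lemma \ref{a53}. The final factorization step is then essentially bookkeeping with the splitting lemma.
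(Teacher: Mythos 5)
There are two genuine problems here. First, your rank count for $\Hess g(\s)$ with $g:=\phi_+-(f-f(\s))$ is backwards: if $g=\ell_{\s,0}^2/2$ with $\ell_{\s,0}$ smooth and $\nabla\ell_{\s,0}(\s)\neq 0$, then $\Hess g(\s)=\nabla\ell_{\s,0}(\s)\,\nabla\ell_{\s,0}(\s)^{t}$ has rank $1$, not rank $d-1$, and the zero locus of $g$ near $\s$ must be a smooth hypersurface of dimension $d-1$ (codimension $1$), not a curve. You have swapped the roles of the kernel and the range throughout the second half of your sketch, and the assertion that $\Hess\phi_+(\s)-H(\s)$ is ``positive semidefinite of rank $d-1$'' is simply false for the kind of operator under consideration.

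Second, and more fundamentally, pointwise Hessian information at $\s$ cannot produce the factorization, even with the rank corrected. A smooth nonnegative $g$ with $g(\s)=0$, $\nabla g(\s)=0$ and $\Hess g(\s)$ of rank one need not be the square of a smooth function: take $g(x,y)=\tfrac12(x+y^2)^2+y^6$, whose zero set is only the origin. To invoke a Morse--Bott/splitting lemma you must know \emph{a priori} that $g$ vanishes to order $2$ along a genuine codimension-one submanifold, and that is exactly what your approach does not supply. Expanding the identity $q(x,\nabla\phi_+(x))\equiv 0$ to second order at $\s$ only recovers the quadratic part of the picture, which is insufficient. The paper instead uses a geometric argument: $\Lambda_f=\{(x,\nabla f(x))\}$ is invariant under the $H_q$ flow because of the eikonal equations, hence the stable/unstable manifolds $K_\pm=\Lambda_\pm\cap\Lambda_f$ of $H_q\vert_{\Lambda_f}$ at $\rho_\s$ are genuine submanifolds of dimensions $d-1$ and $1$ (the dimensions coming from the signature $(d-1,1)$ of $H(\s)$ via the identity $\phi_\pm=f-f(\s)$ on $\pi_x(K_\pm)$). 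On $\pi_x(K_+)$ one has $\nabla\phi_+=\nabla f$, so $g$ vanishes to order two on this $(d-1)$-dimensional set, and the Taylor formula with integral remainder in the transverse variable then delivers the smooth square root with $\nabla\ell_{\s,0}(\s)\neq 0$. This geometric input has no substitute in your purely infinitesimal argument.
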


\begin{proof}
This lemma comes from an observation of \cite[(11.20)]{HeHiSj08-2} and we follow this approach. Let $\Lambda_{f} = \{ ( x , \nabla f ( x ) ) \} \subset T^{*} \R^{d}$ denote the Lagrangian manifold associated with the phase function $f$. The eikonal equations \eqref{a19} and \eqref{a20} imply $q ( x , \nabla f ( x ) ) = 0$ and then $\Lambda_{f}$ is stable by the $H_{q}$ flow. In particular, its tangent space $T_{\rho_{\s}} \Lambda_{f}$ at $\rho_{\s}$ is stable by $F_{q}$, the linearization of $H_{q}$ at $\rho_{\s}$. Moreover, a direct computation and \eqref{g13} show that
\begin{equation*}
F_{q} = \left( \begin{array}{cc}
B & 2 A^{0} \\
2 H A^{0} H & - B^{t}
\end{array} \right) = - i \left( \begin{array}{cc}
1 & 0 \\
0 & -i
\end{array} \right) F_{p^{0}} \left( \begin{array}{cc}
1 & 0 \\
0 & -i
\end{array} \right)^{*} .
\end{equation*}
 In particular, the discussion below \eqref{g13} implies that $F_{q}$ has no eigenvalue on the imaginary axis. Let $k_{\pm}$ be the number of eigenvalues of $F_{q}$ restricted to $T_{\rho_{\s}} \Lambda_{f}$ with positive/negative real part. Then, we have $k_{+} + k_{-} = d$.

Let $K_{\pm}$ be the stable outgoing/incoming submanifold of $\Lambda_{f}$ given by the Hamiltonian vector field $H_{q}$ restricted to $\Lambda_{f}$. Then $K_{\pm}$ has dimension $k_{\pm}$ and $K_{\pm}$ projects nicely on the $x$-space
\begin{equation} \label{a2}
K_{\pm} = \Lambda_{\pm} \cap \Lambda_{f} \qquad \text{and} \qquad T_{\rho_{s}} K_{\pm} = T_{\rho_{s}} \Lambda_{\pm} \cap T_{\rho_{s}} \Lambda_{f} .
\end{equation}
Using that $\nabla \phi_{\pm} = \nabla f$ on $\pi_{x} ( K_{\pm} )$, we get
\begin{equation} \label{a1}
\forall x \in \pi_{x} ( K_{\pm} ) , \qquad \phi_{\pm} ( x ) = f ( x ) - f ( \s ) .
\end{equation}
Since $\s$ is a saddle point of $f$, its Hessian has signature $( d - 1 , 1)$. Thus, \eqref{g14} and \eqref{a1} imply that $k_{+} = d - 1$ and $k_{-} = 1$. Eventually, using again \eqref{a1}, we have on $T_{\s} \pi_{x} ( K_{-} )$
\begin{equation*}
\Hess ( \phi_{+} - f ) = \Hess \phi_{+} - \Hess \phi_{-} > 0 .
\end{equation*}
Thus, in a neighborhood of $\s$, $g : = \phi_{+} - f + f ( \s )$ is a nonnegative function which vanishes 
at order~$2$ on $\pi_{x} ( K_{+} )$.

Let us now construct a square root of $g$. After a local change of coordinates $x \rightarrow ( y , z ) \in \R^{d - 1} \times \R$ mapping $\s$ to $0$, we can assume that $\pi_{x} ( K_{+} ) = \{ ( y , z ) ; \ z = 0 \}$. Near $0$, $g ( y , 0 ) = 0$ from \eqref{a1}, $\partial_{z} g ( y , 0 ) = 0$ from \eqref{a2}, and $\partial^{2}_{z , z} g ( y , 0 ) > 0$ from the last sentence of the previous paragraph. Then, the Taylor formula gives
\begin{equation*}
g ( y , z ) = z^{2}\int_{0}^{1} ( 1 - t ) \partial^{2}_{z , z} g ( y , t z ) \, d t  ,
\end{equation*}
which leads to
\begin{equation*}
\label{eq.ell}
\phi_{+} = f - f ( \s ) + \frac{\ell_{\s , 0}^{2}}{2} \qquad \text{with} \qquad \ell_{\s , 0} ( y , z ) = z \Big( 2 \int_{0}^{1} ( 1 - t ) \partial^{2}_{z , z} g ( y , t z ) \, d t \Big)^{1 / 2}.
\end{equation*}
Since the quantity under the square root is positive when evaluated in $z = 0$, then $\ell_{\s , 0}$ is a smooth function in a vicinity of $\s$ and $\nabla \ell_{\s,0}(\s)\neq0$.
\end{proof}

\begin{lemma}\sl \label{a42}
Let $\s \in \uuu^{( 1 )}$. The function $\ell_{\s , 0}$ defined by Lemma \ref{a38} solves \eqref{a35} in a neighborhood of $\s$.
Moreover, the vector $\eta ( \s ) : = \nabla \ell_{\s , 0} ( \s )$ is an eigenvector of the matrix $\Lambda ( \s ) = 2 H ( \s ) A^{0} ( \s ) + B^{t} ( \s )$ associated with its negative eigenvalue $\mu ( \s )$. Finally,
\begin{equation} \label{a43}
\mu ( \s ) = - A^{0} ( \s ) \eta ( \s ) \cdot \eta ( \s ) ,
\end{equation}
and
\begin{equation} \label{a44}
\det \Hess \Big( f + \frac{1}{2} \ell_{\s , 0}^{2} \Big) ( \s ) = - \det H( \s ) .
\end{equation}
In particular, $A^{0} ( \s ) \eta ( \s ) \cdot \eta ( \s )>0$.
\end{lemma}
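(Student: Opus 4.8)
The plan is to establish the four assertions in turn, each reducing to a short computation. \emph{Eikonal equation.} Recall from the lines preceding Lemma~\ref{a38} that $\Lambda_{+} \subset \{ q = 0 \}$, i.e. $q ( x , \nabla \phi_{+} ( x ) ) = 0$ near $\s$, where $q ( x , \xi ) = \xi \cdot A^{0} \xi + b^{0} \cdot \xi - c^{0}$. I would substitute $\nabla \phi_{+} = \nabla f + \ell_{\s , 0} \nabla \ell_{\s , 0}$ from Lemma~\ref{a38} into this identity, expand the quadratic form $\xi \cdot A^{0} \xi$ and the linear term $b^{0} \cdot \xi$, and simplify using \eqref{a19}, which cancels the contributions $\nabla f \cdot A^{0} \nabla f$ and $c^{0}$, and \eqref{a20}, which cancels $b^{0} \cdot \nabla f$. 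What remains is $\ell_{\s , 0}$ times precisely the left-hand side of \eqref{a35}. Since $\nabla \ell_{\s , 0} ( \s ) \neq 0$ and $\ell_{\s , 0} ( \s ) = 0$ (evaluate Lemma~\ref{a38} at $x = \s$ together with $\phi_{+} ( \s ) = 0$), the zero set of $\ell_{\s , 0}$ is a hypersurface near $\s$, so $\{ \ell_{\s , 0} \neq 0 \}$ is dense and the bracketed factor vanishes identically near $\s$ by continuity; this is \eqref{a35}.

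\emph{Eigenvector property and \eqref{a43}.} I would differentiate \eqref{a35} at $\s$, where $\nabla f ( \s ) = 0$, $\ell_{\s , 0} ( \s ) = 0$, and $b^{0} ( \s ) = 0$ (since $( \s , 0 ) \in \ccc$ by Lemma~\ref{a21}). Tracking which terms survive at $\s$, the gradient of $2 A^{0} \nabla f \cdot \nabla \ell_{0}$ contributes $2 H A^{0} \eta$, that of $( A^{0} \nabla \ell_{0} \cdot \nabla \ell_{0} ) \ell_{0}$ contributes $( A^{0} \eta \cdot \eta ) \eta$, and that of $b^{0} \cdot \nabla \ell_{0}$ contributes $B^{t} \eta$, where $\eta = \eta ( \s )$ and the symmetry of $H = H ( \s )$ is used. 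Hence $\Lambda ( \s ) \eta = ( 2 H A^{0} + B^{t} ) \eta = - ( A^{0} \eta \cdot \eta ) \eta$, so the nonzero vector $\eta$ is an eigenvector of $\Lambda ( \s )$ with eigenvalue $- A^{0} \eta \cdot \eta \leq 0$ (as $A^{0} \geq 0$). This eigenvalue cannot vanish: otherwise $A^{0} \eta = 0$, hence also $B^{t} \eta = 0$, so $\eta \in \ker A^{0} \cap \ker B^{t} = \{ 0 \}$ by Lemma~\ref{g4}~$ii)$ (applicable since \eqref{a12} and \eqref{a17} yield \eqref{g51} at $\s$), a contradiction. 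Thus $- A^{0} \eta \cdot \eta$ is real and negative, so by Lemma~\ref{a29}~$ii)$ it coincides with $\mu ( \s )$, proving \eqref{a43}; in particular $A^{0} \eta \cdot \eta > 0$.

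\emph{Determinant identity \eqref{a44}.} I would compute $\Hess ( f + \frac{1}{2} \ell_{\s , 0}^{2} ) ( \s )$ directly: differentiating twice and using $\ell_{\s , 0} ( \s ) = 0$ gives the rank-one perturbation $H + \eta \eta^{t}$, with $\eta = \eta ( \s )$. By the matrix determinant lemma, $\det ( H + \eta \eta^{t} ) = ( \det H ) ( 1 + \< H^{- 1} \eta , \eta \> )$. To evaluate $\< H^{- 1} \eta , \eta \>$, apply $H^{- 1}$ to the eigenvector identity $( 2 H A^{0} + B^{t} ) \eta = \mu ( \s ) \eta$ and pair with $\eta$; since $J = H^{- 1} B^{t}$ is antisymmetric (Lemma~\ref{a40}), the term $\< J \eta , \eta \>$ drops, leaving $2 A^{0} \eta \cdot \eta = \mu ( \s ) \< H^{- 1} \eta , \eta \> = - ( A^{0} \eta \cdot \eta ) \< H^{- 1} \eta , \eta \>$, whence $\< H^{- 1} \eta , \eta \> = - 2$ after dividing by $A^{0} \eta \cdot \eta > 0$. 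Therefore $\det \Hess ( f + \frac{1}{2} \ell_{\s , 0}^{2} ) ( \s ) = \det ( H + \eta \eta^{t} ) = - \det H$, which is \eqref{a44}. (Consistently, Lemma~\ref{a38} also identifies this Hessian with $\Hess \phi_{+} ( \s )$.)

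All the steps are short computations. The parts requiring the most care are the bookkeeping when differentiating \eqref{a35} at $\s$ — placing the transpose on $B$ correctly and using the symmetry of $H$ — and recognizing that $\Hess ( f + \frac{1}{2} \ell_{\s , 0}^{2} ) ( \s )$ is a rank-one update of $H$, so that the determinant reduces to the single scalar $\< H^{- 1} \eta , \eta \>$, which the eigenvalue equation combined with the antisymmetry of $J$ pins down to $- 2$.
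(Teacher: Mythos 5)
Your proof is correct and follows essentially the same approach as the paper: substitute $\phi_{+} = f - f(\s) + \tfrac{1}{2}\ell_{\s,0}^{2}$ into the eikonal equation for $\phi_{+}$, cancel against the eikonal equation for $f$, factor out $\ell_{\s,0}$; linearize \eqref{a35} at $\s$ to read off the eigenvector relation; and compute $\langle H^{-1}\eta,\eta\rangle = -2$ via the eigenvalue equation and antisymmetry of $J$. The only variations are cosmetic: to rule out a zero eigenvalue you argue via Lemma~\ref{g4}~$ii)$, whereas the paper simply invokes Lemma~\ref{a29} (which implies $\Lambda(\s)$ has no eigenvalue on the imaginary axis at all); and for \eqref{a44} you apply the matrix determinant lemma $\det(H + \eta\eta^{t}) = (\det H)(1 + \langle H^{-1}\eta,\eta\rangle)$, whereas the paper computes $\det(\Id + H^{-1}\Pi_{\eta})$ in an adapted basis. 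Both are equivalent reformulations of the same argument.
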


\begin{proof}
Let us drop the index $\s$ and write $\ell_{0}$ instead of $\ell_{\s , 0}$. By definition, $\phi_{+}$ is solution of the eikonal equation
\begin{equation} \label{a45}
 A^{0} \nabla \phi_{+} \cdot \nabla \phi_{+} + b^{0} \cdot \nabla \phi_{+} - c^{0}= 0 .
\end{equation}
Since $\phi_{+} = f -f(\s)+ \frac{1}{2} \ell_{0}^{2}$, this implies
\begin{equation*}
 A^{0} \nabla f \cdot \nabla f + 2 \ell_{0} A^{0} \nabla f \cdot \nabla \ell_{0} + \ell_{0}^{2} A^{0} \nabla \ell_{0} \cdot \nabla \ell_{0} + b^{0} \cdot \nabla f + \ell_{0} b^{0} \cdot \nabla \ell_{0} - c^{0} = 0 .
\end{equation*}
Since $f$ is also solution of \eqref{a45} by \eqref{a19} and \eqref{a20}, it follows that
\begin{equation*}
 2 \ell_{0} A^{0} \nabla f \cdot \nabla \ell_{0} + \ell_{0}^{2} A^{0} \nabla \ell_{0} \cdot \nabla \ell_{0} + \ell_{0} b^{0} \cdot \nabla \ell_{0} = 0 ,
\end{equation*}
which gives \eqref{a35} by dividing by $\ell_{0}$ (this is allowed since 
$\nabla \ell_{0}(\s)\neq0$ implies
$\ell_{0}\neq 0 $ a.e. around~$\s$). Moreover, a Taylor expansion of \eqref{a35} at $\s$ gives,
for every $x$ around $\s$,
\begin{equation*}
2 A^{0} ( \s ) H ( \s ) (x-\s) \cdot \eta ( \s ) + ( A^{0} ( \s ) \eta ( \s ) \cdot \eta ( \s ) ) ( \eta ( \s ) \cdot (x-\s) ) + d b^{0} ( \s ) (x-\s) \cdot \eta ( \s ) = 0 ,
\end{equation*}
or equivalently
\begin{equation*}
(x-\s) \cdot \Big( \big( 2 H ( \s ) A^{0} ( \s ) + B^{t} ( \s ) + A^{0} ( \s ) \eta ( \s ) \cdot \eta ( \s ) \big) \eta ( \s ) \Big) = 0 .
\end{equation*}
It follows that
\begin{equation}
\big( 2 H ( \s ) A^{0} ( \s ) + B^{t} ( \s ) + A^{0} ( \s ) \eta ( \s ) \cdot \eta ( \s ) \big) \eta ( \s ) = 0 ,
\end{equation}
which shows that $\eta ( \s )$ is an eigenvector of $\Lambda ( \s ) = 2 H ( \s ) A^{0} ( \s ) + B^{t} ( \s )$ associated with 
the eigenvalue $- A^{0} ( \s ) \eta ( \s ) \cdot \eta ( \s )$. Since $A^{0} ( \s )$ is positive semidefinite, this eigenvalue is nonpositive. Then, Lemma~\ref{a29} implies that $ - A^{0} ( \s ) \eta ( \s ) \cdot \eta ( \s ) = \mu ( \s ) <0$.

It remains to prove \eqref{a44}.
We remove the dependence in $\s$ in the following when it is unambiguous.
 By definition of $\ell_{0}$, one has
\begin{equation*}
\Hess \Big( f + \frac{1}{2} \ell_{0}^{2} \Big) ( \s ) = H + \Pi_{\eta} ,
\end{equation*}
where $\Pi_{\eta} x := \< x , \eta \> \eta$. Hence, \eqref{a44} is equivalent to 
\begin{equation} \label{a46}
\det ( \Id + H^{- 1} \Pi_{\eta} ) = - 1.
\end{equation}
We first observe that $\eta^{\bot}$ is stable by $E : = \Id + H^{- 1} \Pi_{\eta}$ and that $E_{\vert \eta^\bot} = \Id$. On the other hand, one has
\begin{equation} \label{a47}
\< E \eta , \eta \> = \Vert \eta  \Vert^{2} \big( 1 + \< H^{- 1}  \eta  , \eta  \> \big) .
\end{equation}
But, $H  ( 2 A^{0}  + J ) \eta  = \Lambda \eta  = \mu  \eta $
with $J=H^{-1} B^{t}$ gives $\< ( 2 A^{0}  + J  ) \eta  , \eta  \> = \mu  \< H^{- 1} \eta  , \eta  \>$.
Since $J$ is antisymmetric 
by Lemma~\ref{a40} and thanks to \eqref{a43},
this implies
\begin{equation*}
\< H^{- 1}  \eta  , \eta  \> = \frac{2}{\mu} \< A^{0} \eta , \eta \> = - 2 .
\end{equation*}
Plugging this identity into \eqref{a47}, we get $\< E \eta  , \eta  \> = - \Vert \eta  \Vert^{2}$. Choosing a basis $( e_{2} , \ldots , e_{d} )$ of $\eta ^{\bot}$ and computing the matrix of $E$ in the basis $( \eta  , e_{2} , \ldots , e_{d})$, the above discussion implies \eqref{a46}.
\end{proof}

\subsection{Resolution of the transport equations \eqref{a36}} \label{b36}

\begin{lemma}\sl
There exists an open neighbourhood $V$ of $\s$ and some smooth functions $\ell_{\s , j} \in C^{\infty} ( V )$ such that, for all $j \geq 1$, $\ell_{\s , j}$ solves \eqref{a36}.
\end{lemma}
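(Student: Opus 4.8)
The plan is to recognize \eqref{a36} as a linear first order transport equation and to solve it by the method of characteristics. Writing $\ell_{0}$ for the eikonal solution $\ell_{\s , 0}$ of Lemma \ref{a42} and setting
\[
X := 2 A^{0} \nabla f + 2 \ell_{0} A^{0} \nabla \ell_{0} + b^{0} = 2 A^{0} \nabla \phi_{+} + b^{0} , \qquad c_{0} := A^{0} \nabla \ell_{0} \cdot \nabla \ell_{0} ,
\]
where the second formula for $X$ uses $\phi_{+} = f - f ( \s ) + \tfrac{1}{2} \ell_{0}^{2}$ from Lemma \ref{a38}, equation \eqref{a36} reads $X \cdot \nabla \ell_{j} + c_{0} \ell_{j} = - R_{j}$. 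Since $\nabla f ( \s ) = 0$, $b^{0} ( \s ) = 0$ (because $\rho_{\s} \in \ccc$) and $\ell_{0} ( \s ) = 0$ (Lemma \ref{a38}), the vector field $X$ vanishes at $\s$. Moreover, $X$ is the push-forward of the Hamilton field $H_{q}$ restricted to the Lagrangian manifold $\Lambda_{+}$ under the projection $\pi_{x}$, which is a diffeomorphism from $\Lambda_{+}$ onto a neighbourhood of $\s$; hence the flow of $X$ is conjugate to $H_{q} \vert_{\Lambda_{+}}$. Since $F_{q}$ has no eigenvalue on the imaginary axis (see the proof of Lemma \ref{a38}) and $\Lambda_{+}$ is the outgoing, i.e. unstable, manifold of $H_{q}$ at $\rho_{\s}$, the linearization $d X ( \s )$ has all its eigenvalues in $\C_{+}$: the point $\s$ is a hyperbolic source of $X$. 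Finally, by \eqref{a43} and Lemma \ref{a42}, $c_{0} ( \s ) = - \mu ( \s ) > 0$, hence $c_{0} > 0$ on some neighbourhood $V$ of $\s$, which we fix small enough for the estimates below to hold.

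I would then record the standard contraction estimates for the backward flow $\varphi_{t}$ of $X$: after shrinking $V$ if necessary, there are $a , C > 0$ such that for all $x \in V$ and $t \geq 0$ one has $\varphi_{-t} ( x ) \in V$, $\vert \varphi_{-t} ( x ) - \s \vert \leq C e^{- a t} \vert x - \s \vert$, and $\Vert D_{x}^{k} \varphi_{-t} ( x ) \Vert \leq C_{k} e^{- a t}$ for every $k \geq 1$; these follow from the spectrum of $d X ( \s )$ lying in $\C_{+}$ (roughness of the exponential dichotomy for the variational equation, then Gronwall for the higher order variational equations). Proceeding by induction on $j \geq 1$, so that $\ell_{0} , \dots , \ell_{j - 1} \in C^{\infty} ( V )$ and hence $R_{j} \in C^{\infty} ( V )$ by Lemma \ref{a34}, I would set
\[
\ell_{\s , j} ( x ) := - \int_{0}^{+ \infty} R_{j} \big( \varphi_{-t} ( x ) \big) \exp \Big( - \int_{0}^{t} c_{0} \big( \varphi_{-s} ( x ) \big) \, d s \Big) \, d t , \qquad x \in V .
\]
The integral converges because $c_{0} ( \varphi_{-s} ( x ) ) \to c_{0} ( \s ) > 0$ makes the exponential weight decay like $e^{- c t}$ for some $c > 0$ while $R_{j} ( \varphi_{-t} ( x ) )$ stays bounded, and differentiation under the integral sign is justified by the flow estimates above (the $x$-derivatives of the exponent are integrable in $t$ and those of $t \mapsto R_{j} ( \varphi_{-t} ( x ) )$ are $\ooo ( e^{- a t} )$), so $\ell_{\s , j} \in C^{\infty} ( V )$.

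It then remains to check that $\ell_{\s , j}$ solves \eqref{a36}. Computing $\ell_{\s , j} ( \varphi_{\tau} ( x ) )$ and performing the substitutions $u = t - \tau$ and $\sigma = s - \tau$ gives
\[
\ell_{\s , j} ( \varphi_{\tau} ( x ) ) = - \exp \Big( - \int_{- \tau}^{0} c_{0} ( \varphi_{- \sigma} ( x ) ) \, d \sigma \Big) \int_{- \tau}^{+ \infty} R_{j} ( \varphi_{- u} ( x ) ) \exp \Big( - \int_{0}^{u} c_{0} ( \varphi_{- \sigma} ( x ) ) \, d \sigma \Big) \, d u ,
\]
and differentiating this identity at $\tau = 0$ yields $X ( x ) \cdot \nabla \ell_{\s , j} ( x ) = - c_{0} ( x ) \ell_{\s , j} ( x ) - R_{j} ( x )$, which is exactly \eqref{a36}. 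This closes the induction and proves the lemma.

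I expect the \textbf{main obstacle} to be the smoothness claim: one must ensure the characteristic integral defines a genuinely $C^{\infty}$ function on a neighbourhood of $\s$ that does not shrink with $j$. This is precisely where the hyperbolic source structure of $X$ at $\s$ — a consequence of $\pm \Hess \phi_{\pm} ( \s ) > 0$ together with the absence of eigenvalues of $F_{q}$ on the imaginary axis — and the uniform exponential control of all derivatives of the backward flow $\varphi_{-t}$ enter. The conceptual reason the construction never obstructs is that, the spectrum of $d X ( \s )$ lying in $\C_{+}$ and $c_{0} ( \s )$ being positive, the indicial numbers $c_{0} ( \s ) + \sum_{\ell} n_{\ell} \lambda_{\ell}$ (with $n_{\ell} \in \N$ and $\lambda_{\ell}$ the eigenvalues of $d X ( \s )$) attached to the formal Taylor expansion at $\s$ never vanish; everything else is routine.
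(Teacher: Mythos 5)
Your argument is correct and takes a genuinely different route from the paper. The paper splits the transport operator $\lll$ into $\lll_{0} + \lll_{>}$, proves via Lemma \ref{a30} that $\lll_{0} = \Upsilon x \cdot \nabla - \mu ( \s )$ is invertible on each $\ppp_{hom}^{m}$, solves the equation order by order as a formal power series, Borel-sums the jet, and then absorbs the remaining $\ooo ( x^{\infty} )$ error via the characteristic method, quoting \cite[proof of Proposition 3.5]{DiSj99_01}. You instead write the characteristic-method solution as a one-shot Duhamel integral along the backward flow of $X = 2 A^{0} \nabla \phi_{+} + b^{0}$, identifying $\s$ as a hyperbolic source of $X$ and using $c_{0} ( \s ) = - \mu ( \s ) > 0$ plus exponential contraction of $\varphi_{-t}$ for convergence and smoothness. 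The two proofs rest on the same input — the spectrum of $\Upsilon = d X ( \s )$ lies in $\{ \Re z > 0 \}$ — but derive it differently: the paper computes $\Upsilon^{t} \eta ( \s ) = - \mu ( \s ) \eta ( \s )$ and exhibits a triangular basis, while you read it off from $\Lambda_{+}$ being the outgoing (unstable) manifold of $H_{q}$ (equivalently, from $\Hess \phi_{+} ( \s ) > 0$ together with Lemma \ref{a40}), using that $\pi_{x}$ conjugates $H_{q} \vert_{\Lambda_{+}}$ to $X$. Your version is more compact and bypasses the Borel step, at the price of the flow-derivative estimates you correctly flag; these do hold (Duhamel plus Gronwall on the successive variational equations yield $\Vert D_{x}^{k} \varphi_{-t} \Vert \lesssim e^{- a^{\prime} t}$ for every $a^{\prime}$ strictly below the spectral abscissa of $\Upsilon$, with no $j$-dependence), and your formula also makes the uniformity of $V$ in $j$ transparent since the flow bounds depend only on $X$ and $c_{0}$, not on the source term $R_{j}$. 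One small remark: the non-vanishing of the ``indicial numbers'' $c_{0} ( \s ) + \sum_{\ell} n_{\ell} \lambda_{\ell}$ that you mention at the end is precisely the invertibility of $\lll_{0}$ on $\ppp_{hom}^{m}$ that drives the paper's argument, so the two proofs are two faces of the same non-resonance mechanism.
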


\begin{proof}
Since, for all $j\geq 1$, $R_{j}$ only depends on the $\ell_{\s , k}$ with $0\leq k\leq j-1$, we can solve the equations
\eqref{a36} by induction. It thus suffices to show that there exists an open neighbourhood $V$ of $\s$ such that, for any smooth function $f$, there exists $u \in C^{\infty} ( V )$ satisfying
\begin{equation} \label{a48}
\lll u = f ,
\end{equation}
where $\lll$ is the transport operator defined by
\begin{equation} \label{a49}
\lll u = 2 A^{0} \nabla f \cdot \nabla u + ( A^{0} \nabla \ell_{\s , 0} \cdot \nabla \ell_{\s , 0} ) u + 2 \ell_{\s , 0} ( A^{0} \nabla \ell_{\s , 0} \cdot \nabla u ) + b^{0} \cdot \nabla u .
\end{equation}
Assume for simplicity that $\s = 0$.
We first look for a formal solution in powers of $x$. Given~$m \in \N$, we denote by $\ppp_{hom}^{m}$ the set of homogeneous polynomials of degree $m$ and we write $f ( x ) \simeq \sum_{m \in \N} f_{m}$ with $f_{m} \in \ppp_{hom}^{m}$. We recall that
$\nabla \ell_{\s , 0} ( \s ) = \eta ( \s )$ is an eigenvector of $\Lambda ( \s ) = 2 H ( \s ) A^{0} ( \s ) + B^{t} ( \s )$
associated with its sole negative eigenvalue $\mu(\s)$ (see  Lemma~\ref{a29}). Then, 
$\lll$ decomposes into
\begin{equation*}
\lll = \lll_{0} + \lll_{>} ,
\end{equation*}
where $\lll_{>} ( p ) = \ooo ( x^{m + 1} )$ for all $p \in \ppp_{hom}^{m}$ and $\lll_{0} : \ppp_{hom}^{m} \rightarrow \ppp_{hom}^{m}$ is given by
\begin{equation*}
\lll_{0} = 2 A^{0} ( \s ) H ( \s ) x \cdot \nabla + A^{0} ( \s ) \eta ( \s ) \cdot \eta ( \s ) + 2 \< \eta ( \s ) , x \> A^{0} ( \s ) \eta ( \s ) \cdot \nabla + B ( \s ) x \cdot \nabla ,
\end{equation*}
that we can rewrite, since $\mu(\s)=-A^{0} ( \s ) \eta ( \s ) \cdot \eta ( \s )$ by Lemma~\ref{a42},
as
\begin{equation} \label{a50}
\lll_{0} = \big( 2 A^{0} ( \s ) H ( \s ) + B ( \s ) + 2 A^{0} ( \s ) \Pi_{\eta} \big) x \cdot \nabla - \mu ( \s ) ,
\end{equation}
where $\Pi_{\eta} y := \< y , \eta ( \s ) \> \eta ( \s )$. We shall prove that $\lll_{0}$ is invertible on $\ppp_{hom}^{m}$ for any $m \geq 0$. Let us denote $\Upsilon = 2 A^{0} ( \s ) H ( \s ) + B ( \s ) + 2 A^{0} ( \s ) \Pi_{\eta} = \Lambda^t ( \s ) + 2 A^{0} ( \s ) \Pi_{\eta}$. One has
\begin{equation*}
\Upsilon^{t} \eta(\s) = \Lambda ( \s ) \eta(\s) + 2 \Pi_{\eta} A^{0} ( \s ) \eta(\s) = \mu ( \s ) \eta(\s) + 2 \< A^{0} ( \s ) \eta(\s) , \eta(\s) \> \eta(\s) = - \mu ( \s ) \eta(\s) .
\end{equation*}
Choosing some vectors $e_{2} , \ldots , e_{d}$ such that $\bbb = ( \eta(\s) , e_{2} , \ldots , e_{d})$ is a basis of $\C^{d}$ and the matrix $M$ of $\Lambda ( \s )$ in $\bbb$ is upper triangular, it follows that the matrix $M^{\prime}$ of $\Upsilon^{t}$ in the basis $\bbb$ is also upper triangular, with the same diagonal entries as $M$, except that the first diagonal entry
$\mu ( \s )$ is replaced by $- \mu ( \s )$ (actually, only the first line of $M$ and $M^{\prime}$ differ).
Since $\mu(\s)$ is the only eigenvalue of $\Lambda ( \s )$ with nonpositive real part, the spectrum of $\Upsilon^{t}$ is contained in $\{ \Re z > 0\}$. Thanks to Lemma \ref{a30} in the appendix, this implies that the spectrum of $\Upsilon x \cdot \nabla : \ppp_{hom}^{m} \rightarrow \ppp_{hom}^{m}$ is contained in $\{ \Re z > 0 \}$ for every $m>0$ and hence $\lll_{0}=\Upsilon x \cdot \nabla-\mu(\s)$ is invertible on $\ppp_{hom}^{m}$ for every $m\geq 0$ (note that $\lll_{0}=-\mu(\s)$ on $\ppp_{hom}^{0}$). Using this property, we can solve the transport equation $\lll u = f$ following the method of \cite[Chapter 3]{DiSj99_01}. We recall briefly the successive steps. We first find a formal solution $\widetilde{u}$ to the equation 
\begin{equation} \label{a51}
\lll \widetilde{u} = \widetilde{f} ,
\end{equation}
where $\widetilde{f}$ denotes the formal power expansion of $f$: $\widetilde{f} \simeq \sum_{k} \widetilde{f}_{k}$ with $\widetilde{f}_{k} \in \ppp^{k}_{hom}$. We look for $\widetilde{u}$ under the form $\widetilde{u} \simeq \sum_{k} \widetilde{u}_{k}$ with $\widetilde{u}_{k} \in \ppp_{hom}^{k}$. Since $\lll_{0}$ is invertible, there exists $\widetilde{u}_{0} \in \ppp_{hom}^{0}$ solving $\lll_{0} \widetilde{u}_{0} = \widetilde{f}_{0}$. Then we choose $\widetilde{u}_{1} \in \ppp_{hom}^{1}$ solution of 
\begin{equation*}
\lll_{0} \widetilde{u}_{1} = \widetilde{f}_{1} + r_{1} ,
\end{equation*}
where $r_{1}$ denotes the homogeneous part of degree $1$ of $- \lll_{>}( \widetilde{u}_{0} )$. Iterating this procedure, we obtain a formal solution to \eqref{a51}. Using this formal solution and a Borel procedure, we construct a smooth function $\overline{u}$ such that $\overline{u}$ and $\widetilde{u}$ have the same Taylor expansion at the origin. As a consequence $\lll \overline{u} = f + \ooo ( x^{\infty} )$. The last step consists in showing that for every $g=\ooo ( x^{\infty} )$,
there exists a solution $v=\ooo ( x^{\infty} )$ to $\lll v = g $. This can be done by using the 
characteristic method and the fact that the spectrum of $\Upsilon$ is contained in $\{ \Re z > 0\}$; 
 we refer to \cite[proof of Proposition 3.5]{DiSj99_01} for details.
Then, taking $\overline{u}_{\infty}$ satisfying $\lll \overline{u}_{\infty} = f-\lll \overline{u} = \ooo ( x^{\infty} )$,
$u:=\overline{u}+\overline{u}_{\infty}$ is a true solution of \eqref{a48}
and \cite[Proposition 3.5]{DiSj99_01} shows that the neighborhood~$V$
of $\s$ where $u$ is defined can be chosen independently of $f$. 
\end{proof}

Using the preceding result and a Borel procedure, we get the following.

\begin{proposition}\sl \label{a52}
For any $\s \in \uuu^{( 1 )}$, there exists a smooth function $x \mapsto \ell_{\s} ( x , h )$ defined in a neighborhood $V_{\s}$ of $\s$ such that the following hold true

$i)$ $\ell_{\s}$ admits a classical expansion $\ell_{\s} \sim \sum_{k} h^{k} \ell_{\s , k}$,

$ii)$ $( 2 A \nabla \ell_{\s} \cdot \nabla f + ( A \nabla \ell_{\s} \cdot \nabla \ell_{\s} ) \ell_{\s} + b \cdot \nabla \ell_{\s} ) - h \div ( A \nabla \ell_{\s} ) = \ooo ( h^{\infty} )$, uniformly with respect to~$x$ in $V_{\s}$,

$iii)$ $\ell_{\s , 0} ( x ) = ( x - \s ) \cdot \eta ( \s ) + \ooo ( \vert x - \s \vert^{2} )$.
\end{proposition}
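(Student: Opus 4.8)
The plan is to glue together the formal data produced by the two preceding lemmas into a single $h$-dependent function by a Borel summation, and then to obtain $i)$--$iii)$ essentially by inspection.

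First I would fix a common domain. By Lemmas~\ref{a38} and~\ref{a42}, there is a neighborhood of~$\s$ on which $\ell_{\s,0}\in C^\infty$ solves the eikonal equation~\eqref{a35}, with $\nabla\ell_{\s,0}(\s)=\eta(\s)\neq0$, so in particular $\ell_{\s,0}\not\equiv0$. By the preceding lemma, which resolves the transport equations~\eqref{a36} --- and whose proof shows precisely that the domain of solvability of~\eqref{a48} can be chosen independently of the right-hand side, hence here independently of~$j$ --- there is a smaller neighborhood $V_\s$ of~$\s$, which I would take relatively compact, on which $\ell_{\s,j}\in C^\infty$ solves~\eqref{a36} for every $j\geq1$. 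A standard Borel procedure then produces a function $\ell_\s$, smooth in $x\in V_\s$ and $h\in\,]0,1]$, with
\[
\partial_x^\alpha\Big(\ell_\s(x,h)-\sum_{0\leq k\leq K}h^k\ell_{\s,k}(x)\Big)=\ooo(h^{K+1})
\]
uniformly on $V_\s$ for all $\alpha$ and $K$, which is exactly $i)$.

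For $ii)$ I would apply Lemma~\ref{a34} with $\ell=\ell_\s$: the left-hand side of $ii)$ is precisely $w/h$ in the notation of that lemma. Lemma~\ref{a34} gives $w\sim\sum_{j\geq1}h^jw_j$, with $w_1$ equal to the left-hand side of~\eqref{a35} evaluated at $\ell_{\s,0}$, hence $w_1=0$ by Lemma~\ref{a42}, and, for $j\geq1$, $w_{j+1}$ equal to the left-hand side of~\eqref{a36} plus $R_j$, hence $w_{j+1}=0$ since $\ell_{\s,j}$ solves~\eqref{a36}. Thus all the $w_j$ vanish and $w=\ooo(h^\infty)$; because the $w_j$ are polynomials in $x$ and in the $\partial^\alpha\ell_{\s,k}$ with $|\alpha|\leq2$, and the coefficients of $P$ are bounded together with all their derivatives, the asymptotic estimate on $\ell_\s$ from the Borel step upgrades this to a uniform $\ooo(h^\infty)$ bound on the relatively compact $V_\s$. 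This gives $ii)$. Finally, $iii)$ is just Taylor's formula at~$\s$: one has $\ell_{\s,0}(\s)=0$, since $\phi_+(\s)=f(\s)-f(\s)+\frac{1}{2}\ell_{\s,0}^2(\s)$ and $\phi_+(\s)=0$ by~\eqref{g14}, while $\nabla\ell_{\s,0}(\s)=\eta(\s)$ by the very definition of $\eta(\s)$ in Lemma~\ref{a42}.

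The step I expect to require the most care --- though I do not regard it as a genuine obstacle --- is the uniformity in $ii)$: it rests on the transport lemma delivering all the $\ell_{\s,j}$ on one and the same neighborhood of~$\s$, and on retaining control of the $C^2$-size of the Borel remainder so that substituting $\ell_\s$ into the polynomial expression for $w$ preserves the $\ooo(h^\infty)$ bound. Both are immediate from the results already established, after shrinking $V_\s$ once more if necessary.
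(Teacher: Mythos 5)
Your proposal is correct and follows exactly the route the paper indicates (the paper's proof is the single sentence ``Using the preceding result and a Borel procedure, we get the following''). You correctly assemble $\ell_{\s,0}$ from Lemmas~\ref{a38}--\ref{a42}, the $\ell_{\s,j}$ ($j\geq 1$) from the transport lemma on a common neighborhood, perform the Borel summation for $i)$, apply Lemma~\ref{a34} to see that all $w_j$ vanish for $ii)$, and read off $iii)$ from $\ell_{\s,0}(\s)=0$ and $\nabla\ell_{\s,0}(\s)=\eta(\s)$.
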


Note that the function $x\mapsto - \ell_{\s} ( x , h )$ also satisfies  Proposition \ref{a52}. More precisely, $i)$ and~$ii)$ hold true without modification while  $\eta ( \s )$  has to be replaced by $- \eta ( \s ) = - \nabla \ell_{\s , 0} ( \s )$ in~$iii)$. At this point, we do not specify which function ($\ell_{\s}$ or $- \ell_{\s}$) will be used later.

\section{Global construction of quasimodes}
\label{s21}

In this section, which is an adaptation of \cite[Section 4A]{LePMi20}, we assume the hypotheses of Theorem \ref{a15}, \eqref{a17},
\eqref{h1}, and \eqref{a24}. We send back the reader to the notations following Definition \ref{a23} and introduce some additional topological objects. Given $\m \in \uuu^{( 0 )}\setminus\{\underline\m\}$, one has $\bsigma ( \m ) = \sigma_{i}$ for some $i \geq 2$. Moreover, since $\sigma_{i - 1} > \sigma_{i}$, there exists a unique connected component of  $\{ f < \sigma_{i - 1} \}$ that contains $\m$ (observe that this component is not necessarily critical). We denote that component by $E_{-} ( \m )$, and by
\begin{equation} \label{b30}
E_{-} : \uuu^{( 0 )}\setminus\{\underline\m\} \longrightarrow {\mathcal P}( \R^{d} )
\end{equation}
the corresponding application. It follows from  \cite[Remark 2.2]{Mi19} that, for any $\m \in \uuu^{( 0 )}\setminus\{\underline\m\}$, there exists a unique $\widehat{\m} \in E_{-} ( \m ) \cap \uuu^{( 0 )}$
such that $\bsigma ( \widehat{\m} ) > \bsigma ( \m )$.  In particular,
$\m\in  E_{-} ( \m ) \subset E(\widehat{\m})$ and thus,
\begin{equation} \label{b31}
\forall \m \in \uuu^{( 0 )}\setminus\{\underline\m\} ,
\qquad f ( \widehat{\m} ) \leq f ( \m ) .
\end{equation}
We denote  by $\widehat{E} ( \m )$ the connected component of $\{ f < \sigma ( \m ) \}$ containing 
$\widehat{\m}$. It holds additionally $\widehat{E} ( \m ) \subset E_{-} ( \m )$ and $\widehat{E} ( \m )$ is a critical component (see Definition \ref{a23}). We denote by $\widehat{E} : \uuu^{( 0 )}\setminus\{\underline\m\} \rightarrow \Cr$ and $\widehat{\m} : \uuu^{( 0 )}\setminus\{\underline\m\} \rightarrow \uuu^{( 0 )}$ the corresponding applications.

Let us consider some arbitrary $\m \in \uuu^{( 0 )} \setminus \{ \underline{\m} \}$. For every $\s \in {\bf j} ( \m )$, one has $f ( \s ) = \bsigma ( \m )$. For any  $\tau , \delta > 0$, we define the sets $\bbb_{\s , \tau , \delta}$ and $\ccc_{\s , \tau , \delta}$ by
\begin{equation*}
\bbb_{\s , \tau , \delta} : = \{ f \leq \bsigma ( \m ) + \delta \} \cap \big\{ x \in \R^{d} ; \ \vert \eta ( \s ) \cdot ( x - \s ) \vert \leq \tau \big\} ,
\end{equation*}
and
\begin{equation} \label{a56}
\ccc_{\s , \tau , \delta} \text{ is the connected component of $\bbb_{\s , \tau , \delta}$ containing } \s ,
\end{equation}
where $\eta ( \s )$ has been defined in Lemma \ref{a42}. We recall that $\eta ( \s )$ is an eigenvector of the matrix $\Lambda ( \s ) = 2 H ( \s ) A^{0} ( \s ) + B^{t} ( \s )$ associated with its only negative eigenvalue $\mu ( \s )$, which has multiplicity one (see Lemma \ref{a29}). Moreover, one has from  \eqref{a43}  the normalization condition $A^{0} ( \s ) \eta ( \s ) \cdot \eta ( \s ) = - \mu ( \s )$. Observe that this normalization condition is not the same as in \cite{LePMi20}, where it is imposed $\Vert \eta(\s) \Vert = 1$. Let us also define
\begin{equation} \label{a57}
E_{\m , \tau , \delta} := \big( E_{-} ( \m ) \cap \{ f < \bsigma ( \m ) + \delta \} \big) \setminus \bigcup_{\s \in {\bf j} ( \m )} \ccc_{\s , \tau , \delta} ,
\end{equation}
where $E_-(\m)$ is defined by \eqref{b30}.

According to the geometry of the Morse function $f$ around $\partial E ( \m )$ and to the lemmas  of Section \ref{sub.eikonal}, we have the following result.

\begin{lemma}\sl \label{a59}
for any $\m \in \uuu^{( 0 )} \setminus \{ \underline{\m} \}$ and $\s \in {\bf j} ( \m )$, there exists a neighborhood $V$ of $\s$ such that
\begin{equation*}
\forall x \in V \setminus \{ \s \} , \qquad x - \s \in \eta ( \s )^{\perp} \ \Longrightarrow \ f ( x ) > f ( \s ) .
\end{equation*}
It follows that, for $\tau_{0},\delta_{0} > 0$ sufficiently small and every $\tau\in ] 0,\tau_{0}]$,
$\delta\in ] 0,\delta_{0}]$,
there exists a connected component of $E_{\m , 3 \tau , 3 \delta}$ containing $\uuu^{(0)}\cap E(\m)$ and disjoint from $\uuu^{(0)}\cap (E_{-}(\m)\setminus E(\m))$.
We will denote by $E_{\m , 3 \tau , 3 \delta}^{+}$ this component and
by $E^{-}_{\m , 3 \tau , 3 \delta}$
its complement in $E_{\m , 3 \tau , 3 \delta}$.
\end{lemma}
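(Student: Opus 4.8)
The plan is to establish the two assertions of Lemma \ref{a59} separately, beginning with the local non-degeneracy statement near a saddle point $\s$, and then deducing from it the global connectedness and separation property for the set $E_{\m,3\tau,3\delta}$.

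First I would prove the pointwise implication near $\s$. By Lemma \ref{a38} and Proposition \ref{a52}, we know $\phi_{+}(x) = f(x) - f(\s) + \frac{1}{2}\ell_{\s,0}^{2}(x)$ with $\ell_{\s,0}(x) = (x-\s)\cdot\eta(\s) + \ooo(|x-\s|^{2})$, and $\phi_{+}$ has positive definite Hessian at $\s$ by \eqref{g14}. Hence, writing $f(x) = f(\s) + \phi_{+}(x) - \frac{1}{2}\ell_{\s,0}^{2}(x)$, for $x$ close to $\s$ with $\eta(\s)\cdot(x-\s) = 0$, the term $\ell_{\s,0}^{2}(x)$ is of order $\ooo(|x-\s|^{4})$ because $\ell_{\s,0}$ vanishes to first order in the direction $\eta(\s)$ and $\ell_{\s,0}(x) = \ooo(|x-\s|^{2})$ on $\eta(\s)^{\perp}$; meanwhile $\phi_{+}(x) \geq \frac{1}{C}|x-\s|^{2}$ on that subspace. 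Thus $f(x) - f(\s) \geq \frac{1}{C}|x-\s|^{2} - \ooo(|x-\s|^{4}) > 0$ for $x \neq \s$ in a small enough neighborhood $V$ of $\s$, which is the claimed implication. (Alternatively one can argue directly with the signature $(d-1,1)$ of $H(\s)$ combined with $A^{0}(\s)\eta(\s)\cdot\eta(\s) = -\mu(\s) > 0$ from Lemma \ref{a42}, but going through $\phi_{+}$ is cleaner.)

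Next I would address the global statement. The point of the local fact is this: the "bad'' directions at $\s$ along which one can stay at level $\leq f(\s)$ while leaving $\s$ are contained, up to higher order, in the line $\R\eta(\s)$; the two half-lines correspond to the two components $C_{1}(\s,r), C_{2}(\s,r)$ of $\{f < f(\s)\}\cap D(\s,r)$. By removing the slab-like neighborhoods $\ccc_{\s,3\tau,3\delta}$ (which, for $\tau,\delta$ small, are thin tubes around the descending/ascending directions at each $\s \in {\bf j}(\m)$ trapped between the hyperplanes $|\eta(\s)\cdot(x-\s)| = 3\tau$), we disconnect $E_{-}(\m)\cap\{f < \bsigma(\m)+3\delta\}$ at each separating saddle lying on $\partial E(\m)$. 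Since ${\bf j}(\m) = \partial E_{i,j}\cap\vvv^{(1)}$ with $E(\m) = E_{i,j}$, these are exactly the saddles through which $E(\m)$ connects to the rest of $E_{-}(\m)$; by the labelling construction every path in $E_{-}(\m)$ from a point of $E(\m)$ to a point of $E_{-}(\m)\setminus E(\m)$ at energy below $\bsigma(\m)+\delta_{0}$ must pass arbitrarily close to some $\s\in{\bf j}(\m)$ along a near-$\eta(\s)$ direction, hence through $\ccc_{\s,3\tau,3\delta}$. Therefore in $E_{\m,3\tau,3\delta}$ the set $E(\m)$ is separated from $E_{-}(\m)\setminus E(\m)$. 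One then takes $E^{+}_{\m,3\tau,3\delta}$ to be the union of the components of $E_{\m,3\tau,3\delta}$ meeting $E(\m)$; it contains $\uuu^{(0)}\cap E(\m)$ (none of these minima lie in the slabs for $\tau$ small, since a minimum $\m'\in\uuu^{(0)}$ is not near any saddle) and is disjoint from $\uuu^{(0)}\cap(E_{-}(\m)\setminus E(\m))$ by the separation just shown. One should also check that $E(\m)\setminus\bigcup_{\s}\ccc_{\s,3\tau,3\delta}$ stays connected for $\tau,\delta$ small — this holds because cutting a finite collection of thin slabs near boundary saddles from a connected open set whose minima and interior topology are fixed cannot disconnect the part containing the minima, for $\tau,\delta$ below a threshold $\tau_{0},\delta_{0}$ depending only on $f$ and $\m$; uniformity over the finitely many $\m$ gives global $\tau_{0},\delta_{0}$.

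The main obstacle I anticipate is the global topological bookkeeping in the second part: making rigorous the claim that any low-energy path out of $E(\m)$ must traverse one of the excised slabs $\ccc_{\s,3\tau,3\delta}$, and that excising them does not accidentally disconnect $E^{+}$ itself or merge it with an unwanted component. This requires careful use of the Morse structure of $f$ near each $\s\in\vvv^{(1)}$ (two descending directions, one along $+\eta(\s)$ toward $E(\m)$ and one along $-\eta(\s)$ or vice versa) together with the explicit geometry of the labelling — in particular Remark 2.2 of \cite{Mi19} and the definition of $E_{-}(\m)$, $\widehat{E}(\m)$ recalled above. The analytic input (Lemmas \ref{a38}, \ref{a42}, Proposition \ref{a52}) is essentially already in hand; the work is in choosing $\tau_{0},\delta_{0}$ small enough, uniformly in the finitely many $\m$, so that the slabs are genuinely ``local'' separators.
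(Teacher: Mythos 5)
Your proof of the first assertion is essentially the paper's: write $f = \phi_{+} + f(\s) - \tfrac{1}{2}\ell_{\s,0}^{2}$, observe that $\ell_{\s,0}$ vanishes to second order on the affine hyperplane $\s + \eta(\s)^{\perp}$ (so $\ell_{\s,0}^{2}$ is fourth order there), and invoke $\Hess\phi_{+}(\s)>0$ from \eqref{g14}. The only cosmetic difference is that you record an $\ooo(|x-\s|^{4})$ error where the paper writes $\ooo(x^{3})$; either suffices against the coercive $\phi_{+}$.

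For the second assertion the paper's printed proof stops after the local implication: the authors leave the topological ``It follows that $\ldots$'' part implicit, as a routine consequence of the local non-degeneracy together with the labelling of Section~\ref{s1}. So there is nothing in the paper to compare your second paragraph against, and your sketch is the intended picture: the sets $\ccc_{\s,3\tau,3\delta}$ are local separators near each $\s\in{\bf j}(\m)\subset\partial E(\m)$; the finitely many minima in $\uuu^{(0)}$ lie at positive distance from these sets, so for $\tau_{0},\delta_{0}$ small (uniformly in the finitely many $\m$, $\s$) they fall in well-defined components of $E_{\m,3\tau,3\delta}$; and any path in $E_{-}(\m)\cap\{f<\bsigma(\m)+3\delta\}$ from $E(\m)$ to $E_{-}(\m)\setminus E(\m)$ must cross $\partial E(\m)$, hence pass through some $\ccc_{\s,3\tau,3\delta}$. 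The one step I would ask you to tighten before treating this as a complete proof is the connectedness of your candidate $E^{+}_{\m,3\tau,3\delta}$: the statement that removing finitely many ``thin slabs'' from a connected open set cannot disconnect the part containing the minima is not a generic topological fact (a thin slab can separate a dumbbell). Here it works because each $\ccc_{\s,3\tau,3\delta}$ is, by the first assertion and by $f<\bsigma(\m)+3\delta$, contained in a small ball around $\s\in\partial E(\m)$ for $\tau,\delta$ small, so that $E(\m)\cap\{f<\bsigma(\m)-\varepsilon\}$ is untouched by the excision and, being connected and dense in $E(\m)$ up to the boundary, dominates the component containing $\uuu^{(0)}\cap E(\m)$. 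That is the geometric input one must actually use, not the ``thin slabs'' heuristic.
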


\begin{remark}\sl
The  above set $E^{-}_{\m , 3 \tau , 3 \delta}$ contains $\widehat\m$
but is not connected in general. However,
when $\m$ satisfies
${\bf j} ( \m ) \cap {\bf j} ( \m^{\prime} ) = \emptyset$ for every $\m^{\prime}\in\uuu^{(0)}\setminus\{\m\}$,
one has, owing to \cite[Remark 1.7 and Section 4A]{LePMi20},  ${\bf j} ( \m ) = \partial \widehat{E} ( \m ) \cap \partial E ( \m )$.
In such a case, the set $E^{-}_{\m , 3 \tau , 3 \delta}$ is connected. 
\end{remark}

\begin{figure}
\begin{center}
\begin{picture}(0,0)%
\includegraphics{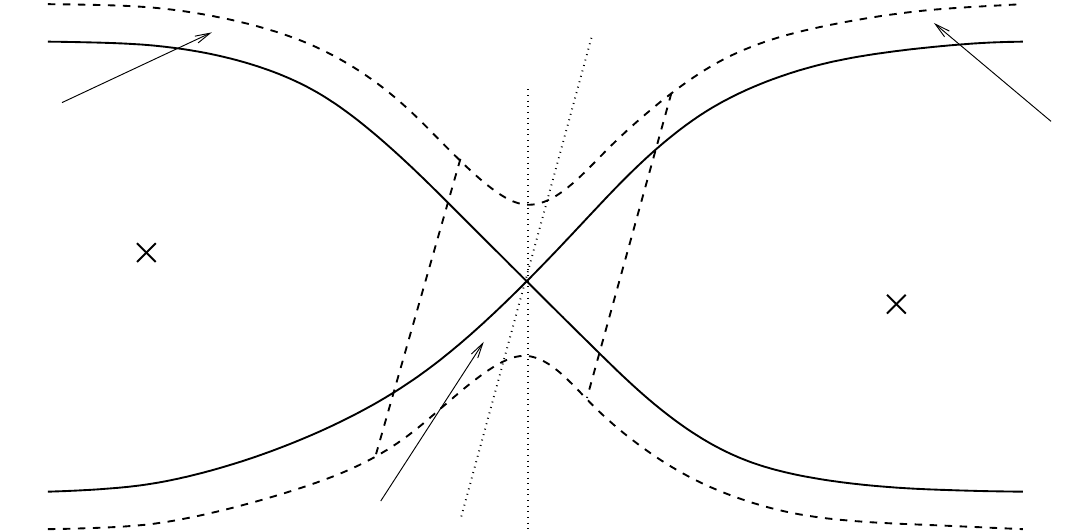}%
\end{picture}%
\setlength{\unitlength}{1184sp}%
\begingroup\makeatletter\ifx\SetFigFont\undefined%
\gdef\SetFigFont#1#2#3#4#5{%
  \reset@font\fontsize{#1}{#2pt}%
  \fontfamily{#3}\fontseries{#4}\fontshape{#5}%
  \selectfont}%
\fi\endgroup%
\begin{picture}(17055,8466)(-2564,-9394)
\put(12376,-5836){\makebox(0,0)[b]{\smash{{\SetFigFont{9}{10.8}{\rmdefault}{\mddefault}{\updefault}$\widehat{\m}$}}}}
\put(6301,-5461){\makebox(0,0)[b]{\smash{{\SetFigFont{9}{10.8}{\rmdefault}{\mddefault}{\updefault}$\s$}}}}
\put(5551,-1936){\makebox(0,0)[b]{\smash{{\SetFigFont{9}{10.8}{\rmdefault}{\mddefault}{\updefault}$\s + \eta_{1} ( \s )^{\perp}$}}}}
\put(2401,-5611){\makebox(0,0)[b]{\smash{{\SetFigFont{9}{10.8}{\rmdefault}{\mddefault}{\updefault}$E ( \m )$}}}}
\put(9151,-7861){\makebox(0,0)[lb]{\smash{{\SetFigFont{9}{10.8}{\rmdefault}{\mddefault}{\updefault}$\partial \widehat{E} ( \m )$}}}}
\put(10051,-2686){\makebox(0,0)[lb]{\smash{{\SetFigFont{9}{10.8}{\rmdefault}{\mddefault}{\updefault}$\partial \widehat{E} ( \m )$}}}}
\put(9076,-5161){\makebox(0,0)[b]{\smash{{\SetFigFont{9}{10.8}{\rmdefault}{\mddefault}{\updefault}$\widehat{E} ( \m )$}}}}
\put(-2549,-2686){\makebox(0,0)[b]{\smash{{\SetFigFont{9}{10.8}{\rmdefault}{\mddefault}{\updefault}$E_{\m , \tau , \delta}^{+}$}}}}
\put(601,-2761){\makebox(0,0)[lb]{\smash{{\SetFigFont{9}{10.8}{\rmdefault}{\mddefault}{\updefault}$\partial E ( \m )$}}}}
\put(-449,-8011){\makebox(0,0)[lb]{\smash{{\SetFigFont{9}{10.8}{\rmdefault}{\mddefault}{\updefault}$\partial E ( \m )$}}}}
\put(14101,-8836){\makebox(0,0)[lb]{\smash{{\SetFigFont{9}{10.8}{\rmdefault}{\mddefault}{\updefault}$\{ f = f ( \s ) \}$}}}}
\put(14476,-3061){\makebox(0,0)[lb]{\smash{{\SetFigFont{9}{10.8}{\rmdefault}{\mddefault}{\updefault}$E_{\m , \tau , \delta}^{-}$}}}}
\put(3301,-9211){\makebox(0,0)[b]{\smash{{\SetFigFont{9}{10.8}{\rmdefault}{\mddefault}{\updefault}$\ccc_{\s , \tau , \delta}$}}}}
\put(376,-5011){\makebox(0,0)[b]{\smash{{\SetFigFont{9}{10.8}{\rmdefault}{\mddefault}{\updefault}$\m$}}}}
\put(7426,-1186){\makebox(0,0)[b]{\smash{{\SetFigFont{9}{10.8}{\rmdefault}{\mddefault}{\updefault}$\s + \eta ( \s )^{\perp}$}}}}
\end{picture}%
\end{center}
\caption{Representation of the Morse function $f$ near a point $\s \in {\bf j} ( \m )\cap\partial \widehat E(\m)$
when the latter set is nonempty. Here, $\eta_{1} ( \s )$ denotes an eigenvector of $\Hess f ( \s )$ associated with its negative eigenvalue.}
\label{f1}
\end{figure}

\begin{proof}[Proof of Lemma \ref{a59}]
Without loss of generality, we can assume that $\s = 0$ and $f ( \s ) = 0$. Thanks to Lemma \ref{a38} and Proposition \ref{a52}, one has
\begin{equation*}
\phi_{+} ( x ) = f ( x ) + \frac{\ell_{\s , 0} ( x )^{2}}{2} = f ( x ) + \frac{\< x , \eta(\s)\>^{2}}{2} + \ooo ( x^{3} ) ,
\end{equation*}
near $\s=0$. This implies that, for all $x \in \eta(\s)^{\bot}$, we have
\begin{equation*}
f ( x ) = \phi_{+} ( x ) + \ooo ( x^{3} ) .
\end{equation*}
Since $\Hess \phi_{+} ( \s )$ is positive definite by \eqref{g14}, the conclusion follows.
\end{proof}

Let us now define, for $h>0$ and $\tau_{0} , \delta_{0} > 0$ small enough, the function $v_{\m , h}$ on the sublevel set $E_{-} ( \m ) \cap \{ f < \bsigma ( \m ) + 3 \delta_{0} \}$ (see \eqref{b30}) as follows. On the disjoint open sets $E^{+}_{\m , 3 \tau_{0} , 3 \delta_{0}}$ and $E^{-}_{\m , 3 \tau_{0} , 3 \delta_{0}}$ introduced in Lemma \ref{a59}, we set
\begin{equation} \label{a60}
v_{\m , h} ( x ) : =
\left\{\begin{aligned}
&+ 1 &&\text{ for } x \in E^{+}_{\m , 3 \tau_{0} , 3 \delta_{0}} ,        \\
&- 1 &&\text{ for } x \in E^{-}_{\m , 3 \tau_{0} , 3 \delta_{0}} .
\end{aligned} \right.
\end{equation}
In addition, for every $\s \in {\bf j} ( \m )$ and $x \in \ccc_{\s , 3 \tau_{0} , 3 \delta_{0}}$ (see \eqref{a56}), we set
\begin{equation} \label{a61} 
v_{\m , h} ( x ) : = C^{- 1}_{\s , h} \int_{0}^{\ell_{\s} ( x , h )} \zeta ( r / \tau_{0} ) e^{- \frac{r^{2}}{2 h}} \, d r ,    
\end{equation}
where the function $\ell_{\s}$ is given by Proposition \ref{a52} and its sign (see the discussion below Proposition \ref{a52}) is chosen so that there exists a neighborhood $V$ of $\s$ such that $E ( \m ) \cap V$ is included in the half-plane $\{ \eta ( \s ) \cdot ( x - \s ) > 0 \}$ (see Lemma \ref{a59} and Figures \ref{f1} and \ref{f2}), $\zeta \in C^{\infty} ( \R ; [ 0 , 1 ] )$ is even and satisfies $\zeta = 1$ on $[ - 1 , 1 ]$, $\zeta ( r ) = 0$ for $\vert r \vert \geq 2$, and
\begin{equation*}
C_{\s , h} : = \frac{1}{2} \int_{- \infty}^{+ \infty} \zeta ( r / \tau_{0} ) e^{- \frac{r^{2}}{2 h}} \, d r .
\end{equation*}
In particular, we have
\begin{equation} \label{g19}
\exists \gamma > 0 , \qquad C^{- 1}_{\s , h} = \sqrt{\frac{2}{\pi h}} \big( 1 + \ooo ( e^{- \frac{\gamma}{h}} ) \big) .
\end{equation}
Note also that, for every $\tau_{0}>0$ and then $\delta_{0} > 0$ small enough, thanks to Proposition \ref{a52} and to the definitions \eqref{a60} and \eqref{a61}, and since the sets $E^{+}_{\m , 3 \tau_{0} , 3 \delta_{0} }$, $E^{-}_{\m , 3 \tau_{0} , 3 \delta_{0}}$ and $\ccc_{\s , 3 \tau_{0} , 3 \delta_{0}}$'s, $\s \in {\bf j} ( \m )$, are mutually disjoint (see Lemma \ref{a59}), $v_{\m , h}$ is well defined and is $C^{\infty}$ on $E_{-} ( \m ) \cap \{ f < \bsigma ( \m ) + 3 \delta_{0} \}$
for $h>0$ small enough.

Consider now a smooth function $\theta_{\m}$ such that
\begin{equation} \label{a62}
\theta_{\m} ( x ) : =
\left\{ \begin{aligned}
&1 &&\text{ for } x \in \Big\{ f \leq \bsigma ( \m ) + \frac{3}{2} \delta_{0} \Big\} \cap E_{-} ( \m ) ,    \\
&0 &&\text{ for } x \in \R^{d} \setminus \big( \{ f < \bsigma ( \m ) + \frac74 \delta_{0} \} \cap E_{-} ( \m ) \big) .    \\
\end{aligned} \right.
\end{equation}
The function $\theta_{\m} v_{\m , h}$ belongs to $C_{c}^{\infty} ( \R^{d} ; [ - 1 , 1 ] )$ and
\begin{equation*}
\supp ( \theta_{\m} v_{\m , h} ) \subset E_{-} ( \m ) \cap \{ f < \bsigma ( \m ) + 2 \delta_{0} \} .
\end{equation*}

\begin{figure}
 \begin{center}
 \hspace{4cm}
\scalebox{0.8}{
\begingroup%
  \makeatletter%
  \providecommand\color[2][]{%
    \errmessage{(Inkscape) Color is used for the text in Inkscape, but the package 'color.sty' is not loaded}%
    \renewcommand\color[2][]{}%
  }%
  \providecommand\transparent[1]{%
    \errmessage{(Inkscape) Transparency is used (non-zero) for the text in Inkscape, but the package 'transparent.sty' is not loaded}%
    \renewcommand\transparent[1]{}%
  }%
  \providecommand\rotatebox[2]{#2}%
  \ifx\svgwidth\undefined%
    \setlength{\unitlength}{382.20486069bp}%
    \ifx\svgscale\undefined%
      \relax%
    \else%
      \setlength{\unitlength}{\unitlength * \real{\svgscale}}%
    \fi%
  \else%
    \setlength{\unitlength}{\svgwidth}%
  \fi%
  \global\let\svgwidth\undefined%
  \global\let\svgscale\undefined%
  \makeatother%
  \begin{picture}(1,0.54872199)%
    \put(0,0){\includegraphics[width=\unitlength,page=1]{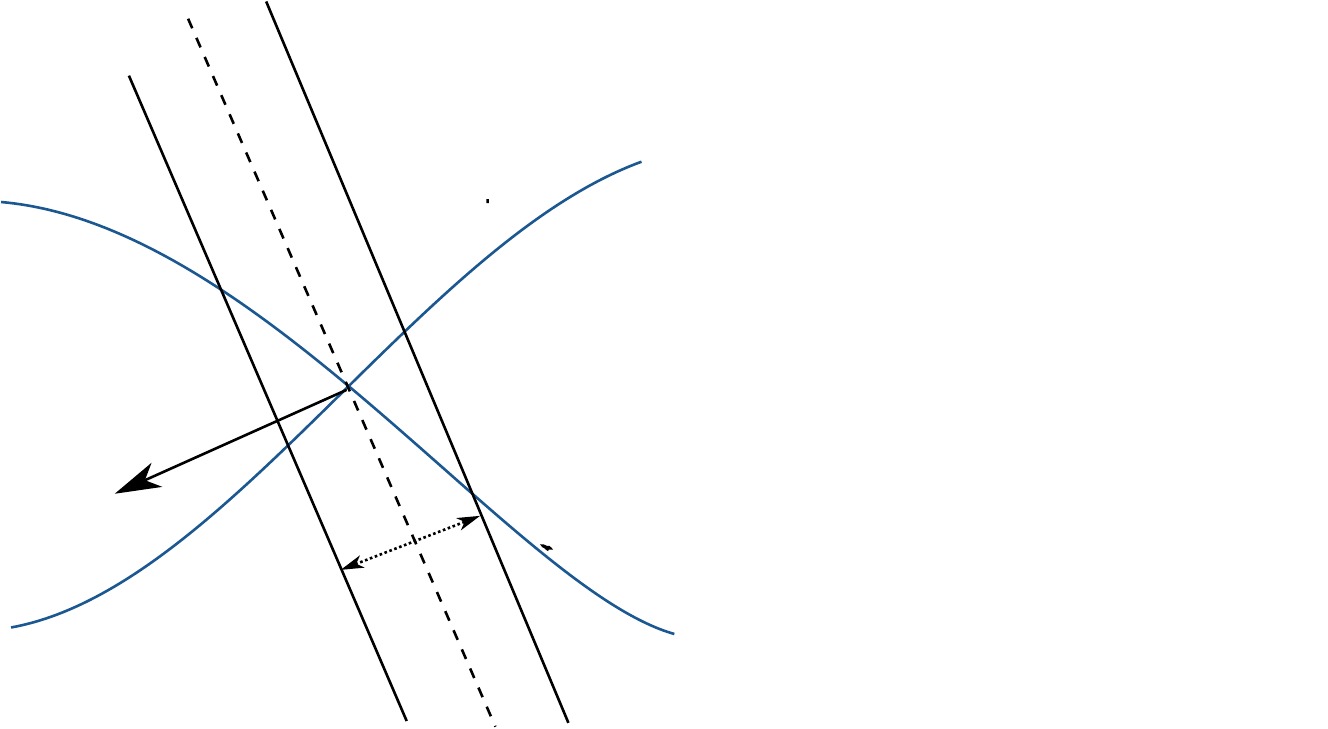}}%
    \put(0.29209216,0.09852605){\color[rgb]{0,0,0}\makebox(0,0)[lb]{\smash{$\ooo(\tau_0)$}}}%
    \put(0.06094284,0.20539241){\color[rgb]{0,0,0}\makebox(0,0)[lb]{\smash{$\eta(\s)$}}}%
    \put(0,0){\includegraphics[width=\unitlength,page=2]{cutoffloc2.pdf}}%
    \put(0.2630373,0.27629371){\color[rgb]{0,0,0}\makebox(0,0)[lb]{\smash{$\s$}}}%
    \put(0.38557519,0.26118089){\color[rgb]{0,0,0}\makebox(0,0)[lb]{\smash{$v_{\m,h}=-1$}}}%
    \put(0.0277826,0.26323132){\color[rgb]{0,0,0}\makebox(0,0)[lb]{\smash{$v_{\m,h}=1$}}}%
    \put(0.47485892,0.46956104){\color[rgb]{0,0,0}\makebox(0,0)[lb]{\smash{$\textcolor{mauve}{\supp(\theta_\m)}$}}}%
    \put(0.26803035,0.23393679){\color[rgb]{0,0,0}\makebox(0,0)[lb]{\smash{}}}%
    \put(0.47093252,0.38584055){\color[rgb]{0,0,0}\makebox(0,0)[lb]{\smash{\textcolor{bleu}{$\{V=\bsigma(\m)\}$}}}}%
    \put(0.5357438,0.18487925){\color[rgb]{0,0,0}\makebox(0,0)[lb]{\smash{}}}%
    \put(0.00425117,0.30155497){\color[rgb]{0,0,0}\makebox(0,0)[lb]{\smash{}}}%
    \put(0.24278387,0.35383609){\color[rgb]{0,0,0}\makebox(0,0)[lb]{\smash{}}}%
    \put(0,0){\includegraphics[width=\unitlength,page=3]{cutoffloc2.pdf}}%
    \put(-0.30623584,0.26077893){\color[rgb]{0,0,0}\makebox(0,0)[lb]{\smash{}}}%
    \put(-0.02336851,0.62347388){\color[rgb]{0,0,0}\makebox(0,0)[lt]{\begin{minipage}{1.02908175\unitlength}\raggedright \end{minipage}}}%
    \put(-0.02163312,0.61826773){\color[rgb]{0,0,0}\makebox(0,0)[lb]{\smash{}}}%
  \end{picture}%
\endgroup%
}
  \end{center}
  \caption{The support of the function $v_{\m,h}$}
  \label{f2}
  \end{figure}

\begin{defin}\sl \label{a63}
For $\tau_{0}>0$ and then $\delta_{0} , h > 0$ small enough, let us define the functions
\begin{equation} \label{a64}
\left\{ \begin{aligned}
\psi_{\underline{\m }, h} ( x ) &:=e^{- \frac{f ( x ) - f ( \underline{\m} )} {h}} ,  \\
\psi_{\m , h} ( x ) &: = \theta_{\m} ( x ) \big( v_{\m , h} ( x ) + 1 \big) e^{- \frac{f ( x ) - f ( \m )} {h}} \qquad \text{for } \m \in \uuu^{( 0 )}\setminus\{\underline{\m}\} .
\end{aligned} \right.
\end{equation}
We then define, for any $\m \in \uuu^{( 0 )}$, the quasimode $\varphi_{\m , h}$ by 
\begin{equation*}
\varphi_{\m , h} : = \frac{\psi_{\m , h}}{\Vert \psi_{\m , h} \Vert_{L^{2}}} .
\end{equation*}
\end{defin}

Note that, for every $\tau_{0},\delta_{0},h > 0 $ so that the above definition makes sense, $P \varphi_{\underline{\m} , h} = 0$ and, for every $\m \in \uuu^{( 0 )} \setminus \{ \underline{\m} \}$, the quasimodes $\psi_{\m , h}$ and $\varphi_{\m , h}$ belong to $C_{c}^{\infty} ( \R^{d} ; \R^{+} )$ with supports included in $E_{-} ( \m ) \cap \{ f < \bsigma ( \m ) + 2 \delta_{0} \}$. We have more precisely the following lemma resulting from the previous construction.

\begin{lemma}\sl \label{a65}
For every $\m \in \uuu^{( 0 )}$ and $\varepsilon > 0$, there exist $\tau_{0}>0$ and then $\delta_{0} > 0$ small enough such that,  for every $h>0$ small enough, one has

$i)$ The support of $\psi_{\m , h}$ satisfies
\begin{equation*}
\supp \psi_{\m , h} \subset \overline{E ( \m )} + D ( 0 , \varepsilon ) .
\end{equation*}

$ii)$ When $\m \neq \underline{\m}$, there exists a neighborhood $V_{\tau_{0} , \delta_{0}}$ of $\overline{E ( \m )}$ such that
\begin{equation*}
V_{\tau_{0} , \delta_{0}} \setminus \bigcup_{\s \in {\bf j} ( \m )} \ccc_{\s , 3 \tau_{0} , 3 \delta_{0}} \subset \{ \theta_{\m} v_{\m , h} = 1 \} .
\end{equation*}

$iii)$ When $\m \neq \underline{\m}$, it holds
\begin{equation*}
\forall x \in \supp \nabla ( \theta_{\m} ( v_{\m} + 1 ) ) , \qquad \Big( f ( x ) < \bsigma ( \m ) + \frac{3}{2} \delta_{0} \ \Longrightarrow \ x \in \bigcup_{\s \in {\bf j} ( \m )} \ccc_{\s , 3 \tau_{0} , 3 \delta_{0}} \Big) .
\end{equation*}

\noindent
Let moreover $\m^{\prime}$ belong to $\uuu^{( 0 )}$ with $\m \neq \m^{\prime}$. For every $\tau_{0}>0$ and then $\delta_{0} > 0$ small enough, one has, for $h >0$ small enough,

$iv)$ if $\bsigma ( \m ) = \bsigma ( \m^{\prime} )$ and ${\bf j}(\m)\cap {\bf j}(\m^{\prime} )= \emptyset$, then $\supp ( \psi_{\m , h} ) \cap \supp ( \psi_{\m^{\prime} , h} ) = \emptyset$,

$v)$ if $\bsigma ( \m ) > \bsigma ( \m^{\prime} )$, then 
\begin{itemize}
\item[$\star$] either $\supp ( \psi_{\m , h} ) \cap \supp ( \psi_{\m^{\prime} , h} ) = \emptyset$,
\item[$\star$] or $\psi_{\m , h} = 2 e^{- ( f - f ( \m ) ) / h}$ on $\supp ( \psi_{\m^{\prime} , h} )$.
\end{itemize}
\end{lemma}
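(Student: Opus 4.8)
The plan is to prove the five items by unwinding Definition~\ref{a63} and feeding in the geometry of Lemma~\ref{a59} and Proposition~\ref{a52}; the proof is essentially Morse theory of $f$ near the critical level $\bsigma(\m)$. Three preliminary facts will be used throughout. (1) By \eqref{a60}--\eqref{a61}, $v_{\m,h}\equiv +1$ on $E^{+}_{\m,3\tau_{0},3\delta_{0}}$, $v_{\m,h}\equiv -1$ on $E^{-}_{\m,3\tau_{0},3\delta_{0}}$, and $v_{\m,h}$ is non constant only on the pairwise disjoint tubes $\ccc_{\s,3\tau_{0},3\delta_{0}}$, $\s\in{\bf j}(\m)$; there, by \eqref{a61} and Proposition~\ref{a52}~$iii)$, $\nabla v_{\m,h}$ is supported in $\{|\ell_{\s}|\le 2\tau_{0}\}$, which for $\tau_{0},\delta_{0},h$ small is well inside the interior of $\ccc_{\s,3\tau_{0},3\delta_{0}}$. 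Hence $\psi_{\m,h}=0$ on $E^{-}_{\m,3\tau_{0},3\delta_{0}}$ and $\psi_{\m,h}=2\,e^{-(f-f(\m))/h}$ on $E^{+}_{\m,3\tau_{0},3\delta_{0}}\cap\{\theta_{\m}=1\}$. (2) From the proof of Lemma~\ref{a59}, $\Hess\phi_{+}(\s)>0$, so $\Hess f(\s)$ is positive definite on $\eta(\s)^{\perp}$; a short Morse estimate then gives $\ccc_{\s,3\tau_{0},3\delta_{0}}\subset D(\s,C(\tau_{0}+\sqrt{\delta_{0}}))$ for a geometric constant $C$, so the tubes shrink onto ${\bf j}(\m)\subset\partial E(\m)\subset\overline{E(\m)}$. (3) Taking $\delta_{0}$ smaller than half the gap between $\bsigma(\m)=\sigma_{i}$ and $\sigma_{i-1}$, the cut-off $\theta_{\m}$ is constant on $\{f<\bsigma(\m)+\tfrac32\delta_{0}\}$, hence $\nabla\theta_{\m}\equiv 0$ there.

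For $i)$--$iii)$ I would argue as follows. By (1), $\supp\psi_{\m,h}\subset\overline{E^{+}_{\m,3\tau_{0},3\delta_{0}}}\cup\bigcup_{\s\in{\bf j}(\m)}\overline{\ccc_{\s,3\tau_{0},3\delta_{0}}}$; the tubes are controlled by (2), and the crux is that $E^{+}_{\m,3\tau_{0},3\delta_{0}}$ cannot escape a $C(\tau_{0}+\sqrt{\delta_{0}})$-neighbourhood of $\overline{E(\m)}$: it is connected, lies in $E_{-}(\m)\cap\{f<\bsigma(\m)+3\delta_{0}\}$ minus the tubes, and contains only the minima in $E(\m)$. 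Every saddle of $f$ on $\partial E(\m)$ at level $\bsigma(\m)$ bordering another component of $\{f<\bsigma(\m)\}$ is separating (Definition~\ref{a23}), hence lies in ${\bf j}(\m)=\partial E(\m)\cap\vvv^{(1)}$ and is sealed by its tube, while near all remaining boundary points of $E(\m)$ (regular points, non-separating saddles, critical points of index $\neq 1$) the set $\{f<\bsigma(\m)+3\delta_{0}\}$ stays within $C\sqrt{\delta_{0}}$ of $\overline{E(\m)}$ and opens no new passage; given $\varepsilon$ one then takes $\tau_{0}$ and then $\delta_{0}$ small, which is $i)$. For $ii)$, the same analysis shows $V_{\tau_{0},\delta_{0}}:=E^{+}_{\m,3\tau_{0},3\delta_{0}}\cup\bigcup_{\s\in{\bf j}(\m)}\operatorname{int}\ccc_{\s,3\tau_{0},3\delta_{0}}$ is a neighbourhood of $\overline{E(\m)}$ and that $V_{\tau_{0},\delta_{0}}\setminus\bigcup_{\s}\ccc_{\s,3\tau_{0},3\delta_{0}}\subset E^{+}_{\m,3\tau_{0},3\delta_{0}}$, where $v_{\m,h}=1$ and, by (3), $\theta_{\m}=1$. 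For $iii)$, expand $\nabla(\theta_{\m}(v_{\m,h}+1))=(v_{\m,h}+1)\nabla\theta_{\m}+\theta_{\m}\nabla v_{\m,h}$: on $\{f<\bsigma(\m)+\tfrac32\delta_{0}\}$ the first term vanishes by (3) and the second is supported in $\bigcup_{\s\in{\bf j}(\m)}\ccc_{\s,3\tau_{0},3\delta_{0}}$ by (1).

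For $iv)$ and $v)$ I would compare $\overline{E(\m)}$ and $\overline{E(\m')}$, using that the boundary of a connected component of a sublevel set of $f$ lies in the corresponding level set. If $\bsigma(\m)=\bsigma(\m')$ and ${\bf j}(\m)\cap{\bf j}(\m')=\emptyset$: a common point of $\overline{E(\m)}$ and $\overline{E(\m')}$ would lie at level $\bsigma(\m)$ and, $E(\m)\neq E(\m')$ being distinct components of $\{f<\bsigma(\m)\}$, would be a non-degenerate index-$1$ critical point separating them, hence an element of ${\bf j}(\m)\cap{\bf j}(\m')=\emptyset$, a contradiction; so $\dist(\overline{E(\m)},\overline{E(\m')})>0$ and $i)$ gives $iv)$. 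If $\bsigma(\m)>\bsigma(\m')$ and $\overline{E(\m)}\cap\overline{E(\m')}\neq\emptyset$ (otherwise disjoint supports, as above), then since $\overline{E(\m')}\subset\{f\le\bsigma(\m')\}$ while $\partial E(\m)\subset\{f=\bsigma(\m)\}$ with $\bsigma(\m)>\bsigma(\m')$, we get $\overline{E(\m')}\cap\partial E(\m)=\emptyset$, so $\overline{E(\m')}$ is a connected subset of $\{f<\bsigma(\m)\}$ meeting $\overline{E(\m)}$, hence $\overline{E(\m')}\subset E(\m)$, a compact subset of this open set at some distance $d_{0}>0$ from $\partial E(\m)$. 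By $i)$, $\supp\psi_{\m',h}\subset\overline{E(\m')}+D(0,\varepsilon)$ with $f<\bsigma(\m')+2\delta_{0}<\bsigma(\m)$ there, while the tubes $\ccc_{\s,3\tau_{0},3\delta_{0}}$, $\s\in{\bf j}(\m)$, lie within $C(\tau_{0}+\sqrt{\delta_{0}})$ of $\partial E(\m)$; taking $\varepsilon+C(\tau_{0}+\sqrt{\delta_{0}})<d_{0}$, the connected set $\overline{E(\m')}+D(0,\varepsilon)$ contains $\m'\in\uuu^{(0)}\cap E(\m)$, sits in $E_{-}(\m)\cap\{f<\bsigma(\m)+3\delta_{0}\}$ and avoids the tubes, so it lies in the component $E^{+}_{\m,3\tau_{0},3\delta_{0}}$; since also $\{f<\bsigma(\m)\}\cap E_{-}(\m)\subset\{\theta_{\m}=1\}$, fact (1) yields $\psi_{\m,h}=2\,e^{-(f-f(\m))/h}$ on $\supp\psi_{\m',h}$, which is $v)$ (the case $\m=\underline{\m}$ of $v)$ being immediate from $\psi_{\underline{\m},h}=e^{-(f-f(\underline{\m}))/h}$).

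I expect the main obstacle to be the Morse-theoretic bookkeeping underlying $i)$ (and reused in $ii)$ and $v)$): one must go through $\partial E(\m)$ at the level $\bsigma(\m)$ point by point and verify that the thickened sublevel set $E_{-}(\m)\cap\{f<\bsigma(\m)+3\delta_{0}\}$ connects $E(\m)$ to a region far from $\overline{E(\m)}$ \emph{only} through a separating saddle, and that these saddles are exactly the points of ${\bf j}(\m)$, each sealed off by the tube $\ccc_{\s,3\tau_{0},3\delta_{0}}$. Everything else is a direct unwinding of the definitions and of Lemmas~\ref{a59} and~\ref{a42}.
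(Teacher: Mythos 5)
Your proposal is correct and follows the same route as the paper, which disposes of $i)$--$iii)$ as "following from the construction" (\eqref{a60}, \eqref{a61}, \eqref{a62}, Lemma~\ref{a59}) and proves $iv)$--$v)$ by the same trichotomy you use: $\overline{E(\m)}\cap\overline{E(\m')}=\partial E(\m)\cap\partial E(\m')={\bf j}(\m)\cap{\bf j}(\m')$ when $\bsigma(\m)=\bsigma(\m')$, and $\overline{E(\m')}\subset E_-(\m')\subset E(\m)$ versus $\overline{E(\m)}\cap\overline{E(\m')}=\emptyset$ according to whether $\m'\in E(\m)$ or not when $\bsigma(\m)>\bsigma(\m')$. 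What you add, correctly, is the Morse-theoretic bookkeeping behind $i)$ and $ii)$ (shrinkage of the tubes $\ccc_{\s,3\tau_0,3\delta_0}$, classification of the points of $\partial E(\m)$ at level $\bsigma(\m)$, positivity of $\Hess f(\s)$ on $\eta(\s)^\perp$ from Lemma~\ref{a59}), which the paper leaves implicit; this fills a real gap in the exposition and is worth the space you give it. One small remark: your parenthetical that "the case $\m=\underline\m$ of $v)$" is immediate from $\psi_{\underline\m,h}=e^{-(f-f(\underline\m))/h}$ does not quite match the stated conclusion $\psi_{\m,h}=2e^{-(f-f(\m))/h}$ (the factor $2$ is absent for $\underline\m$). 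In fact the lemma is used in Proposition~\ref{a67} only up to a nonzero constant prefactor, so the discrepancy is harmless; but it would be cleaner either to exclude $\m=\underline\m$ from $v)$ (as the paper effectively does, since its proof of $v)$ invokes item $ii)$, which requires $\m\neq\underline\m$) or to state the conclusion as proportionality rather than equality with the explicit constant $2$.
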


\begin{proof}
Points $i)$, $ii)$ and $iii)$ of Lemma \ref{a65} follow from the construction of the quasimodes $\varphi_{\m , h}$ in Definition \ref{a63} for $\m \in \uuu^{( 0 )}$, see indeed \eqref{a60}, \eqref{a61} and \eqref{a62}. Let us then prove the two last points of Lemma \ref{a65}.

When $\bsigma ( \m ) = \bsigma ( \m^{\prime} )$ and $\m \neq \m^{\prime}$, note first that $\m$ and $\m^{\prime}$ differ from $\underline{\m}$ since $\bsigma ( \m ) = + \infty$ if and only if $\m = \underline{\m}$.
Moreover, $\bsigma ( \m ) = \bsigma ( \m^{\prime} )$ and $\m \neq \m^{\prime}$
imply $E(\m)\cap E(\m^{\prime})=\emptyset$. Indeed, the relation $E(\m)\cap E(\m^{\prime})\neq\emptyset$ would imply
that $E(\m)$ and $E(\m^{\prime})$ are the same connected component of $\{f<\bsigma(\m)=\bsigma(\m^{\prime})\}$, in contradiction with the construction of $E$. When in addition ${\bf j}(\m)\cap {\bf j}(\m^{\prime} )= \emptyset$, we then have
\begin{equation*}
\overline{E ( \m )} \cap \overline{E ( \m^{\prime} )} = \partial E ( \m ) \cap \partial E ( \m^{\prime} ) = {\bf j} ( \m ) \cap {\bf j} ( \m^{\prime} )= \emptyset .
\end{equation*}
Combined with $i)$ of Lemma \ref{a65} with $\varepsilon > 0$ sufficiently small, this implies $\supp ( \psi_{\m , h} ) \cap \supp ( \psi_{\m^{\prime} , h} ) = \emptyset$.

When $\bsigma(\m) > \bsigma ( \m^{\prime} )$ and $\m^{\prime} \notin E ( \m )$, we have $\overline{E ( \m )} \cap \overline{E ( \m^{\prime} )} = \emptyset$, and again, according to the first item of Lemma \ref{a65}, it holds $\supp ( \psi_{\m , h} ) \cap \supp ( \psi_{\m^{\prime} , h} ) =\emptyset$ for every $\tau_{0}>0$ and then $\delta_{0} > 0$ small enough. Lastly, when $\bsigma(\m) > \bsigma ( \m^{\prime} )$ and $\m^{\prime} \in E ( \m )$, it holds $\overline{E ( \m^{\prime} )} \subset E_{-} ( \m^{\prime} ) \subset E ( \m )$ and then, according to the second item of Lemma \ref{a65}, $\psi_{\m , h} = 2 e^{- ( f - f ( \m ) ) / h}$ on $\supp ( \psi_{\m^{\prime} , h} )$ for every $\tau_{0}>0$ and then $\delta_{0} > 0$ small enough.
\end{proof}

\section{Proof of the main results} \label{s3}

We will use the following notation, here and in the sequel. For two families of numbers $a = ( a_{h} )_{h\in ] 0,h_{0}]}$ and $b = ( b_{h} )_{h\in ] 0,h_{0}]}$, we say that $a \in \eee_{cl} ( b )$ if there exists a family $( c_{h} )_{h\in ] 0,h_{0}]}$ such that, for every $h\in ] 0,h_{0}]$,
\begin{equation*}
a_{h} = b_{h} c_{h} \text{ and } c_{h} \text{ admits a classical expansion } c_{h} \sim \sum_{j \geq 0} c_{j} h^{j} \text{ with } c_{0} = 1 .
\end{equation*}
We also write $D_{\u} = \big\vert \det \Hess ( f ) ( \u ) \big\vert^{1 / 2}$ for any $\u \in \uuu$.

\subsection{Computation of interaction coefficients}

\begin{proposition}\sl \label{a67}
Under the assumptions of Theorem \ref{a66} but with only the first part of \eqref{a28}, we have the following estimates for  $\tau_{0} > 0$ and then $\delta_{0} > 0$ small enough: there exists $C > 0$ such that, for every $\m , \m^{\prime} \in \uuu^{( 0 )}$ and $h > 0$ small enough,
\begin{equation*}
\begin{aligned}
&i) &&\< \varphi_{\m , h} , \varphi_{\m^{\prime} , h} \> = \delta_{\m , \m^{\prime}} + \ooo ( e^{- C / h} ) ,  \\
&ii) &&\< P \varphi_{\m , h} , \varphi_{\m , h} \> \in \eee_{cl} \Big( h e^{- 2 S ( \m ) / h} \sum_{\s \in {\bf j} ( \m )} \frac{\vert \mu ( \s ) \vert}{2 \pi} \frac{D_{\m}}{D_{\s}} \Big) ,  \\
&iii) &&\Vert P \varphi_{\m , h} \Vert^{2} = \ooo ( h^{\infty} ) \< P \varphi_{\m , h} , \varphi_{\m , h} \> ,  \\
&iv) &&\Vert P^{*} \varphi_{\m , h} \Vert^{2} = \ooo ( h ) \< P \varphi_{\m , h} , \varphi_{\m , h} \> .
\end{aligned}
\end{equation*}
\end{proposition}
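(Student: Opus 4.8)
The plan is to compute each quantity by reducing it to a Laplace-type integral concentrated near the saddle points $\s \in {\bf j}(\m)$, using the explicit Gaussian-type form \eqref{a61} of $v_{\m,h}$ there, and to control all the remaining contributions by exponentially small or $\ooo(h^{\infty})$ errors coming from the fact that the cutoffs $\theta_\m$ and $\zeta$ are localized where $f$ is already above the relevant saddle value. Throughout I would set $d_{\m,\m'}=\dist$-type separations so that the overlaps of supports are governed by Lemma \ref{a65}; items $i)$, $iv)$ and $v)$ of that lemma are exactly what makes the cross terms and the far-from-saddle terms negligible.

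\emph{Item $i)$.} For $\m=\m'$ this is the normalization $\Vert \varphi_{\m,h}\Vert=1$, trivially true. For $\m\neq\m'$, by Lemma \ref{a65} $iv)$ and $v)$ either the supports of $\psi_{\m,h}$ and $\psi_{\m',h}$ are disjoint — in which case $\<\varphi_{\m,h},\varphi_{\m',h}\>=0$ — or $\psi_{\m,h}=2e^{-(f-f(\m))/h}$ on $\supp(\psi_{\m',h})$, which (after the discussion below \eqref{a28}, so $\m\neq\underline\m$ forces $f(\m)>f(\m')$ or a strict height gap on the relevant region) forces $\<\psi_{\m,h},\psi_{\m',h}\>$ to be $\ooo(e^{-C/h})$ times the norms, because the product $e^{-(f-f(\m))/h}\psi_{\m',h}$ is supported where $f\geq f(\m')+c$ for some $c>0$ while $\Vert\psi_{\m',h}\Vert^{2}$ picks up the full Laplace mass at $f(\m')$. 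Dividing by $\Vert\psi_{\m,h}\Vert\,\Vert\psi_{\m',h}\Vert$, each of which is $\sim C h^{d/4}e^{-f(\m)/h}$ up to classical expansions by a standard Laplace expansion at the (unique, by the first part of \eqref{a28}) global minimum of $f_{|E(\m)}$, gives the claim.

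\emph{Item $ii)$} — this is the heart of the matter. Since $P(e^{-f/h})=0$, on the support of $\theta_\m$ we have $P\psi_{\m,h}=[P,\theta_\m(v_{\m,h}+1)]e^{-(f-f(\m))/h}$, and Lemma \ref{a65} $iii)$ tells us the commutator is supported either near the cutoff region of $\theta_\m$ (where $f\geq \bsigma(\m)+\tfrac32\delta_0$, giving an exponentially small contribution relative to $e^{-2S(\m)/h}$) or inside $\bigcup_\s \ccc_{\s,3\tau_0,3\delta_0}$. On each $\ccc_{\s,3\tau_0,3\delta_0}$ we use the local formula \eqref{a61}; writing $u_\s=v_{\m,h}+1$ there and invoking Lemma \ref{a34} and Proposition \ref{a52} $ii)$, $P(u_\s e^{-f/h})=(w+r)e^{-(f+\ell_\s^2/2)/h}$ with $w=\ooo(h^\infty)$ and $r$ supported in $\{|\ell_\s|\geq\tau_0\}$. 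Hence the leading contribution to $\<P\varphi_{\m,h},\varphi_{\m,h}\>$ comes from integrating $\overline{(w+r)}\,C_{\s,h}^{-1}\,u_\s$ against $e^{-(f+\ell_\s^2/2+f-2f(\m))/h}$ over a neighborhood of $\s$; by \eqref{a43}–\eqref{a44} the phase $f+\tfrac12\ell_{\s,0}^2=\phi_+$ has a nondegenerate critical point at $\s$ with $\det\Hess\phi_+(\s)=-\det H(\s)$ (index $0$ in one fewer variable after the change of coordinates of Lemma \ref{a38}), so a Laplace expansion in the $d-1$ directions of $\eta(\s)^\perp$ and an explicit one-dimensional Gaussian integral in the $\eta(\s)$-direction produce the factor $\frac{|\mu(\s)|}{2\pi}\frac{D_\m}{D_\s}e^{-2S(\m)/h}$, with $S(\m)=\bsigma(\m)-f(\m)$ and $\vert\mu(\s)\vert=A^0(\s)\eta(\s)\cdot\eta(\s)$ by \eqref{a43}; the $\eee_{cl}$ structure is inherited from the classical expansions of $w$, $\ell_\s$, and the coefficients of $P$. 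Summing over $\s\in{\bf j}(\m)$ and dividing by $\Vert\psi_{\m,h}\Vert^2\sim (\pi h)^{d/2}D_\m^{-1}e^{-2(f(\m)-f(\underline\m))/h}\cdot e^{2(f(\m)-f(\underline\m))/h}$ — i.e. using $\Vert\psi_{\m,h}\Vert^2\in\eee_{cl}((\pi h)^{d/2}D_\m^{-2})$ after the $e^{-(f-f(\m))/h}$ normalization — gives $ii)$. \textbf{The main obstacle} is precisely this Laplace computation: one must keep the $\sqrt{2/\pi h}$ normalization $C_{\s,h}^{-1}$ from \eqref{g19} in exact balance against the Gaussian in the transverse Laplace integral so that the powers of $h$ cancel and the prefactor is exactly $|\mu(\s)|/2\pi$ rather than off by a constant, and one must verify that the $r$-term (supported where $|\ell_\s|\geq\tau_0$, i.e. at phase $\geq \phi_+(\s)+\tau_0^2/2-\ooo(\tau_0^3)$) really is exponentially smaller than the main term — which holds for $\tau_0$ small.

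\emph{Items $iii)$ and $iv)$.} Both are crude bounds obtained by the same localization. For $iii)$, on $\supp(P\psi_{\m,h})$ we have (outside the exponentially small cutoff region) $P\psi_{\m,h}=(w+r)C_{\s,h}^{-1}e^{-(f+\ell_\s^2/2-f(\m))/h}$ with $w=\ooo(h^\infty)$; squaring and doing a Laplace estimate at $\s$ gives $\Vert P\psi_{\m,h}\Vert^2=\ooo(h^\infty)\cdot h^{?}e^{-2(\bsigma(\m)-f(\m))/h}$, while $\<P\psi_{\m,h},\psi_{\m,h}\>$ is of exact size $h\cdot h^{(d-1)/2}$-ish times $e^{-2S(\m)/h}$ by $ii)$; the $r$-contribution to $\Vert P\psi_{\m,h}\Vert^2$ is exponentially small relative to this. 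Comparing, $\Vert P\varphi_{\m,h}\Vert^2=\ooo(h^\infty)\<P\varphi_{\m,h},\varphi_{\m,h}\>$. For $iv)$, since $P^*(e^{-f/h})=0$ as well (using $P^\dagger(e^{-f/h})=0$ from \eqref{a17}), the same commutator structure applies to $P^*\psi_{\m,h}=[P^*,\theta_\m(v_{\m,h}+1)]e^{-(f-f(\m))/h}$; the difference is that the analogue of Lemma \ref{a34} for $P^*$ gives a leading term of order $h$ (not $\ooo(h^\infty)$), because $-\ell_\s$ — or rather the phase $\phi_-$ — is the relevant one for $P^*$ and the corresponding "eikonal" term does not vanish identically but is merely $\ooo(h)$ after one accounts that $v_{\m,h}$ solves the $P$-eikonal equation, not the $P^*$-one; carefully, the transport relation yields $P^*\psi_{\m,h}=\ooo(h)e^{-\phi_+/h}\cdot(\ldots)$ on $\ccc_{\s}$, and a Laplace estimate then gives $\Vert P^*\varphi_{\m,h}\Vert^2=\ooo(h)\<P\varphi_{\m,h},\varphi_{\m,h}\>$. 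I would carry out $i)$ first (it fixes the normalization constants), then $ii)$, then read off $iii)$ and $iv)$ from the same local expansions.
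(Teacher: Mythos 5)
Your treatment of items $iii)$ and $iv)$ is in line with the paper, but there is a genuine gap in item $ii)$, and a smaller one in item $i)$.

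\textbf{Item $ii)$.} You propose to compute $\langle P\varphi_{\m,h},\varphi_{\m,h}\rangle$ by plugging in $P\psi_{\m,h}=C_{\s,h}^{-1}(w+r)e^{-(f+\ell_{\s}^{2}/2-f(\m))/h}$ from Lemma \ref{a34} and Proposition \ref{a52}~$ii)$, and then applying a Laplace expansion. But by construction $w=\ooo(h^{\infty})$ and $r$ is supported in $\{\vert\ell_{\s}\vert\geq\tau_{0}\}$ (so exponentially suppressed by the Gaussian), so your integral is $\ooo(h^{\infty})\,e^{-2S(\m)/h}$: it cannot produce the nonzero leading coefficient $\sum_{\s}\frac{\vert\mu(\s)\vert}{2\pi}\frac{D_{\m}}{D_{\s}}$. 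Indeed, this direct computation is precisely the content of item $iii)$, not $ii)$; your argument has collapsed the two. There is also a phase mismatch: the actual phase in your integral is $2f+\tfrac12\ell_{\s}^{2}-2f(\m)$, not $\phi_{+}$, and $\Hess(2f+\tfrac12\ell_{\s,0}^{2})(\s)=2H(\s)+\eta(\s)\otimes\eta(\s)$ is not known to be positive definite (only $H(\s)+\eta(\s)\otimes\eta(\s)$ is), so \eqref{a43}--\eqref{a44} do not directly apply to this phase. The paper sidesteps both issues with an integration-by-parts identity: since $\psi_{\m,h}$ is real and $P_{1}$ is formally anti-adjoint, $\langle P_{1}\psi_{\m,h},\psi_{\m,h}\rangle=0$; and since $(P_{2}+P_{0})(\widetilde{\psi}_{\m})=0$ with $\widetilde{\psi}_{\m}=e^{-(f-f(\m))/h}$, one finds
\begin{equation*}
\langle P\psi_{\m,h},\psi_{\m,h}\rangle=h^{2}\big\langle\widetilde{\psi}_{\m}\,A\nabla\widetilde{v}_{\m},\,\widetilde{\psi}_{\m}\,\nabla\widetilde{v}_{\m}\big\rangle .
\end{equation*}
The right-hand side carries $\vert\nabla v_{\m}\vert^{2}\propto C_{\s,h}^{-2}\,e^{-\ell_{\s}^{2}/h}\,(A\nabla\ell_{\s}\cdot\nabla\ell_{\s})$, which both yields the correct phase $2(f+\tfrac12\ell_{\s}^{2}-f(\m))$ (whose Hessian at $\s$ is $2(H(\s)+\eta(\s)\otimes\eta(\s))>0$ by \eqref{a44}) and makes $\vert\mu(\s)\vert=A^{0}(\s)\nabla\ell_{\s,0}(\s)\cdot\nabla\ell_{\s,0}(\s)$ appear through \eqref{a43}. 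Without this identity the explicit leading term is out of reach.

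\textbf{Item $i)$.} Your dichotomy (disjoint supports or $\psi_{\m,h}=2e^{-(f-f(\m))/h}$ on $\supp\psi_{\m',h}$) omits the case $\bsigma(\m)=\bsigma(\m')$ with ${\bf j}(\m)\cap{\bf j}(\m')\neq\emptyset$, which is not ruled out here since only the first part of \eqref{a28} is assumed. In that case the supports overlap in $\bigcup_{\s\in{\bf j}(\m)\cap{\bf j}(\m')}\ccc_{\s,3\tau_{0},3\delta_{0}}$ and one needs a separate (if similar) localization argument to show the inner product is exponentially small. The remainder of your argument for $i)$, and your items $iii)$ and $iv)$, follow the paper's approach.
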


The results of this proposition are very close to those of \cite[Propositions 4.4 to 4.6]{LePMi20}. The difference is that we have a classical expansion in $ii)$ and that the multiplicative error in $iii)$ is of order $\ooo ( h^{\infty} )$ instead of $\ooo ( h^{2} )$. This is due to the fact that our quasimodal constructions are sharper.

\begin{remark}\sl \label{g16}
When, in comparison with the assumptions of Proposition \ref{a67}, the first part of \eqref{a28} is not satisfied neither, items $iii)$ and $iv)$ of Proposition \ref{a67} still hold, while item $ii)$ becomes
\begin{equation*}
\< P \varphi_{\m , h} , \varphi_{\m , h} \> \in \eee_{cl} \bigg( h e^{- 2 S ( \m ) / h} \sum_{\s \in {\bf j} ( \m )} \frac{\vert \mu ( \s ) \vert}{2 \pi} \frac{\big(\sum_{\m^{\prime} \in \argmin_{E(\m)} f} D^{-1}_{\m^{\prime} }\big)^{-1}}{D_{\s}} \bigg) .
\end{equation*}
However, the quasi-orthonormality of the family $(\varphi_{\m , h})_{\m \in \uuu^{(0)}}$ stated in $i)$ is not satisfied anymore when the first part of \eqref{a28} does not hold. Indeed, if $\m \neq \m^{\prime} \in \uuu^{( 0 )}$ satisfy $f ( \m ) = f ( \m^{\prime} )$ and $\m^{\prime} \in E ( \m )$, it holds $\bsigma ( \m ) > \bsigma ( \m^{\prime} )$ and it follows from $v)$ in Lemma\ref{a65} that $\psi_{\m , h} = 2 e^{- ( f - f ( \m^{\prime} ) ) / h}$ on $\supp ( \psi_{\m^{\prime} , h} )$. Working with similar arguments as in the proof below, one can then show that there exists $c \in ] 0 , 1 [$ such that $\< \varphi_{\m , h} , \varphi_{\m^{\prime} , h} \> \sim c$ when $h \to 0^{+}$.
\end{remark}

\begin{proof}
The proof of Proposition \ref{a67} follows very closely the proofs of Propositions 4.4 to 4.6 in \cite{LePMi20}. We just sketch it briefly and drop the index $h$ in order to lighten the notation.

First, we recall that, for any $\m \in \uuu^{( 0 )}$, $\varphi_{\m} = \psi_{\m} / \Vert \psi_{\m} \Vert$ with $\psi_{\m}$ given by \eqref{a64}. Moreover, according to the first part of \eqref{a28}, $f$ uniquely attains its absolute minimum  at $\m$ on $\overline{E(\m)}$, and then on $\supp ( \psi_{\m} )$ for every $\tau_{0},\delta_{0}> 0$ small enough (see indeed $i)$ in Lemma \ref{a65}). By a standard Laplace method, we then easily get, for any $\m \in \uuu^{( 0 )}$,
\begin{equation} \label{a68}
\Vert \psi_{\m} \Vert \in \eee_{cl} \Big( 2 (\pi h)^{d / 4} D_{\m}^{- 1 / 2} \Big) .
\end{equation}

Let us now prove $i)$. First, by definition, we have $\< \varphi_{\m} , \varphi_{\m} \> = 1$ for every $\m \in \uuu^{( 0 )}$. Moreover, for every $\m \neq \m^{\prime} \in \uuu^{( 0 )}$, we are in one of the three following cases.
\begin{itemize}
\item[a)] The case when $\bsigma ( \m ) = \bsigma ( \m^{\prime} )$ and ${\bf j} ( \m ) \cap {\bf j} ( \m^{\prime} ) = \emptyset$, in which case we deduce from $iv)$ in Lemma \ref{a65} that $\supp ( \psi_{\m } ) \cap \supp ( \psi_{\m^{\prime} } ) = \emptyset$ and then that $\< \varphi_{\m} , \varphi_{\m^{\prime}} \> = 0$.

\item[b)] The case when $\bsigma ( \m ) = \bsigma ( \m^{\prime} )$ and ${\bf j} ( \m ) \cap {\bf j} ( \m^{\prime} ) \neq \emptyset$ (note that this case does not occur when the second part of \eqref{a28} is satisfied). In this case, we have $\overline{E ( \m )} \cap \overline{E ( \m^{\prime} )}= {\bf j} ( \m ) \cap {\bf j} ( \m^{\prime} ) \neq \emptyset$ and we deduce from the construction of the $\varphi_{\m}$, $\m \in \uuu^{( 0 )}$, that $\varphi_{\m} \varphi_{\m^{\prime}}$ is supported in $\bigcup_{\s \in {\bf j} ( \m ) \cap {\bf j} ( \m^{\prime} )} \ccc_{\s , 3 \tau_{0} , 3 \delta_{0}}$ (see indeed \eqref{a60}, \eqref{a61}, \eqref{a62} and Definition \ref{a63}). Since $\vert \psi_{\m} \vert \leq 2 e^{- ( f - f ( \m ) / h )}$, it follows that
\begin{equation*}
\big\vert \< \varphi_{\m} , \varphi_{\m^{\prime}} \> \big\vert \leq \sum_{\s \in {\bf j} ( \m ) \cap {\bf j} ( \m^{\prime} )} \frac{4}{\Vert \psi_{\m} \Vert \Vert \psi_{\m^{\prime}} \Vert} \big\< e^{- ( f - f ( \m ) ) / h} , e^{- ( f - f ( \m^{\prime} ) ) / h} \big\>_{L^{2} ( \ccc_{\s , 3 \tau_{0} , 3 \delta_{0}} )} .
\end{equation*}
Using \eqref{a68} and $f ( {\bf j} ( \m ) ) = f ( {\bf j} ( \m^{\prime} ) ) > \max ( f(\m) , f ( \m^{\prime} ) )$, the latter relation implies $i)$ for $\tau_{0} > 0$ and then $\delta_{0} > 0$ small enough.

\item[c)] The case when, up to switching $\m$ and $\m^{\prime}$, $\bsigma ( \m ) > \bsigma ( \m^{\prime} )$. Here, according to $v)$ in Lemma \ref{a65}, either $\supp ( \psi_{\m} ) \cap \supp ( \psi_{\m^{\prime}} ) = \emptyset$, in which case $\< \varphi_{\m} , \varphi_{\m^{\prime}} \> = 0$, or $\psi_{\m } = 2 e^{- ( f - f ( \m ) ) / h}$ on $\supp ( \psi_{\m^{\prime} } )$, in which case, since $f \geq f ( \m^{\prime} )$ on $\supp ( \psi_{\m^{\prime} } )$, the Cauchy--Schwarz inequality gives
\begin{equation*}
\< \varphi_{\m} , \varphi_{\m^{\prime}} \> = \frac{2}{\Vert \psi_{\m} \Vert} \< e^{- ( f - f ( \m ) ) / h} , \varphi_{\m^{\prime}} \>_{L^{2}(\supp ( \psi_{\m^{\prime} } ))} = \frac{1}{\Vert \psi_{\m} \Vert} \ooo \big( e^{- ( f(\m^{\prime}) - f ( \m ) ) / h} \big) .
\end{equation*}
The relation $i)$ follows easily, using \eqref{a68} and the relation $f ( \m^{\prime} ) > f ( \m )$ implied by the first part of \eqref{a28}.
\end{itemize}

In order to prove the remaining points $ii)$ to $iv)$ of Proposition \ref{a67}, let us write $\psi_{\m} = \widetilde{v}_{\m} \widetilde{\psi}_{\m}$ with $\widetilde{v}_{\m} = \theta_{\m} ( 1 + v_{\m} )$ and $\widetilde{\psi}_{\m} = e^{- ( f - f ( \m ) ) / h}$. Using this writing and the decomposition $P = P_{2} + P_{1} + P_{0}$ with $P_{0} = c $, $P_{1} = \frac{1}{2} (  b \cdot h \nabla + h \div \circ b )$, and $P_{2} = - h \div \circ A \circ h \nabla$, we get
\begin{equation*}
\begin{aligned}
\< P \psi_{\m} , \psi_{\m} \> &= \< ( P_{2} + P_{0} ) \psi_{\m} , \psi_{\m} \> = \< A h \nabla ( \widetilde{v}_{\m} \widetilde{\psi}_{\m} ) , h \nabla ( \widetilde{v}_{\m} \widetilde{\psi}_{\m}) \> + \< P_{0} \psi_{\m} , \psi_{\m} \>  \\
&= h^{2} \< \widetilde{v}_{\m} A \nabla \widetilde{\psi}_{\m} , \widetilde{v}_{\m} \nabla \widetilde{\psi}_{\m} \> + h^{2} \< \widetilde{\psi}_{\m} A \nabla\widetilde{v}_{\m} , \widetilde{\psi}_{\m} \nabla \widetilde{v}_{\m} \>  \\
&\qquad \qquad \qquad \qquad + 2 h^{2} \< \widetilde{\psi}_{\m} A \nabla \widetilde{v}_{\m} , \widetilde{v}_{\m} \nabla \widetilde{\psi}_{\m} \> + \< P_{0} \psi_{\m} , \psi_{\m} \>    \\
&= - h^{2} \< \div ( \widetilde{v}_{\m}^{2} A \nabla \widetilde{\psi}_{\m} ) , \widetilde{\psi}_{\m} \> + h^{2} \< \widetilde{\psi}_{\m} A \nabla \widetilde{v}_{\m} , \widetilde{\psi}_{\m} \nabla \widetilde{v}_{\m} \>   \\
&\qquad \qquad \qquad \qquad + 2 h^{2} \< \widetilde{\psi}_{\m} A \nabla \widetilde{v}_{\m} , \widetilde{v}_{\m} \nabla \widetilde{\psi}_{\m} \> + \< P_{0} \psi_{\m} , \psi_{\m} \>  \\
&= \< \widetilde{v}_{\m}^{2} P_{2} \widetilde{\psi}_{\m} , \widetilde{\psi}_{\m} \> + h^{2} \< \widetilde{\psi}_{\m} A \nabla \widetilde{v}_{\m} , \widetilde{\psi}_{\m} \nabla \widetilde{v}_{\m} \> + \< \widetilde{v}_{\m}^{2} P_{0} \widetilde{\psi}_{\m} , \widetilde{\psi}_{\m} \> ,
\end{aligned}
\end{equation*}
and, since $( P_{2} + P_{0} ) ( \widetilde{\psi}_{\m} ) = 0$, it implies
\begin{equation*} \label{a69}
\< P \psi_{\m} , \psi_{\m} \> = h^{2} \< \widetilde{\psi}_{\m} A \nabla \widetilde{v}_{\m} , \widetilde{\psi}_{\m} \nabla \widetilde{v}_{\m} \> .
\end{equation*}
Since $f - f ( \m ) > S ( \m ) + \delta_{0}$ on $\supp ( \nabla \theta_{\m} )$ (see \eqref{a62}), it follows that
\begin{equation*}
\< P \psi_{\m} , \psi_{\m} \> = h^{2} \int \theta_{\m}^{2} A \nabla v_{\m} \cdot \nabla v_{\m} e^{- 2 ( f - f ( \m ) ) / h} d x + \ooo \big( e^{- 2 ( S ( \m ) + \delta_{0} ) / h} \big) .
\end{equation*}
On the other hand, thanks to \eqref{a61}, we have on $\bigcup_{\s \in {\bf j} ( \m )}\ccc_{\s , 3 \tau_{0} , 3 \delta_{0}}$
\begin{equation} \label{v69}
\nabla v_{\m} = \sum_{\s \in {\bf j} ( \m )} \frac{1}{C_{\s , h}} \zeta ( \ell_{\s} / \tau_{0} ) e^{- \ell_{\s}^{2} / 2 h} \nabla \ell_{\s} ,
\end{equation}
which yields, since $\nabla v_{\m} = 0$ on $E^{+}_{\m , 3 \tau_{0} , 3 \delta_{0}} \cup E^{-}_{\m , 3 \tau_{0} , 3 \delta_{0}}$ by \eqref{a60},
\begin{align*}
\< P \psi_{\m} , \psi_{\m} \> = h^{2} \sum_{\s \in {\bf j} ( \m )} \frac{1}{C_{\s , h}^{2}} \int_{\ccc_{\s , 3 \tau_{0} , 3 \delta_{0}}} \theta_{\m}^{2} \zeta ( \ell_{\s} / \tau_{0} )^{2} A \nabla \ell_{\s} \cdot \nabla \ell_{\s} & e^{- 2 \big( f + \frac{\ell_{\s}^{2}}{2} - f ( \m ) \big) / h} d x   \\
&\qquad + \ooo \big( e^{- 2 ( S ( \m ) + \delta_{0} ) / h} \big) .
\end{align*}
The first term of the right hand side can now be computed as in \cite[Proof of Proposition 4.5]{LePMi20}, the only difference here being that $A$ and $\ell_{\s}$ depend on $h$ and admit a classical expansion with respect to $h$.  More precisely, since
\begin{equation*}
f + \frac{\ell_{\s}^{2}}{2} - f ( \m ) = f + \frac{\ell_{\s , 0}^{2}}{2} - f ( \m ) + \ell_{\s , 0} \ell_{\s , 1} h + \ooo ( h^{2} ) ,
\end{equation*}
where, according to Lemma \ref{a38} and to \eqref{g14}, the function $f + \frac{\ell_{\s , 0}^{2}}{2}$ satisfies
\begin{equation} \label{g15}
\Big( f + \frac{\ell_{\s , 0}^{2}}{2} \Big) ( \s ) = f ( \s ) = \bsigma ( \m ) , \quad \nabla \Big( f + \frac{\ell_{\s , 0}^{2}}{2} \Big) ( \s ) = 0 \quad \text{and} \quad \Hess \Big( f + \frac{1}{2} \ell_{\s , 0}^{2} \Big) ( \s ) > 0 ,
\end{equation}
we can apply the Laplace method with the phase function $- 2 \big( f + \frac{\ell_{\s , 0}^{2}} 2 - f ( \m ) \big)$. Using  $\ell_{\s , 0} ( \s ) = 0$ and \eqref{g19}, this yields
\begin{equation*}
\< P \psi_{\m} , \psi_{\m} \> \in \eee_{cl} \Big( \frac{2 h}{\pi} ( \pi h )^{\frac{d}{2}} \sum_{\s \in {\bf j} ( \m )} \big( A^{0} \nabla \ell_{\s , 0} \cdot \nabla\ell_{\s , 0} \big) ( \s ) \Big( \det \Hess \Big( f + \frac{1}{2} \ell_{\s , 0}^{2} \Big) ( \s ) \Big)^{- 1 / 2} e^{- 2 S ( \m ) / h} \Big) .
\end{equation*}
Moreover, thanks to Lemma \ref{a42}, one has 
\begin{equation*}
\big( A^{0} \nabla \ell_{\s , 0} \cdot \nabla \ell_{\s , 0} \big) ( \s ) = \vert \mu ( \s ) \vert \qquad \text{and} \qquad \det \Hess \Big( f + \frac{1}{2} \ell_{\s , 0}^{2} \Big) ( \s ) = \vert \det \Hess ( f ) ( \s ) \vert ,
\end{equation*}
and hence
\begin{equation} \label{g18}
\< P \psi_{\m} , \psi_{\m} \> \in \eee_{cl} \Big( \frac{2 h}{\pi} ( \pi h )^{\frac{d}{2}} e^{- 2 S ( \m ) / h} \sum_{\s \in {\bf j} ( \m )} \vert \mu ( \s ) \vert D_{\s}^{- 1} \Big) .
\end{equation}
Combining \eqref{g18} with \eqref{a68} proves $ii)$.

Let us now prove $iii)$. Since $P ( e^{- f / h} ) = 0$, one has 
\begin{equation*}
P \psi_{\m} = P \big( \theta_{\m} ( v_{\m} + 1 ) e^{- ( f - f ( \m ) ) / h} \big) = [ P , \theta_{\m} ] ( v_{\m} + 1 ) e^{- ( f - f ( \m ) ) / h} + \theta_{\m} P \big( v_{\m} e^{- ( f - f ( \m ) ) / h} \big) ,
\end{equation*}
and, since $f - f ( \m ) > S ( \m ) + \delta_{0}$ on $\supp ( \nabla \theta_{\m} )$ and on $\supp ( 1 - \theta_{\m} ) \cap \supp \theta_{\m}$, this implies
\begin{align} 
\Vert P \psi_{\m} \Vert^{2} &= \big\Vert \theta_{\m} P \big( v_{\m} e^{- ( f - f ( \m ) ) / h} \big) \big\Vert^{2} + \ooo \big( e^{- 2 ( S ( \m ) + \delta_{0} ) / h} \big)   \nonumber \\
&= \big\Vert P \big( v_{\m} e^{- ( f - f ( \m ) ) / h} \big) \big\Vert_{L^{2} ( \supp \theta_{\m} )}^{2} + \ooo \big( e^{- 2 ( S ( \m ) + \delta_{0} ) / h} \big) .   \label{a74}
\end{align}
On the other hand, on $\supp \theta_{\m}$, $P ( v_{\m} e^{- ( f - f ( \m ) ) / h} )$ is supported in $\bigcup_{\s \in {\bf j} ( \m )} \ccc_{\s , 3 \tau_{0} , 3 \delta_{0}}$ by \eqref{a60} and, on any $\ccc_{\s , 3 \tau_{0} , 3 \delta_{0}}$, one has (see \eqref{a61})
\begin{equation*}
P \big( v_{\m} e^{- ( f - f ( \m ) ) / h} \big) = C^{- 1}_{\s , h}( w + r ) e^{- ( f - f ( \m ) +  \frac{\ell^{2}_{\s}}{2} ) / h} ,
\end{equation*}
where $w$ and $r$ are given by Lemma \ref{a34}. Since $f + \frac{\ell^{2}_{\s}}{2} = f + \frac{\ell^{2}_{\s , 0}}{2} + \ooo ( h )$, the Hessian of $f + \frac{\ell^{2}_{\s,0}}{2}$ at $\s$ is positive definite (see \eqref{g15}), and $r$ is supported away from $\s$, one has, for some $\delta > 0$,
\begin{equation*}
\big\Vert r e^{- ( f - f ( \m ) +  \frac{\ell^{2}_{\s}}{2} ) / h} \big\Vert_{L^{2} ( \ccc_{\s , 3 \tau_{0} , 3 \delta_{0}} )}^{2} = \ooo \big( e^{- 2 ( S ( \m ) + \delta ) / h} \big) = \ooo ( h^{\infty} ) \< P \psi_{\m} , \psi_{\m} \> ,
\end{equation*}
where we used \eqref{g18} to obtain the last equality. Moreover, thanks to Proposition \ref{a52} (and to Lemma \ref{a34}), one also has 
\begin{equation*}
\big\Vert w e^{- ( f - f ( \m ) +  \frac{\ell^{2}_{\s}}{2} ) / h} \big\Vert_{L^{2} ( \ccc_{\s , 3 \tau_{0} , 3 \delta_{0}} )}^{2} = \ooo ( h^{\infty} ) e^{- 2 S ( \m ) / h} = \ooo ( h^{\infty} ) \< P \psi_{\m} , \psi_{\m} \> .
\end{equation*}
These two estimates and \eqref{a74} show that $\Vert P \psi_{\m} \Vert^{2} = \ooo ( h^{\infty} ) \< P \psi_{\m} , \psi_{\m} \>$,
which proves $iii)$.

The proof of $iv)$ is similar and left to the reader.
\end{proof}

\subsection{Proof of Theorem \ref{a66}}

Until the end of this section, the local minima $\m_{1} , \ldots , \m_{n_{0}}$ of $f$ are labeled so that $( S ( \m_{j} ) )_{j \in \{ 1 , \dots , n_{0} \}}$ is non-decreasing (see \eqref{a27}). That is 
\begin{equation}\label{b32}
 \text {  for all } j \in \{ 0, \dots , n_{0}-1 \} , \ S ( \m_{j + 1} ) \geq S ( \m_{j} )  \text{ and } S_{\m_{n_0}}=+\infty.
\end{equation}
For $j \in \{ 1 , \ldots , n_{0} \}$, we will also denote for shortness
\begin{equation}\label{h2}
S_{j} : = S ( \m_{j} ) , \quad \varphi_{j} : = \varphi_{\m_{j} , h} \quad \text{and} \quad \widetilde{\lambda}_{j} : = \< P \varphi_{j} , \varphi_{j} \> .
\end{equation}
From Proposition \ref{a67}, one knows that, for all $j \in \{ 1 , \ldots , n_{0} \}$, we have
\begin{equation} \label{a70}
\widetilde{\lambda}_{j} \in \eee_{cl} \Big(  h e^{- 2 S_{j} / h} \sum_{\s \in {\bf j} ( \m_{j} )} \frac{\vert \mu ( \s ) \vert}{2 \pi} \frac{D_{\m_j}}{D_{\s}} \Big)
\end{equation}
and, for $j,k\in \{ 1 , \ldots , n_{0} \} $,
\begin{equation} \label{a71}
\< \varphi_{j} , \varphi_{k} \>=\delta_{j,k} + \ooo(e^{-\frac Ch}) , \quad \Vert P \varphi_{j} \Vert = \ooo \big( h^{\infty} \sqrt{\widetilde{\lambda}_{j}} \big) \quad \text{and} \quad \Vert P^{*} \varphi_{j} \Vert = \ooo \big( \sqrt{h \widetilde{\lambda}_j} \big).
\end{equation}
Using in addition $iv)$ and $v)$ in Lemma \ref{a65}
together with the second part of \eqref{a28} (see \cite[Proof of Lemma 4.7]{LePMi20} for details), we also have
\begin{equation} \label{g35}
\< P \varphi_{j} , \varphi_{k} \> = \delta_{j , k} \widetilde{\lambda}_{j}.
\end{equation}
We then introduce the spectral projector 
\begin{equation} \label{g34}
\Pi_{h} = \frac{1}{2 i \pi} \int_{\partial D ( 0 , \varepsilon_{*} h / 2 )} ( z - P )^{- 1} d z ,
\end{equation}
with $\varepsilon_{*}$ given by Proposition \ref{a22}. 
Working as in the proof of \cite[Lemma 4.9]{LePMi20}, one deduces from the two last estimates of \eqref{a71}
and from the resolvent estimate \eqref{a18} of Theorem \ref{a15} that 
\begin{equation} \label{a5}
( 1 - \Pi_{h} ) \varphi_{j} = \ooo \big( h^{\infty} \sqrt{\widetilde{\lambda}_{j}} \big) \quad \text{and} \quad ( 1 - \Pi_{h}^{*} ) \varphi_{j} = \ooo \big( \sqrt{h \widetilde{\lambda}_{j}} \big) .
\end{equation}

The estimates \eqref{a71}, \eqref{g35}, and \eqref{a5} easily imply the following proposition, whose proof is the same as the proof of Proposition 4.10 in
\cite{LePMi20}.

\begin{proposition}\sl \label{a72}
For every $j \in \{ 1 , \dots , n_{0} \}$ and $h > 0$ small enough, let us define $v_{j} : = \Pi_{h} \varphi_{j}$. Then, there exists $c > 0$ such that, for all $j , k \in \{ 1 , \ldots , n_{0} \}$, one has
\begin{equation} \label{a73}
\< v_{j} , v_{k} \> = \delta_{j , k} + \ooo ( e^{- c / h} ) 
\end{equation}
and
\begin{equation} \label{a75}
\< P v_{j} , v_{k} \> = \delta_{j , k} \widetilde{\lambda}_{j} + \ooo \big( h^{\infty} \sqrt{\widetilde{\lambda}_{j} \widetilde{\lambda}_{k}} \big) .
\end{equation}
In particular, it follows from \eqref{a73} that for every $h > 0$ small enough, the family $( v_{1} , \dots , v_{n_{0}} )$ is a basis of $\Ran \Pi_{h}$.
\end{proposition}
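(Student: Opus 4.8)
The plan is to deduce Proposition \ref{a72} directly from the three families of estimates \eqref{a71}, \eqref{g35}, and \eqref{a5} already at our disposal, following the argument of \cite[Proposition 4.10]{LePMi20}. The starting point for \eqref{a73} is to write $\langle v_j, v_k \rangle = \langle \Pi_h \varphi_j, \Pi_h \varphi_k\rangle = \langle \varphi_j, \Pi_h^* \Pi_h \varphi_k\rangle$ and to decompose $\varphi_j = v_j + (1-\Pi_h)\varphi_j$ with the first estimate of \eqref{a5}. Since the $\widetilde\lambda_j$ are exponentially small by \eqref{a70}, the error terms $\ooo(h^\infty \sqrt{\widetilde\lambda_j})$ are in fact $\ooo(e^{-c/h})$ for a suitable $c > 0$ (any $c < \min_j 2S_j$ works, shrinking it slightly to absorb the polynomial factor in $h$); combined with the first estimate of \eqref{a71}, namely $\langle \varphi_j, \varphi_k\rangle = \delta_{j,k} + \ooo(e^{-C/h})$, and the uniform bound $\|\Pi_h\| = \ooo(1)$ coming from the resolvent estimate \eqref{a18} of Theorem \ref{a15}, this yields \eqref{a73}. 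Here one should be slightly careful: $\Pi_h$ is not self-adjoint, so one uses both $(1-\Pi_h)\varphi_j$ and $(1-\Pi_h^*)\varphi_j$, the latter being only $\ooo(\sqrt{h\widetilde\lambda_j})$, but since this bound is also $\ooo(e^{-c/h})$ it causes no trouble for \eqref{a73}.

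For \eqref{a75}, I would again write $\langle P v_j, v_k\rangle = \langle P\Pi_h \varphi_j, \Pi_h \varphi_k\rangle$ and use that $\Pi_h$ commutes with $P$ (it is a spectral projector of $P$), so $P v_j = \Pi_h P\varphi_j$. Then $\langle P v_j, v_k \rangle = \langle \Pi_h P \varphi_j, \Pi_h \varphi_k\rangle = \langle P\varphi_j, \Pi_h^*\Pi_h\varphi_k\rangle$. Expanding $\varphi_k = v_k + (1-\Pi_h)\varphi_k$ and, on the other side, inserting $\varphi_j = v_j + (1-\Pi_h)\varphi_j$ where convenient, the leading term is $\langle P\varphi_j, \varphi_k\rangle = \delta_{j,k}\widetilde\lambda_j$ by \eqref{g35}, and the remaining terms involve pairings of $P\varphi_j$ (of size $\ooo(h^\infty\sqrt{\widetilde\lambda_j})$ by the second estimate of \eqref{a71}) with the $\ooo$-terms of \eqref{a5}, or pairings of $P^*$-type using $\|P^*\varphi_k\| = \ooo(\sqrt{h\widetilde\lambda_k})$ against $(1-\Pi_h)\varphi_j$. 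Tracking the worst combination gives a remainder of size $\ooo(h^\infty\sqrt{\widetilde\lambda_j}\,\sqrt{h\widetilde\lambda_k})$ or $\ooo(h^\infty\sqrt{\widetilde\lambda_j\widetilde\lambda_k})$, which in either case is $\ooo(h^\infty\sqrt{\widetilde\lambda_j\widetilde\lambda_k})$, as claimed. The symmetry of the statement in $j,k$ lets one always choose to put the cheap $h^\infty$ factor where needed.

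Finally, the last assertion — that $(v_1,\dots,v_{n_0})$ is a basis of $\Ran\Pi_h$ — follows from \eqref{a73}: the Gram matrix of $(v_1,\dots,v_{n_0})$ is $\Id + \ooo(e^{-c/h})$, hence invertible for $h$ small, so the $v_j$ are linearly independent; and since $v_j = \Pi_h\varphi_j \in \Ran\Pi_h$ with $\dim\Ran\Pi_h = n_0$ by Proposition \ref{a22}, they span it.

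The argument is essentially bookkeeping on the error terms, and I do not anticipate a genuine obstacle; the one point requiring a little care is the non-self-adjointness of $\Pi_h$, which forces one to keep both $(1-\Pi_h)\varphi_j = \ooo(h^\infty\sqrt{\widetilde\lambda_j})$ and the weaker $(1-\Pi_h^*)\varphi_j = \ooo(\sqrt{h\widetilde\lambda_j})$ in play and to distribute them correctly so that every cross term ends up bounded by the stated quantities rather than by something larger. Since all the $\widetilde\lambda_j$ are exponentially small, there is comfortable room, and the polynomial losses from $\|\Pi_h\|=\ooo(1)$ and from the $\sqrt h$ factors are harmless.
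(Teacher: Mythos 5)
Your proposal is correct and follows the same route the paper points to, namely the proof of Proposition 4.10 in \cite{LePMi20}: the three families of inputs you use — \eqref{a71}, \eqref{g35}, and \eqref{a5}, together with $\|\Pi_h\|=\ooo(1)$ and the fact that $\Pi_h$ commutes with $P$ — are exactly the ingredients the paper singles out, and the bookkeeping you describe (in particular, replacing $\langle(1-\Pi_h)P\varphi_j,\varphi_k\rangle$ by $\langle P\varphi_j,(1-\Pi_h^{*})\varphi_k\rangle$ so that the weaker bound $\|(1-\Pi_h^{*})\varphi_k\|=\ooo(\sqrt{h\widetilde\lambda_k})$ is compensated by the $h^\infty$ factor from $\|P\varphi_j\|$) is precisely the point that makes the error come out as $\ooo(h^\infty\sqrt{\widetilde\lambda_j\widetilde\lambda_k})$ rather than $\ooo(h^\infty\sqrt{\widetilde\lambda_j})$. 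No gaps; the final basis claim follows from the invertibility of the Gram matrix and $\dim\Ran\Pi_h=n_0$ from Proposition~\ref{a22}, as you say.
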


The end of the proof follows line by line the proof of Theorem 1.9 in \cite[pp. 39--42]{LePMi20}. First, we orthonormalize the basis $( v_{n_0-j+1} )_{j\in\{1,\dots,n_{0}\}}$ into a basis $( e_{n_0-j+1} )_{j\in\{1,\dots,n_{0}\}}$ by the Gram--Schmidt process.
Thanks to \eqref{a73}, we obtain an orthonormal basis of  $\Ran \Pi_{h}$ such that, for every $j\in\{1,\dots,n_{0}\}$,
\begin{equation*}
e_{n_{0} - j + 1} = v_{n_{0} - j + 1} + \ooo ( e^{- C / h} ),
\end{equation*}
(see \cite[Lemma 4.11]{LePMi20}). We then show, using \eqref{a75} and the above labeling of the basis $( e_{n_0-j+1} )_{j\in\{1,\dots,n_{0}\}}$
starting from $e_{n_{0}}$, that, for every $j,k\in \{1,\dots,n_{0}\}$,
\begin{equation} \label{g36}
\< P e_{n_{0}-j+1} , e_{n_{0}-k+1} \> = \delta_{j , k} \widetilde{\lambda}_{n_{0}-j+1} + \ooo \big( h^{\infty} \sqrt{\widetilde{\lambda}_{n_{0}-j+1} \widetilde{\lambda}_{n_{0}-k+1}} \big) ,
\end{equation}
(see \cite[Proposition 4.12]{LePMi20}). We can then compute the eigenvalues of the matrix $M_{h}$ of $P_{\vert \Ran \Pi_{h}}$ in this basis, using the so-called graded structure of $M_{h}$ (see  \cite[pp. 41--42 and Theorem A.4]{LePMi20} or Theorem \ref{b10} of the next section). Since $\widetilde{\lambda}_{j}$ (or more precisely $e^{ 2 S_{j} / h}\widetilde{\lambda}_{j}$, see \eqref{a70}) admits a classical expansion, and thanks to the $h^{\infty}$ multiplicative errors in \eqref{g36}, we obtain the announced result.

\subsection{Proof of Corollaries \ref{g30} and \ref{g33}}

Recall that $\Pi_{h}$, the spectral projector of $P$ associated with the $n_{0}$ exponentially small eigenvalues of $P$, has been defined in \eqref{g34}. Since $\partial D ( 0 , \varepsilon_{*} h / 2 )$ is at a distance of order  $h$ from the spectrum of $P$ (see Proposition \ref{a22}), \eqref{a18} implies that there exists $C > 0$ such that, for every $h$ small enough, 
\begin{equation} \label{g37}
\Vert \Pi_{h} \Vert \leq C .
\end{equation}
Then, \eqref{a18} and \eqref{g37} give that $( P - z )^{- 1} ( 1 - \Pi_{h} )$ is of order $h^{- 1}$ in $\{ \Re z < 2 \varepsilon_{*} h / 3 \} \setminus D ( 0 , \varepsilon_{*} h / 2 )$. Since this operator-valued function is holomorphic in $\{ \Re z < 2 \varepsilon_{*} h / 3 \}$, the maximum principle yields
\begin{equation} \label{g38}
\big\Vert ( P - z )^{- 1} ( 1 - \Pi_{h} ) \big\Vert \leq C h^{- 1} ,
\end{equation}
for $\Re z < 2 \varepsilon_{*} h / 3$ and $h$ small enough.

The solution of \eqref{g32} can be written
\begin{equation} \label{g42}
u ( t ) = e^{- t P / h} u_{0} = e^{- t P / h} \Pi_{h} u_{0} + e^{- t P / h} ( 1 - \Pi_{h} ) u_{0} .
\end{equation}
Let $Q:\Im ( 1 - \Pi_{h} )\to \Im ( 1 - \Pi_{h} )$ be the operator $P$ restricted to the Hilbert space $\Im ( 1 - \Pi_{h} )$. Since $P$ is maximal accretive, so is $Q$ and thus $\Vert e^{- t Q / h} \Vert \leq 1$. Moreover, \eqref{g38} shows that $\Vert ( Q - z )^{- 1} \Vert \leq C h^{- 1}$ for $\Re z < 2 \varepsilon_{*} h / 3$. To estimate the last term in \eqref{g42}, we use a Gearhardt--Pr\"{u}ss type inequality with an explicit bound. More precisely,  \cite[Theorem 1.4]{HeSj21_01} (see  \cite[Proposition 2.1]{HeSj10_01} for more details) gives, for some $C>0$ and all $t \geq 0$,
\begin{equation*}
\big\Vert e^{- t Q / h} \big\Vert \leq \Big( 1 + 2 \frac{\varepsilon_{*} h}{2} \sup_{\Re z = \varepsilon_{*} h / 2} \big\Vert ( Q - z )^{- 1} \big\Vert \Big) e^{- t \frac{\varepsilon_{*} h}{2} / h} \leq C e^{- t \varepsilon_{*} / 2} .
\end{equation*}
 Combined with \eqref{g37}, it implies the existence of $C>0$ such that, for all $t \geq 0$,
\begin{equation} \label{g43}
\big\Vert e^{- t P / h} ( 1 - \Pi_{h} ) u_{0} \Big\Vert \leq C e^{- \varepsilon t} \Vert u_{0} \Vert ,
\end{equation}
where $\varepsilon = \varepsilon_{*} / 2$. On the other hand, $P$ restricted to $\Im \Pi_{h}$ is a matrix of size~$n_{0}$ whose eigenvalues are the $\lambda ( \m , h )$'s. Then, \eqref{g42}, \eqref{g43}, and the usual formula for the exponential of a matrix applied to $e^{- t P / h} \Pi_{h} u_{0}$ provide \eqref{g31}. Moreover, using \eqref{g43} instead of the argument  of \cite[page 43]{LePMi20}, the proof of \eqref{g49} is similar to the one of  \cite[Theorem 1.11]{LePMi20},
except we have here to apply Theorem~\ref{b10} instead of \cite[Theorem A.4]{LePMi20},  and we omit the details. Summing up, we have just shown Corollary \ref{g30}.

Let us now prove Corollary \ref{g33}. For $R > 1$, we define the balls
\begin{equation*}
\forall k \in \{ 1 , \ldots , K - 1 \} , \qquad D_{k} = D \big( ( R + R^{- 1} ) h e^{- 2 S_{k} / h} , R h e^{- 2 S_{k} / h} \big) ,
\end{equation*}
and $D_{K} = D ( 0 , R^{- 1} h e^{- 2 S_{K - 1} / h} )$. For $R$ fixed large enough and every $h$ small enough, each exponentially small eigenvalue of $P$ belongs to exactly one of the disjoint sets $D_{k}$ from Theorem~\ref{a66}. Moreover, $\partial D_{k}$ is at distance of order $h e^{- 2 S_{k} / h}$ (resp. $h e^{- 2 S_{K - 1} / h}$) from the spectrum of $P$ for $k \in \{ 1 , \ldots , K - 1 \}$ (resp. $k = K$). Using \eqref{b13} to estimate the resolvent of $P$ on the image of $\Pi_{h}$ and \eqref{g38} to control the contribution on the image of $1 - \Pi_{h}$, we get
\begin{equation} \label{g45}
\forall z \in \partial D_{k} , \qquad \big\Vert ( P - z )^{- 1} \big\Vert \leq \left\{ \begin{aligned}
&C h^{- 1} e^{2 S_{k} / h}  &&\text{ for } k \in \{ 1 , \ldots , K - 1 \} , \\
&C h^{- 1} e^{2 S_{K - 1} / h}  &&\text{ for } k = K .
\end{aligned} \right.
\end{equation}
In particular, the spectral projector associated with the eigenvalues of order $h e^{- 2 S_{k} / h}$,
\begin{equation} \label{g40}
\widetilde{\Pi}_{k} = \frac{1}{2 i \pi} \int_{\partial D_{k}} ( z - P )^{- 1} d z ,
\end{equation}
is well-defined and satisfies $\Vert \widetilde{\Pi}_{k} \Vert \leq C$.

We can now decompose
\begin{equation} \label{g44}
e^{- t P / h} \Pi_{h} = \sum_{1 \leq k \leq K} e^{- t P / h} \widetilde{\Pi}_{k} .
\end{equation}
For $k \in \{ 1 , \ldots , K - 1 \}$ and $0 \leq t \leq t_{k}^{-}$, \eqref{g45} and $t_{k}^{-} e^{- 2 S_{k} / h} = \ooo ( h^{\infty} )$ imply
\begin{align}
e^{- t P / h} \widetilde{\Pi}_{k} &= \frac{1}{2 i \pi} \int_{\partial D_{k}} e^{- t z / h} ( z - P )^{- 1} d z  \nonumber  \\
&= \frac{1}{2 i \pi} \int_{\partial D_{k}} ( z - P )^{- 1} d z + \frac{1}{2 i \pi} \int_{\partial D_{k}} \big( e^{- t z / h} - 1 \big) ( z - P )^{- 1} d z    \nonumber  \\
&= \widetilde{\Pi}_{k} + \int_{\partial D_{k}} \ooo ( t \vert z \vert / h ) \big\Vert ( P - z )^{- 1} \big\Vert \, d z  \nonumber  \\
&= \widetilde{\Pi}_{k} + \ooo \big( t h e^{- 2 S_{k} / h} / h \big) \nonumber  \\
&= \widetilde{\Pi}_{k} + \ooo ( h^{\infty} ) . \label{g46}
\end{align}
On the contrary, for $t_{k}^{+} \leq t$, \eqref{g45} and $e^{- t_{k}^{+} e^{- 2 S_{k} / h} / R} = \ooo ( h^{\infty} )$ give
\begin{align}
e^{- t P / h} \widetilde{\Pi}_{k} &= \frac{1}{2 i \pi} \int_{\partial D_{k}} e^{- t z / h} ( z - P )^{- 1} d z  \nonumber  \\
&= \ooo \Big( \int_{\partial D_{k}} e^{- t \Re z / h} \big\Vert ( P - z )^{- 1} \big\Vert \, d z \Big) \nonumber  \\
&= \ooo \big( e^{- t R^{- 1} h e^{- 2 S_{k} / h} / h} \big) \int_{\partial D_{k}} \big\Vert ( P - z )^{- 1} \big\Vert \, d z  \nonumber  \\
&= \ooo ( h^{\infty} ) . \label{g47}
\end{align}
Lastly, $e^{- t P / h} \widetilde{\Pi}_{K} = \widetilde{\Pi}_{K}$ since $\widetilde{\Pi}_{K}$ is the rank-one spectral projector associated with the eigenvalue $0$. On the other hand, since $e^{- \varepsilon t_{0}^{+}} = \ooo ( h^{\infty} )$, \eqref{g43} becomes
\begin{equation} \label{g48}
\big\Vert e^{- t P / h} ( 1 - \Pi_{h} ) \Big\Vert = \ooo ( h^{\infty} ) ,
\end{equation}
for $t \geq t_{0}^{+}$. Summing up, Corollary \ref{g33} is a direct consequence of the formulas \eqref{g42} and \eqref{g44} with the relation $\Pi^{\leq}_{k} = \sum_{k \leq j \leq K} \widetilde{\Pi}_{j}$ and the estimates \eqref{g46}, \eqref{g47}, and \eqref{g48}.

\section{Generalization} \label{b28}

In this part, we briefly explain how one can drop the assumption  \eqref{a28} and treat the general case in the spirit of \cite{Mi19}. This requires to introduce some additional material of \cite{Mi19}.

\begin{defin}\sl \label{b33}
Let $\m \in \uuu^{( 0 )} \setminus \{ \underline \m \}$. We say that $\m$ is of type I when $f ( \widehat{\m} ) < f ( \m )$ and that $\m$ is of type II when $f ( \widehat{\m}  ) = f ( \m )$. We will also denote
\begin{align*}
\uuu^{( 0 ) , I} &:= \{ \m \in \uuu^{( 0 )}\setminus\{\underline\m\} ; \ \m \text{ is of type I} \} ,  \\
\uuu^{( 0 ) , II} &:= \{ \m \in \uuu^{( 0 )}\setminus\{\underline\m\} ; \ \m \text{ is of type II} \} .
\end{align*}
We have clearly the following disjoint union $\uuu^{( 0 )}\setminus\{\underline\m\} = \uuu^{( 0 ) , I} \bigsqcup \uuu^{( 0 ) , II }$.
\end{defin}

Given $\sigma \in \Sigma$, let $\Omega_{\sigma} := \{ E ( \m ) ; \ \m \in \bsigma^{- 1} ( \sigma ) \} \bigcup \{ \widehat E ( \m ) ; \ \m \in \bsigma^{- 1} ( \sigma ) \cap \uuu^{( 0 ) , II} \}$.

\begin{defin}\sl
\label{de.equiv}
We define an equivalence relation $\rrr$ on $\uuu^{( 0 )}$ by $\m \rrr \m^{\prime}$ if and only if
\begin{equation}
\left\{
\begin{aligned}
&\bsigma ( \m ) = \bsigma ( \m^{\prime} ) = \sigma ,    \\
&\exists \omega_{1} , \ldots , \omega_{K} \in \Omega_\sigma \ \ \text{s.t.}\ \   \m \in \omega_{1} , \ \m^{\prime} \in \omega_{K}, \text{ and } \forall k = 1 , \ldots , K - 1 , \ \overline{\omega_{k}} \cap \overline{\omega_{k + 1}} \neq \emptyset .
\end{aligned}
\right.
\end{equation}
\end{defin}

We denote by $\Cl ( \m )$ the equivalence class of $\m$ for the relation $\rrr$. Observe that $\Cl ( \underline{\m} ) = \{ \underline{\m} \}$ since $\underline\m$ is the only $\m \in \uuu^{( 0 )}$  such that $\bsigma ( \m ) = +\infty$. Let $\aaa$ denote the (finite) set made of the equivalence classes for $\rrr$ and, for $\alpha \in \aaa$, let $\uuu^{( 0 )}_{\alpha}$ be the set of the elements of the class~$\alpha$. We have then evidently
\begin{equation*}
\uuu^{( 0 )} = \bigsqcup_{\alpha \in \aaa} \uuu^{( 0 )}_{\alpha} = \{ \underline{\m} \} \bigsqcup \bigsqcup_{\alpha \in \aaa \setminus \{ \Cl ( \underline{\m} )\}} \uuu^{( 0 )}_{\alpha} .
\end{equation*}
In the theorem below, we sum up in a rather vague way  the description of the small eigenvalues of $P$. Precise statements are given in Theorems~\ref{b23} and~\ref{b20}.

\begin{theorem}\sl \label{b34}
Suppose that the assumptions of Theorem \ref{a15} are satisfied and that \eqref{a17}, \eqref{h1}, and \eqref{a24} hold true. Let $\varepsilon_{*}$ be given by Proposition  \ref{a22}. Then, for $h > 0$ small enough, there exists a bijection, taking into account multiplicities,
\begin{equation*}
\beta : \{ 0 \} \cup \bigcup_{\alpha \in \aaa \setminus \{ \Cl ( \underline{\m} )\}} \bigcup_{j = 1}^{p} h e^{- 2 S_{j} / h} \sigma ( M_{\alpha,j} ) \longrightarrow \sigma ( P ) \cap \{ \Re z <\varepsilon_{*} h \} ,
\end{equation*}
with $\beta ( z ) = z + \ooo ( h^{\infty} \vert z \vert )$, for some  symmetric positive definite matrices $M_{\alpha , j}$ having a classical expansion in powers of $h$ with explicit invertible leading term and for some labeling $( S_{j} )_{j\in\{1,\dots,p\}}$ of $S ( \uuu^{( 0 )}\setminus\{\underline\m\} )$.
\end{theorem}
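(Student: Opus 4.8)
The plan is to run the whole machinery of Sections~\ref{s20}--\ref{s3} again, being careful about what survives when \eqref{a28} is dropped, and to replace the scalar eigenvalue analysis of Section~\ref{s3} by a block-diagonal one governed by the combinatorics of \cite{Mi19}. First I would observe that the quasimodal constructions of Sections~\ref{s20} and~\ref{s21} only used \eqref{a17}, \eqref{h1} and \eqref{a24}, so the family $(\varphi_{\m,h})_{\m\in\uuu^{(0)}}$ of Definition~\ref{a63} is still available, as is the spectral projector $\Pi_{h}$ of \eqref{g34} (which was built using only Proposition~\ref{a22}). Moreover, items $iii)$ and $iv)$ of Proposition~\ref{a67} remain valid verbatim, $\langle P\varphi_{\m,h},\varphi_{\m,h}\rangle$ is given by the formula of Remark~\ref{g16}, and the localization properties $i)$--$v)$ of Lemma~\ref{a65} hold unchanged, their proofs never invoking \eqref{a28}. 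As in Section~\ref{s3}, it then suffices to compute, up to $\ooo(h^{\infty})$ relative errors, the $n_0\times n_0$ matrix of $P$ restricted to $\Ran\Pi_{h}$ in a well-chosen basis, and to read off its spectrum.

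The place where \eqref{a28} really entered was the quasi-orthonormality of $(\varphi_{\m,h})_{\m}$ and the exact diagonality \eqref{g35}. Without it, $\langle\varphi_{\m,h},\varphi_{\m',h}\rangle$ and $\langle P\varphi_{\m,h},\varphi_{\m',h}\rangle$ are still exponentially small unless $\m\rrr\m'$: by Lemma~\ref{a65}~$iv)$--$v)$ the interaction is negligible unless $\m$ and $\m'$ lie in a common element of some $\Omega_{\sigma}$, while for $\m$ of type~II the overlap of the supports of $\varphi_{\m,h}$ and $\varphi_{\widehat\m,h}$ produces a genuinely non-negligible scalar product. Hence the Gram matrix $(\langle\varphi_{\m,h},\varphi_{\m',h}\rangle)_{\m,\m'}$ is, up to $\ooo(e^{-C/h})$, block-diagonal with blocks indexed by the equivalence classes $\alpha\in\aaa$ of Definition~\ref{de.equiv}, and within each block further organized by the value of $\bsigma$. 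I would then orthonormalize, class by class and from the highest separating level downwards (exactly as in \cite{Mi19} and in the proof of Theorem~\ref{a66}), to obtain an orthonormal family $(e_{\m,h})_{\m}$ of $\Ran\Pi_{h}$ with $e_{\m,h}=\Pi_{h}\varphi_{\m,h}+\ooo(e^{-C/h})$ inside the span of the class of $\m$.

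In this basis the matrix $M_{h}$ of $P|_{\Ran\Pi_{h}}$ inherits the same block-diagonal structure, with one block per pair $(\alpha,j)$, where $\alpha\in\aaa\setminus\{\Cl(\underline\m)\}$ and $S_j$ runs over the distinct finite values of $S(\uuu^{(0)}\setminus\{\underline\m\})$; the block attached to $(\alpha,j)$ has the form $h\,e^{-2S_j/h}M_{\alpha,j}$, with $M_{\alpha,j}$ admitting a classical expansion whose leading term $M_{\alpha,j}^{0}$ is the explicit weighted-graph matrix of \cite{Mi19}: its entries are assembled from the quantities $|\mu(\s)|/(2\pi)$ attached to the separating saddles and $D_{\u}^{-1}$ attached to the minima of the class, according to the incidence between the critical components $\widehat E(\m)$, $E(\m)$ and their boundary saddles, precisely as in the Witten case (cf.\ \eqref{e13} and Proposition~\ref{a67}~$ii)$ together with Remark~\ref{g16}). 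The off-block and higher-order terms are $\ooo(h^{\infty})$ relative to the diagonal blocks, thanks to Proposition~\ref{a67}~$iii)$--$iv)$ and the resolvent estimate \eqref{a18}, exactly as in the derivation of \eqref{g36}. Applying then the graded-matrix statement Theorem~\ref{b10} (our refinement of \cite[Theorem~A.4]{LePMi20}) identifies the spectrum of $M_{h}$, up to multiplicative errors $1+\ooo(h^{\infty})$, with the union over $(\alpha,j)$ of $h\,e^{-2S_j/h}\sigma(M_{\alpha,j})$; adjoining the eigenvalue $0$ carried by $\varphi_{\underline\m,h}$ yields the bijection $\beta$ with $\beta(z)=z+\ooo(h^{\infty}\vert z\vert)$, and the precise bookkeeping is what Theorems~\ref{b23} and~\ref{b20} will record.

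The main obstacle is the algebraic step: proving that each leading matrix $M_{\alpha,j}^{0}$ is symmetric positive definite, i.e.\ that within a fixed equivalence class at a fixed separating level no residual degeneracy remains. This is the combinatorial core inherited from \cite{Mi19}: one has to check that the ``Dirichlet'' contribution of the weighted graph Laplacian — the part coming from saddles on $\partial E(\m)$ that separate the class from components already treated at higher levels — makes the associated quadratic form coercive on the span of the class. Establishing this coercivity, and arranging the labeling of the orthonormal basis so that the triangular/graded hypotheses of Theorem~\ref{b10} are satisfied (which is where the type~I/type~II dichotomy of Definition~\ref{b33} is used to split off the contributions that decouple at higher order), is where essentially all the new work lies; the rest is a careful rerun of Section~\ref{s3}.
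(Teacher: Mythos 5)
Your overall roadmap matches the paper's: reuse the quasimode constructions, observe that $\Upsilon$ decouples into blocks indexed by the equivalence classes $\alpha\in\aaa$, analyze each block via the graded--matrix machinery of Theorem~\ref{b10}, and identify the leading matrices with explicit weighted--graph data. However, there is a concrete misstatement in the middle of your argument that, taken at face value, would lead you to the wrong conclusion.

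You write that in the orthonormalized basis the matrix $M_h$ of $P|_{\Ran\Pi_h}$ ``inherits the same block-diagonal structure, with one block per pair $(\alpha,j)$'', with ``the off-block and higher-order terms\dots $\ooo(h^\infty)$ relative to the diagonal blocks''. This is false. Within a fixed class $\alpha$, the entries coupling different levels $j<j'$ are of size comparable to $h\,e^{-(S_j+S_{j'})/h}$, which is exponentially \emph{larger} than the diagonal at level $j'$ (of size $h\,e^{-2S_{j'}/h}$). That is precisely why $\Upsilon_\alpha$ is a \emph{graded} matrix (Definition~\ref{b14}) and not block-diagonal by level, and why the matrices $M_{\alpha,j}$ in the statement are the \emph{Schur complements} $\jjj\circ\rrr_j\big(e^{2S_j/h}\Upsilon_\alpha^\sharp\big)$ rather than literal sub-blocks of $M_h$ --- if the inter-level coupling were $\ooo(h^\infty)$ relative to the diagonal, Theorem~\ref{b10} would be unnecessary. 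Your subsequent invocation of Theorem~\ref{b10} is correct, but it contradicts the preceding sentence; a reader cannot tell which of the two claims you actually rely on.

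Two further points. First, the decisive computation for the off-diagonal interactions $\langle P\varphi_{\m},\varphi_{\m'}\rangle$ with $\m\neq\m'$ is Proposition~\ref{b19}: it uses the sign-matching of $\ell_{\s,\m}$ and $\ell_{\s,\m'}$ near a shared saddle (Lemma~\ref{g20}) together with the anti-adjointness of $P_1$ to show that only $P^\sharp=P_2+P_0$ contributes at leading order, yielding \eqref{b27} with the factor $(-1)^{\delta_{\m,\m'}-1}$. You allude to this via Remark~\ref{g16}, but that remark only governs the diagonal entries; without \eqref{b27} you do not get the symmetric matrix $\Upsilon^\sharp_\alpha$ on which Theorem~\ref{b10} is applied. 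Second, you rightly flag positive definiteness of the leading matrices as the combinatorial core, but the proof is not just ``coercivity of a graph Laplacian'' in the abstract: the paper factors $M_0=L^*L$ with $L$ indexed by separating saddles, and injectivity of $L$ is proved by a propagation argument through the class using the saddles $\s_k$ with $G(\s_k)$ a singleton (i.e.\ those on $\partial\widehat E(\m)$). Finally, note that the paper proves Theorem~\ref{b34} only in the case $\uuu^{(0),II}=\emptyset$ (where the quasimodes remain quasi-orthonormal and no extra orthogonalization within a class is needed); the type~II case requires modified quasimodes as in \cite[Section~3]{Mi19}, and your proposal does not explain how to construct them.
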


Here, the set $h e^{- 2 S_{j} / h} \sigma ( M_{\alpha,j} )$ is empty whenever $S_{j} \notin S(\uuu^{( 0 )}_{\alpha})$. The general strategy to prove Theorem \ref{b34} (leading to the explicit expression of the matrices $M_{\alpha , j}$) is to combine the quasimodal constructions near the saddle points developed in the preceding section together with the topological classification of the saddle points introduced in \cite{Mi19}. In the latter work, the construction of the quasimode $\varphi_{\m}$ depends on the fact that $\m$ is of type I or II. In order to lighten the presentation, we assume from now that every point $\m$ is of type I and will prove Theorem \ref{b34} under this assumption, i.e. when $\uuu^{( 0 ) , II} = \emptyset$. In that case, the leading term of $M_{\alpha , j}$ is given in Theorem \ref{b20} below, which makes explicit the statement of Theorem \ref{b34}
when $\uuu^{( 0 ) , II} = \emptyset$. The reader may check that the construction below can be adapted to the case of type II points as in \cite{Mi19}.

\begin{remark}\sl \label{g17}
Note that  $\uuu^{( 0 ) , II} = \emptyset$ if and only if $f ( \widehat{\m} ) < f ( \m )$ for every  $\m \in \uuu^{( 0 )} \setminus \{ \underline \m \}$ (see Definition \ref{b33}). It follows that $\uuu^{( 0 ) , II} = \emptyset$ if and only if the first part of \eqref{a28} is satisfied. Indeed, if $\m$  is the unique global minimum of  $f_{\vert E ( \m )}$ for every $\m \in \uuu^{( 0 )}$,
then, for every $\m \in \uuu^{( 0 )} \setminus \{ \underline \m \}$, the relation $\m \in E ( \widehat{\m} )$ implies $f ( \widehat{\m} ) <f(\m)$. Conversely, by contraposition, assume  the existence of $\m \neq \m^{\prime} \in \uuu^{( 0 )}$ such that $\m^{\prime} \in E ( \m )$ and $f ( \m^{\prime} ) = f ( \m )$. It then holds $\bsigma ( \m ) > \bsigma ( \m^{\prime} )$ and thus $\widehat{\m^{\prime}} \in E ( \m )$, which implies $f ( \m ) \leq f ( \widehat{\m^{\prime}} ) \leq f ( \m^{\prime} )$ (see \eqref{b31}), and then $f ( \m^{\prime} ) = f ( \widehat{\m^{\prime}} )$, i.e. $\m^{\prime} \in \uuu^{( 0 ) , II} \neq \emptyset$.

In particular, the statement of Proposition \ref{a67} is valid when $\uuu^{( 0 ) , II} = \emptyset$. This will be used in the sequel.
\end{remark}

\subsection{Quasimodal constructions for type I minima}

Let $( \psi_{\m} )_{\m \in \uuu^{( 0 )}}$ denote the family of quasimodes of Definition \ref{a63}. We recall that, when $\m\neq \underline\m$, 
\begin{equation*} 
\psi_{\m} ( x ) = \theta_{\m} ( x ) \big( v_{\m } ( x ) + 1 \big) e^{- ( f ( x ) - f ( \m ) ) / h} ,
\end{equation*}
with $\theta_{\m}$ and $v_{\m}$ defined by \eqref{a60}, \eqref{a61}, and \eqref{a62} (here and throughout we dropped the subscript $h$ to lighten the notation). In particular, near any point $\s \in {\bf j} ( \m )$, one has
\begin{equation*}
v_{\m} ( x ) = C^{- 1}_{\s , h} \int_{0}^{\ell_{\s} ( x , h )} \zeta ( r / \tau_{0} ) e^{- \frac{r^{2}}{2 h}} \, d r ,
\end{equation*}
where the function $\ell_{\s}$ is the function defined by Proposition \ref{a52} such that there exists a neighbourhood $V$ of $\s$ such that $E ( \m ) \cap V \subset\{ \eta ( \s ) \cdot ( x - \s ) > 0 \}$ (see the lines below \eqref{a61}). This choice of sign depends of course of $\m$ and in order to avoid any confusion, we shall denote by $\ell_{\s , \m}$ the function $\ell_{\s}$ above.

\begin{lemma}\sl \label{g20}
Let $\m \neq \m^{\prime} \in \uuu^{( 0 )}$ satisfy ${\bf j} ( \m ) \cap {\bf j} ( \m^{\prime} )\neq\emptyset$  and take $\s \in {\bf j} ( \m ) \cap {\bf j} ( \m^{\prime} )$. Then, the functions $\ell_{\s , \m}$ and $\ell_{\s , \m^{\prime}}$ can be chosen so that $\ell_{\s , \m^{\prime}} = - \ell_{\s , \m}$ near $\s$.
\end{lemma}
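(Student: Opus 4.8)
The plan is to reduce the statement to a purely local fact about the geometry of $f$ near $\s$, read off from Lemma~\ref{a38}. Recall that, in the construction of the quasimodes, $\ell_{\s,\m}$ is ``the function $\ell_\s$ of Proposition~\ref{a52} with the appropriate sign'', the sign being chosen so that $E(\m)$ lies, in a small neighbourhood $V$ of $\s$, on the side $\{\ell_{\s,\m,0}>0\}$ of the zero set of the leading term (equivalently, in the half-space $\{\nabla\ell_{\s,\m,0}(\s)\cdot(x-\s)>0\}$; see the lines after \eqref{a61} and the discussion below Proposition~\ref{a52}). Fixing one reference function $\ell_\s$, we thus have $\ell_{\s,\m},\ell_{\s,\m^{\prime}}\in\{\ell_\s,-\ell_\s\}$, and the claim amounts to showing that $E(\m)$ and $E(\m^{\prime})$ lie, near $\s$, on opposite sides of the hypersurface $\{\ell_{\s,0}=0\}$.

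First I would record, from Lemma~\ref{a38} and \eqref{g14}, that near $\s$ one has $\phi_+=f-f(\s)+\frac{1}{2}\ell_{\s,0}^2$ with $\Hess\phi_+(\s)>0$ and $\nabla\ell_{\s,0}(\s)=\eta(\s)\neq0$. Hence $\{\ell_{\s,0}=0\}$ is a smooth hypersurface through $\s$, and on it $f-f(\s)=\phi_+>0$ away from $\s$; so for $r>0$ small the set $\{x\in D(\s,r);\ f(x)<f(\s)\}$ is disjoint from $\{\ell_{\s,0}=0\}$ and splits as $C_+(\s,r)\sqcup C_-(\s,r)$, with $C_\pm(\s,r):=\{x\in D(\s,r);\ f(x)<f(\s),\ \pm\ell_{\s,0}(x)>0\}$. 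A short argument in the Morse-type coordinates used in the proof of Lemma~\ref{a38}, writing $f-f(\s)=\frac{1}{2}\ell_{\s,0}^2-g$ with $g\geq0$ vanishing to second order on $\{\ell_{\s,0}=0\}$, shows that for $r$ small each $C_\pm(\s,r)$ is nonempty and connected, so $\{C_+(\s,r),C_-(\s,r)\}$ is exactly the pair $\{C_1(\s,r),C_2(\s,r)\}$ of the discussion preceding Definition~\ref{a23}. Along the way one checks that on $C_\pm(\s,r)$ the quadratic remainder of $\ell_{\s,0}$ is negligible against $|x-\s|$, so the side $\{\pm\ell_{\s,0}>0\}$ agrees there with the half-space $\{\pm\,\eta(\s)\cdot(x-\s)>0\}$.

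To conclude: by \eqref{a26}, $\s\in{\bf j}(\m)\cap{\bf j}(\m^{\prime})$ gives $\s\in\partial E(\m)\cap\partial E(\m^{\prime})$ with $E(\m)$ and $E(\m^{\prime})$ distinct connected components of $\{f<f(\s)\}$; each of them then contains at least one of $C_1(\s,r),C_2(\s,r)$, and since $\s\in\vvv^{(1)}$, Definition~\ref{a23} forces $C_1(\s,r)$ and $C_2(\s,r)$ to lie in different components of $\{f<f(\s)\}$. Therefore exactly one of $C_+(\s,r),C_-(\s,r)$ is contained in $E(\m)$ and the other in $E(\m^{\prime})$, and the sign prescription then yields $\{\ell_{\s,\m},\ell_{\s,\m^{\prime}}\}=\{\ell_\s,-\ell_\s\}$, that is $\ell_{\s,\m^{\prime}}=-\ell_{\s,\m}$ near $\s$.

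The main obstacle is the single delicate point in the second paragraph: proving that $C_+(\s,r)$ and $C_-(\s,r)$ are each connected, so that they really are the two connected components of $\{f<f(\s)\}\cap D(\s,r)$. This uses crucially the nondegeneracy $\Hess\phi_+(\s)>0$ of Lemma~\ref{a38} together with $\nabla\ell_{\s,0}(\s)\neq0$; everything else is bookkeeping with the sign convention below \eqref{a61} and with the definition of separating saddle points.
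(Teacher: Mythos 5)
Your proof is correct and follows essentially the same route as the paper's: reduce the statement to the sign convention below \eqref{a61} and to the fact that, near $\s$, the sets $E(\m)$ and $E(\m^{\prime})$ lie on opposite sides of the hypersurface $\{\ell_{\s,0}=0\}$ (equivalently of $\s+\eta(\s)^{\perp}$). The paper's own proof is much terser and leaves this geometric fact implicit, relying on the discussion around Lemma~\ref{a59} and the end of Section~\ref{b36}; you simply spell out those details.
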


\begin{proof}
Note that the function $\ell_{\s,\n}$ only makes sense when $\n\in \{\m,\m^{\prime}\}$. We assume that $\ell_{\s , \m}$ is given by Proposition \ref{a52} with the sign condition for $\m$ below \eqref{a61}. As explained at the end of Section \ref{b36}, $- \ell_{\s , \m}$ also satisfies Proposition \ref{a52} with the opposite sign condition. Thus, this function satisfies the sign condition for $\m^{\prime}$ and can be chosen as the function $\ell_{\s , \m^{\prime}}$.
\end{proof}

\begin{proposition}\sl \label{b19}
Assume that the hypotheses of Theorem \ref{b34} hold true
and that $\uuu^{( 0 ) , II} = \emptyset$. Then, the conclusions of Proposition \ref{a67} are satisfied. Moreover, for every $\m , \m^{\prime} \in \uuu^{( 0 )}$, one has
\begin{equation} \label{b27}
\< P \varphi_{\m} , \varphi_{\m^{\prime}} \> \in \eee_{cl} \bigg( ( - 1 )^{\delta_{\m , \m^{\prime}} - 1} h e^{- (S ( \m ) + S ( \m^{\prime} ) ) / h} \sum_{\s \in {\bf j} ( \m ) \cap {\bf j} ( \m^{\prime} )} \frac{\vert \mu ( \s ) \vert}{2 \pi} \frac{\sqrt{D_{\m} D_{\m^{\prime}}}}{D_{\s}} \bigg) .
\end{equation}
Finally, denoting $P^{\sharp} = P_{2} + P_{0}$, there exists $\alpha > 0$ such that
\begin{equation} \label{b22}
\< P \varphi_{\m} , \varphi_{\m^{\prime}} \> = \< P^{\sharp} \varphi_{\m} , \varphi_{\m^{\prime}} \> \big( 1 + \ooo ( e^{- \alpha / h} ) \big) .
\end{equation}
\end{proposition}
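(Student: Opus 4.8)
Since $\uuu^{(0),II}=\emptyset$ is equivalent to the first part of \eqref{a28} by Remark~\ref{g17}, Proposition~\ref{a67} applies verbatim; in particular its conclusions hold, and its item $ii)$ is precisely \eqref{b27} in the diagonal case $\m=\m'$ (there $(-1)^{\delta_{\m,\m}-1}=1$, $S(\m)+S(\m)=2S(\m)$, $\sqrt{D_\m D_\m}=D_\m$ and ${\bf j}(\m)\cap{\bf j}(\m)={\bf j}(\m)$). Likewise \eqref{b22} for $\m=\m'$ is trivial: $P_1$ is formally anti-adjoint and $\varphi_\m$ is real, so $\<P_1\varphi_\m,\varphi_\m\>=0$. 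Thus everything reduces to the case $\m\neq\m'$. There I first dispose of the trivial geometric configurations. By points $iv)$ and $v)$ of Lemma~\ref{a65}, if $\bsigma(\m)\neq\bsigma(\m')$, or if $\bsigma(\m)=\bsigma(\m')$ with ${\bf j}(\m)\cap{\bf j}(\m')=\emptyset$, then either $\supp\psi_\m\cap\supp\psi_{\m'}=\emptyset$ or $\psi_\m=2e^{-(f-f(\m))/h}$ on a neighbourhood of $\supp\psi_{\m'}$, in which case $P\psi_\m=P^\sharp\psi_\m=0$ there by \eqref{a17}; in all cases $\<P\psi_\m,\psi_{\m'}\>=\<P^\sharp\psi_\m,\psi_{\m'}\>=0$ while ${\bf j}(\m)\cap{\bf j}(\m')=\emptyset$, so both sides of \eqref{b27} and \eqref{b22} vanish. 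Hence only $\bsigma(\m)=\bsigma(\m')$ with ${\bf j}(\m)\cap{\bf j}(\m')\neq\emptyset$ is left, and there, up to errors $\ooo(e^{-C/h})$ localized on $\supp\nabla\theta_\m$ (where $f-f(\m)\geq S(\m)+\tfrac32\delta_0$), both $\psi_\m\psi_{\m'}$ and $P\psi_\m$ restricted to $\supp\psi_{\m'}$ are supported in $\bigcup_{\s\in{\bf j}(\m)\cap{\bf j}(\m')}\ccc_{\s,3\tau_0,3\delta_0}$, on each of which $v_{\m'}=-v_\m$ near~$\s$ thanks to Lemma~\ref{g20}.

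For \eqref{b27}, I write $\psi_\m=\widetilde v_\m\widetilde\psi_\m$ with $\widetilde v_\m=\theta_\m(1+v_\m)$, $\widetilde\psi_\m=e^{-(f-f(\m))/h}$, and run the integration by parts of the proof of Proposition~\ref{a67} with the two minima $\m,\m'$ instead of one; using $(P_2+P_0)\widetilde\psi_\m=0$ and the symmetry of $A$, this gives the clean formula
\[
\< P^\sharp\psi_\m,\psi_{\m'}\>=h^2\int A\nabla\widetilde v_\m\cdot\nabla\widetilde v_{\m'}\,\widetilde\psi_\m\widetilde\psi_{\m'}\,dx .
\]
Near each $\s\in{\bf j}(\m)\cap{\bf j}(\m')$, \eqref{v69} together with Lemma~\ref{g20} yields $\nabla\widetilde v_{\m'}=-\nabla\widetilde v_\m$, hence $A\nabla\widetilde v_\m\cdot\nabla\widetilde v_{\m'}=-C_{\s,h}^{-2}\zeta(\ell_\s/\tau_0)^2(A\nabla\ell_\s\cdot\nabla\ell_\s)e^{-\ell_\s^2/h}$. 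Combining $2f+\ell_\s^2=2(f+\tfrac12\ell_{\s,0}^2)+\ooo(h)=2\phi_++2f(\s)+\ooo(h)$ (Lemma~\ref{a38}) with $2f(\s)-f(\m)-f(\m')=S(\m)+S(\m')$, the integrand becomes a full nondegenerate Gaussian $e^{-2\phi_+/h}$ times $e^{-(S(\m)+S(\m'))/h}$, so the Laplace method with the classical expansions of $A$ and $\ell_\s$ applies; using $(A^0\nabla\ell_{\s,0}\cdot\nabla\ell_{\s,0})(\s)=|\mu(\s)|$ and $\det\Hess(f+\tfrac12\ell_{\s,0}^2)(\s)=|\det\Hess f(\s)|$ from Lemma~\ref{a42}, and \eqref{g19}, one obtains $\<P^\sharp\psi_\m,\psi_{\m'}\>\in\eee_{cl}\bigl(-\tfrac{2h}{\pi}(\pi h)^{d/2}e^{-(S(\m)+S(\m'))/h}\sum_{\s}|\mu(\s)|D_\s^{-1}\bigr)$. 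Dividing by $\Vert\psi_\m\Vert\Vert\psi_{\m'}\Vert\in\eee_{cl}\bigl(4(\pi h)^{d/2}D_\m^{-1/2}D_{\m'}^{-1/2}\bigr)$ (Laplace, using the first part of \eqref{a28}) gives \eqref{b27} for $P^\sharp$, and then for $P$ by \eqref{b22}. The sign $(-1)^{\delta_{\m,\m'}-1}=-1$ is produced exactly by the relation $\nabla\widetilde v_{\m'}=-\nabla\widetilde v_\m$, the diagonal case differing only in that this sign flip is absent.

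It remains to prove \eqref{b22} for $\m\neq\m'$, i.e.\ that $\<P_1\varphi_\m,\varphi_{\m'}\>=\ooo(e^{-\alpha/h})\<P^\sharp\varphi_\m,\varphi_{\m'}\>$. Since $P_1\widetilde\psi_\m=0$ by \eqref{a17}, one has $P_1\psi_\m=[P_1,\widetilde v_\m]\widetilde\psi_\m=h(b\cdot\nabla\widetilde v_\m)\widetilde\psi_\m$, and symmetrizing via the anti-adjointness of $P_1$ (so that the $\widetilde v_{\m'}$ factor disappears, using $v_{\m'}=-v_\m$ near $\s$) one gets, up to a $\ooo(e^{-(S(\m)+S(\m')+3\delta_0)/h})$ contribution of $\supp\nabla\theta_\m$,
\[
\<P_1\psi_\m,\psi_{\m'}\>=h\sum_{\s\in{\bf j}(\m)\cap{\bf j}(\m')}\int C_{\s,h}^{-1}\,\zeta(\ell_\s/\tau_0)\,(b\cdot\nabla\ell_\s)\,e^{-\ell_\s^2/2h}\,\widetilde\psi_\m\widetilde\psi_{\m'}\,dx+\ooo(e^{-C/h}) .
\]
The key point is then to substitute $b\cdot\nabla\ell_\s=-2A\nabla\ell_\s\cdot\nabla f-(A\nabla\ell_\s\cdot\nabla\ell_\s)\ell_\s+h\div(A\nabla\ell_\s)+\ooo(h^\infty)$ from Proposition~\ref{a52}~$ii)$, to rewrite $A\nabla f\,\widetilde\psi_\m\widetilde\psi_{\m'}=-\tfrac h2A\nabla(\widetilde\psi_\m\widetilde\psi_{\m'})$, and to integrate by parts: the three terms conspire so that the contributions of $-2A\nabla\ell_\s\cdot\nabla f$ cancel exactly against those of $-(A\nabla\ell_\s\cdot\nabla\ell_\s)\ell_\s$ and of $h\div(A\nabla\ell_\s)$, leaving only the boundary term
\[
\<P_1\psi_\m,\psi_{\m'}\>=-\frac{h^2}{\tau_0}\sum_{\s\in{\bf j}(\m)\cap{\bf j}(\m')}\int C_{\s,h}^{-1}\,\zeta'(\ell_\s/\tau_0)\,(A\nabla\ell_\s\cdot\nabla\ell_\s)\,e^{-\ell_\s^2/2h}\,\widetilde\psi_\m\widetilde\psi_{\m'}\,dx+\ooo(e^{-C/h}) .
\]
Now $\zeta'(\ell_\s/\tau_0)$ is supported in $\{\tau_0\leq|\ell_\s|\leq2\tau_0\}$, away from $\{\ell_\s=0\}$, so the Gaussian supplies a factor $e^{-\tau_0^2/2h}$, and one checks that on $\ccc_{\s,3\tau_0,3\delta_0}\cap\{|\ell_\s|\geq\tau_0\}$ one has $2f-f(\m)-f(\m')+\tfrac12\ell_\s^2\geq S(\m)+S(\m')+\alpha$ for some $\alpha>0$, for $\tau_0>0$ and then $\delta_0>0$ small enough; comparing with \eqref{b27} this gives $\<P_1\psi_\m,\psi_{\m'}\>=\ooo(e^{-\alpha'/h})\<P^\sharp\psi_\m,\psi_{\m'}\>$, and dividing by $\Vert\psi_\m\Vert\Vert\psi_{\m'}\Vert$ yields \eqref{b22}. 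I expect this last estimate to be the main obstacle: one has to exploit the precise geometry of $f$ on the pinched component $\ccc_{\s,3\tau_0,3\delta_0}$ near~$\s$ (Lemma~\ref{a59} and Figure~\ref{f1}), namely that $f+\tfrac12\ell_\s^2=\phi_++f(\s)+\ooo(h)$ with $\phi_+$ bounded below by a positive constant away from $\{\ell_\s=0\}$, exactly as in the proof of the corresponding estimate in \cite{LePMi20}.
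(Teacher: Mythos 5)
Your treatment of \eqref{b27} coincides with the paper's: reduce via Lemma~\ref{a65} to the case $\bsigma(\m)=\bsigma(\m')$ with ${\bf j}(\m)\cap{\bf j}(\m')\neq\emptyset$, compute $\<P^\sharp\psi_\m,\psi_{\m'}\>$ using the symmetric structure of $P_2+P_0$, $\widetilde\psi_\m\nabla\widetilde\psi_{\m'}=\widetilde\psi_{\m'}\nabla\widetilde\psi_\m$, and $\nabla\widetilde v_{\m'}=-\nabla\widetilde v_\m$ from Lemma~\ref{g20}, then apply the Laplace method near each $\s$ with Lemmas~\ref{a38} and~\ref{a42}. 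Your integration by parts for $\<P_1\psi_\m,\psi_{\m'}\>$ is a genuinely different route for \eqref{b22}, and its mechanics are correct: symmetrizing by anti-adjointness and using $v_{\m'}=-v_\m$ gives $h\int(b\cdot\nabla v_\m)\widetilde\psi_\m\widetilde\psi_{\m'}\,dx$ up to $\ooo(e^{-(S(\m)+S(\m')+2\delta_0)/h})$, and inserting Proposition~\ref{a52}\,ii) and integrating by parts does cancel the non-boundary terms, leaving only the $\zeta'$-boundary term.

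The gap is the final bound $2f-f(\m)-f(\m')+\tfrac12\ell_\s^2\geq S(\m)+S(\m')+\alpha$ on $\ccc_{\s,3\tau_0,3\delta_0}\cap\{|\ell_\s|\geq\tau_0\}$. Subtracting $S(\m)+S(\m')=2f(\s)-f(\m)-f(\m')$, the excess phase is $g:=2(f-f(\s))+\tfrac12\ell_{\s,0}^2+\ooo(h)$, whose Hessian at $\s$ is $2H(\s)+\eta(\s)\eta(\s)^t$. The identity $\<H^{-1}\eta,\eta\>=-2$, established in the proof of Lemma~\ref{a42}, forces $\det\big(\Id+\tfrac12 H^{-1}\eta\eta^t\big)=1+\tfrac12\<H^{-1}\eta,\eta\>=0$, so this Hessian is positive \emph{semidefinite} with a one-dimensional kernel $\R w$, and necessarily $\<w,\eta\>\neq 0$ (else $Hw=0$), so the line $\s+\R w$ does enter $\{|\ell_\s|\geq\tau_0\}$ inside the pinched component. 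Along $w$ the phase degenerates, $g(\s+tw)=\ooo(t^3)$, with a cubic coefficient that the hypotheses do not control; equivalently, a Lagrange-multiplier computation gives $\min\{\phi_+:\ell_{\s,0}=r\}=r^2/4+\ooo(r^3)$ (using $\<\eta,\Hess\phi_+(\s)^{-1}\eta\>=2$), hence $\min\{g:\ell_{\s,0}=r\}=\ooo(r^3)$, i.e.\ the quadratic model yields exactly zero. The positivity of $\phi_+$ alone, which is what the LePMi20-type estimate you cite provides, controls only $f-f(\m)+\tfrac12\ell_\s^2\geq S(\m)+\alpha_1$; the second factor $f-f(\m')$ dips below $S(\m')$ on $\ccc_{\s,3\tau_0,3\delta_0}$ by an amount of order $\tau_0^2$, which eats the whole gain. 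The paper avoids this: after the same reduction, it uses $P_1(e^{-f/h})=0$, $\theta_\m=\theta_{\m'}$, $v_{\m'}=-v_\m$ and the anti-adjointness of $P_1$ to collapse $\<P_1\psi_\m,\psi_{\m'}\>$, up to contributions supported in $\supp\nabla\theta_\m$ where both $\widetilde\psi$'s are already exponentially small, to $-e^{(f(\m')-f(\m))/h}\<P_1(\theta_\m v_\m\widetilde\psi_\m),\theta_\m v_\m\widetilde\psi_\m\>$, which is identically zero since $\<P_1u,u\>$ for real $u$ is simultaneously real and purely imaginary. That short algebraic argument is the intended proof of \eqref{b22}; if you want to keep your boundary-term formula you must extract smallness from the sign cancellation in $\zeta'$, not pointwise.
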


\begin{remark}\sl
$i)$ Note that the relation \eqref{b27} implies that 
\begin{equation} \label{b21}
\< P \varphi_{\m} , \varphi_{\m^{\prime}} \> = 0 ,
\end{equation}
for every $\m,\m^{\prime}\in\uuu^{(0)}$ with ${\bf j} ( \m ) \cap {\bf j} ( \m^{\prime} ) = \emptyset$.
This is in particular the case when $\m^{\prime} \notin \Cl ( \m )$ (see Definition \ref{de.equiv}).

$ii)$ When $\uuu^{( 0 ) , II} \neq \emptyset$, the family of quasimodes $(\varphi_{\m , h})_{\m \in \uuu^{( 0 )}}$ is not quasi-orthonormal and thus does not satisfy the $i)$ of Proposition \ref{a67} (see indeed Remarks \ref{g16} and \ref{g17}).
\end{remark}

\begin{proof}[Proof of Proposition \ref{b19}]
Thanks to Remark \ref{g17} and $\uuu^{( 0 ) , II} = \emptyset$, the assumptions of Proposition \ref{a67} are satisfied.

Let us now prove \eqref{b27} and take $\m , \m^{\prime} \in \uuu^{( 0 )}$. When $\m^{\prime} = \m$, this is exactly $i)$ of Proposition \ref{a67} and we can hence assume that $\m^{\prime} \neq \m$.

Assume moreover that ${\bf j} ( \m ) \cap {\bf j} ( \m^{\prime} ) = \emptyset$. If $\bsigma ( \m ) > \bsigma ( \m^{\prime} )$, then $\varphi_{\m} = C_{\m,h} e^{- ( f - f ( \m ) ) / h}$ on $\supp \varphi_{\m^{\prime}}$ for some constant $C_{\m,h}$. Hence $P \varphi_{\m} = 0$ on $\supp \varphi_{\m^{\prime}}$, which implies $\< P \varphi_{\m} , \varphi_{\m^{\prime}} \> = 0$. If $\bsigma ( \m ) < \bsigma ( \m^{\prime} )$, the same argument works since $P^{*} ( e^{- f / h} ) = 0$. Assume now that $\bsigma ( \m ) = \bsigma ( \m^{\prime} )$. According to $iv)$ in Lemma \ref{a65}, it follows
that $\supp ( \varphi_{\m } ) \cap \supp ( \varphi_{\m^{\prime} } ) = \emptyset$
and then that  $\< P \varphi_{\m} , \varphi_{\m^{\prime}} \> = 0$. Summing up, if ${\bf j} ( \m ) \cap {\bf j} ( \m^{\prime} ) = \emptyset$, we always have $\< P \varphi_{\m} , \varphi_{\m^{\prime}} \> = 0$, which is precisely \eqref{b27} in that case.

Assume eventually that ${\bf j} ( \m ) \cap {\bf j} ( \m^{\prime} ) \neq \emptyset$ and then $\bsigma ( \m ) = \bsigma ( \m^{\prime} )$. As in the proof of Proposition \ref{a67}, we denote $\psi_{\m} = \widetilde{v}_{\m} \widetilde{\psi}_{\m}$ with $\widetilde{v}_{\m} = \theta_{\m} ( 1 + v_{\m} )$ and $\widetilde{\psi}_{\m} = e^{- ( f - f ( \m ) ) / h}$. Using the identity $\widetilde{\psi}_{\m} \nabla \widetilde{\psi}_{\m^{\prime}} = \widetilde{\psi}_{\m^{\prime}} \nabla \widetilde{\psi}_{\m}$ and working as in the proof of Proposition \ref{a67}, we get 
\begin{equation} \label{b37}
\< ( P_{2} + P_{0} ) \psi_{\m} , \psi_{\m^{\prime}} \> = h^{2} \big\< \widetilde{\psi}_{\m} A \nabla \widetilde{v}_{\m} , \widetilde{\psi}_{\m^{\prime}} A \nabla \widetilde{v}_{\m^{\prime}} \big\> .
\end{equation}
Observe now that
$\theta_{\m}=\theta_{\m^{\prime}}$ (see \eqref{a62}) and that
\begin{equation} \label{g26}
\supp \theta_{\m} ( v_{\m} + 1 ) ( v_{\m^{\prime}} + 1 ) \subset \bigcup_{\s \in{\bf j} ( \m ) \cap {\bf j} ( \m^{\prime} )} \ccc_{\s , 3 \tau_{0} , 3 \delta_{0}} ,
\end{equation}
from \eqref{a60}. Hence, since
$\ell_{\s , \m^{\prime}} = - \ell_{\s , \m}$ (see Lemma \ref{g20}) and then $v_{\m^{\prime}} = - v_{\m}$ on the support of $\theta_{\m} (v_{\m} +1)(v_{\m^{\prime}}+1)$ (see \eqref{a61}), we have
\begin{align*}
\< P_{1} \psi_{\m} , \psi_{\m^{\prime}} \> &= \big\< P_{1} ( \theta_{\m} (v_{\m}+1) \widetilde{\psi}_{\m}) , \theta_{\m} ( - v_{\m} + 1 ) \widetilde{\psi}_{\m^{\prime}} \big\> .
\end{align*}
Since in addition $P_{1}$ is formally anti-adjoint, $P_{1} ( e^{-f/h} ) = 0$, and $f - \delta_{0} > \bsigma ( \m ) = \bsigma ( \m^{\prime} )$ on the compact $\supp ( \nabla \theta_{\m} )$ (see \eqref{a62}),
\begin{align}
\< P_{1} \psi_{\m} , \psi_{\m^{\prime}} \> &= -  \big\< P_{1} ( \theta_{\m} v_{\m} \widetilde{\psi}_{\m}) , \theta_{\m} v_{\m} \widetilde{\psi}_{\m^{\prime}} \big\> + \ooo \big( e^{- ( S ( \m ) + S ( \m^{\prime} ) + 2\delta_{0} ) / h} \big)   \nonumber \\
&= -e^{( f ( \m^{\prime} ) - f ( \m ) ) / h}\big\< P_{1} ( \theta_{\m} v_{\m} \widetilde{\psi}_{\m}) , \theta_{\m} v_{\m} \widetilde{\psi}_{\m} \big\> +\ooo \big( e^{- ( S ( \m ) + S ( \m^{\prime} ) + 2\delta_{0} ) / h} \big)
 \nonumber\\
&= \ooo \big( e^{- ( S ( \m ) + S ( \m^{\prime} ) + 2\delta_{0} ) / h} \big). \label{b38}
\end{align}
Combining \eqref{b37} and \eqref{b38}, we get
\begin{equation} \label{b39}
\< P \psi_{\m} , \psi_{\m^{\prime}} \> = h^{2} \big\< \widetilde{\psi}_{\m} A \nabla \widetilde{v}_{\m} , \widetilde{\psi}_{\m^{\prime}} \nabla \widetilde{v}_{\m^{\prime}} \big\> + \ooo \big( e^{- ( S ( \m ) + S ( \m^{\prime} ) + 2\delta_{0} ) / h} \big) .
\end{equation}
Using \eqref{v69}, \eqref{g26}, and $f - \delta_{0} > \bsigma ( \m ) = \bsigma ( \m^{\prime} )$ on $\supp ( \nabla \theta_{\m} )$, this implies
\begin{align*}
\< P \psi_{\m} , \psi_{\m^{\prime}} \> ={}& h^{2} \sum_{\s \in {\bf j} ( \m )\cap {\bf j} ( \m^{\prime} )} \frac{1}{C^{2}_{\s , h} } \int_{\ccc_{\s , 3 \tau_{0} , 3 \delta_{0}}} \theta^{2}_{\m} \zeta ( \ell_{\s , \m} / \tau_{0} ) \zeta ( \ell_{\s , \m^{\prime}} / \tau_{0} )   \\
&\times A \nabla \ell_{\s , \m} \cdot \nabla \ell_{\s, \m^{\prime}} e^{- \big( 2 f + \frac{\ell_{\s , \m}^{2}}{2} + \frac{\ell_{\s , \m^{\prime}}^{2}}{2} - f ( \m ) - f ( \m^{\prime} ) \big) / h} d x + \ooo \big( e^{-( S ( \m ) + S ( \m^{\prime} ) + 2\delta_{0} ) / h} \big) .
\end{align*}
Since  $\ell_{\s , \m^{\prime}} = - \ell_{\s , \m}$ on $\ccc_{\s , 3 \tau_{0} , 3 \delta_{0}}$ and $\zeta$ is even, it follows that
\begin{align*}
\< P \psi_{\m} , \psi_{\m^{\prime}} \> ={}& h^{2} \sum_{\s \in {\bf j} ( \m ) \cap {\bf j} ( \m^{\prime} )} \frac{- 1}{C_{\s , h}^{2}} \int \theta^{2}_{\m} \zeta ( \ell_{\s , \m} / \tau_{0} )^{2} A \nabla \ell_{\s , \m} \cdot \nabla \ell_{\s , \m} e^{- \big( 2 f + \ell_{\s , \m}^{2} - f ( \m ) - f ( \m^{\prime} ) \big) / h} d x \\
&+ \ooo \big( e^{-( S ( \m ) + S ( \m^{\prime} ) + 2\delta_{0} ) / h} \big) .
\end{align*}
Since ${\bf j} ( \m ) \cap {\bf j} ( \m^{\prime} ) \neq \emptyset$, this quantity can be computed by the Laplace method as in the proof of Proposition \ref{a67}. We obtain 
\begin{equation} \label{b40}
\< P \psi_{\m} , \psi_{\m^{\prime}} \> \in \eee_{cl} \Big( \frac{-2 h}{\pi} ( \pi h )^{\frac{d}{2}} e^{- (S ( \m ) +S(\m^{\prime}))/ h} \sum_{\s \in {\bf j} ( \m )\cap {\bf j} ( \m^{\prime} )} \vert \mu ( \s ) \vert D_{\s}^{- 1} \Big).
\end{equation}
The relation \eqref{b27} follows, using also \eqref{a68}.

Eventually, \eqref{b22} follows from \eqref{b37}, \eqref{b38}, \eqref{b39}, and \eqref{b40} when ${\bf j} ( \m ) \cap {\bf j} ( \m^{\prime} ) \neq \emptyset$, and, when ${\bf j} ( \m ) \cap {\bf j} ( \m^{\prime} ) = \emptyset$,  from 
$\< P \varphi_{\m} , \varphi_{\m^{\prime}} \> = 0= \< P^{\sharp} \varphi_{\m} , \varphi_{\m^{\prime}} \> $,
 where the last equality can be proved
as was the first one in the third paragraph, using  $( P_{2} + P_{0} ) ( e^{- f / h} ) =( P_{2} + P_{0} )^{*}( e^{- f / h} )= 0$.
\end{proof}

\subsection{Graded structure of the interaction matrix}

Suppose from now on that the minima $\m \in \uuu^{( 0 )}$ of $f$ are labeled as in \eqref{b32}. Let $( \widetilde{e}_{j} )_{j = 1 , \ldots , n_{0} }$ denote the basis of $\Ran \Pi_{h}$ obtained from the sequence $( \Pi_{h} \varphi_{\m_{n_{0} - j + 1}} )_{j = 1 , \ldots , n_{0}}$ by 
the Gram--Schmidt process, and let $e_{j} = \widetilde{e}_{n_{0} - j + 1}$ for $j = 1 , \ldots , n_{0}$.
We recall that, taking the $\varepsilon_{*}$ of Proposition \ref{a22},
\begin{equation*}
\Pi_{h} = \frac{1}{2 i \pi} \int_{\partial D ( 0 , \varepsilon_{*} h / 2 )} ( z - P )^{- 1} d z
\end{equation*}
denotes the spectral projector associated with the  
$n_0$ eigenvalues of $P$ 
of order $\ooo(h^{1+\alpha})$, $\alpha>0$.

Let $\Upsilon = ( \upsilon_{i , j} )_{i , j = 1 , \ldots , n_{0} - 1}$ denote the matrix with coefficients
\begin{equation} \label{b26}
\upsilon_{i , j} = \< P e_{i} , e_{j} \> .
\end{equation}
Introduce also the matrix $\Upsilon^{\sharp} = ( \upsilon^{\sharp}_{i , j})_{i , j = 1 , \ldots , n_{0} - 1}$ defined by 
\begin{equation} \label{b24}
\upsilon^{\sharp}_{i , j} = \< P^{\sharp} \varphi_{\m_{i}} , \varphi_{\m_{j}} \> =
\<(P_{2} + P_{0})\varphi_{\m_{i}} , \varphi_{\m_{j}} \> .
\end{equation}
Note that, in these definitions, we do not consider the contribution of the vectors $e_{n_{0}}$ and $\varphi_{\m_{n_{0}}}$ which are collinear to $e^{- f / h}$ and then belong to the kernels of $P$, $P^{*}$, and $P^{\sharp}$. If we had added these latter vectors in the definitions of $\Upsilon$ and of $\Upsilon^{\sharp}$, the last line and column of these matrices would have consisted of zeros. In particular, $\sigma ( P_{\vert \Ran \Pi_{h}} ) = \sigma ( \Upsilon ) \cup \{ 0 \}$.

In order to compute the spectrum of the matrix $\Upsilon$ and then  the spectrum of $P_{\vert \Ran \Pi_{h}}$, we recall some tools from \cite[Section 5B]{Mi19}. In the sequel, we denote by $\Sr^{+} ( E )$ the set of symmetric positive definite matrices on a vector space $E$. We will denote by $\Sr^{+}_{cl} ( E )$ the set of $h$-depending matrices $M ( h ) \in \Sr^{+} ( E )$ admitting a classical expansion $M ( h ) \sim \sum_{j} h^{j} M_{j}$ with $M_{0} \in \Sr^{+} ( E )$. We will sometimes forget $E$ and write for short $\Sr^{+}$, $\Sr^{+}_{cl}$.

\begin{defin}\sl \label{b14}
Let $\Er = ( E_{j} )_{j = 1 , \ldots , p}$ be a sequence of finite dimensional vector spaces, $E = \oplus_{j = 1 , \ldots , p} E_{j}$, and let $\tau = ( \tau_{2} , \ldots , \tau_{p} ) \in ( \R_{+}^{*} )^{p - 1}$. Suppose that $\tau \mapsto \mmm ( \tau )$ is a map from $( \R_{+}^{*} )^{p - 1}$ into the set of matrices $\Mr(E)$. 
\begin{itemize}
\item[$\star$] We say that $\mmm ( \tau )$ is an $( \Er , \tau )$-graded symmetric matrix if there exists $M \in \Sr^{+} ( E )$ independent of $\tau$ such that 
\begin{equation} \label{b15}
\mmm ( \tau ) = \Omega ( \tau ) M \Omega ( \tau ) ,
\end{equation}
with $\Omega ( \tau ) = \diag ( \varepsilon_{1} ( \tau ) \Id_{E_{1}} , \ldots , \varepsilon_{p} ( \tau ) \Id_{E_{p}} )$, $\varepsilon_{1} ( \tau ) = 1$ and $\varepsilon_{j} ( \tau ) = \prod_{k = 2}^{j} \tau_{k}$ for $j \geq 2$. 

\item[$\star$] We say that a family of $( \Er , \tau )$-graded symmetric matrices $\mmm_{h} ( \tau )$, $h \in ] 0 , h_{0} ]$ is classical if one has $\mmm_{h} ( \tau ) = \Omega ( \tau ) M_{h} \Omega ( \tau )$ for some matrix $M_{h} \in \Sr^{+}_{cl} ( E )$.

\item[$\star$] We say that $\mmm_{h} ( \tau )$ is an $( \Er , \tau )$-graded almost symmetric matrix if there exists $M_{h} \in \Mr ( E )$ such that 
\begin{equation} \label{b16}
\mmm_{h} ( \tau ) = \Omega ( \tau ) M_{h} \Omega ( \tau ), \quad M_{h} + M_{h}^{*}\in \Sr^{+} ( E ) \quad \text{and} \quad M_{h} - M_{h}^{*} = \ooo ( h^\infty ) .
\end{equation}

\item[$\star$] We say that a family of $( \Er , \tau )$-graded almost symmetric matrices $\mmm_{h} ( \tau )$, $h \in ] 0 , h_{0} ]$ is classical if the matrix $M_{h}$ in \eqref{b16} satisfies $M_{h} + M_{h}^{*} \in \Sr^{+}_{cl} ( E )$.
\end{itemize}
Throughout, we denote by $GS ( \Er , \tau )$ (resp. $GS{cl} ( \Er , \tau )$) the set of (resp. classical) $( \Er , \tau )$-graded symmetric matrices, and by $GAS ( \Er , \tau )$ (resp. $GAS_{cl} ( \Er , \tau )$) the set of (resp. classical) $( \Er , \tau )$-graded almost symmetric matrices.

For any $\mmm \in \Mr ( E )$, we denote $\mmm^{s} = \frac{1}{2} ( M + M^{*} )$ its real part. Obviously, the real part of a matrix in $GAS ( \Er , \tau )$ (resp. $GAS_{cl} ( \Er , \tau )$) belongs to $GS ( \Er , \tau )$ (resp. $GS_{cl} ( \Er , \tau )$).
\end{defin}

Let $\{ S_{1} \leq \cdots \leq S_{p} \}$ denote the set $\{ S ( \m_{j} ) ; \ j = 1 , \ldots , n_{0} - 1 \}$ and, for all $k = 1 , \ldots , p$, let $E_{k}$ denote the vector space generated by $\{ e_{r} ; \ S ( \m_{r} ) = S_{k} \}$. We also set $\tau_{k} = e^{- ( S_{k} - S_{k - 1} ) / h}$ for any $k = 2 , \ldots , p$.

\begin{proposition}\sl \label{b17}
Let $\Er = ( E_{1} , \ldots , E_{p} )$ and $\tau = ( \tau_{2} , \ldots , \tau_{p} )$ be as above, then $h^{- 1} e^{2 S_{1} / h} \Upsilon^{\sharp}$ belongs to $GS_{cl} ( \Er , \tau )$ and $h^{- 1} e^{2 S_{1} / h} \Upsilon$ belongs to $GAS_{cl} ( \Er , \tau )$. Moreover, one has
\begin{equation} \label{b18}
e^{2 S_{1} / h} \Upsilon = e^{2 S_{1} / h} \Upsilon^{\sharp} + \Omega ( \tau ) \ooo ( h^{\infty} ) \Omega ( \tau ) .
\end{equation}
\end{proposition}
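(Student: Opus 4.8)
The plan is to follow the strategy of \cite[Section 5B]{Mi19}, feeding in the sharp interaction estimates of Proposition~\ref{b19} in place of the cruder bounds used there. Throughout, for $\m\in\uuu^{(0)}$ write $a(\m)$ for the index with $S(\m)=S_{a(\m)}$, so that $\varepsilon_{a(\m)}(\tau)=\prod_{k=2}^{a(\m)}\tau_{k}=e^{-(S(\m)-S_{1})/h}$, and set $\widetilde\lambda_{i}:=\<P\varphi_{\m_{i}},\varphi_{\m_{i}}\>$, which by \eqref{b27} is $\asymp he^{-2S(\m_{i})/h}$. The first step is to transfer the estimates from the orthonormal basis $(e_{i})$ to the quasimodes $(\varphi_{\m_{i}})$. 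Recall $v_{i}:=\Pi_{h}\varphi_{\m_{i}}$, that $\Pi_{h}$ commutes with $P$ and $\Pi_{h}^{*}$ with $P^{*}$, and that, arguing exactly as for \eqref{a5} (whose proof only uses $iii)$--$iv)$ of Proposition~\ref{a67}, valid here by Proposition~\ref{b19}, together with \eqref{a18}), $(1-\Pi_{h})\varphi_{\m_{i}}=\ooo(h^{\infty}\sqrt{\widetilde\lambda_{i}})$ and $(1-\Pi_{h}^{*})\varphi_{\m_{i}}=\ooo(\sqrt{h\widetilde\lambda_{i}})$. Writing $Pv_{i}=\Pi_{h}P\varphi_{\m_{i}}$, $v_{j}=\varphi_{\m_{j}}-(1-\Pi_{h})\varphi_{\m_{j}}$ and bounding the cross terms with $iii)$--$iv)$ of Proposition~\ref{a67} gives $\<Pv_{i},v_{j}\>=\<P\varphi_{\m_{i}},\varphi_{\m_{j}}\>+\ooo(h^{\infty}\sqrt{\widetilde\lambda_{i}\widetilde\lambda_{j}})$. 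Since the Gram--Schmidt process is run on $(v_{n_{0}},\dots,v_{1})$, i.e.\ by decreasing values of $S$, it produces as in \cite[Lemma 4.11]{LePMi20} a decomposition $e_{i}=v_{i}+\sum_{k>i}\ooo(e^{-C/h})\,v_{k}$, where $k>i$ implies $S(\m_{k})\geq S(\m_{i})$ by \eqref{b32}; combining this with $\<P\varphi_{\m_{k}},\varphi_{\m_{l}}\>=\ooo(he^{-(S(\m_{k})+S(\m_{l}))/h})$ from \eqref{b27} and $e^{-C/h}=\ooo(h^{\infty})$, I obtain
\[
\upsilon_{i,j}=\<Pe_{i},e_{j}\>=\<P\varphi_{\m_{i}},\varphi_{\m_{j}}\>+\ooo(h^{\infty})\,he^{-(S(\m_{i})+S(\m_{j}))/h}=\upsilon_{i,j}^{\sharp}+\ooo(h^{\infty})\,he^{-(S(\m_{i})+S(\m_{j}))/h},
\]
the last equality by \eqref{b22}.

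Next I would exhibit the graded structure. Define $M_{h}=(m_{i,j})$ and $\widetilde M_{h}=(\widetilde m_{i,j})$ by $m_{i,j}=h^{-1}e^{(S(\m_{i})+S(\m_{j}))/h}\upsilon_{i,j}^{\sharp}$ and $\widetilde m_{i,j}=h^{-1}e^{(S(\m_{i})+S(\m_{j}))/h}\upsilon_{i,j}$. Since $\varepsilon_{a(\m_{i})}(\tau)\varepsilon_{a(\m_{j})}(\tau)=e^{2S_{1}/h}e^{-(S(\m_{i})+S(\m_{j}))/h}$, one has $h^{-1}e^{2S_{1}/h}\Upsilon^{\sharp}=\Omega(\tau)M_{h}\Omega(\tau)$ and $h^{-1}e^{2S_{1}/h}\Upsilon=\Omega(\tau)\widetilde M_{h}\Omega(\tau)$, while the previous step reads $\widetilde M_{h}=M_{h}+\ooo(h^{\infty})$, which gives \eqref{b18} after conjugating by $\Omega(\tau)$ and absorbing a power of $h$. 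Now $M_{h}$ is a real symmetric matrix ($P^{\sharp}=P_{2}+P_{0}$ is formally self-adjoint and the $\varphi_{\m}$ are real-valued), and by \eqref{b27}--\eqref{b22} it admits a classical expansion $M_{h}\sim\sum_{j}h^{j}M_{j}$ with
\[
(M_{0})_{i,j}=(-1)^{\delta_{i,j}-1}\sqrt{D_{\m_{i}}D_{\m_{j}}}\sum_{\s\in{\bf j}(\m_{i})\cap{\bf j}(\m_{j})}\frac{|\mu(\s)|}{2\pi\, D_{\s}}.
\]
Consequently $\widetilde M_{h}-\widetilde M_{h}^{*}=\ooo(h^{\infty})$ and $\widetilde M_{h}+\widetilde M_{h}^{*}=2M_{h}+\ooo(h^{\infty})$ is classical with leading term $2M_{0}$; so, by Definition~\ref{b14}, both claimed memberships $h^{-1}e^{2S_{1}/h}\Upsilon^{\sharp}\in GS_{cl}(\Er,\tau)$ and $h^{-1}e^{2S_{1}/h}\Upsilon\in GAS_{cl}(\Er,\tau)$ reduce to showing that $M_{0}\in\Sr^{+}(E)$, i.e.\ that $M_{0}$ is positive definite.

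This positive-definiteness is the main obstacle. Setting $y_{i}=\sqrt{D_{\m_{i}}}\,x_{i}$ and regrouping by saddles, one finds
\[
x^{t}M_{0}x=\sum_{\s}\frac{|\mu(\s)|}{2\pi\, D_{\s}}\,Q_{\s}(y),\qquad Q_{\s}(y)=\sum_{i\in I_{\s}}y_{i}^{2}-\sum_{\substack{i\neq j\\ i,j\in I_{\s}}}y_{i}y_{j},\quad I_{\s}:=\{\,i:\ \s\in{\bf j}(\m_{i})\,\}.
\]
Since $\s\in\vvv^{(1)}$ and $\s\in{\bf j}(\m_{i})\subset\{f=f(\s)\}$ force $E(\m_{i})$ to be a connected component of $\{f<f(\s)\}$ with $\s\in\partial E(\m_{i})$, and since near $\s$ this component must contain one of the (exactly two) local components of $\{f<f(\s)\}$ while distinct $E(\m_{i})$ at that level are distinct components, one gets $\#I_{\s}\leq 2$; thus $Q_{\s}(y)=y_{i}^{2}$ if $I_{\s}=\{i\}$ and $Q_{\s}(y)=(y_{i}-y_{j})^{2}$ if $I_{\s}=\{i,j\}$, whence $x^{t}M_{0}x\geq 0$. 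To upgrade this to definiteness I would argue by induction on the labeling level $\sigma_{m}$, using that separating saddles strictly decrease the number of connected components: at value $\sigma_{m}$, the pieces into which a component of $\{f<\sigma_{m-1}\}$ splits, together with the $\vvv^{(1)}$-saddles of value $\sigma_{m}$ joining them, form a tree which necessarily contains an ``old'' piece (one already carrying a minimum $\m_{k}$ labeled at a stage $k<m$, hence equal to no $E(\m_{i})$ at level $m$), and that old piece $E(\m_{k})$ lies strictly inside $\{f<\sigma_{m}\}$, so no $\sigma_{m}$-saddle lies in ${\bf j}(\m_{k})$. Reading $x^{t}M_{0}x=0$, i.e.\ $Q_{\s}(y)=0$ for all $\s$, along this tree from its old pieces then propagates $y_{i}=0$ to every new piece $E(\m_{i})$ of level $m$ (in the base case $m=2$, the only old piece is the one containing $\underline\m$, which contributes no variable). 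Since every $\m\in\uuu^{(0)}\setminus\{\underline\m\}$ is a new piece at its own level, this yields $x=0$, hence $M_{0}\in\Sr^{+}(E)$, which completes the proof.
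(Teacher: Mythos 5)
Your proof is correct and follows essentially the same route as the paper. The first two steps (passing from the Gram--Schmidt basis to the quasimodes via $iii)$--$iv)$ of Proposition~\ref{a67}, and exhibiting the graded structure $\Omega(\tau)M_h\Omega(\tau)$ from Proposition~\ref{b19}) are exactly the paper's argument, which invokes \cite[Proposition 4.12]{LePMi20}. For the positive-definiteness of $M_0$, the paper writes $M_0=L^*L$ with $L$ an explicit $\sharp\vvv^{(1)}\times(n_0-1)$ matrix indexed by separating saddles, and proves injectivity of $L$ by the propagation argument of \cite[Lemma 5.1]{Mi19}; your ``sum of squares over saddles'' identity $x^tM_0x=\sum_{\s}\frac{|\mu(\s)|}{2\pi D_\s}Q_\s(y)$ is precisely $\|Lx\|^2$ unwound row by row, and your injectivity argument is the same in substance, just organized by labeling level and parent component rather than by equivalence class. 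One small imprecision: the incidence graph whose vertices are the components of $\{f<\sigma_m\}$ inside a given component of $\{f<\sigma_{m-1}\}$ and whose edges are the $\vvv^{(1)}$-saddles at value $\sigma_m$ need not be a tree (three wells joined pairwise by three equal-height separating saddles give a $3$-cycle); what you actually use, and what holds, is that this graph is \emph{connected} and contains at least one ``old'' piece. With ``tree'' replaced by ``connected graph,'' the propagation $Q_\s(y)=0\Rightarrow y_i=y_j$ along edges and $Q_\s(y)=0\Rightarrow y_i=0$ at free saddles goes through unchanged.
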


\begin{proof}
Mimicking the proof of \cite[Proposition 4.12]{LePMi20}, we get
\begin{equation}
\upsilon_{i , j} = \< P \varphi_{\m_{i}} , \varphi_{\m_{j}} \> + \ooo \big( h^{\infty} \< P \varphi_{\m_{i}} , \varphi_{\m_{i}} \>^{1 / 2} \< P \varphi_{\m_{j}} , \varphi_{\m_{j}} \>^{1 / 2} \big) .
\end{equation}
Then, one deduces from Proposition \ref{b19} the existence of some $\alpha>0$ such that 
\begin{align*}
e^{2 S_{1} / h} \upsilon_{i , j} &= e^{2 S_{1} / h} \big( \upsilon_{i , j}^{\sharp} \big( 1 + \ooo ( e^{- \alpha / h} ) \big) + \ooo \big( h^{\infty} \< P \varphi_{\m_{i}} , \varphi_{\m_{i}} \>^{1 / 2} \< P \varphi_{\m_{j}} , \varphi_{\m_{j}} \>^{1 / 2} \big) \big)    \\
&= e^{2 S_{1} / h} \upsilon^{\sharp}_{i , j} + \ooo ( e^{( 2 S_{1} - S_{i} - S_{j} ) / h} h^{\infty} ) .
\end{align*}
This establishes \eqref{b18}.

It just remains to show that $h^{- 1} e^{2 S_{1} / h} \Upsilon^{\sharp} \in GS_{cl} ( \Er , \tau )$. 
Indeed,  the fact that $h^{- 1} e^{2 S_{1} / h} \Upsilon \in GAS_{cl} ( \Er , \tau )$ will then follow from \eqref{b18}.

Using \eqref{b27}, \eqref{b22} and the fact that $P^{\sharp}$ is symmetric, we deduce that $h^{- 1} e^{2 S_{1} / h} \Upsilon^{\sharp}$ is a graded matrix, say $h^{- 1} e^{2 S_{1} / h} \Upsilon^{\sharp} = \Omega ( \tau ) M_{h} \Omega ( \tau )$, where $M_{h}$ is a symmetric matrix having a classical expansion with leading term
\begin{equation*}
( M_{0} )_{i , j} = ( - 1 )^{\delta_{\m_{i} , \m_{j}} - 1} \sum_{\s \in {\bf j} ( \m_{i} ) \cap {\bf j} ( \m_{j} )} \frac{\vert \mu ( \s ) \vert}{2 \pi} \frac{\sqrt{D_{\m_{i}} D_{\m_{j}}}}{D_{\s}} .
\end{equation*}
In order to show that $M_{0}$ is positive definite, it is sufficient to show that it has the form $M_{0} = L^{*} L$, where $L$ is an injective matrix from $\R^{n_{0} - 1}$ to $\R^{\sharp \vvv^{( 1 )}}$. To this end, let us
define, for every  $\s_{k} \in \vvv^{( 1 )}$, $G ( \s_{k} ) := \{ \m \in \uuu^{( 0 )}\setminus\{\underline\m\} ; \ \s_{k} \in {\bf j} ( \m ) \}$. 
For any $\s_{k} \in \vvv^{( 1 )}$,
the set $G ( \s_{k} )$ is non-empty. Moreover,
from the structure of a Morse function near a (separating) saddle point,
it has at most two elements and
has only  one element $\m$ if and only if $\s_{k}\in \partial \widehat E(\m)$. If there is only one minimum $\m_{i} \in \uuu^{( 0 )}\setminus\{\underline\m\}$ in $G ( \s_{k} )$, we set
\begin{equation*}
L_{k , i} = \sqrt{\frac{\vert \mu ( \s ) \vert}{2 \pi} \frac{D_{\m_{i}}}{D_{\s_{k}}}}.
\end{equation*}
If there are two minima $\m_{i} \neq \m_{j} \in \uuu^{( 0 )}\setminus\{\underline\m\}$ in $G ( \s_{k} )$, we define
\begin{equation*}
L_{k , i} = \sqrt{\frac{\vert \mu ( \s ) \vert}{2 \pi} \frac{D_{\m_{i}}}{D_{\s_{k}}}} \qquad \text{and} \qquad L_{k , j} = - \sqrt{\frac{\vert \mu ( \s ) \vert}{2 \pi} \frac{D_{\m_{j}}}{D_{\s_{k}}}} .
\end{equation*}
We do not specify which index ($i$ or $j$) receives a ``$-$'', since this choice is irrelevant in the sequel. The other coefficients of $L$ are set to zero
\begin{equation*}
L_{k , i} = 0 \text{ whenever } \m_{i} \notin G ( \s_{k} ), \text{ i.e. 
whenever } \s_{k} \notin{\bf j} ( \m_{i} ) .
\end{equation*}
A direct computation gives $M_{0} = L^{*} L$. 
Moreover, it follows from  \cite[Lemma 5.1]{Mi19} that $L$ is injective.
Let us briefly explain the argument of \cite[Lemma 5.1]{Mi19}.
Let $X = (X_{\m})_{\m\in  \uuu^{( 0 )}\setminus\{\underline\m\}}\in \R^{n_{0} - 1}$ be such that $LX=0$.
For any $\s_{k}\in \vvv^{( 1 )}$ such that $G(\s_{k})$ contains one unique element $\m(\s_{k})\in \uuu^{( 0 )}\setminus\{\underline\m\}$,
it then holds $X_{\m(\s_{k})}=0$. It  follows from the structure of $L$  that
$X_\m=0$ for every $\m\in\Cl ( \m(\s_{k}) )$. But, for every $\m\in \uuu^{( 0 )}\setminus\{\underline\m\}$, there
exists $\s_{k} \in \vvv^{( 1 )}$ and  $\m(\s_{k})\in\Cl ( \m)$ 
such that $G(\s_{k})=\{\m(\s_{k})\}$. It thus holds
$X_\m=0$ for every $\m\in \uuu^{( 0 )}\setminus\{\underline\m\}$ and $L$ is injective.
Summing up, $M_{0}$ is positive definite and thus $h^{- 1} e^{2 S_{1} / h} \Upsilon^{\sharp} \in GS_{cl} ( \Er , \tau )$, which concludes the proof of Proposition \ref{b17}. 
\end{proof}

\subsection{The spectrum of GAS matrices}

The results stated here are variants of those of \cite[Section 5]{Mi19} and of \cite[Appendix]{LePMi20}. For $p \in \N^{*}$, a finite dimensional vector space $E = E_{1} \oplus \cdots \oplus E_{p}$, and $j \in \{ 1 , \dots , p \}$, let us write a general matrix $M \in \Mr ( E )$ by blocks
\begin{equation} \label{b9}
M = \left( \begin{array}{cc}
A & B \\
C & D
\end{array} \right) : ( E_{1} \oplus \cdots \oplus E_{j - 1} ) \oplus ( E_{j} \oplus \cdots \oplus E_{p} ) \longrightarrow ( E_{1} \oplus \cdots \oplus E_{j - 1} ) \oplus ( E_{j} \oplus \cdots \oplus E_{p} ) .
\end{equation}
If $A \in \Mr ( E_{1} \oplus \cdots \oplus E_{j-1} )$ is invertible, the Schur matrix of $M$ (with respect to the vector space $E_{1} \oplus \cdots \oplus E_{j - 1}$) is the matrix on $E_{j} \oplus \cdots \oplus E_{p}$ defined by
\begin{equation*}
\rrr_{j} ( M ) = D - C A^{- 1} B ,
\end{equation*}
where by convention $\rrr_{1} ( M ) = M$. By the Schur complement method, $M$ is invertible if and only if $\rrr_{j} ( M )$ is invertible. Moreover, the map $\rrr_{j}$ sends $\Sr^{+} ( E )$ into $\Sr^{+} ( E_{j} \oplus \cdots \oplus E_{p} )$ and
$\S r^{+}_{cl} ( E )$ into $\Sr^{+}_{cl} ( E_{j} \oplus \cdots \oplus E_{p} )$. We will also denote by $\jjj : \Mr ( \oplus_{k = j , \ldots , p } E_{k} ) \to \Mr ( E_{j})$ the restriction map to the first vector space $E_{j}$ of $\oplus_{k = j , \ldots , p } E_{k}$. More precisely, with the notations of \eqref{b9}, we will write $\jjj ( M ) = A$ when $j=1$. Of course, the map $\jjj$ depends on $j \in \{ 1 , \dots , p \}$, but we omit this dependence since the set on which $\jjj$ is acting will be obvious in the sequel.

Let $E$ be a finite dimensional vector space and $M_{h} \in \Sr^{+}_{cl} ( E )$. From a standard result of perturbation theory of symmetric matrices (see  \cite[Theorem 6.1 in Chapter II]{Ka66}), the eigenvalues of $M_{h}$, after an appropriate labeling that we assume in the sequel, have an asymptotic expansion in power of $h$ with a non-zero leading term. This justifies the following definition.

\begin{defin}\sl
For $M_{h} \in \Sr^{+}_{cl} ( E )$, we denote by $\sigma_{\equiv} ( M_{h} )$ the set of asymptotic expansions (that is formal power series in $h$) of the eigenvalues of $M_{h}$. Moreover, $m_{\equiv} ( \Lambda , M_{h} )$ is defined as the multiplicity of $\Lambda \in \sigma_{\equiv} ( M_{h} )$.
\end{defin}

By an abuse of notation, if $\lambda \in \sigma ( M_{h} )$ has an asymptotic expansion $\Lambda \in \sigma_{\equiv} ( M_{h} )$, we will sometimes write $m_{\equiv} ( \lambda , M_{h} )$ instead of $m_{\equiv} ( \Lambda , M_{h} )$. Roughly speaking, $m_{\equiv} ( \lambda , M_{h} )$ can be seen as the multiplicity modulo $\ooo ( h^{\infty} )$ of the eigenvalue $\lambda \in \sigma ( M_{h} )$. Note that if $\lambda , \nu \in \sigma ( M_{h} )$ do not have the same asymptotic expansion, there exists $K_{0} > 0$ such that $\vert \lambda - \nu \vert \geq h^{K_{0}}$ for $h > 0$ small enough.

\begin{theorem}\sl \label{b10}
Let $\mmm_{h} = \Omega ( \tau ) M_{h} \Omega  ( \tau ) \in GAS_{cl} ( \Er , \tau )$ and assume that $\tau_{j} = \tau_{j} ( h ) = \ooo ( h^{\infty} )$ for every $j=2,\dots,p$. Then, we have for $h>0$ small enough,
\begin{equation} \label{b11}
\sigma ( \mmm_{h} ) \subset \bigcup_{j = 1}^{p} \varepsilon_{j} ( \tau )^{2} \big( \sigma \big( \jjj \circ \rrr_{j } ( M_{h}^{s} ) \big) + D ( 0 , \ooo ( h^{\infty} ) ) \big) ,
\end{equation}
where we recall that $M_{h}^{s}=\frac{1}{2} ( M_{h} + M_{h}^{*} )$ denotes the real part of $M_{h}$.
Moreover, for all $j = 1 , \ldots , p$, $K >  0$ large enough and $\lambda \in \sigma ( \jjj \circ \rrr_{j } ( M_{h}^{s} ))$, one has 
\begin{equation} \label{b12}
n ( \mmm_{h} , D_{j , \lambda}^{K} ) = m_{\equiv} \big( \lambda , \jjj \circ \rrr_{j } ( M_{h}^{s} ) \big),
\end{equation}
which does not depend on $K$,
where $n ( \mmm_{h} , D_{j , \lambda}^{K} )$ is the number of eigenvalues of $\mmm_{h}$ in $D_{j , \lambda}^{K} = D ( \varepsilon_{j}^{2} \lambda , \varepsilon_{j}^{2} h^{K} )$ counted with their multiplicity. Eventually, for all $K > 0$ large enough, there exists $C > 0$ such that
\begin{equation} \label{b13}
\forall z \in \C \setminus \bigcup_{j , \lambda} D_{j , \lambda}^{K} , \qquad \Vert ( \mmm_{h} - z )^{- 1} \Vert \leq C \dist ( z , \sigma ( \mmm_{h} ) )^{- 1} ,
\end{equation}
for $h > 0$ small enough.
\end{theorem}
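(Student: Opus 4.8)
The plan is to argue by induction on the number $p$ of blocks, via the Schur complement method as in \cite[Section 5]{Mi19} and \cite[Appendix]{LePMi20}, keeping track of the hierarchy of scales $\varepsilon_{1}(\tau)^{2}=1\gg\varepsilon_{2}(\tau)^{2}\gg\cdots\gg\varepsilon_{p}(\tau)^{2}$, which are separated by factors $\ooo(h^{\infty})$. Write $M_{h}=M_{h}^{s}+R_{h}$ with $M_{h}^{s}\in\Sr^{+}_{cl}(E)$ and $R_{h}=\ooo(h^{\infty})$; since $M_{h}^{s}$ is positive definite and $\Omega(\tau)$ is invertible with $\Vert\Omega(\tau)\Vert=1$, the matrix $\mmm_{h}$ is invertible and $\sigma(\mmm_{h})\subset\overline{D(0,C)}$ for some $C>0$. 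For $p=1$, where $\mmm_{h}=M_{h}$, the Bauer--Fike inequality applied to the self-adjoint matrix $M_{h}^{s}$ shows that every eigenvalue of $\mmm_{h}$ lies within $\ooo(h^{\infty})$ of $\sigma(M_{h}^{s})$, which gives \eqref{b11}; a homotopy $M_{h}^{s}+tR_{h}$, $t\in[0,1]$, together with the separation $\vert\lambda-\nu\vert\geq h^{K_{0}}$ of distinct asymptotic expansions in $\sigma(M_{h}^{s})$ and the perturbation theory of \cite[Theorem 6.1 in Chapter II]{Ka66}, gives \eqref{b12} for $K>K_{0}$ and in particular shows that the disks $D_{1,\lambda}^{K}$ already exhaust $\sigma(\mmm_{h})$; and factoring $\mmm_{h}-z=(M_{h}^{s}-z)(\Id+(M_{h}^{s}-z)^{-1}R_{h})$, with $\Vert(M_{h}^{s}-z)^{-1}\Vert\leq h^{-K}$ off the disks, yields \eqref{b13}.

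For the inductive step, split $E=E_{1}\oplus E'$ with $E'=E_{2}\oplus\cdots\oplus E_{p}$, write $M_{h}$ in block form with diagonal blocks $A_{h}\in\Mr(E_{1})$, $D_{h}\in\Mr(E')$ and off-diagonal blocks $B_{h},C_{h}$, and note that the restriction of $\mmm_{h}$ to $E'$ is $\tau_{2}^{2}\,\Omega'(\tau')D_{h}\Omega'(\tau')$, where $\Omega'(\tau')$ is the grading operator of the reduced problem $\Er'=(E_{2},\dots,E_{p})$ and $\varepsilon_{j}(\tau)=\tau_{2}\,\varepsilon'_{j-1}(\tau')$ for $j\geq 2$. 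Fix $\delta_{0}>0$ below the bottom of $\sigma(A_{h}^{s})$, where $A_{h}^{s}:=\jjj(M_{h}^{s})$. In the annulus $\delta_{0}\leq\vert z\vert\leq C$, the $E'$-block of $\mmm_{h}-z$ is invertible with inverse $\ooo(1)$, and the Schur complement of $\mmm_{h}-z$ with respect to $E'$ equals $A_{h}^{s}-z+\ooo(h^{\infty})$; the $p=1$ analysis applied to $A_{h}^{s}$ then produces the $j=1$ part of \eqref{b11}--\eqref{b13}, with $D_{1,\lambda}^{K}=D(\lambda,h^{K})$.

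It remains to treat $\vert z\vert<\delta_{0}$, where $A_{h}-z$ is invertible with bounded inverse, so $\mmm_{h}-z$ is invertible if and only if its Schur complement with respect to $E_{1}$,
\begin{equation*}
\sss_{h}(z)=\tau_{2}^{2}\,\Omega'(\tau')\bigl[D_{h}-C_{h}(A_{h}-z)^{-1}B_{h}\bigr]\Omega'(\tau')-z ,
\end{equation*}
is. Setting $z=\tau_{2}^{2}w$ and using $D_{h}-C_{h}(A_{h}-\tau_{2}^{2}w)^{-1}B_{h}=\rrr_{2}(M_{h}^{s})+\ooo(h^{\infty})$ uniformly for $\vert w\vert\leq C'$ (because $\tau_{2}=\ooo(h^{\infty})$ and the antisymmetric part of $M_{h}$ is $\ooo(h^{\infty})$), one gets $\sss_{h}(\tau_{2}^{2}w)=\tau_{2}^{2}\bigl(\widehat{\mmm}_{h}(w)-w\bigr)$ with $\widehat{\mmm}_{h}(w)=\Omega'(\tau')\bigl(\rrr_{2}(M_{h}^{s})+e_{h}(w)\bigr)\Omega'(\tau')\in GAS_{cl}(\Er',\tau')$, where $\rrr_{2}(M_{h}^{s})\in\Sr^{+}_{cl}(E')$ and $e_{h}(w)=\ooo(h^{\infty})$ is holomorphic in $w$. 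Applying the induction hypothesis to $\widehat{\mmm}_{h}(w)$ (for each $w$, with uniform constants), together with the quotient identity $\rrr'_{j-1}(\rrr_{2}(M_{h}^{s}))=\rrr_{j}(M_{h}^{s})$ for Schur complements and $\varepsilon_{j}(\tau)^{2}=\tau_{2}^{2}\,\varepsilon'_{j-1}(\tau')^{2}$, gives the $j\geq 2$ part of \eqref{b11}. For \eqref{b12} with $j\geq 2$, since $\det(A_{h}-z)\neq 0$ for $\vert z\vert<\delta_{0}$, $n(\mmm_{h},D_{j,\lambda}^{K})$ equals the number of zeros of $z\mapsto\det\sss_{h}(z)$ in $D_{j,\lambda}^{K}$, which after $z=\tau_{2}^{2}w$ and a Rouché comparison of $\widehat{\mmm}_{h}(w)-w$ with $\widehat{\mmm}_{h}(w_{0})-w$ at the centre $w_{0}$ of the rescaled disk (licit because $\widehat{\mmm}_{h}(w)-\widehat{\mmm}_{h}(w_{0})=\ooo(h^{\infty})\vert w-w_{0}\vert$ is negligible against the inductive bound \eqref{b13} for $\widehat{\mmm}_{h}(w_{0})$ on the boundary) equals $m_{\equiv}\bigl(\lambda,\jjj\circ\rrr_{j}(M_{h}^{s})\bigr)$. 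Finally, \eqref{b13} for $\vert z\vert<\delta_{0}$ comes from the block-inversion identity $\Vert(\mmm_{h}-z)^{-1}\Vert\leq C(1+\Vert\sss_{h}(z)^{-1}\Vert)$: one bounds $\Vert\sss_{h}(z)^{-1}\Vert$ directly for $C'\tau_{2}^{2}\leq\vert z\vert<\delta_{0}$ (no eigenvalues there) and, for $\vert z\vert\leq C'\tau_{2}^{2}$, by $\tau_{2}^{-2}\Vert(\widehat{\mmm}_{h}(w)-w)^{-1}\Vert$ together with the inductive \eqref{b13}, using that the counting just proved identifies $\tau_{2}^{-2}\bigl(\sigma(\mmm_{h})\cap\{\vert z\vert\leq C'\tau_{2}^{2}\}\bigr)$ with $\sigma(\widehat{\mmm}_{h})$ up to $\ooo(h^{\infty})$.

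I expect the main difficulty to be the bookkeeping of the exponentially small scales $\varepsilon_{j}(\tau)^{2}$: the error terms generated by the Schur reductions and by the $w$-dependence of $\widehat{\mmm}_{h}$ are of size $\ooo(h^{\infty})$ only at the level of the \emph{ungraded} matrices $M_{h}^{s},\rrr_{2}(M_{h}^{s}),\dots$, so that after restoring the grading $\Omega'(\tau')$ they come multiplied by the appropriate products $\varepsilon'_{j}(\tau')\varepsilon'_{k}(\tau')$; since the consecutive ratios $\tau_{j}$ are themselves $\ooo(h^{\infty})$, these errors remain negligible against the disk radii $\varepsilon_{j}(\tau)^{2}h^{K}$ at every level of the induction, which is precisely what keeps the Bauer--Fike, Rouché and Neumann-series estimates uniform in $h$. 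One must also verify that the quotient property of Schur complements and the relation $\varepsilon_{j}=\tau_{2}\varepsilon'_{j-1}$ are applied consistently with the reindexing of the reduced problem, so that the disk family for $\mmm_{h}$ splits exactly into the $j=1$ family and the $z\mapsto\tau_{2}^{2}z$ image of the disk family of the reduced problem.
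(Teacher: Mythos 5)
Your proof is correct, but it takes a genuinely different route from the paper. You argue by induction on the number $p$ of blocks: you peel off the top block $E_{1}$ by a single Schur complement with respect to $E_{1}$, obtain a reduced $(p-1)$-block problem at scale $\tau_{2}^{2}$ with a $w$-dependent coefficient matrix $\widehat{\mmm}_{h}(w)$, and conclude via Bauer--Fike for the base case together with a Rouch\'e comparison (to tame the $w$-dependence) in the inductive step, using the transitivity of Schur complements $\rrr'_{j-1}\circ\rrr_{2}=\rrr_{j}$ to reconcile the labels. The paper instead treats each scale directly and in one shot: for every $j$ it works on the ring $\ccc_{j}=\{\varepsilon_{j}^{2}/R<\vert z\vert<R\varepsilon_{j}^{2}\}$, performs a single Schur complement of $\mmm_{h}-z$ with respect to the full top block $E_{1}\oplus\cdots\oplus E_{j-1}$, shows that the Schur matrix equals $\varepsilon_{j}^{2}\,\jjj\circ\rrr_{j}(M_{h}^{s})\oplus 0 - z + \ooo(\varepsilon_{j}^{2}h^{\infty})$, and reads off the eigenvalue count by comparing the ranks of the spectral projectors $\Pi_{\lambda}$ and $\pi_{\lambda}$ obtained from contour integrals of the resolvents; the gaps $\ddd_{j}$ between consecutive rings are then handled by the same Schur decomposition with a Neumann series. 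The paper's direct formulation buys two simplifications: (i) it produces the correct matrix $\jjj\circ\rrr_{j}(M_{h}^{s})$ at scale $\varepsilon_{j}^{2}$ without any recursion, so the transitivity Lemma~\ref{g21} is not needed for Theorem~\ref{b10} at all (the paper invokes it only afterwards, in Theorem~\ref{b20}, to connect notation with \cite{Mi19}); (ii) because the spectral parameter is only rescaled once and the Schur matrix is an \emph{explicit} perturbation of a self-adjoint matrix, the eigenvalue count follows from the contour-integral formula for projector rank, avoiding the extra Rouch\'e step you need to handle the $w$-dependence of $\widehat{\mmm}_{h}(w)$. Your route does work, and it makes visible the recursive structure underlying Theorem~\ref{b20}, but the bookkeeping it requires (the reindexing $\varepsilon_{j}=\tau_{2}\varepsilon'_{j-1}$, the uniformity in $w$, the Rouch\'e argument at every inductive level) is precisely the overhead the paper's single-step Schur reduction is designed to avoid.
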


\begin{proof}
For $j = 1 , \ldots , p$, we assume that the spectral parameter $z$ belongs to the ring $\ccc_{j} = \{ z \in \C ; \ \varepsilon_{j}^{2} / R < \vert z \vert < R \varepsilon_{j}^{2} \}$,
where $R>1$ is large enough so that $\bigcup_{\lambda \in \sigma ( \jjj \circ \rrr_{j} ( M_{h}^{s} ) )} \overline{D_{j , \lambda}^{K}}\subset \ccc_{j}$. We write $\mmm_{h}$ as
\begin{equation*}
\mmm_{h} = \Omega ( \tau ) M_{h} \Omega ( \tau ) = 
\left( \begin{array}{cc}
L_{+} &0 \\
0 &L_{-}
\end{array} \right)
\left( \begin{array}{cc}
A &B \\
C &D
\end{array} \right)
\left( \begin{array}{cc}
L_{+} &0 \\
0 &L_{-}
\end{array} \right)  \\
= \left( \begin{array}{cc}
L_{+} A L_{+} &L_{+} B L_{-} \\
L_{-} C L_{+} &L_{-} D L_{-}
\end{array} \right) ,
\end{equation*}
with $L_{+} = \diag ( \varepsilon_{1} \Id_{E_{1}} , \ldots , \varepsilon_{j - 1} \Id_{E_{j - 1}} )$ and $L_{-} = \diag ( \varepsilon_{j} \Id_{E_{j}} , \ldots , \varepsilon_{p} \Id_{E_{p}} )$. For $z \in \ccc_{j}$, we have $\eee : = L_{+} A L_{+} - z = L_{+} ( A - L_{+}^{- 1} z L_{+}^{- 1} ) L_{+}$ with $\Vert L_{+}^{- 1} z L_{+}^{- 1} \Vert = \ooo ( \varepsilon_{j}^{2} \varepsilon_{j - 1}^{- 2} ) = \ooo ( \tau_{j}^{2} ) = \ooo ( h^{\infty} )$. Since $A \in \Sr^{+}_{cl} ( E_{1} \oplus \cdots \oplus E_{j - 1} )$ modulo $\ooo ( h^{\infty} )$, the matrix $\eee$ is invertible and
\begin{equation} \label{b1}
\eee^{- 1} = L_{+}^{- 1} ( A^{- 1} + \ooo ( h^{\infty} ) ) L_{+}^{- 1} = \ooo ( \varepsilon_{j - 1}^{- 2} ) ,
\end{equation}
uniformly for $z \in \ccc_{j}$. By the Schur complement method, $\mmm_{h} - z$ is invertible if and only if
\begin{equation} \label{b6}
\fff = L_{-} D L_{-} - z - L_{-} C L_{+} \eee^{- 1} L_{+} B L_{-} \ \ \ \text{is invertible.}
\end{equation}
 In that case,
\begin{equation} \label{b2}
( \mmm_{h} - z )^{- 1} = \left( \begin{array}{cc}
\eee^{- 1} + \eee^{- 1} L_{+} B L_{-} \fff^{- 1} L_{-} C L_{+} \eee^{- 1} &- \eee^{- 1} L_{+} B L_{-} \fff^{- 1} \\
- \fff^{- 1} L_{-} C L_{+} \eee^{- 1} &\fff^{- 1}
\end{array} \right) .
\end{equation}
From \eqref{b1}, the matrix $\fff$ can be written, uniformly for $z \in \ccc_{j}$,
\begin{equation*}
\fff = L_{-} ( D - C A^{- 1} B ) L_{-} - z + \ooo ( \varepsilon_{j}^{2} h^{\infty} ) = \varepsilon_{j}^{2} \jjj ( D - C A^{- 1} B ) \oplus 0 - z + \ooo ( \varepsilon_{j}^{2} h^{\infty} ) ,
\end{equation*}
with $\jjj ( D - C A^{- 1} B ) : E_{j} \rightarrow E_{j}$ and the shortcut
\begin{equation*}
T \oplus 0 =
\left( \begin{array}{cc}
T & 0 \\
0 & 0
\end{array} \right) : E_{j} \oplus \cdots \oplus E_{p} \rightarrow E_{j} \oplus \cdots \oplus E_{p} .
\end{equation*}
 Thus,
\begin{align} 
\nonumber
\fff &= \varepsilon_{j}^{2} \jjj \circ \rrr_{j} ( M_{h} ) \oplus 0 - z + \ooo ( \varepsilon_{j}^{2} h^{\infty} )\\
\label{b7}
 &= \varepsilon_{j}^{2} \jjj \circ \rrr_{j} ( M_{h}^{s} ) \oplus 0 - z + \ooo ( \varepsilon_{j}^{2} h^{\infty} ) .
\end{align}

We obtain an upper bound on the resolvent of $\mmm_{h}$ away from its expected spectrum. Let $K_{0} > 0$ be such that $\vert \lambda - \nu \vert \geq h^{K_{0}}$ for all $\lambda , \nu \in \sigma ( \jjj \circ \rrr_{j} ( M_{h}^{s} ) )$ having different asymptotic expansions and let $K > K_{0}$. Let us define $\widetilde{\ccc}_{j} = \ccc_{j} \setminus \bigcup_{\lambda \in \sigma ( \jjj \circ \rrr_{j} ( M_{h}^{s} ) )} D_{j , \lambda}^{K}$. Since $\jjj \circ \rrr_{j} ( M_{h}^{s} )$ is symmetric, we have $( \varepsilon_{j}^{2} \jjj \circ \rrr_{j} ( M_{h}^{s} ) - z )^{- 1} = \ooo ( \varepsilon_{j}^{- 2} h^{- K} )$ for $z \in \widetilde{\ccc}_{j}$,
\begin{align*}
\fff &= \big( \varepsilon_{j}^{2} \jjj \circ \rrr_{j} ( M_{h}^{s} ) \oplus 0 - z \big) \big( 1 + \big( \varepsilon_{j}^{2} \jjj \circ \rrr_{j} ( M_{h}^{s} ) \oplus 0 - z \big)^{- 1} \ooo ( \varepsilon_{j}^{2} h^{\infty} ) \big) \\
&= \big( \varepsilon_{j}^{2} \jjj \circ \rrr_{j} ( M_{h}^{s} ) \oplus 0 - z \big) ( 1 + \ooo ( h^{\infty} ) ) ,
\end{align*}
and then $\fff$ is invertible and
\begin{equation} \label{b3}
\fff^{- 1} = \big( \varepsilon_{j}^{2} \jjj \circ \rrr_{j} ( M_{h}^{s} ) \oplus 0 - z \big)^{- 1} + \ooo ( \varepsilon_{j}^{- 2} h^{\infty} ) = \ooo ( \varepsilon_{j}^{- 2} h^{- K} ).
\end{equation}
Combining the first equality in \eqref{b1}, \eqref{b2}, \eqref{b3} and using $\varepsilon_{k} = \ooo ( \varepsilon_{\ell} h^{\infty} )$ for $k > \ell$, we get, for $z \in \widetilde{\ccc}_{j}$,
\begin{equation} \label{b4}
( \mmm_{h} - z )^{- 1} = \left( \begin{array}{ccc}
0 & 0 & 0 \\
0 & \big( \varepsilon_{j}^{2} \jjj \circ \rrr_{j} ( M_{h}^{s} ) - z \big)^{- 1} & 0 \\
0 & 0 & - z^{- 1}
\end{array} \right)
+ \ooo ( \varepsilon_{j}^{- 2} h^{\infty} ) ,
\end{equation}
which implies that there exists $C > 0$ such that for $z \in \widetilde{\ccc}_{j}$
\begin{equation} \label{b5}
\Vert ( \mmm_{h} - z )^{- 1} \Vert \leq C \dist \big( z , \varepsilon_{j}^{2} \sigma \big( \jjj \circ \rrr_{j} ( M_{h}^{s} ) \big) \big)^{- 1} .
\end{equation}

We now compute the eigenvalues of $\mmm_{h}$. For $\lambda \in \sigma ( \jjj \circ \rrr_{j} ( M_{h}^{s} ) )$, we introduce the spectral projectors
\begin{equation*}
\Pi_{\lambda} = - \frac{1}{2 i \pi} \int_{\partial D_{j , \lambda}^{K}} ( \mmm_{h} - z )^{- 1} d z  \quad \text{and} \quad \pi_{\lambda} = - \frac{1}{2 i \pi} \int_{\partial D ( \lambda , h^{K} )} ( \jjj \circ \rrr_{j} ( M_{h}^{s} ) - z )^{- 1} d z .
\end{equation*}
Using that the circumference of $\partial D_{j , \lambda}^{K}$ is $2 \pi \varepsilon_{j}^{2} h^{K}$, \eqref{b4} implies
\begin{equation*}
\Pi_{\lambda} = \left( \begin{array}{ccc}
0 & 0 & 0 \\
0 & \pi_{\lambda} & 0 \\
0 & 0 & 0
\end{array} \right)
+ \ooo ( h^{\infty} ) .
\end{equation*}
Since $E$ is a finite dimensional space and the rank of a projector is an integer equal to its trace, we deduce that
\begin{equation*}
\rk ( \Pi_{\lambda} ) = \tr ( \Pi_{\lambda} ) = \tr ( \pi_{\lambda} ) + \ooo ( h^{\infty} ) = \rk ( \pi_{\lambda} ).
\end{equation*}
Thus, the number of eigenvalues of $\mmm_{h}$ in $D_{j , \lambda}^{K}$ counted with their multiplicity is equal to $m_{\equiv} ( \lambda , \jjj \circ \rrr^{j - 1} ( M_{h}^{s} ) ) $. Since
\begin{equation*}
\sum_{j = 1}^{p}\sum_{\lambda \in \sigma_{\equiv} ( \jjj \circ \rrr_{j} ( M_{h}^{s} ) )} m_{\equiv} \big( \lambda , \jjj \circ \rrr_{j} ( M_{h}^{s} ) \big) = \sum_{j = 1}^{p} \dim E_{j} = \dim E ,
\end{equation*}
the relations \eqref{b11} and \eqref{b12} of Theorem \ref{b10} follow.

To finish the proof of Theorem \ref{b10}, we have to obtain the resolvent estimate \eqref{b13}. Since it follows from \eqref{b5} in the $\widetilde{\ccc}_{j}$, $j=1,\dots, p$ , it remains to prove it in $\ddd_{1} = \{ z \in \C ; \ R \varepsilon_{1}^{2} \leq \vert z \vert \}$, $\ddd_{j} = \{ z \in \C ; \ R \varepsilon_{j}^{2} \leq \vert z \vert \leq \varepsilon_{j - 1}^{2} / R \}$ for $2 \leq j \leq p$ and $\ddd_{p + 1} = \{ z \in \C ; \ \vert z \vert \leq \varepsilon_{p}^{2} / R \}$. We only show it in $\ddd_{j}$, when $2 \leq j \leq p$, since the two remaining situations can be treated in the same way. Mimicking the proof of \eqref{b1}, we have $\eee^{- 1} = L_{+}^{- 1} ( A^{- 1} + \ooo ( R^{- 1} ) ) L_{+}^{- 1}$ for $R$ large enough and $z \in \ddd_{j}$. Then, $\fff$ defined in \eqref{b6} satisfies, instead of \eqref{b7},
\begin{equation*}
\fff = \varepsilon_{j}^{2} \jjj \circ \rrr_{j} ( M_{h}^{s} ) \oplus 0 - z + \ooo ( \varepsilon_{j}^{2} R^{- 1} ) .
\end{equation*}
Since $\vert z \vert \geq R \varepsilon_{j}^{2}$, this implies $\Vert \fff^{- 1} \Vert = \ooo ( \vert z \vert^{- 1} )$ and $\Vert ( \mmm_{h} - z )^{- 1} \Vert = \ooo ( \vert z \vert^{- 1} )$ from \eqref{b2}. The last estimate gives \eqref{b13}.
\end{proof}

\subsection{The spectrum of the interaction matrix}

Combining Proposition \ref{b17} and Theorem \ref{b10}, we obtain the following result.

\begin{theorem}\sl \label{b23}
Suppose that the assumptions of Theorem \ref{a15} are satisfied.
Assume also that \eqref{a17}, \eqref{h1},  and \eqref{a24} hold true and that $\uuu^{(0),II}=\emptyset$. Let $\Upsilon^{\sharp}$ be defined by
\eqref{b24}, then
\begin{equation*}
\sigma ( P ) \cap \{ \Re z < \varepsilon_{*} h \} \subset \{ 0 \} \cup \bigcup_{j = 1}^{p} e^{- 2 S_{j} / h} \Big( \sigma \Big( \jjj \circ \rrr_{j } \big( e^{2 S_{j} / h} \Upsilon^{\sharp} \big) \Big) + D ( 0 , \ooo ( h^{\infty} ) ) \Big) .
\end{equation*}
Moreover, for all $j = 1 , \ldots , p$, $K >  0$ large enough and $\lambda \in \sigma ( \jjj \circ \rrr_{j } ( e^{2 S_{j} / h} \Upsilon^{\sharp} ) )$, one has 
\begin{equation*}
n ( P , D_{j , \lambda}^{K} ) = m_{\equiv} \big( \lambda , \jjj \circ \rrr_{j } \big( e^{2 S_{j} / h} \Upsilon^{\sharp} \big) \big) ,
\end{equation*}
where $n ( P , D_{j , \lambda}^{K} )$ is the number of eigenvalues of $P$ in $D_{j , \lambda}^{K} = D ( e^{- 2 S_{j} / h} \lambda , e^{- 2 S_{j} / h} h^{K} )$ counted with their multiplicity.
\end{theorem}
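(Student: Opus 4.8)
The plan is to deduce Theorem~\ref{b23} by feeding Proposition~\ref{b17} into Theorem~\ref{b10}; the only real work is to keep track of the various scaling factors. First, by Proposition~\ref{a22} the spectrum of $P$ in $\{\Re z<\varepsilon_{*}h\}$ consists, counting multiplicities, of $n_{0}$ eigenvalues of order $\ooo(h^{1+\alpha})$, all lying in $D(0,\varepsilon_{*}h/2)$ for $h$ small, so it coincides with the spectrum of $P_{\vert\Ran\Pi_{h}}$. As recorded below \eqref{b24}, in the orthonormal basis $(e_{j})_{j=1,\dots,n_{0}}$ the matrix of $P_{\vert\Ran\Pi_{h}}$ is block diagonal with blocks $\Upsilon$ and $0$, the latter coming from $e_{n_{0}}$, which is collinear to $e^{-f/h}$ and hence (by \eqref{a17}) annihilated by both $P$ and $P^{*}$. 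Therefore
\begin{equation*}
\sigma(P)\cap\{\Re z<\varepsilon_{*}h\}=\sigma(\Upsilon)\cup\{0\}
\end{equation*}
with multiplicities, and it suffices to describe $\sigma(\Upsilon)$.

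Set $\mmm_{h}:=h^{-1}e^{2S_{1}/h}\Upsilon$. By Proposition~\ref{b17} one has $\mmm_{h}\in GAS_{cl}(\Er,\tau)$ with $\tau_{k}=e^{-(S_{k}-S_{k-1})/h}$; since the values $S_{1}<\dots<S_{p}$ are distinct, $\tau_{k}=\ooo(h^{\infty})$, so Theorem~\ref{b10} applies. Write $\mmm_{h}=\Omega(\tau)N_{h}\Omega(\tau)$ and $h^{-1}e^{2S_{1}/h}\Upsilon^{\sharp}=\Omega(\tau)M_{h}\Omega(\tau)$ with $M_{h}\in\Sr^{+}_{cl}$ symmetric (again Proposition~\ref{b17}); then \eqref{b18} gives $N_{h}=M_{h}+\ooo(h^{\infty})$, hence $N_{h}^{s}=M_{h}+\ooo(h^{\infty})$. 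With $\varepsilon_{j}:=\prod_{k=2}^{j}\tau_{k}=e^{-(S_{j}-S_{1})/h}$, Theorem~\ref{b10} yields the inclusion of $\sigma(\mmm_{h})$ in $\bigcup_{j}\varepsilon_{j}^{2}\big(\sigma(\jjj\circ\rrr_{j}(N_{h}^{s}))+D(0,\ooo(h^{\infty}))\big)$, together with $n(\mmm_{h},D^{K}_{j,\mu})=m_{\equiv}(\mu,\jjj\circ\rrr_{j}(N_{h}^{s}))$ for $K$ large and $\mu\in\sigma(\jjj\circ\rrr_{j}(N_{h}^{s}))$, where $D^{K}_{j,\mu}=D(\varepsilon_{j}^{2}\mu,\varepsilon_{j}^{2}h^{K})$, independently of $K$.

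Next I would rescale. A direct block computation gives $\rrr_{j}(\Omega(\tau)X\Omega(\tau))=L_{-}\rrr_{j}(X)L_{-}$, whence $\jjj\circ\rrr_{j}(\Omega(\tau)X\Omega(\tau))=\varepsilon_{j}^{2}\jjj\circ\rrr_{j}(X)$; combined with the degree-one homogeneity of $\jjj\circ\rrr_{j}$ and the identity $e^{2S_{j}/h}\Upsilon^{\sharp}=h\varepsilon_{j}^{-2}\Omega(\tau)M_{h}\Omega(\tau)$, this produces $\jjj\circ\rrr_{j}(e^{2S_{j}/h}\Upsilon^{\sharp})=h\,\jjj\circ\rrr_{j}(M_{h})$, so that $\jjj\circ\rrr_{j}(N_{h}^{s})=h^{-1}\jjj\circ\rrr_{j}(e^{2S_{j}/h}\Upsilon^{\sharp})+\ooo(h^{\infty})$ with $\jjj\circ\rrr_{j}(M_{h})\in\Sr^{+}_{cl}$ (Schur complement and restriction preserve $\Sr^{+}_{cl}$). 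In particular the eigenvalues $\lambda$ of $\jjj\circ\rrr_{j}(e^{2S_{j}/h}\Upsilon^{\sharp})$ are of order $h$, hence nonzero, and $m_{\equiv}$ is unchanged by the $\ooo(h^{\infty})$-perturbation and by the scalar factor $h$. Using $\sigma(\Upsilon)=he^{-2S_{1}/h}\sigma(\mmm_{h})$, $he^{-2S_{1}/h}\varepsilon_{j}^{2}h^{-1}=e^{-2S_{j}/h}$, $he^{-2S_{1}/h}\varepsilon_{j}^{2}=he^{-2S_{j}/h}$ and $h\cdot\ooo(h^{\infty})=\ooo(h^{\infty})$, the inclusion for $\sigma(\mmm_{h})$ turns into
\begin{equation*}
\sigma(\Upsilon)\subset\bigcup_{j=1}^{p}e^{-2S_{j}/h}\big(\sigma(\jjj\circ\rrr_{j}(e^{2S_{j}/h}\Upsilon^{\sharp}))+D(0,\ooo(h^{\infty}))\big),
\end{equation*}
which, with the first display, is the inclusion claimed in Theorem~\ref{b23}. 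For the counting statement, the disk $D(e^{-2S_{j}/h}\lambda,e^{-2S_{j}/h}h^{K})$ is carried by $\Upsilon=he^{-2S_{1}/h}\mmm_{h}$ to $D(\varepsilon_{j}^{2}h^{-1}\lambda,\varepsilon_{j}^{2}h^{K-1})$, which excludes $0$ for $h$ small (since $|\lambda|$ is of order $h\gg h^{K-1}$) and differs from $D^{K-1}_{j,\mu}$ only by an $\ooo(h^{\infty})$ shift of its center, $\mu$ being the eigenvalue of $\jjj\circ\rrr_{j}(N_{h}^{s})$ asymptotic to $h^{-1}\lambda$; applying \eqref{b12} with $K-1$ in place of $K$ and the identification of $m_{\equiv}$'s above gives $n(P,D(e^{-2S_{j}/h}\lambda,e^{-2S_{j}/h}h^{K}))=m_{\equiv}(\lambda,\jjj\circ\rrr_{j}(e^{2S_{j}/h}\Upsilon^{\sharp}))$.

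The computations are elementary, and the heavy analysis sits entirely in Proposition~\ref{b17} and Theorem~\ref{b10}; the only point demanding care is that the spectral data $\sigma_{\equiv}$ and $m_{\equiv}$ must be transported through three operations — multiplication by $h^{-1}$, $\ooo(h^{\infty})$-perturbation, and conjugation by $\Omega(\tau)$ inside a Schur complement (the identity $\jjj\circ\rrr_{j}(\Omega(\tau)X\Omega(\tau))=\varepsilon_{j}^{2}\jjj\circ\rrr_{j}(X)$) — each harmless in isolation but which together are exactly what replaces the reference level $S_{1}$ by the natural level $S_{j}$ in the final statement.
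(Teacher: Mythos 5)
Your proposal is correct and takes exactly the route the paper intends: Theorem~\ref{b23} is obtained by combining Proposition~\ref{b17} with Theorem~\ref{b10}, and the paper leaves the rescaling bookkeeping implicit while you carry it out explicitly. The computations (the identity $\jjj\circ\rrr_{j}(\Omega(\tau)X\Omega(\tau))=\varepsilon_{j}^{2}\,\jjj\circ\rrr_{j}(X)$, the degree-one homogeneity of the Schur map, the $\ooo(h^{\infty})$ stability, and the translation from the $S_{1}$-normalization of $\mmm_{h}$ to the $S_{j}$-normalization appearing in the statement) are all sound, so the proof stands.
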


Note here that the matrix $h^{-1} \jjj \circ \rrr_{j } ( e^{2 S_{j} / h} \Upsilon^{\sharp} )$ belongs to $\Sr^{+}_{cl}$,
which almost gives the statement of Theorem \ref{b34}.
In order to obtain an explicit version of Theorem \ref{b34}, let us use the specific structure of 
the matrices $\Upsilon$ and $\Upsilon^{\sharp}$. For any $\alpha \in  \aaa\setminus \{ \Cl ( \underline{\m} )\}$, let $\Upsilon_{\alpha}$ and $\Upsilon^{\sharp}_{\alpha}$ denote the matrices
\begin{equation} \label{b25}
\Upsilon_{\alpha} = \big( \< P e_{\m} , e_{\m^{\prime}} \> \big)_{\m , \m^{\prime} \in \uuu^{( 0 )}_{\alpha}}\quad\text{and}\quad
\Upsilon^{\sharp}_{\alpha} = \big( \< P^{\sharp} \varphi_{\m} , \varphi_{\m^{\prime}} \> \big)_{\m , \m^{\prime} \in \uuu^{( 0 )}_{\alpha}} .
\end{equation}
It follows from \eqref{b22} and \eqref{b21} that $\< P \varphi_{\m} , \varphi_{\m^{\prime}} \> =\< P^{\sharp} \varphi_{\m} , \varphi_{\m^{\prime}} \> = 0$ for all $\m^{\prime} \notin \Cl ( \m )$ and hence the matrices $\Upsilon$
and $\Upsilon^{\sharp}$ defined by \eqref{b26} and \eqref{b24}
are  permutation-similar to the block diagonal matrices $ \diag ( \Upsilon_{\alpha} ; \ \alpha \in \aaa\setminus \{ \Cl ( \underline{\m} )\})$
and $ \diag ( \Upsilon^{\sharp}_{\alpha} ; \ \alpha \in \aaa\setminus \{ \Cl ( \underline{\m} )\} )$. It implies that
\begin{align*}
\sigma ( P ) \cap \{ \Re z < \varepsilon_{*} h \} &= \{ 0 \} \sqcup \sigma ( \Upsilon )  \\
&=\{ 0 \} \sqcup \bigcup_{\alpha \in \aaa \setminus \{ \Cl ( \underline{\m} ) \}} \sigma(\Upsilon_{\alpha}).
\end{align*}
Applying again Proposition \ref{b17} and Theorem \ref{b10} but  with the blocks $\Upsilon_{\alpha}$ and $\Upsilon^{\sharp}_{\alpha}$, $\alpha \in \aaa \setminus \{ \Cl ( \underline{\m} ) \}$,
we obtain the following  variant of Theorem \ref{b23}, whose formulation is close to that of
 Theorem 5.8 in \cite{Mi19}.

\begin{theorem}\sl \label{b20}
Suppose that the assumptions of Theorem \ref{a15} are satisfied. Assume also that \eqref{a17}, \eqref{h1}, and \eqref{a24} hold true and that $\uuu^{( 0 ) , II} = \emptyset$. Let $\Upsilon^{\sharp}_{\alpha}$ be defined by \eqref{b25}, then
\begin{equation*}
\sigma ( P ) \cap \{ \Re z < \varepsilon_{*} h \} \subset \{ 0 \} \cup \bigcup_{\alpha \in \aaa \setminus \{ \Cl ( \underline{\m} ) \}} \bigcup_{j = 1}^{p} e^{- 2 S_{j} / h} \Big( \sigma \Big( \jjj \circ \rrr_{j } \big( e^{2 S_{j} / h} \Upsilon_{\alpha}^{\sharp} \big) \Big) + D ( 0 , \ooo ( h^{\infty} ) ) \Big) .
\end{equation*}
Moreover, for all $\alpha \in \aaa$, $j = 1 , \ldots , p$, $K >  0$ large enough and $\lambda \in \sigma ( \jjj \circ \rrr_{j} ( e^{2 S_{j} / h} \Upsilon_{\alpha}^{\sharp} ) )$, one has 
\begin{equation*}
n ( P , D_{j , \lambda}^{K} ) = \sum_{\beta \in \aaa} m_{\equiv} \Big( \lambda , \jjj \circ \rrr_{j} \big( e^{2 S_{j} / h} \Upsilon^{\sharp}_{\beta} \big) \Big) ,
\end{equation*}
where $n ( P , D_{j , \lambda}^{K} )$ is the number of eigenvalues of $P$ in $D_{j , \lambda}^{K} = D ( e^{- 2 S_{j} / h} \lambda , e^{- 2 S_{j} / h} h^{K} )$ counted with their multiplicity and with the convention that $m_{\equiv} ( \lambda , M ) = 0$ if $\lambda \notin \sigma_{\equiv} ( M )$.
\end{theorem}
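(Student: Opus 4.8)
The plan is to combine Proposition~\ref{b17} and Theorem~\ref{b10}, applied separately to each block of the interaction matrix indexed by the equivalence classes of the relation~$\rrr$, thereby refining the argument already used to prove Theorem~\ref{b23}.

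First, one recalls from the discussion preceding the theorem that $\sigma(P)\cap\{\Re z<\varepsilon_* h\}$ is finite and equals $\sigma(P_{\vert\Ran\Pi_h})=\{0\}\sqcup\sigma(\Upsilon)$, the eigenvalue $0$ corresponding to $e^{-f/h}$ and to the class $\Cl(\underline\m)=\{\underline\m\}$. By \eqref{b21} and \eqref{b22}, one has $\langle Pe_{\m},e_{\m'}\rangle=\langle P^\sharp\varphi_{\m},\varphi_{\m'}\rangle=0$ whenever $\m'\notin\Cl(\m)$; moreover the Gram--Schmidt orthonormalization producing the $e_{\m}$'s may be carried out within each class (elements of distinct classes either have distinct $S$-values or disjoint quasimode supports, so the procedure does not mix them up to $\ooo(e^{-c/h})$ errors, exactly as in the proof of Theorem~\ref{a66} following \cite{LePMi20}). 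Hence, after the permutation of $\uuu^{(0)}\setminus\{\underline\m\}$ grouping the elements of each class, the matrices $\Upsilon$ and $\Upsilon^\sharp$ of \eqref{b26} and \eqref{b24} become block-diagonal with blocks $\Upsilon_\alpha$ and $\Upsilon^\sharp_\alpha$ as in \eqref{b25}, $\alpha\in\aaa\setminus\{\Cl(\underline\m)\}$, so that $\sigma(P)\cap\{\Re z<\varepsilon_* h\}=\{0\}\sqcup\bigsqcup_\alpha\sigma(\Upsilon_\alpha)$, multiplicities included.

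Next, one checks that the proof of Proposition~\ref{b17} applies verbatim to each block. Fixing $\alpha$ and keeping the global grading $S_1\le\cdots\le S_p$ of $S(\uuu^{(0)}\setminus\{\underline\m\})$, one sets $E^\alpha_k=\mathrm{span}\{e_{\m};\ \m\in\uuu^{(0)}_\alpha,\ S(\m)=S_k\}$ (which is $\{0\}$ when $S_k\notin S(\uuu^{(0)}_\alpha)$) and $\tau_k=e^{-(S_k-S_{k-1})/h}$. Using \eqref{b27}, \eqref{b22} and the block version of \eqref{b18}, one obtains that $h^{-1}e^{2S_1/h}\Upsilon^\sharp_\alpha$ is a classical graded symmetric matrix $\Omega(\tau)M_{h,\alpha}\Omega(\tau)$ whose leading term $M_{0,\alpha}$ has, on the nontrivial spaces, the same entrywise expression as $M_0$ in Proposition~\ref{b17} restricted to $\uuu^{(0)}_\alpha$; the factorization $M_{0,\alpha}=L_\alpha^*L_\alpha$ with $L_\alpha$ injective is exactly the content of the class-by-class argument of \cite[Lemma~5.1]{Mi19} (which is precisely why the statement is organized along classes), whence $M_{0,\alpha}\in\Sr^{+}$, $h^{-1}e^{2S_1/h}\Upsilon^\sharp_\alpha\in GS_{cl}$, and then $h^{-1}e^{2S_1/h}\Upsilon_\alpha\in GAS_{cl}$ by \eqref{b18}. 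One then applies Theorem~\ref{b10} to $\mmm_h:=h^{-1}e^{2S_1/h}\Upsilon_\alpha$ for each $\alpha$, using $\tau_k=\ooo(h^\infty)$; this gives \eqref{b11} and \eqref{b12} for $\sigma(\Upsilon_\alpha)$ in terms of $\jjj\circ\rrr_j(M_{h,\alpha}^s)$. Undoing the normalizations with $\varepsilon_j(\tau)^2=e^{-2(S_j-S_1)/h}$ and the identity $\fff=\varepsilon_j^2\,\jjj\circ\rrr_j(M_{h,\alpha}^s)\oplus 0-z+\ooo(\varepsilon_j^2h^\infty)$ from the proof of Theorem~\ref{b10}, one rewrites $he^{-2S_1/h}\varepsilon_j^2\,\jjj\circ\rrr_j(M_{h,\alpha}^s)$ as $e^{-2S_j/h}\jjj\circ\rrr_j(e^{2S_j/h}\Upsilon^\sharp_\alpha)$ (using \eqref{b18} once more to pass from $\Upsilon_\alpha$ to $\Upsilon^\sharp_\alpha$, with trivial spaces giving empty contributions when $S_j\notin S(\uuu^{(0)}_\alpha)$). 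Taking the union over $\alpha$ and over $j$ yields the displayed inclusion, while summing the per-class multiplicities $m_\equiv(\lambda,\jjj\circ\rrr_j(e^{2S_j/h}\Upsilon^\sharp_\beta))$ over $\beta\in\aaa$ (with the convention $m_\equiv(\lambda,M)=0$ if $\lambda\notin\sigma_\equiv(M)$) gives $n(P,D_{j,\lambda}^K)$, which is the second assertion.

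The only real difficulty I anticipate is bookkeeping: keeping the grading indices and the exponential normalizing factors $e^{2S_j/h}$ consistent between the global labeling of $S(\uuu^{(0)}\setminus\{\underline\m\})$ and the per-class gradings, and correctly discarding the empty blocks. The single point requiring genuine care — that the Gram--Schmidt step and the replacement of $\Pi_h\varphi_{\m}$ by $\varphi_{\m}$ do not create spurious couplings between distinct classes beyond $\ooo(h^\infty)$ relative errors — is handled exactly as in the proof of Theorem~\ref{a66} (following \cite{LePMi20}), combined with the exact vanishing \eqref{b21}--\eqref{b22}. No new analytic input beyond Propositions~\ref{a67}, \ref{b19} and Theorem~\ref{b10} is needed.
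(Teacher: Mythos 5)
Your proposal follows the same route as the paper: you observe that \eqref{b21}--\eqref{b22} make the interaction matrices (essentially) block-diagonal along the equivalence classes of $\rrr$, so that $\sigma(P)\cap\{\Re z<\varepsilon_*h\}=\{0\}\sqcup\bigcup_\alpha\sigma(\Upsilon_\alpha)$, and you then re-run Proposition~\ref{b17} and Theorem~\ref{b10} block by block, with the per-class injectivity of $L_\alpha$ supplied by the argument of \cite[Lemma~5.1]{Mi19}. This is exactly what the paper does in the paragraph preceding Theorem~\ref{b20}, and your unwinding of the normalizations (the identity $he^{-2S_1/h}\varepsilon_j^2=he^{-2S_j/h}$ together with $\rrr_j(\Omega M\Omega)=L_-\rrr_j(M)L_-$) is correct. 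Your extra remark that the Gram--Schmidt step only couples distinct classes at $\ooo(h^\infty)$-relative order — whereas the paper simply asserts permutation-similarity to the block-diagonal form — is a fair reading of a point the paper leaves implicit, but it does not change the argument or its conclusion.
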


In the previous result, if $e^{- S_{j} / h}$ does not appear in the graded writing of $\Upsilon_{\alpha}^{\sharp}$, the matrix $\jjj \circ \rrr_{j} ( e^{2 S_{j} / h} \Upsilon_{\alpha}^{\sharp} )$ acts on the trivial vector space $\{ 0 \}$ and its spectrum is empty.

In order to emphasize the connection with the formulation of
 Theorem 5.8 in \cite{Mi19}, let us note, with a
 slight abuse of notation, $\rrr^{k} = \rrr_{2} \circ \cdots \circ \rrr_{2}$ ($k$ times),
 with the convention $\rrr^{0} = \Id$.
 It then follows from Appendix \ref{s2}
 that $\rrr^{j-1}=\rrr_{j}$ for every $j\in\{1,\dots,p\}$.
Thus, for every $j\in\{1,\dots,p\}$,  the term
$\jjj \circ \rrr_{j } ( e^{2 S_{j} / h} \Upsilon_{\alpha}^{\sharp} )$
is nothing but $\jjj \circ \rrr^{j-1 } ( e^{2 S_{j} / h} \Upsilon_{\alpha}^{\sharp} )$,
which is the writing appearing  in \cite[Theorem 5.8]{Mi19}.

\begin{remark}\sl
A similar result holds true without the assumption $\uuu^{( 0 ) , II} = \emptyset$. This requires to construct adapted quasimodes 
as in \cite[Section 3]{Mi19} (see for example formula (3.14) there) with cut-off functions as above.
\end{remark}

\appendix
\addcontentsline{toc}{section}{Appendix}

\addtocontents{toc}{\protect\setcounter{tocdepth}{0}}
\section{Spectrum of transport operators}

We begin the appendix with a result used to solve some transport equations.

\begin{lemma}\sl \label{a30}
For $m \in \N^{*}$, let $\ppp_{hom}^{m}$ denote the set of complex polynomials in $d$ variables which are homogeneous of degree $m $. Let $A \in \mmm_{d} ( \C )$ be a matrix and let $\lll_{A} : \ppp_{hom}^{m} \rightarrow \ppp_{hom}^{m}$ be given by
\begin{equation*}
\lll_{A} p = A x \cdot \nabla p .
\end{equation*}
If $\sigma ( A ) \subset\{ \Re z > 0 \}$, then $\sigma ( \lll_{A} ) \subset \{ \Re z > 0 \}$.
\end{lemma}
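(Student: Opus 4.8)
The plan is to prove the stronger statement that $\sigma(\lll_A)$, acting on $\ppp_{hom}^{m}$, is exactly the set of numbers $\langle\alpha,\lambda\rangle:=\sum_{k=1}^{d}\alpha_{k}\lambda_{k}$, where $\alpha\in\N^{d}$ ranges over the multi-indices with $|\alpha|=m$ and $\lambda=(\lambda_{1},\dots,\lambda_{d})$ is the list of eigenvalues of $A$ counted with (algebraic) multiplicity. Once this is known, the conclusion is immediate: if $\sigma(A)\subset\{\Re z>0\}$ and $|\alpha|=m\geq1$, then $\Re\langle\alpha,\lambda\rangle=\sum_{k}\alpha_{k}\Re\lambda_{k}\geq m\min_{k}\Re\lambda_{k}>0$.

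The first step is to notice that $\sigma(\lll_A)$ depends only on the similarity class of $A$. Given $P\in\mmm_{d}(\C)$ invertible, the linear change of variables $x\mapsto Px$ induces an isomorphism $\Phi_{P}\colon\ppp_{hom}^{m}\to\ppp_{hom}^{m}$, $(\Phi_{P}p)(x)=p(Px)$, and the chain rule $\nabla(\Phi_{P}p)=P^{t}\,(\nabla p)\circ(P\,\cdot\,)$ gives, for every $p$,
\begin{equation*}
\lll_{P^{-1}AP}(\Phi_{P}^{-1}p)=\Phi_{P}^{-1}\big(\lll_{A}p\big),
\end{equation*}
i.e. $\lll_{P^{-1}AP}=\Phi_{P}^{-1}\lll_{A}\Phi_{P}$. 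Hence, by Schur's theorem, we may assume that $A$ is upper triangular, $A_{ij}=0$ for $i>j$, with diagonal entries $A_{ii}=\lambda_{i}$.

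The second step is to exhibit a triangular structure for $\lll_A$ in the monomial basis $(x^{\alpha})_{|\alpha|=m}$ of $\ppp_{hom}^{m}$. Writing $\lll_{A}=\sum_{i\leq j}A_{ij}\,x_{j}\partial_{x_{i}}$, the diagonal part $\sum_{i}\lambda_{i}x_{i}\partial_{x_{i}}$ acts on $x^{\alpha}$ by multiplication by $\langle\alpha,\lambda\rangle$, while each off-diagonal term $x_{j}\partial_{x_{i}}$ with $j>i$ sends $x^{\alpha}$ to a scalar multiple of $x^{\alpha-e_{i}+e_{j}}$. Introducing the weight $w(\alpha):=\sum_{k=1}^{d}k\,\alpha_{k}$, one has $w(\alpha-e_{i}+e_{j})=w(\alpha)+(j-i)>w(\alpha)$. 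Therefore, ordering the degree-$m$ monomials by increasing value of $w$ and refining this to a total order by breaking ties arbitrarily, the matrix of $\lll_{A}$ becomes triangular with diagonal entries $\langle\alpha,\lambda\rangle$, $|\alpha|=m$. This gives $\sigma(\lll_{A})=\{\langle\alpha,\lambda\rangle:\alpha\in\N^{d},\ |\alpha|=m\}$ and hence the lemma.

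I do not expect a serious obstacle: the argument is essentially bookkeeping. The only point requiring a little care is the choice of a weight function on multi-indices that is strictly monotone under each elementary move $x^{\alpha}\mapsto x^{\alpha-e_{i}+e_{j}}$ with $j>i$, so as to guarantee genuine triangularity; the linear weight $w(\alpha)=\sum_{k}k\,\alpha_{k}$ does the job.
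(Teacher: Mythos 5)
Your proof is correct, and it takes a genuinely different route from the paper's. The paper argues first for diagonalizable $A$ (change variables so that $A$ is diagonal, observe that the monomials are then eigenvectors of $\lll_A$ with eigenvalues $\sum_j \lambda_j\gamma_j$), and then extends to arbitrary $A$ by a density-of-diagonalizable-matrices argument. You instead invoke Schur's theorem to reduce to $A$ upper triangular, and then observe that the off-diagonal pieces $x_j\partial_{x_i}$ ($j>i$) strictly increase the weight $w(\alpha)=\sum_k k\alpha_k$, so that $\lll_A$ is triangular in the monomial basis ordered by $w$, with diagonal entries $\langle\alpha,\lambda\rangle$. This is fully constructive, gives the exact spectrum of $\lll_A$ for every $A$ without any limiting argument, and avoids the (slightly elided) continuity-of-spectra step the paper relies on; the paper's route is shorter to state but leans on the density argument for the non-diagonalizable case. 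One small sign error: with $(\Phi_P p)(x)=p(Px)$, the chain rule gives $\Phi_P^{-1}\lll_A\Phi_P=\lll_{PAP^{-1}}$, not $\lll_{P^{-1}AP}$. This is inconsequential, since all you use is that $\lll_A$ and $\lll_B$ are conjugate whenever $A$ and $B$ are similar, but it is worth fixing.
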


\begin{proof}
Assume first that $A$ is diagonalizable and denote by
$\lambda_{1} , \ldots , \lambda_{d}$ its eigenvalues.
After a linear change of variable in $\C^{d}$ (leaving $\ppp_{hom}^{m}$ invariant), we can assume that $A$ is diagonal. Then, the monomials of degree $m$ form a basis of eigenvectors of $\lll_{A}$. Moreover, the eigenvalues of $\lll_{A}$ are the numbers
of the form $\vert \gamma \vert_{\lambda} := \sum_{j=1}^{d} \lambda_{j} \gamma_{j}$, where
 $\gamma = ( \gamma_{1} , \dots , \gamma_{d} ) \in \N^{d}$ satisfies  $\sum_{j=1}^{d} \gamma_{j} = m$. By density of the diagonalizable matrices in $\mmm_{d} ( \C )$, the last point holds for any matrix $A \in \mmm_{d} ( \C )$. Since $\Re \lambda_{j} > 0$ for all $j$ implies $\Re \vert \lambda \vert_{\gamma} > 0$ for all $\gamma\in \N^{d}$ such that $\sum_{j=1}^{d} \gamma_{j} > 0$, the lemma follows.
\end{proof}

\section{About the supersymmetric structure} \label{s4}

We now give an example showing that all the operators considered in this paper do not have a nice supersymmetric structure. We send the reader to H{\'e}rau, Hitrik, and Sj{\"o}strand \cite{HeHiSj13} and to the last author \cite{Mi16} for general discussions about supersymmetric structure for differential operators of second order. We say that $P$ as in \eqref{a3} admits a temperate supersymmetric structure if there exist a smooth $d \times d$ matrix $G ( x , h )$ and $M > 0$ such that
\begin{equation} \label{e1}
P = - \sum_{i , j = 1}^{d} ( h \partial_{x_{i}} - \partial_{x_{i}} f ( x ) ) \circ G_{i , j} ( x , h ) \circ ( h \partial_{x_{j}} + \partial_{x_{j}} f ( x ) ) ,
\end{equation}
and $\Vert G ( x , h ) \Vert \lesssim h^{- M}$ locally in $x$. Note that \eqref{e1} implies $P ( e^{- f / h} ) = P^{\dagger} ( e^{ - f / h} ) = 0$ and that the present definition of temperate supersymmetric structure is weaker than that of \cite{Mi16}.

\begin{proposition}\sl \label{e2}
In dimension $d = 2$, there exists an operator $P$ satisfying the assumptions of Theorem \ref{a66} and having no temperate supersymmetric structure.
\end{proposition}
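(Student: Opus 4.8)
The plan is to exhibit an explicit operator $P$ on $\R^2$ and show it cannot be written in the form \eqref{e1}. The natural candidates are the non-reversible perturbations of the Witten Laplacian of the shape \eqref{g27}, i.e. $P=\Delta_f+b\cdot d_f$ with $\div b=0$ and $b\cdot\nabla f=0$, since these satisfy \eqref{a17} automatically and, for suitable Morse $f$, all the hypotheses of Theorem~\ref{a66}. Here $b$ will be taken $h$-independent. First I would record what a temperate supersymmetric structure forces: expanding \eqref{e1} and comparing with \eqref{a3}, the second-order part of $P$ is $-h\div\circ G^{\mathrm{sym}}\circ h\nabla$ where $G^{\mathrm{sym}}=\tfrac12(G+G^t)$, and the first-order (imaginary) part is governed by the antisymmetric part of $G$ together with the gradient of $f$; more precisely a short computation gives
\begin{equation*}
b=(G-G^t)\nabla f-h\,\mathrm{(divergence\ terms)},
\end{equation*}
so at the level of principal symbols one must have $b^0=(G_0-G_0^t)\nabla f$ for some matrix field $G_0$ with $G_0^{\mathrm{sym}}=A^0=\Id$ (in the Witten case). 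In dimension $d=2$ the antisymmetric part of $G_0$ is a scalar multiple of the rotation $\smash{\bigl(\begin{smallmatrix}0&-1\\1&0\end{smallmatrix}\bigr)}$, say $G_0-G_0^t=\beta(x)\,\smash{\bigl(\begin{smallmatrix}0&-1\\1&0\end{smallmatrix}\bigr)}$ with $\beta$ smooth, so the constraint becomes $b^0(x)=\beta(x)\,\nabla^\perp f(x)$ where $\nabla^\perp f=(-\partial_2 f,\partial_1 f)$.

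Next I would choose $f$ and $b$ to violate precisely this: since $b$ must be divergence free and orthogonal to $\nabla f$, it is automatically of the form $b=\beta\nabla^\perp f$ \emph{locally} away from critical points of $f$ (as $\nabla^\perp f$ spans $(\nabla f)^\perp$ there), but the scalar $\beta$ need not extend smoothly, nor be globally single-valued, nor satisfy the divergence-free condition for a \emph{smooth} $\beta$. Concretely, take $f$ with two wells separated by a single saddle (so that Theorem~\ref{a66} applies), and pick $b$ divergence free with $b\cdot\nabla f=0$ such that the associated multiplier $\beta=|\nabla f|^{-2}\langle b,\nabla^\perp f\rangle$ blows up, or fails to be bounded by any power of $h$ — but since $b$ is $h$-independent the obstruction must be that $\beta$ is genuinely singular as a function of $x$. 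The cleanest route: arrange that near the saddle point $\s$, where $\nabla f$ vanishes to first order, $b$ vanishes only to \emph{zeroth} order (i.e. $b(\s)\neq 0$ is impossible since $b\cdot\nabla f=0$ forces $b(\s)\in(\nabla f(\s))^\perp=\R^2$... ) — so instead force $b$ to vanish to first order at $\s$ with $db(\s)$ having a component not proportional to any $\beta_0\,\nabla^\perp$; then $\beta=\langle b,\nabla^\perp f\rangle/|\nabla f|^2$ has a nonremovable singularity at $\s$, contradicting smoothness of $G$. This is a finite linear-algebra computation on the $2\times2$ Hessians $H=\Hess f(\s)$ and $B=db(\s)$, using $B^tH$ antisymmetric (Lemma~\ref{a40}).

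The main obstacle I anticipate is twofold. First, one must be careful that the failure is genuinely \emph{intrinsic}: the matrix $G$ in \eqref{e1} need not be $A$ itself (only its symmetric part is pinned down), it may depend on $h$, and the definition only requires temperateness $\|G\|\lesssim h^{-M}$; so I must show no such $G$ exists, which means controlling the freedom in the antisymmetric part of $G$ and in the lower-order ($h$-dependent) correction terms. The systematic way is to look at the $h^0$ coefficient of the principal symbol: \eqref{e1} forces $b^0=(G_0-G_0^t)\nabla f$ with $G_0-G_0^t$ \emph{smooth}, and an $h$-dependent $G$ only affects subprincipal terms, so the obstruction at the level of $b^0$ alone already settles it. Second, I must verify the chosen $P$ really satisfies \emph{all} of the hypotheses of Theorem~\ref{a66} — \eqref{h1}, \eqref{a12}, \eqref{a14}, \eqref{a24}, \eqref{a28} — which for $\Delta_f+b\cdot d_f$ with confining Morse $f$ and the Corollary~\ref{g10}/Remark~\ref{g11} criterion is essentially the elliptic case ($A^0=\Id$ makes \eqref{a12} immediate), so this is routine but must be stated. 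I would package the example as: $f(x_1,x_2)$ a suitable double-well Morse function, $b=\nabla^\perp g$ for a well-chosen $g$ with $\nabla g\cdot\nabla f\equiv 0$ impossible in general — rather $b$ built directly so that $b\cdot\nabla f=0$ and $b$ vanishes to exactly first order at the saddle with $\Hess$-data obstructing smoothness of $\beta$ — and then the one-line contradiction with smoothness of $G_0$.
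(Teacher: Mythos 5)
Your approach is genuinely different from the paper's, and unfortunately both of its main pillars have gaps. The paper does not work with an $h$-independent perturbation at all: it takes a supersymmetric $P_{0}$ (e.g.\ a Witten Laplacian) and adds an anti-adjoint perturbation $P_{per}$ built from the vector field $b^{per}=e^{2(f-C_{0})/h}d^{*}\chi$, where $\chi$ cuts off near a loop $\gamma$ chosen so that $\max f_{\vert\gamma}<C_{0}<\min(f(\rho_{1}),f(\rho_{2}))$ with $\rho_{1}$ inside $\gamma$ and $\rho_{2}$ outside. This $b^{per}$ is $\ooo(e^{-c/h})$, so $P=P_{0}+P_{per}$ has the \emph{same} classical expansion as $P_{0}$ and satisfies all hypotheses of Theorem~\ref{a66} for free. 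The supersymmetric equation then integrates \emph{exactly} to $g=\frac{2}{h}e^{2(f-C_{0})/h}(\chi+C(h))$, and evaluating at $\rho_{2}$ (forcing $C(h)$ exponentially small) and then at $\rho_{1}$ forces $g(\rho_{1},h)$ to be exponentially large, beating any $h^{-M}$. The exponential scale is the whole point: a polynomial temperateness bound cannot rule out an exponentially forced singularity, but it can absorb any obstruction visible only in the classical expansion.

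This last remark is exactly where your proposed route breaks down. First, the definition of temperate supersymmetry only imposes $\|G(x,h)\|\lesssim h^{-M}$ locally; it does \emph{not} force $G$ to admit a classical expansion, so the coefficient ``$G_{0}$'' you appeal to need not exist, and there is no a priori identity $b^{0}=(G_{0}-G_{0}^{t})\nabla f$. Concretely, the antisymmetric scalar $g$ may be $\ooo(1)$ away from critical points (where the relation $b\approx -g\nabla^{\perp}f$ pins it down) but blow up like $h^{-M}$ in an $h$-dependent neighborhood of a critical point, which is precisely where $\nabla^{\perp}f$ vanishes and ceases to constrain it; your argument, which only sees the principal symbol, cannot exclude this. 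Second, your proposed local obstruction at the saddle does not exist at the order you invoke: by Lemma~\ref{a40}, $B^{t}H$ is antisymmetric, hence in dimension $2$ one has $B^{t}H=\alpha J$ for a scalar $\alpha$, i.e.\ $B=-\alpha H^{-1}J$, and then
\begin{equation*}
\beta(x)=\frac{\langle b(x),\nabla^{\perp}f(x)\rangle}{|\nabla f(x)|^{2}}
=\frac{\langle Bx,JHx\rangle}{|Hx|^{2}}+\ooo(|x|)
=-\frac{\alpha}{\det H}+\ooo(|x|),
\end{equation*}
so the leading behavior is automatically a constant and the ``non-removable Hessian singularity'' is removed by the very constraints ($\div b=0$, $b\cdot\nabla f=0$) you impose on $b$. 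In fact, since any such $b$ is $d^{*}\psi$ for a smooth stream function $\psi$ with $\{\psi,f\}=0$, and $\beta=-\nabla\psi\cdot\nabla f/|\nabla f|^{2}$, the multiplier $\beta$ extends smoothly across any nondegenerate critical point of $f$, so no $h$-independent $b$ of the form \eqref{g27} can produce the singular $\beta$ your construction requires. The obstruction to a temperate supersymmetric structure is not visible in the classical expansion, and the paper's exponentially small perturbation is designed to create one anyway.
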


The counterexample constructed in the proof also shows that the determination of the supersymmetric structure (that is the matrix $G$) of an operator having a temperate supersymmetric structure is an instable question. On the contrary, since all the closed forms on $\R^{d}$ are exact, all the operators $P$ satisfying the assumptions of Theorem \ref{a66} have a supersymmetric structure which may not be temperate (see Theorem 1.2 of \cite{HeHiSj13}).

\begin{figure}
\begin{center}
\begin{picture}(0,0)%
\includegraphics{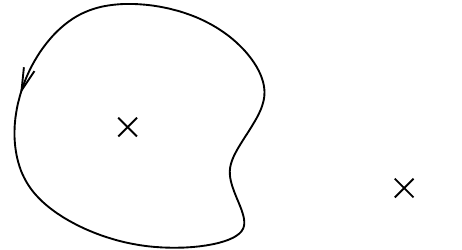}%
\end{picture}%
\setlength{\unitlength}{1184sp}%
\begingroup\makeatletter\ifx\SetFigFont\undefined%
\gdef\SetFigFont#1#2#3#4#5{%
  \reset@font\fontsize{#1}{#2pt}%
  \fontfamily{#3}\fontseries{#4}\fontshape{#5}%
  \selectfont}%
\fi\endgroup%
\begin{picture}(7380,3967)(-2264,-6675)
\put(4201,-6286){\makebox(0,0)[b]{\smash{{\SetFigFont{9}{10.8}{\rmdefault}{\mddefault}{\updefault}$\rho_{2}$}}}}
\put(-224,-5311){\makebox(0,0)[b]{\smash{{\SetFigFont{9}{10.8}{\rmdefault}{\mddefault}{\updefault}$\rho_{1}$}}}}
\put(5101,-3436){\makebox(0,0)[b]{\smash{{\SetFigFont{9}{10.8}{\rmdefault}{\mddefault}{\updefault}$\R^{2}$}}}}
\put(-2249,-3886){\makebox(0,0)[b]{\smash{{\SetFigFont{9}{10.8}{\rmdefault}{\mddefault}{\updefault}$\gamma$}}}}
\end{picture}%
\end{center}
\caption{The geometrical setting in the proof of Proposition \ref{e1}.}
\label{f3}
\end{figure}

\begin{proof}
First, we consider an operator $P_{0}$ satisfying all the assumptions of Theorem \ref{a66} and having a temperate supersymmetric structure (for instance, the Witten Laplacian described in \eqref{a16}). Of course, $P_{0}$ is of the form \eqref{a3}. We assume in addition that the Morse function $f$ is such that there exist two points $\rho_{1} , \rho_ {2} \in \R^{2}$, a simple smooth loop $\gamma$ around $\rho_{1}$ but not $\rho_{2}$, and $C_{0} > 0$ such that
\begin{equation} \label{e4}
\max f_{\vert \gamma} < C_{0} < \min ( f ( \rho_{1} ) , f ( \rho_{2} ) ) ,
\end{equation}
(see Figure \ref{f3}). Let $\chi \in C^{\infty}_{c} ( \R^{2} ; [ 0 , 1 ] )$ be such that $\chi ( \rho_{1} ) = 1$ and $\supp \nabla \chi$ is sufficiently close to $\gamma$ (so, in particular, $\chi ( \rho_{2} ) = 0$). We define $P = P_{0} + P_{per}$ with the perturbation operator
\begin{equation*}
P_{per} = \sum_{j = 1}^{2} b_{j}^{per} ( x , h ) \circ h \partial_{x_{j}} + h \partial_{x_{j}} \circ b_{j}^{per} ( x , h ) ,
\end{equation*}
and the smooth and compactly supported vector field
\begin{equation} \label{e5}
b^{per} ( x , h ) = e^{2 f / h} e^{- 2 C_{0} / h} d^{*} \chi \qquad \text{where} \qquad d^{*} = \left( \begin{gathered}
\partial_{x_{2}} \\
- \partial_{x_{1}}
\end{gathered} \right) .
\end{equation}
If the support of $\nabla \chi$ is close enough to $\gamma$, the function $b^{per}$ and all its derivatives are exponentially small from \eqref{e4}. In particular, $b^{per}$ satisfies \eqref{a7} with a null classical expansion and $P_{per}$ does not change the principal symbol of $P_{0}$. Moreover, a direct computation gives
\begin{equation} \label{e7}
P_{per} ( e^{- f / h} ) = - ( P_{per} )^{\dagger} ( e^{ - f / h} ) = e^{f / h} h \div \big( b^{per} e^{- 2 f / h } \big) = 0 .
\end{equation}
Thus, $P$ (as $P_{0})$ satisfies the assumptions of Theorem \ref{a66}. It remains to show that $P$ has no temperate supersymmetric structure. Since $P_{0}$ has such a structure, it is equivalent to show that $P_{per}$ has no temperate supersymmetric structure. We prove this point by contradiction and assume that $P_{per}$ can be written as in \eqref{e1} for some polynomially locally bounded matrix $G ( x , h )$. Since $P_{per}= - ( P_{per} )^{\dagger}$, the matrix $G$ must be antisymmetric, say
\begin{equation*}
G ( x , h ) = \left( \begin{array}{cc}
0 & g ( x , h ) \\
- g ( x , h ) & 0
\end{array} \right) ,
\end{equation*}
for some smooth function $g$ with $\vert g ( x , h ) \vert \lesssim h^{- M}$ locally. Expanding \eqref{e1} gives
\begin{align*}
P_{per} &= - ( h \nabla - \nabla f ) \cdot G ( h \nabla + \nabla f )  \\
&= h ( d^{*} g ) \cdot h \nabla - h \nabla \cdot ( G \nabla f ) - ( G \nabla f ) \cdot h \nabla  \\
&= \Big( \frac{h}{2} d^{*} g - G \nabla f \Big) \cdot h \nabla + h \nabla \cdot \Big( \frac{h}{2} d^{*} g - G \nabla f \Big) .
\end{align*}
Then, $G$ satisfies the relation
\begin{equation} \label{e8}
b^{per} = \frac{h}{2} d^{*} g - G \nabla f = \frac{h}{2} e^{2 f / h} d^{*} \big( e^{- 2 f / h} g \big) ,
\end{equation}
which is similar to \cite[(2.4)]{Mi16}. Comparing with \eqref{e5}, this equation is equivalent to
\begin{equation*}
d^{*} \big( e^{- 2 f / h} g \big) = \frac{2}{h} e^{- 2 C_{0} / h} d^{*} \chi .
\end{equation*}
Since $d^{*} \psi = 0$ implies that $\psi$ is constant in dimension $2$, this gives
\begin{equation} \label{e6}
g = \frac{2}{h} e^{2 ( f - C_{0} ) / h} ( \chi + C ( h ) ) ,
\end{equation}
for some constant $C ( h ) \in \R$. Computing $g$ at $x = \rho_{2}$ where $\chi = 0$, \eqref{e4}, \eqref{e6} and $\vert g ( \rho_{2} , h ) \vert \lesssim h^{- M}$ imply that $C ( h )$ must be exponentially small. On the other hand, Computing $g$ at $x = \rho_{1}$ where $\chi = 1$ leads to $g ( \rho_{1} , h ) \geq h^{- 1} e^{2 ( f ( \rho_{1} ) - C_{0} ) / h}$ for $h$ small enough. Then, \eqref{e4} shows that $g ( \rho_{1} , h )$ is exponentially large, in contradiction with $\vert g ( \rho_{1} , h ) \vert \lesssim h^{- M}$. Summing up, $P_{per}$ and then $P$ have no temperate supersymmetric structure and the proposition follows.
\end{proof}

\section{Iteration of the Schur complement} \label{s2}

Let us conclude the appendix with a lemma about the Schur complement. We recall that, for a matrix $M \in \mmm_{d + d^{\prime}} ( \C )$ with $d , d^{\prime} \in \mathbb N^{*}$ written by blocks
\begin{equation*}
M = \left( \begin{array}{cc}
A & B \\
C & D
\end{array} \right) \qquad \text{with} \qquad
A \in \mmm_{d} ( \C ) \text{ invertible},
\end{equation*}
the Schur complement of the block $D\in \mmm_{d^{\prime}} ( \C )$ of $M$ is the matrix defined by
\begin{equation*}
\rrr ( M ) = D - C A^{- 1} B \in \mmm_{d^{\prime}} ( \C ).
\end{equation*}
Moreover, by the Schur complement method, $M$ is invertible if and only if $\rrr ( M )$ is invertible.

\begin{lemma}\sl \label{g21}
For $d_{1} , d_{2} , d_{3} \in \mathbb N^{*}$ and matrices $M \in \mmm_{d_{1} + d_{2} + d_{3}} ( \C )$ and $M^{\prime} \in \mmm_{d_{2} + d_{3}} ( \C )$ written by blocks
\begin{equation*}
M = \begin{pmatrix} A & B & C \\
D & E & F \\
G & H & I \end{pmatrix} \qquad \text{and} \qquad
M^{\prime} = \begin{pmatrix} A^{\prime} & B^{\prime} \\
C^{\prime} & D^{\prime} \end{pmatrix} ,
\end{equation*}
we denote respectively, when they make sense, by $\rrr_{1} ( M )$, $\rrr_{1,2} ( M )$ and $\rrr_{2} ( M^{\prime} )$ the Schur complements of the blocks $\left( \begin{smallmatrix} E & F \\ H & I \end{smallmatrix} \right) \in \mmm_{d_{2} + d_{3}} ( \C )$ of $M$, $I \in \mmm_{d_{3}} ( \C )$ of $M$ and $D^{\prime} \in \mmm_{d_{3}} ( \C )$ of $M^{\prime}$.

Then, when $M$ has the previous form with $A$ and 
$\left( \begin{smallmatrix}
A & B \\
D & E
\end{smallmatrix} \right)$
invertible, the Schur complements $\rrr_{1,2} ( M )$, $\rrr_{1} ( M )$ and $\rrr_{2} ( \rrr_{1} ( M ) )$ make sense and satisfy
\begin{equation*}
\rrr_{2} ( \rrr_{1} ( M ) ) = \rrr_{1,2} ( M ) .
\end{equation*}
\end{lemma}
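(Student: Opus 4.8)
The plan is to reduce the statement to a direct block computation. Write $S := E - D A^{-1} B$. First I would check that all three Schur complements appearing in the identity are well defined under the hypotheses. Since $A$ is invertible, $\rrr_1(M)$ is defined, and writing it in blocks conformally with $\C^{d_2}\oplus\C^{d_3}$,
\begin{equation*}
\rrr_1(M) = \begin{pmatrix} E & F \\ H & I \end{pmatrix} - \begin{pmatrix} D \\ G \end{pmatrix} A^{-1} \begin{pmatrix} B & C \end{pmatrix} = \begin{pmatrix} S & F - D A^{-1} C \\ H - G A^{-1} B & I - G A^{-1} C \end{pmatrix}.
\end{equation*}
Because $A$ and $\left(\begin{smallmatrix} A & B \\ D & E \end{smallmatrix}\right)$ are both invertible, the Schur complement method recalled just before the lemma gives that $S = \rrr\!\left(\begin{smallmatrix} A & B \\ D & E \end{smallmatrix}\right)$ is invertible; hence the upper-left block of $\rrr_1(M)$ is invertible and $\rrr_2(\rrr_1(M))$ makes sense, while invertibility of $\left(\begin{smallmatrix} A & B \\ D & E \end{smallmatrix}\right)$ makes $\rrr_{1,2}(M)$ make sense.

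Next I would compute both sides explicitly. From the block form above,
\begin{equation*}
\rrr_2(\rrr_1(M)) = (I - G A^{-1} C) - (H - G A^{-1} B)\, S^{-1} (F - D A^{-1} C).
\end{equation*}
For the other side, I would use the standard block-inverse formula (valid precisely because $A$ and $S$ are invertible)
\begin{equation*}
\begin{pmatrix} A & B \\ D & E \end{pmatrix}^{-1} = \begin{pmatrix} A^{-1} + A^{-1} B S^{-1} D A^{-1} & -A^{-1} B S^{-1} \\ -S^{-1} D A^{-1} & S^{-1} \end{pmatrix},
\end{equation*}
substitute it into $\rrr_{1,2}(M) = I - \begin{pmatrix} G & H \end{pmatrix}\left(\begin{smallmatrix} A & B \\ D & E \end{smallmatrix}\right)^{-1}\begin{pmatrix} C \\ F \end{pmatrix}$, and expand. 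Collecting terms gives
\begin{equation*}
\rrr_{1,2}(M) = I - G A^{-1} C - (G A^{-1} B - H)\, S^{-1} (D A^{-1} C - F),
\end{equation*}
which coincides with the expression for $\rrr_2(\rrr_1(M))$ after observing that $(H - G A^{-1} B) S^{-1} (F - D A^{-1} C) = (G A^{-1} B - H) S^{-1} (D A^{-1} C - F)$. This proves the identity.

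The computation is entirely routine; the only care needed is the bookkeeping of blocks and signs and the invocation of the invertibility of $S$, which is exactly where the hypothesis on $\left(\begin{smallmatrix} A & B \\ D & E \end{smallmatrix}\right)$ is used. If one prefers to avoid the explicit block-inverse formula, an alternative is to first treat the case where $M$ itself is invertible: then $\rrr_{1,2}(M)^{-1}$ equals the lower-right $d_3 \times d_3$ block of $M^{-1}$, and applying this characterization twice shows $\rrr_2(\rrr_1(M))^{-1}$ equals the same block, so the two matrices agree; the general case then follows by replacing $I$ with $I + \varepsilon\,\Id_{d_3}$ for all but finitely many small $\varepsilon > 0$ (which leaves $A$ and $\left(\begin{smallmatrix} A & B \\ D & E \end{smallmatrix}\right)$ unchanged and merely shifts both sides of the asserted identity by $\varepsilon\,\Id_{d_3}$) and letting $\varepsilon \to 0$.
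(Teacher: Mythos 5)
Your proof is correct and follows essentially the same route as the paper's: invertibility of $A$ and of the top-left $2\times 2$ block gives invertibility of $S = E - DA^{-1}B$, then both $\rrr_{1,2}(M)$ and $\rrr_2(\rrr_1(M))$ are expanded via the explicit block-inverse formula and compared. The alternative argument you sketch at the end (identifying both Schur complements with the inverse of the lower-right block of $M^{-1}$ and perturbing) is a nice extra, but the main computation matches the paper.
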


\begin{proof}
First, since $A$ and 
$\left( \begin{smallmatrix} A & B \\
D & E \end{smallmatrix} \right)$
are invertible, the respective Schur complements $\rrr_{1} ( M )$ and $\rrr_{1,2} ( M )$ of the blocks 
$\left( \begin{smallmatrix} E & F \\
H & I \end{smallmatrix} \right)$
and $I$ of $M$ make sense. Moreover, by the Schur complement method, the invertibility of $A$ and $\left( \begin{smallmatrix} A & B \\ D & E \end{smallmatrix} \right)$ imply that the Schur complement $E - D A^{- 1} B$ of the block $E$ of
$\left( \begin{smallmatrix} A & B \\
D & E \end{smallmatrix} \right)$
is invertible, and a straightforward computation shows that
\begin{equation*}
\begin{pmatrix} A & B \\
D & E \end{pmatrix}^{-1} = \begin{pmatrix} A^{-1} + A^{-1} B ( E - D A^{-1} B )^{- 1} D A^{- 1} & - A^{-1} B ( E - D A^{- 1} B )^{- 1} \\
- ( E - D A^{- 1} B )^{- 1} D A^{- 1} & ( E - D A^{- 1} B )^{- 1} \end{pmatrix} .
\end{equation*}
It follows that
\begin{align}
\rrr_{1 , 2} ( M ) &= I - \begin{pmatrix} G & H \end{pmatrix} \begin{pmatrix} A & B \\
D & E \end{pmatrix}^{- 1} \begin{pmatrix} C \\ F \end{pmatrix}   \nonumber \\
&= I - G A^{- 1} C + ( H - G A^{- 1} B ) ( E - D A^{- 1} B )^{- 1} ( D A^{- 1} C - F) . \label{g28}
\end{align}

Besides, it holds
\begin{equation*}
\rrr_{1} ( M ) = \begin{pmatrix} E & F \\
H & I \end{pmatrix} - 
\begin{pmatrix} D \\ G \end{pmatrix} A^{- 1} \begin{pmatrix} B & C \end{pmatrix}
= \begin{pmatrix} E - D A^{- 1} B & F - D A^{- 1} C \\
H - G A^{- 1} B & I - G A^{- 1} C \end{pmatrix} .
\end{equation*}
Since $E - D A^{- 1} B$ is invertible, the Schur complement $\rrr_{2} ( \rrr_{1} ( M ) )$ of the block $I - G A^{- 1} C$ of $\rrr_{1} ( M )$ makes thus sense and satisfies
\begin{equation} \label{g29}
\rrr_{2} ( \rrr_{1} ( M ) ) = I - G A^{- 1} C - ( H - G A^{- 1} B ) ( E - D A^{- 1} B )^{- 1} ( F - D A^{- 1} C ) .
\end{equation}
The statement of Lemma \ref{g21} then follows from \eqref{g28} and \eqref{g29}.
\end{proof}

\addtocontents{toc}{\protect\setcounter{tocdepth}{1}}

\bibliographystyle{amsplain}
\providecommand{\href}[2]{#2}


\end{document}